\tikzset{
	commutative diagrams/.cd, 
	arrow style=tikz, 
	diagrams={>=stealth}
}
\newcommand{\supplog}{\mathpzc{Supp}}
\newcommand{\supptrop}{\mathsf{Supp}}
\theoremstyle{definition}
\newcommand{\colim@}[2]{%
	\vtop{\m@th\ialign{##\cr
			\hfil$#1\operator@font colim$\hfil\cr
			\noalign{\nointerlineskip\kern1.5\ex@}#2\cr
			\noalign{\nointerlineskip\kern-\ex@}\cr}}%
}
\newcommand{\colim}{%
	\mathop{\mathpalette\colim@{\rightarrowfill@\textstyle}}\nmlimits@
}
\def\@tocline#1#2#3#4#5#6#7{\relax
	\ifnum #1>\c@tocdepth 
	\else
	\par \addpenalty\@secpenalty\addvspace{#2}%
	\begingroup \hyphenpenalty\@M
	\@ifempty{#4}{%
		\@tempdima\csname r@tocindent\number#1\endcsname\relax
	}{%
		\@tempdima#4\relax
	}%
	\parindent\z@ \leftskip#3\relax \advance\leftskip\@tempdima\relax
	\rightskip\@pnumwidth plus4em \parfillskip-\@pnumwidth
	#5\leavevmode\hskip-\@tempdima
	\ifcase #1
	\or\or \hskip 1em \or \hskip 2em \else \hskip 3em \fi%
	#6\nobreak\relax
	\dotfill\hbox to\@pnumwidth{\@tocpagenum{#7}}\par
	\nobreak
	\endgroup
	\fi}
\newcommand{\drawATwoFanWithDots}[4]{%
  \draw[thick,->] (0,0) -- (2,0) node[right] {$e_1$};
  \draw[thick,->] (0,0) -- (0,2) node[above] {$e_2$};
  \fill[blue!20,opacity=0.4] (0,0) -- (2,0) -- (0,2) -- cycle;
  \filldraw[black] (0,0) circle (2pt);

  \fill[red] (#1,#2) circle (3pt);
  \fill[red] (#3,#4) circle (3pt);
}
\newdimen\gap
\newdimen\posY
\def\drawFansRecursive#1,#2,#3,#4,#5{%
  \ifnum\currentfan>\totalfans
  \else
    \pgfmathsetmacro{\x}{(\currentfan-1)*\gap}
    \begin{scope}[shift={(\x,\the\posY)}]
      \drawATwoFanWithDots{#1}{#2}{#3}{#4}
    \end{scope}
    \advance\currentfan by 1
    \ifx\relax#5\relax
    \else
      \expandafter\drawFansRecursive#5
    \fi
  \fi
}
\newcommand{\spreadFansWithDots}[5]{%
  \totalfans=#1\relax
  \posY=#3cm\relax
  \pgfmathsetmacro{\maxwidth}{#2*#5} 
  \ifnum\totalfans>1
    \pgfmathsetlengthmacro{\rawgap}{\maxwidth/(\totalfans-1)}%
    \gap=\rawgap cm
  \else
    \gap=0pt
  \fi
  \currentfan=1
  \edef\tempcoords{#4}%
  \expandafter\drawFansRecursive\tempcoords,\relax
}
\newcounter{marginnote}
\DeclareMathAlphabet{\mathpzc}{OT1}{pzc}{m}{it}
\theoremstyle{definition}
\newtheorem{maintheorem}{Theorem}
\newtheorem{theorem}{Theorem}[subsection]
\newtheorem{situation}[theorem]{Situation}
\newtheorem{corollary}[theorem]{Corollary}
\newtheorem{lemma}[theorem]{Lemma}
\newtheorem{proposition}[theorem]{Proposition}
\newtheorem{goal}[theorem]{Goal}
\newtheorem{remark}[theorem]{Remark}
\newtheorem*{runningexample*}{Running example}
\newtheorem{constr}[theorem]{Construction}
\newtheorem{definition}[theorem]{Definition}
\newtheorem{thmdefn}[theorem]{Theorem/Definition}
\newtheorem{example}[theorem]{Example}
\newtheorem{proposition-definition}[theorem]{Proposition-Definition}
\newenvironment{construction}    
{%
	\pushQED{\qed}\begin{constr}}
	{\popQED\end{constr}}
\newcommand{\bcd}{\begin{center}\begin{tikzcd}}
	\newcommand{\ecd}{\end{tikzcd}\end{center}}
\newcommand{\C}{\mathbb{C}}
\newcommand{\pt}{\text{pt}}
\newcommand{\AX}{\mathpzc{X}}
\DeclareMathAlphabet{\mathpzc}{OT1}{pzc}{m}{it}
\NewDocumentCommand{\compatibilitydatum}{m m m m m m O{} O{} O{}}{
	\begin{equation*} \begin{tikzcd}[ampersand replacement=\&]
	\: \arrow{r} \& {#1} \arrow{r} \arrow{d}{#7} \& {#2} \arrow{r} \arrow{d}{#8} \& {#3} \arrow{r}{[1]} \arrow{d}{#9} \& \: \\
	\: \arrow{r} \& {#4} \arrow{r} \& {#5} \arrow{r} \& {#6} \arrow{r} \& \:
	\end{tikzcd} \end{equation*}}
\NewDocumentCommand{\commutingsquare}{m m m m o O{} O{} O{} O{}}{
	\begin{equation}\begin{tikzcd}[ampersand replacement=\&] \label{#5}
	#1 \arrow{r}{#6} \arrow{d}{#7} \& #2 \arrow{d}{#8} \\
	#3 \arrow{r}{#9} \& #4
	\end{tikzcd}\IfValueTF{#5}{\label{#5}}{} \end{equation}}
\NewDocumentCommand{\cartesiansquarelabel}{m m m m m O{} O{} O{} O{}}{
	\begin{tikzcd}[ampersand replacement=\&]
	#1 \arrow{r}{#6} \arrow{d}{#7} \arrow[dr, phantom, "\square"] \& #2 \arrow{d}{#8} \\
	#3 \arrow{r}{#9} \& #4
	\end{tikzcd}\IfValueTF{#5}{\label{#5}}{}
}
\NewDocumentCommand{\triangleofspaces}{m m m O{} O{} O{}}{
	\begin{tikzcd} [ampersand replacement=\&]
	#1 \arrow{r}{#4} \arrow[bend right]{rr}{#5} \& #2 \arrow{r}{#6} \& #3
	\end{tikzcd}}
\begin{document}
	\title{The Logarithmic Quot space: foundations and tropicalisation}
	\author{Patrick Kennedy--Hunt}
	\maketitle
	\begin{abstract}
		We construct a logarithmic version of the Hilbert scheme, and more generally the Quot scheme, of a simple normal crossings pair. The logarithmic Quot space admits a natural tropicalisation called the space of tropical supports, which is a functor on the category of cone complexes. The fibers of the map to the space of tropical supports are algebraic. The space of tropical supports is representable by ``piecewise linear spaces'', which are introduced here to generalise fans and cone complexes to allow non--convex geometries. The space of tropical supports can be seen as a polyhedral analogue of the Hilbert scheme. The logarithmic Quot space parameterises quotient sheaves on logarithmic modifications that satisfy a natural transversality condition. We prove that our moduli space is a separated and universally closed logarithmic algebraic space. The logarithmic Hilbert space parameterizes families of proper monomorphisms, and in this way is exactly analogous to the classical Hilbert scheme. The new complexity of the space can then be viewed as stemming from the complexity of proper monomorphisms in logarithmic geometry. Our construction generalises the logarithmic Donaldson--Thomas space studied by Maulik--Ranganathan to arbitrary rank and dimension, and the good degenerations of Quot schemes of Li--Wu to simple normal crossings geometries.
	\end{abstract}

 \tableofcontents
 
	\section*{Introduction}
    Let $X$ be a smooth scheme and consider a simple normal crossings degeneration of $X$. This paper addresses the following fundamental question.
    \begin{center}
\noindent\fbox{%
    \parbox{\linewidth - 2\fboxsep}{%
 $$\text{{How can one study the Hilbert scheme or Quot schemes of $X$ under such a degeneration?}}$$
    }%
}
\end{center} Questions about degenerations lead naturally to questions for a pair $(X,D)$ with $D$ a simple normal crossings divisor on $X$. We are led to construct the Quot schemes of a pair $(X,D)$- this means studying moduli spaces of surjections of sheaves such that the quotient sheaf is well behaved in a neighbourhood of $D$. The usual Quot scheme is recovered in the case that $D$ is empty. 
 
	Let $\underline{X}$ be a projective variety over $\mathbb{C}$ and $D$ a divisor on $\underline{X}$ with components $\{\underline{D}_i\}$. Assume that $(\underline{X},D)$ is a simple normal crossings pair and write $X$ for the associated logarithmic scheme. We define a logarithmic scheme $D_i$ by equipping $\underline{D}_i$ with divisorial logarithmic structure from its intersection with the other components of $D$. Let $\mathcal{E}$ be a coherent sheaf on $X$. We are interested in understanding how the presence of $D$ affects the Quot scheme associated to $\mathcal{E}$. 
 
 The divisor $D$ picks out an open subset of the Quot scheme as follows. There are restriction maps between Grothendieck's Quot schemes $$r_i:\mathsf{Quot}(\underline{X}, \mathcal{E})\dashrightarrow \mathsf{Quot}(\underline{{D}_i}, \mathcal{E}|_{\underline{{D}_i}})$$ defined on an open subset, but which do not extend to a morphism. There is a maximal open subscheme $U_X$ of $\mathsf{Quot}(\underline{X}, \mathcal{E})$ on which all $r_i$ are defined. We define an open subset $\mathrm{Quot}(X,\mathcal{E})^o$ of $\mathsf{Quot}(\underline{X}, \mathcal{E})$ recursively on the dimension of $X$ as the intersect of the open sets $$\mathsf{Quot}(\underline{X}, \mathcal{E})=U_X\cap \bigcap_i r_i^{-1}(\mathrm{Quot}\left(\underline{D}_i,\mathcal{E}|_{\underline{D}_i})^o\right).$$
 This is a geometric characterisation of the interior of the logarithmic Quot space.
 \begin{center}
 
\noindent\fbox{%
    \parbox{\linewidth - 2\fboxsep}{%
 $$\textrm{\textbf{Goal:} provide a moduli space and tautological morphism compactifying the data}$$  $$r^o_i:\mathsf{Quot}({X}, \mathcal{E})^o\rightarrow \mathsf{Quot}({{D}_i}, \mathcal{E}|_{{{D}_i}})^o.$$
    }%
}
\end{center}
The morphisms $r_i$ are the key ingredient required to establish degeneration and gluing formulae, generalising powerful results from Gromov--Witten theory \cite{decompdegenGWinvar,abramovich2021punctured,Ran19,YixianWu} and Donaldson--Thomas theory \cite{MR23}. 
	\subsection{Key outcomes} In this paper we construct a proper moduli stack on the category of logarithmic schemes, called the \textit{logarithmic Quot space} and denoted $\mathsf{Quot}(X,\mathcal{E})$. The morphisms $r_i$ extend to morphisms of logarithmic Quot spaces. We show that the logarithmic Quot spaces we construct are representable in a suitable sense, universally closed, and separated. The main result of \cite{Quot2} completes the proof that, after fixing numerical data, the logarithmic Quot space is proper.  
	
	In the special case $\mathcal{E} = \mathcal{O}_X$ the (logarithmic) Quot scheme coincides with the (logarithmic) Hilbert scheme. The morphisms $r_i$ are geometrically interesting. For example, they establish a link between the rich geometry of the Hilbert scheme of points on a surface, in particular Grojnowski--Nakajima theory \cite{MR1386846,MR1711344}, and the Hilbert scheme of curves on a threefold. For arbitrary $\mathcal{E}$ the morphisms $r_i$ are the key tool for developing a \textit{gluing formula} generalising similar formulae in Gromov--Witten and Donaldson--Thomas theory \cite{decompdegenGWinvar,MR2683209,kim2021degeneration,li2001degeneration, LiWu, MR23,Ran19,MNOP2}. The logarithmic Quot space has a number of further connections to well-studied geometric objects, see Section \ref{sec:ConnectionLiterature}.

    Our logarithmic Quot space has good categorical properties, see especially Section \ref{sec:QuotAsModuli}. For example, the logarithmic Hilbert scheme is the moduli space of proper monomorphisms in the category of logarithmic schemes. The logarithmic Quot space is the moduli space of \textit{logarithmic surjections of coherent sheaves}, which are closely related to  surjections of sheaves in the logarithmic \'etale topology\cite{MW23,Nak17}.  
    
    A central idea of this paper is to identify a natural tropical object to associate with a logarithmic surjection of coherent sheaves, called the \textit{tropical support}. Tropical support leads to the correct replacement for the idea of \textit{minimal or basic logarithmic structure} of the logarithmic Quot space \cite{abramovich2011stable,chen2011stable,Gillam,gross2012logarithmic}. Since the logarithmic Quot space is not a scheme with logarithmic structure, we avoid using the language of minimal logarithmic structures in the sequel. The connection between tropical support and the K-theory of toric varieties is explored further in \cite{Quot2}.
 
	\subsection{The logarithmic Quot space} We define what it means for a coherent sheaf to be \textit{logarithmically flat} in Section \ref{sec:transversality}. Suppose $\mathcal{E}$ is a sheaf on $X$ logarithmically flat over $\mathrm{Spec}(\mathbb{C})$.\footnote{By $\mathrm{Spec}(\mathbb{C})$ we will always mean a point equipped with the trivial logarithmic structure.}

    In Section \ref{sec:LogQuot} we construct a diagram of stacks over the strict \'etale site $$
		\begin{tikzcd}
		\mathcal{X} \arrow[d, "\varpi"] \arrow[r,"\pi_X"]        & X \\
		\mathsf{Quot}(X,\mathcal{E}) \arrow[r, "r_i"] & \mathsf{Quot}(D_i,\mathcal{E}|_{D_i})                   
		\end{tikzcd}$$ and a universal surjection of sheaves on $\mathcal{X}$ $$q:\pi_X^{\star} \mathcal{E}\rightarrow \mathcal{F}.$$
\begin{maintheorem}\label{Mainthm:LogQuot}
  The logarithmic Quot space $\mathsf{Quot}(X,\mathcal{E})$ contains as an open algebraic substack $$\mathsf{Quot}(X, \mathcal{E})^o\subset \mathsf{Quot}(X,\mathcal{E}).$$ Moreover $\mathsf{Quot}(X,\mathcal{E})$ is universally closed, separated, and admits a logarithmic \'etale cover by schemes with logarithmic structure. Every standard logarithmic point of our moduli space determines an expansion and surjection of sheaves on this expansion. 
\end{maintheorem}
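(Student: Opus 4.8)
The plan is to establish the four assertions in turn, using the tropicalisation map to the space of tropical supports as the organising tool throughout. The open substack claim and the final expansion claim are the constructive ones. By its very definition $\mathsf{Quot}(X,\mathcal{E})^o$ is an open subscheme of Grothendieck's $\mathsf{Quot}(\underline{X},\mathcal{E})$, hence algebraic; what must be checked is that it sits as an \emph{open} substack of the logarithmic Quot space. I would identify it with the locus where the tropical support is trivial, i.e.\ where no expansion is required, and argue that this locus is open because transversality (logarithmic flatness of the quotient along $D$) is an open condition in flat families. The final assertion is then essentially a restatement of the construction of $\mathcal{X}\to X$ in Section~\ref{sec:LogQuot}: the tropical support attached to a point over $\mathrm{Spec}(\mathbb{C})$ with monoid $\mathbb{N}^r$ is a piecewise linear datum whose induced subdivision of the tropicalisation of $X$ is precisely the expansion, and the universal surjection $q$ restricts to the required surjection of sheaves on it.

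The substantive work is algebraicity together with the logarithmic \'etale cover. Here I would exploit the two structural facts quoted in the abstract: that the space of tropical supports is representable by a piecewise linear space, and that the fibres of the tropicalisation map are algebraic. Concretely, I would stratify the space of tropical supports by the combinatorial type of the support; over each stratum a choice of subdivision determines a universal expansion, and the fibre of the tropicalisation over that stratum is, by the fibrewise algebraicity result, a relative Quot scheme of the universal expansion over that stratum. These relative Quot schemes are algebraic by Grothendieck's theory applied to the expanded (still projective) target. Ranging over a cofinal system of subdivisions and gluing the resulting charts along the logarithmic \'etale maps induced by refinement produces the logarithmic \'etale cover. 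Each chart is a Deligne--Mumford stack with logarithmic structure because the automorphisms of a transverse quotient on a fixed expansion are finite: the surjection $q$ rigidifies the data exactly as for the classical Quot scheme.

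For separatedness and universal closedness I would verify the valuative criteria over logarithmic traits, following the template of Maulik--Ranganathan's logarithmic Donaldson--Thomas space. For separatedness, given two extensions of a single generic family over a trait that agree on the generic fibre, uniqueness of the minimal (basic) logarithmic structure forces the two tropical supports to coincide; the extensions then lie on a common expansion, and separatedness of the classical relative Quot scheme of that expansion yields their equality. Universal closedness is the main obstacle. Given a family over the punctured trait one must produce a limit over the closed point, possibly after a ramified base change, and the difficulty is that the naive flat limit of the quotient need not be transverse to the central fibre of any fixed expansion; one must first select the correct expansion. I expect this to be a tropical semistable reduction argument: the generic tropical support determines a one-parameter family of supports, and after subdividing and performing a finite base change one extends it to a tropical support over the closed point whose associated subdivision makes the limiting quotient transverse. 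The limit quotient is then produced by the valuative criterion for the \emph{proper} relative Quot scheme of that central expansion, with transversality of the limit guaranteed precisely by the choice of expansion dictated by the extended tropical support. Checking that this limit is independent of the choices, up to the identifications encoded in the logarithmic structure, closes the argument.
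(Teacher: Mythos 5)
Your overall architecture coincides with the paper's: a tropicalisation map to the space of tropical supports, charts given by open loci in relative Quot schemes over universal expansions, DM-ness from finiteness of stabilisers, separatedness via separatedness of Grothendieck's relative Quot scheme on a common expansion, and universal closedness via a flat-limit argument after ramified base change. However, the proposal has a genuine gap exactly where the paper does its hardest work: universal closedness. You write that you ``expect'' a tropical semistable reduction argument producing, after subdivision and finite base change, a tropical support over the closed point whose expansion makes the limit transverse. That expectation \emph{is} the theorem to be proved (Theorem \ref{Goal:FlatLimits}), and it cannot be extracted from the generic tropical support alone: the support of the limit depends on initial degenerations of the actual sheaf, not just on the generic combinatorics. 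The paper's proof runs in the opposite order -- first take the naive flat limit using properness of the classical Quot scheme, then \emph{flatten} it by a logarithmic modification. That flattening rests on Tevelev's theorem (Theorem \ref{thm:OldTev}: push forward to a projective toric target, pass to a coherent equivariant subsheaf, use generic flatness and the scheme-theoretic image in a relative Quot scheme, and invoke Grothendieck's characterisation of blowups), a reduction of the snc case to the toric case, and a vertex-by-vertex gluing construction of the subdivision over a two-dimensional cone that also \emph{produces} the logarithmic structure on the trait. None of this mechanism appears in your sketch. A second, related soft spot: your success criterion for the limit is transversality, but membership in the moduli problem requires logarithmic flatness and integrality; within this paper the implication ``transverse on the right expansion $\Rightarrow$ logarithmically flat'' is only cited from \cite{Quot2} (Theorem \ref{conj:CharacteriseLogFlat}), so an argument routed through transversality is not self-contained here, whereas the paper's construction directly yields a logarithmically flat strict transform and checks integrality by miracle flatness.

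Two smaller points. First, the open immersion $\mathsf{Quot}(X,\mathcal{E})^o \hookrightarrow \mathsf{Quot}(X,\mathcal{E})$ requires constructing the map at all: the universal quotient over $\mathsf{Quot}(X,\mathcal{E})^o$ is transverse but not obviously logarithmically flat, so the paper again invokes Theorem \ref{thm:OldTev} to modify $\mathsf{Quot}(X,\mathcal{E})^o \times X$ before the universal property applies (Section \ref{sec:OpenSubscheme}); your proposal jumps straight to identifying the image with the trivial-support locus. Second, your justification for finite stabilisers -- that ``the surjection $q$ rigidifies the data exactly as for the classical Quot scheme'' -- misses the actual source of automorphisms: they come from the torus automorphisms of the expansion, not from the sheaf. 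The paper's argument (Theorem \ref{thm:algebraic}) is that the stabiliser is the subgroup of $\mathbb{G}_m^k$ fixing $q$, and by the defining minimality of the tropical support (Theorem/Definition \ref{lem:defnTropSupp}) together with the stability condition this subgroup contains no one-dimensional subtorus, hence is finite. That precise mechanism -- stability forces all torus-fixed directions to have been collapsed already -- is what makes the charts Deligne--Mumford, and it is the very reason tropical support is defined the way it is.
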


\noindent In fact, after fixing numerical data, the logarithmic Quot space is bounded and thus proper \cite{Quot2}. 

Logarithmic algebraic spaces are algebraic spaces defined up to a choice of logarithmic birational modification. The logarithmic Picard group and logarithmic multiplicative group have similar representability properties\cite{MolchoWise,RangWise}. The choice made in constructing logarithmic Donaldson--Thomas spaces is an alternative way of handling the same phenomenon \cite{MR20}. 

An $S$ valued point of $\mathsf{Quot}(X,\mathcal{E})$ is an equivalence class of surjections of sheaves on \textit{logarithmic modifications} of $X\times S$. We call an equivalence class a \textit{logarithmic surjection of coherent sheaves} and typically denote it $q$. See Section \ref{sec:logbackground} for details. From a surjection of sheaves on a logarithmic modification, we construct a tropical object called the \textit{tropical support}. The tropical support is combinatorial in nature and records the data of a distinguished class of logarithmic modifications of $X\times S$. 

	\subsection{Representability and tropicalisation.}\label{sec:intro:rep}
	The logarithmic Quot space is not quite an algebraic stack with logarithmic structure, although it does enjoy more subtle representability properties. In particular, there is a strict map of logarithmic spaces $$\pi:\mathsf{Quot}(X,\mathcal{E}) \rightarrow \supplog(\mathpzc{X})$$ with algebraic fibres. The space $\supplog$ is combinatorial in nature and closely related to the theory of Artin fans. We now explain $\supplog$ further.

The logarithmic scheme $X$ is equipped with a map $$X \rightarrow \mathpzc{X}.$$ Here the \textit{Artin fan} $\AX$ of $X$ is a zero dimensional algebraic stack equipped with logarithmic structure \cite{AWbirational}, see Section~\ref{sec:logbackground}. The category of Artin fans is equivalent to the category of cone stacks \cite{ModStckTropCurve}. In Section~\ref{Bigsec:TropSupp} we build a moduli space on the category of cone complexes $\supptrop(\mathpzc{X})$ parametrising all possible tropical supports, called the space of tropical supports. 

The space of tropical supports is not a cone stack; and to study it we must go beyond the theory of Artin fans to study \textit{piecewise linear spaces}. The theory of piecewise linear spaces generalises the theory of cone stacks \cite{ACP,ModStckTropCurve} to accomodate non-convex geometries, see Section \ref{sec:PLcomplexes} and Section \ref{sec:PLSpaces} for details. 

We make sense of subdivisions of $\supptrop(\AX)$ in Section \ref{sec:PLSpaces}, and say a subdivision with domain a cone complex is a \textit{tropical model}. Under the equivalence of the previous paragraph, tropical models are those subdivisions which are themselves algebraic. We denote the set of tropical models of $\supptrop(\AX)$ by $$S_{\AX} = \{\supptrop_\Sigma(\AX) \rightarrow \supptrop(\AX)\}.$$

	A moduli functor on the category of cone complexes lifts to define a moduli problem over the category of logarithmic schemes, see \cite[Section 7]{ModStckTropCurve}. We denote the lift of $\supptrop(\AX)$ by $\supplog(\AX)$ and the lift of $\supptrop_\Sigma(\AX)$ by $\supplog_\Sigma(\AX)$. There is a unique cone complex $\mathrm{Trop}(X)$ which lifts to $\AX$. We define \textit{proper models of the logarithmic Quot space} to be morphisms $$\mathsf{Quot}_\Sigma(X,\mathcal{E})\rightarrow \mathsf{Quot}(X,\mathcal{E})$$ pulled back along $\pi$ from tropical models $$\supplog_\Sigma(\AX)\rightarrow \supplog(\AX).$$	

     \begin{displayquote}
 \textbf{Slogan:} The failure of the logarithmic Quot space to be an algebraic stack is controlled by the failure of $\supptrop(\mathpzc{X})$ to be a cone complex. 
 \end{displayquote}
 
    \noindent Here is one more precise instance of our slogan. Consider an open subfunctor (in the \textit{face topology}) $V$ of $\supptrop(\mathpzc{X})$ which is a cone complex and let $\mathpzc{V}$ be the associated Artin fan. The preimage $\pi^{-1}(\mathpzc{V})$ in $\mathsf{Quot}_\Sigma(X,\mathcal{E})$ is represented by a Deligne--Mumford stack with logarithmic structure.
\\
	\\

Associated to each tropical model $\supptrop_\Sigma(\mathpzc{X})\rightarrow \supptrop(\mathpzc{X})$ there is a diagram of Deligne--Mumford stacks with logarithmic structure $$
		\begin{tikzcd}
		\mathcal{X}_\Sigma \arrow[d, "\varpi"] \arrow[r,"\pi_X"]        & X  \\
		\mathsf{Quot}_\Sigma(X,\mathcal{E}) \arrow[r,"r_i"] & \mathsf{Quot}_\Sigma({D}_i,\mathcal{E}|_{D_i}).                   
		\end{tikzcd}$$
A precise version of the following theorem can be found in Section \ref{sec:LogQuot}.
\begin{maintheorem}\label{thm:LogQuot}
		The above diagrams exhibit the following properties.
		
		\noindent \textbf{Representability.} The model $\mathsf{Quot}_\Sigma(X,\mathcal{E})$ is a Deligne--Mumford stack with logarithmic structure. The logarithmic Quot space has a representable cover, in the sense that in the category of stacks on the strict \'etale site $$ \varinjlim_{\Sigma\in S_{\mathpzc{X}}}\mathsf{Quot}_\Sigma(X,\mathcal{E}) = \mathsf{Quot}(X,\mathcal{E}).$$
		
		\noindent \textbf{Universally closed.} For every choice of $\Sigma$, the underlying Deligne--Mumford stack of the moduli space $\mathsf{Quot}_\Sigma(X,\mathcal{E})$ is universally closed. 
		
		\noindent \textbf{Interpretation as a moduli space.} The logarithmic Quot space is the moduli space of \textit{logarithmic surjections of coherent sheaves}. See Section \ref{sec:RelativeSheaves} for definitions. It contains an open subscheme $$\mathsf{Quot}({X},\mathcal{E})^o\subset \mathsf{Quot}(X,\mathcal{E}).$$\\
		\\
	\end{maintheorem}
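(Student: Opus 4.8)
The plan is to establish the three assertions in turn, using as the organising principle the strict map $\pi\colon\mathsf{Quot}(X,\mathcal{E})\to\supplog(\AX)$ with algebraic fibres and its stated local structure over cone-complex charts. Throughout, the tropical support is what lets me transfer the combinatorial control of $\supptrop(\AX)$ to the geometry of the moduli space.

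First I would prove representability of a fixed model. By definition $\mathsf{Quot}_\Sigma(X,\mathcal{E})$ is the pullback of $\mathsf{Quot}(X,\mathcal{E})$ along the lift $\supplog_\Sigma(\AX)\to\supplog(\AX)$ of a tropical model, and $\supptrop_\Sigma(\AX)$ is a cone complex. Hence it is covered, in the face topology, by open subfunctors $V$ that are themselves cone complexes, with associated Artin fans $\mathpzc{V}$; over each such chart the stated local representability gives that the preimage $\pi^{-1}(\mathpzc{V})$ is a Deligne--Mumford stack with logarithmic structure. These charts glue along strict \'etale maps to present $\mathsf{Quot}_\Sigma(X,\mathcal{E})$, with boundedness of the connected components \cite{Quot2} ensuring the charts are of finite type. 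The remaining content of the representability clause is the colimit presentation $\varinjlim_{\Sigma\in S_{\AX}}\mathsf{Quot}_\Sigma(X,\mathcal{E})=\mathsf{Quot}(X,\mathcal{E})$ on the strict \'etale site: the index category $S_{\AX}$ is directed under common refinement with transition maps the pullbacks of subdivisions, and the key point is cofinality, namely that the tropical support of any $S$-point is supported on finitely many cones of $\supptrop(\AX)$ and so factors through some cone-complex model $\Sigma$. This makes the natural map from the colimit essentially surjective, while compatibility of the $r_i$ and of the universal surjection $q\colon\pi_X^{\star}\mathcal{E}\to\mathcal{F}$ across refinements yields full faithfulness after sheafification.

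For universal closedness of the underlying Deligne--Mumford stack of $\mathsf{Quot}_\Sigma(X,\mathcal{E})$ I would deduce the statement from Main Theorem~\ref{Mainthm:LogQuot} together with properness of subdivisions: the map $\supplog_\Sigma(\AX)\to\supplog(\AX)$ is a logarithmic modification, hence universally closed, so its pullback $\mathsf{Quot}_\Sigma(X,\mathcal{E})\to\mathsf{Quot}(X,\mathcal{E})$ is universally closed, and composing with the universal closedness of $\mathsf{Quot}(X,\mathcal{E})$ gives the claim. Alternatively one runs the valuative criterion directly, filling in the tropical limit using completeness of the cone complex $\supptrop_\Sigma(\AX)$ and the algebraic limit using properness of the relative Quot scheme on the resulting expansion, checking that transversality is preserved in the limit.

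Finally I would identify the functor of points. An $S$-point of the moduli functor is a logarithmic surjection of coherent sheaves on a logarithmic modification of $X\times S$ (Section~\ref{sec:RelativeSheaves}); pulling back the universal surjection on $\mathcal{X}$ defines a natural transformation to $\mathsf{Quot}(X,\mathcal{E})$, which I would show is an equivalence by verifying that the tropical support and the fibrewise quotient together determine, and are determined by, the surjection. The open subscheme $\mathsf{Quot}(X,\mathcal{E})^o$ is then the locus of trivial tropical support, where no expansion occurs; by the recursive description in the introduction this recovers $U_X\cap\bigcap_i r_i^{-1}\left(\mathsf{Quot}(\underline{D}_i,\mathcal{E}|_{\underline{D}_i})^o\right)$, and openness follows from the tropical stratification of $\supptrop(\AX)$. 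I expect the main obstacle to be the cofinality and gluing of the colimit step: controlling how the Deligne--Mumford models overlap and agree across refinements, and proving that the strict \'etale sheafification of $\varinjlim_{\Sigma}\mathsf{Quot}_\Sigma(X,\mathcal{E})$ is exactly the non-representable logarithmic algebraic space $\mathsf{Quot}(X,\mathcal{E})$, in analogy with the representability results for the logarithmic Picard and multiplicative groups \cite{MolchoWise,RangWise}.
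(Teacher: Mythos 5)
Your representability step is circular. The ``stated local representability'' you invoke --- that the preimage $\pi^{-1}(\mathpzc{V})$ of a cone--complex chart of $\supptrop(\AX)$ is a Deligne--Mumford stack with logarithmic structure --- is the introduction's restatement of precisely the clause you are asked to prove; it is not an available input. The paper has to \emph{manufacture} these charts: for a compatible pair $\Lambda = (\Lambda_1,\Lambda_2)$ it forms the relative Quot scheme $\mathrm{Quot}(\mathcal{X}_{\Lambda_1}/\supplog_{\Lambda_2},\mathcal{E})$ over an Artin--fan model of the universal expansion (first corrected to a flat family by Construction \ref{cons:flatmapArtinfan}), and cuts out the locus $U_\Lambda$ where the quotient is logarithmically flat and \emph{stable}, meaning its tropical support agrees with the family pulled back from $\supplog_{\Lambda_2}$. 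Two substantive facts are then needed and are absent from your proposal: that stability is an open condition (Proposition \ref{prop:StabilityIsOpen}, which rests on the trait/Gr\"obner analysis of Section \ref{sec:TropSuppNotConstDegen}), so that $U_\Lambda$ is an open algebraic space giving an open immersion $U_\Lambda \to \mathsf{Quot}_\Sigma(X,\mathcal{E})$ (Lemma \ref{lem:coverbyUlambda}); and that stabilisers are finite, which follows because by Theorem \ref{lem:defnTropSupp} the stabiliser of a point is an algebraic subgroup of a torus containing no one--parameter subtorus (compare \cite[Theorem 1.8]{CarocciNabijou1}). Without these inputs ``Deligne--Mumford stack'' cannot be concluded. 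Your colimit argument, by contrast, is essentially the paper's (factor \'etale--locally through a model, glue via common refinements, sheafify), and is fine.

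Your universal closedness argument runs backwards. Main Theorem \ref{Mainthm:LogQuot} and Theorem \ref{thm:LogQuot} are proved together, and in the paper the universal closedness of $\mathsf{Quot}(X,\mathcal{E})$ is \emph{deduced} from the valuative criterion checked on the models $\mathsf{Quot}_\Sigma(X,\mathcal{E})$ (via the lifting property of \cite[Theorem 2.2.5.2]{MolchoWise}); quoting Theorem \ref{Mainthm:LogQuot} to get the statement for the models is therefore circular. Your alternative sketch --- run the valuative criterion, use properness of the relative Quot scheme --- is the right skeleton, but it omits the theorem carrying the entire weight: properness of Grothendieck's Quot scheme produces a limit that is flat, not logarithmically flat, and the gap between the two is exactly Theorem \ref{Goal:FlatLimits} (flat limits after Tevelev, Section \ref{sec:GenTev}). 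One must, possibly after ramified base change, choose a logarithmic structure on the trait (a logarithmic extension in the sense of \cite[Section 7.1]{MR20}) and a logarithmic modification of $X \times S$ whose strict transform of the limit sheaf is logarithmically flat and integral; ``checking that transversality is preserved in the limit'' is not automatic and is in general false without this modification, as Example \ref{ex:NeedLogQuot} shows. Finally, note that the moduli interpretation clause is immediate from Definition \ref{defn:LogQuot}, whereas the open immersion $\mathsf{Quot}(X,\mathcal{E})^o \subset \mathsf{Quot}(X,\mathcal{E})$ again needs Theorem \ref{thm:OldTev}: the universal surjection over $\mathsf{Quot}(X,\mathcal{E})^o$ is not logarithmically flat on the nose, so a Tevelev flattening is required before the classifying map even exists; trivial tropical support then identifies its image with the preimage of the zero cone, giving openness.
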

Once again \cite{Quot2} establishes boundedness of $\mathsf{Quot}_\Sigma(X,\mathcal{E})$ and thus that each model is proper. A concise summary of this section is: up to a choice of polyhedral subdivision, the logarithmic Quot space is an algebraic stack with logarithmic structure.
\subsection{Structure of the paper} The following diagram is useful for understanding how the ideas in this paper connect.
$$
\begin{tikzcd}
{\mathrm{Quot}(X,\mathcal{E})} \arrow[d, "\mathrm{tropicalisation}"', dotted] \arrow[rr, "\textrm{moduli space of}", dashed] &  & {q = [\pi_\Gamma,q_\Gamma]} \arrow[d, "\mathrm{tropicalisation}", dotted] \\
\mathpzc{Supp} \arrow[rr, "\textrm{moduli space of}", dashed]                                                                &  & \mathscr{T}(q)                                                           
\end{tikzcd} $$
Here $q$ is a logarithmic surjection of coherent sheaves, see Section \ref{sec:RelativeSheaves} and has an associated tropical support $\mathscr{T}(q)$ described in Section \ref{sec:defineTropSupp}. The moduli space of tropical supports relates to $\mathpzc{Supp}$ and is studied in Section~\ref{Bigsec:TropSupp}. To make sense of tropical support and its moduli we introduce the language of piecewise linear spaces in Section \ref{sec:PLSpaces}. 

The reader may now skip to Section \ref{sec:PLSpaces} without loss of continuity. In the remainder of the introduction we provide an overview of the ideas introduced above. 
	\subsection{Piecewise linear spaces.}\label{sec:PLcomplexes} The main combinatorial innovation of this paper is the \textit{piecewise linear space}. Piecewise linear spaces are the natural language to express the tropicalisation of a logarithmic surjection of coherent sheaves, including the case of subschemes. They are analogous to the set theoretic tropicalization studied in \cite{MaclaganSturmfels,tevelev2005compactifications}, but are more sensitive to scheme theoretic data such as embedded components. Piecewise linear spaces are also helpful for understanding the tropicalisation of the logarithmic Quot space.
 
 Geometry of fine and saturated monoids is captured by sheaves on the category \textbf{RPC} of \textit{rational polyhedral cones}, see \cite{ModStckTropCurve,MR1296725} for background on \textbf{RPC} and the \textit{face topology}. We briefly recall \textit{cone complexes} in Section~\ref{sec:backgroundconecx}. Cone complexes, via their functor of points, give rise to a distinguished class of sheaves on \textbf{RPC} which have a concrete geometric description. To make language simpler we will confuse a cone complex or piecewise linear space with its functor of points, a sheaf on \textbf{RPC}.
	
	The monoid geometry in our situation is described by {piecewise linear spaces}, a generalisation of cone complexes. A piecewise linear space, denoted $\mathscr{T}, \mathscr{S}$ or $\mathscr{G}$, is the data of a sheaf on \textbf{RPC} which may be visualised as the functor of points of a geometric object, just as with cone complexes. \textit{Piecewise linear complexes} are formed by gluing piecewise linear cones: a rational polyhedral cone is the data of a convex piecewise linear cone. The relation between piecewise linear cones and piecewise linear spaces is in a precise sense the relation between affine schemes and algebraic spaces.

\begin{figure}[ht]
	\centering
	\begin{tikzpicture}[scale=0.7]

\def\downwardCone{(0,0) -- (-2,-2) -- (2,-2) -- cycle}
\def\upwardCone{(0,0) -- (-2,2) -- (2,2) -- cycle}

\def\upRayA{(0,0) -- (2,2)}
\def\upRayB{(0,0) -- (-2,2)}
\def\downRayA{(0,0) -- (2,-2)}
\def\downRayB{(0,0) -- (-2,-2)}

\begin{scope}[shift={(0cm, 0cm)}]
  \fill[blue!10] \upwardCone;
  \draw[thick] \upRayA;
  \draw[thick] \upRayB;
  \filldraw[black] (0,0) circle (3pt);
\end{scope}

\begin{scope}[shift={(4.5cm, 0cm)}]
  \fill[blue!10, even odd rule]
    (-2,2) rectangle (2,-2)
    \downwardCone;

  \fill[white] (-2,-2.05) rectangle (2,-1.95);

  \draw[thick] \downRayA;
  \draw[thick] \downRayB;
  \filldraw[black] (0,0) circle (3pt);
\end{scope}

\begin{scope}[shift={(9cm, 0cm)}]
  \fill[blue!10, even odd rule]
    (-2,2) rectangle (2,-2)
    \downwardCone;

  \fill[white] (-2,-2.05) rectangle (2,-1.95);

  \draw[thick] \downRayA;
  \draw[thick] \downRayB;
  \draw[red, thick] (0,0) -- (0,2);
  \filldraw[black] (0,0) circle (3pt);
\end{scope}

\begin{scope}[shift={(13.5cm, 0cm)}]
  \fill[blue!10] (-2,2) rectangle (2,-2);
  \filldraw[black] (0,0) circle (3pt);
\end{scope}

\end{tikzpicture}
    \caption{Left is a rational polyhedral cone $\mathscr{G}_1$. Centre--left is the piecewise linear cone $\mathscr{G}_{-1}.$ Centre--right is a tropical model of $\mathscr{G}_{-1}$. Right is the piecewise linear cone obtained as the colimit over all fan structures on $\mathbb{R}^2$.}\label{fig:FirstPLcones}
\end{figure}
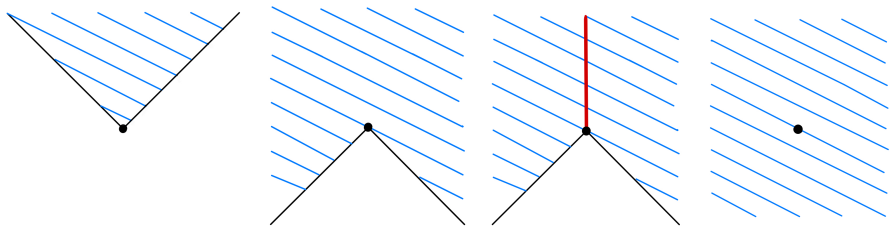

    \begin{example}
        For $k$ an integer we define subsets of $\mathbb{R}^2 = \mathbb{Z}^2 \otimes \mathbb{R}$,  $$ R_k = \{(x,y) \in \mathbb{R}^2| y\geq k|x|\}.$$ Let $J_k$ be the category of fan structures with support $R_k$. A fan structure with support ${R}_k$ defines a cone complex and thus a functor of points on the category of rational polyhedral cones. This functor of points is a sheaf with respect to the \textit{face topology}, see \cite{ModStckTropCurve}. Consider the colimit $\mathscr{G}_k$ over the functor of points of elements of $J_k$. 

        For $k>0$ note $\mathscr{G}_k$ is a rational polyhedral cone. The category $J_k$ has terminal object which is the cone with support $R_k$ and the colimit is a rational polyhedral cone.

        If $k$ is negative then $J_k$ does not have a terminal object and $\mathscr{G}_k$ cannot be a rational polyhedral cone. This is our first example of a piecewise linear cone which is not a rational polyhedral cone. For all values of $k$ we may think of $\mathscr{G}_k$ as the functor assigning to a cone $\tau$ the set of monoid maps from $\tau$ to $\mathbb{Z}^2$ whose image is contained within $R_k$. Figure \ref{fig:FirstPLcones} depicts the piecewise linear cone $\mathscr{G}_k$. for various values of $k$. 
    \end{example}
\subsubsection{How piecewise linear spaces arise}	\textit{Subdivisions} of piecewise linear spaces are defined in Section~\ref{sec:piecewiselinspaces}. A \textit{tropical model} of a piecewise linear space $\mathscr{S}$ is a morphism from a cone complex to $\mathscr{S}$ which is a subdivision. Under the slogan of Section \ref{sec:intro:rep}, tropical models correspond to logarithmic \'etale covers. In the example of Figure \ref{fig:FirstPLcones}, tropical models of $\mathscr{G}_k$ are fan structures on $\mathbb{R}^2$ with support $R_k$. 

Subdivisions of cone complexes correspond to logarithmic \'etale covers and thus piecewise linear spaces help us track a class of objects which have a logarithmic \'etale cover by logarithmic stacks, despite these objects not arising from a logarithmic structure on an algebraic stack.  In the present paper, piecewise linear spaces arise naturally in the following ways.
	\begin{enumerate}[(1)]
		\item Consider a surjection of sheaves $q: \mathcal{O}_X^n\twoheadrightarrow \mathcal{F}$ on $X$ such that $\mathcal{F}$ has no sections supported on $D$. The logarithmic modifications $$X_\Gamma \rightarrow X$$ under which the strict transform of $\mathcal{F}$ is logarithmically flat over $\mathrm{Spec}(\mathbb{C})$ are controlled by a tropical condition on $\Gamma$. The tropical condition asks $\Gamma$ is a tropical model of a certain piecewise linear space $\mathscr{T}(q)$ subdividing $\mathrm{Trop}(X)$. See \cite{Quot2} for a proof.
		\item The moduli space of subdivisions of cone complexes $\Gamma \rightarrow \mathrm{Trop}(X)$ is represented by a piecewise linear space but not by a cone complex. Moreover $\supptrop(\AX)$, which we define to be the moduli space of piecewise linear spaces subdividing $\mathrm{Trop}(X)$, is a piecewise linear space.  
		\item Our proof that the logarithmic Quot space is proper involves a version of Gr\"obner theory. Instead of a Gr\"obner fan we obtain a Gr\"obner piecewise linear space. These Gr\"obner piecewise linear spaces are built from the data of the \textit{Gr\"obner stratification} defined in \cite{Cartwright}.
	\end{enumerate}
	
	\subsubsection{Piecewise linear spaces and tropicalisation} Let $Z$ be a closed subscheme of $\underline{X}$ such that $\mathcal{O}_Z$ has no sections supported on $D$ and $Z$ is logarithmically flat over $\mathrm{Spec}(\mathbb{C})$. There is a link between the following objects. 
	\begin{enumerate}
		\item Associated to $Z$ is a subset $\mathrm{Trop}(Z)$ of the topological realisation $|\mathrm{Trop}(X)|$. We call $\mathrm{Trop}(Z)$ the \textit{tropicalisation of Z}, see \cite{MaclaganSturmfels} for details. Construction \ref{cons:} uses this data to define a piecewise linear space which is a subdivision of the cone complex $\mathrm{Trop}(X)$.
		\item The piecewise linear subdivision $\mathscr{T}(q:\mathcal{O}_X \rightarrow \mathcal{O}_Z)$.
	\end{enumerate}
	The data of (2) is a more refined version of the data of (1) and thus we may view the piecewise linear space associated to a surjection of structure sheaves as a refined version of the usual notion of tropicalisation. A key difference is that (2) detects a tropical analogue of embedded components, as the next example illustrates.
	
	\begin{example}\label{ex:keyexplspace}
		In this example we study a subscheme of $\mathbb{P}^4$ with its toric logarithmic structure. Consider $$Z = Z_1 \cup Z_2\textrm{ where }Z_1 = V(X_0 + X_1 + X_2 + X_3 + X_4)\textrm{ and }Z_2 = V(X_0-X_1,X_2-X_1, X_3-X_4).$$ The piecewise linear space $\mathscr{T}(q)$ associated to $q:\mathcal{O}_X\rightarrow \mathcal{O}_Z$ is specified by a stratification $\mathcal{P}$ of $\mathbb{R}^4$. This locally closed stratification is the common refinement of the locally closed stratifications $\{\mathrm{Trop}(Z_1), \mathrm{Trop}(Z_1)^c\}$ and $\{\mathrm{Trop}(Z_2), \mathrm{Trop}(Z_2)^c\}$ defined in \cite[Section 2.5]{MaclaganSturmfels}. See Figure~\ref{fig:KeyExample} for the piecewise linear structure on one cone of the fan of $\mathbb{P}^2$.
        The tropicalisation of $Z$ defined in \cite{MaclaganSturmfels} records only the locally closed stratification defined by $\mathrm{Trop}(Z_1)\cup \mathrm{Trop}(Z_2)$ and its complement.
	\end{example}  
	
	
	
	\subsection{The logarithmic Quot space as a moduli space}\label{sec:QuotAsModuli}
	The logarithmic Quot space is a moduli space of \textit{logarithmic surjections of coherent sheaves} of $\mathcal{E}$, defined in Section~\ref{sec:RelativeSheaves} as an equivalence class of pairs $$(\pi_\Gamma:X_\Gamma\rightarrow X, q_\Gamma: \pi_\Gamma^{\star} \mathcal{E}\rightarrow \mathcal{F})$$ where $\pi_\Gamma$ is a logarithmic modification corresponding to the subdivision $\Gamma \rightarrow \mathrm{Trop}(X)$ and $\mathcal{F}$ a logarithmically flat coherent sheaf on the underlying scheme of $X_\Gamma$ in the \'etale topology. We will write $q$ for the equivalence class. The equivalence relation is generated by pullback along certain logarithmic modifications.
	
	We associate to a logarithmic surjection of coherent sheaves, say $q=[X_\Gamma,q_\Gamma]$, a piecewise linear space $\mathscr{T}(q)$ subdividing $\mathrm{Trop}(X)$. We call this subdivision the \textit{tropical support} of $q$. The subdivision $\Gamma$ is a tropical model of $\mathscr{T}(q)$. The tropical support of a logarithmic surjection of sheaves tracks the minimal expansion $\pi_\Gamma:X_\Gamma \rightarrow X$ such that we may write $q = [\pi_\Gamma,q_\Gamma]$. 
 
	In the special case $\mathcal{E}= \mathcal{O}_X$, Grothendieck's Quot scheme is the \textit{Hilbert scheme}, a moduli space of proper monomorphisms in the category of schemes. Proper monomorphisms in the category of logarithmic schemes are compositions of two flavours of proper monomorphism: strict closed immersions and logarithmic modifications. We write $\mathsf{Hilb}(X) = \mathsf{Quot}(X, \mathcal{O}_X)$ and call it the \textit{logarithmic Hilbert space}. 
	
	Since both logarithmic modifications and strict closed immersions define proper monomorphisms in the category of logarithmic schemes, a morphism from a logarithmic scheme $B$ to the logarithmic Hilbert space specifies a proper monomorphism to $X\times B$. 
	\begin{maintheorem}\label{thm:Hilbint}
		The logarithmic Hilbert space is the moduli stack whose fibre over a scheme $B$ is the groupoid of equivalence classes of proper monomorphisms $Z \rightarrow X\times B$ such that the composition $$Z \rightarrow X\times B \rightarrow B$$ is logarithmically flat and integral. See Definition~\ref{defn:surjrelsheaves} for the equivalence relation.
	\end{maintheorem}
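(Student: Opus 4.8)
The plan is to construct mutually inverse equivalences between the two groupoids, then check that they respect the equivalence relations and are functorial in $B$. Fix $B$ and write $Y = X\times B$. By definition a $B$-point of $\mathsf{Hilb}(X) = \mathsf{Quot}(X,\mathcal{O}_X)$ is an equivalence class $[\pi_\Gamma, q_\Gamma]$ of a logarithmic modification $\pi_\Gamma\colon Y_\Gamma \to Y$ together with a surjection $q_\Gamma\colon \pi_\Gamma^{\star}\mathcal{O}_X = \mathcal{O}_{Y_\Gamma}\twoheadrightarrow \mathcal{F}$ with $\mathcal{F}$ logarithmically flat over $B$. First I would observe that, since the source is the structure sheaf, the kernel $\mathcal{I} = \ker(q_\Gamma)$ is an ideal sheaf and $\mathcal{F} = \mathcal{O}_{Y_\Gamma}/\mathcal{I} = \mathcal{O}_{Z_\Gamma}$ for a unique closed subscheme $Z_\Gamma \hookrightarrow Y_\Gamma$; this is the classical identification underlying $\mathsf{Hilb} = \mathsf{Quot}(\mathcal{O})$, carried out on underlying schemes in the strict \'etale topology. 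Equipping $Z_\Gamma$ with the pullback logarithmic structure makes $Z_\Gamma \hookrightarrow Y_\Gamma$ a strict closed immersion, and composing gives $Z := Z_\Gamma \to Y_\Gamma \xrightarrow{\pi_\Gamma} Y$. As strict closed immersions and logarithmic modifications are both proper monomorphisms and these classes are stable under composition, $Z\to Y$ is a proper monomorphism, while the logarithmic flatness of $\mathcal{F}$ over $B$ is precisely the condition that $Z\to B$ be logarithmically flat and integral (this is the content of the logarithmic flatness notion of Section~\ref{sec:transversality}). This defines the forward map.

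For the inverse I would appeal to the structure theory of proper monomorphisms recalled in the introduction: any proper monomorphism $Z\to Y$ of fine saturated logarithmic schemes factors as a strict closed immersion $\iota\colon Z\hookrightarrow Y_\Gamma$ followed by a logarithmic modification $\pi_\Gamma\colon Y_\Gamma \to Y$, where $\pi_\Gamma$ corresponds to a subdivision $\Gamma \to \mathrm{Trop}(Y)$. The strict closed immersion yields a surjection $\mathcal{O}_{Y_\Gamma}\twoheadrightarrow \iota_{\star}\mathcal{O}_Z =: \mathcal{F}$, and logarithmic flatness and integrality of $Z\to B$ translate into logarithmic flatness of $\mathcal{F}$ over $B$. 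Thus $[\pi_\Gamma, q_\Gamma]$ is a logarithmic surjection of coherent sheaves, giving a candidate inverse.

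The subtle point, and where I expect the real work, is that this factorisation is not unique: a fixed $Z\to Y$ may factor through many logarithmic modifications $Y_\Gamma$. I would show that any two such factorisations are dominated by a common refinement $Y_{\Gamma''}\to Y_\Gamma$, and that pulling $q_\Gamma$ back along this refinement returns $q_{\Gamma''}$, precisely because the pullback of the ideal sheaf cuts out the same subscheme $Z$. This is exactly the equivalence relation generating logarithmic surjections of coherent sheaves (Definition~\ref{defn:surjrelsheaves}), so the forward and inverse maps descend to equivalence classes and are mutually inverse. The tropical support $\mathscr{T}(q)$ and its minimal model make this precise: each class contains a minimal expansion, the factorisations of $Z\to Y$ form a cofiltered system of refinements of it, and \emph{equivalent} means \emph{sharing a common refinement}.

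Finally I would check compatibility with base change in $B$ and with isomorphisms, upgrading the fibrewise equivalence to an isomorphism of stacks on the strict \'etale site; since both constructions are built from pullbacks of ideal sheaves and of logarithmic modifications, this functoriality is formal. The main obstacle is the structure theorem for proper monomorphisms together with the matching of equivalence relations: I must know that every proper monomorphism in fine saturated logarithmic geometry is a strict closed immersion followed by a logarithmic modification, that logarithmic modifications are genuinely monomorphisms in this category so that no data is lost, and that the poset of factorisations is cofiltered with minimal element governed by the tropical support. Granting these inputs, the remaining verifications — that kernels of surjections from $\mathcal{O}$ are ideals, that strictness is preserved under composition, and that logarithmic flatness of $\mathcal{F}$ matches logarithmic flatness and integrality of $Z\to B$ — are routine.
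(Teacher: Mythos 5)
Your forward map, and the translation between logarithmic flatness of $\mathcal{F}$ and logarithmic flatness/integrality of $Z \to B$, match the paper. The genuine gap is in your inverse map: you assert that every proper monomorphism $Z \to Y$ factors as a strict closed immersion $Z \hookrightarrow Y_\Gamma$ followed by a logarithmic modification $Y_\Gamma \to Y$, \emph{with $Z$ itself unchanged}, and you cite this as known structure theory. What the paper actually proves (Lemma~\ref{lem:ProperMonom}) is weaker: there is a commutative square in which the \emph{source must also be modified} --- a logarithmic modification $\pi_Z\colon \tilde{Z} \to Z$, a logarithmic modification $\pi_X\colon \tilde{X}\to X$, and a strict closed immersion $\tilde{Z}\hookrightarrow \tilde{X}$. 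The source modification cannot be dispensed with: the subdivision of $\mathrm{Trop}(Y)$ induced by $Z$ is only a \emph{piecewise linear} subdivision $\mathscr{T}(Z)$, and any tropical model $\Sigma$ of it will in general subdivide the images of the cones of $\mathrm{Trop}(Z)$; then $Z$ admits no map at all to $Y_\Sigma$ over $Y$, let alone a strict closed immersion, and one is forced to pull the subdivision back to $Z$, producing $\tilde{Z}\neq Z$. This is precisely the tropical embedded-component phenomenon (Example~\ref{Ex:ConeOverEx}) that motivates the whole piecewise linear formalism, so it is not a technicality one can wave away.

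Here is a concrete counterexample to your factorisation claim. Let $X = \mathbb{A}^3$ with its toric logarithmic structure, $\sigma = \mathbb{R}^3_{\geq 0}$, and set $Z = Z_1 \sqcup Z_2$, where $Z_1$ is the strict closed point $(0,0,1)$, whose cone is the face $\langle e_1,e_2\rangle$ of $\sigma$, and $Z_2$ is a closed point, with its strict logarithmic structure, of the stratum of cone $\langle (1,1,0), e_3\rangle$ in the blow-up of the $z$-axis; note $Z_2$ maps to the origin. Then $Z \to X$ is a proper monomorphism (both pieces are composites of strict closed immersions and logarithmic modifications, and their images are disjoint). A strict closed immersion $Z \hookrightarrow X_\Sigma$ over $X$ would force both $\langle e_1,e_2\rangle$ and $\langle (1,1,0),e_3\rangle$ --- hence also the ray through $(1,1,0)$ --- to be cones of $\Sigma$. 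No fan contains both, since the ray $(1,1,0)$ lies in the relative interior of $\langle e_1,e_2\rangle$, so their intersection is not a face of $\langle e_1, e_2 \rangle$. The repair stays within the statement you are proving, because the theorem concerns \emph{equivalence classes} of proper monomorphisms: apply the paper's square to replace $Z$ by $\tilde{Z}$, observe that $[Z\to Y]=[\tilde{Z}\to Y]$ under the equivalence relation of Definition~\ref{defn:surjrelsheaves}, and define the inverse on classes by sending $[Z \to Y]$ to $[\pi_{\tilde{Y}}, \mathcal{O}_{\tilde{Y}}\twoheadrightarrow \iota_{\star}\mathcal{O}_{\tilde{Z}}]$; your cofiltered-system argument for well-definedness must then be run over factorisations of modifications of $Z$, not of $Z$ itself.
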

 The logarithmic Quot space is not invariant under logarithmic modification. However, given a logarithmic modification $\pi:\tilde{X} \rightarrow X$ and $\mathcal{E}$ a sheaf on $X$ there is a logarithmic modification $$\mathrm{Quot}(\tilde{X},\pi^\star\mathcal{E})\twoheadrightarrow\mathrm{Quot}({X},\mathcal{E}).$$

 \subsection{Connection to literature}\label{sec:ConnectionLiterature}  
In addition to directly generalizing work of Li--Wu, the logarithmic Quot scheme captures and provides a uniform framework for a number of moduli spaces that have appeared in the literature. In some of these cases, it would be worthwhile to elucidate the precise connection, but we record them here to illustrate the range of possible geometric applications:

\begin{enumerate}[(i)]
    \item \textbf{Logarithmic Gromov--Witten spaces.} There is a rich precedent for using logarithmic geometry to understand moduli spaces and their invariants. Originally developed in the context of Gromov--Witten theory \cite{MR3166394,gross2012logarithmic, Li_stablemorphisms, NishSieb06,Ran19}. Understanding logarithmic Hilbert spaces gives an alternative route to discovering logarithmic mapping spaces. In fact, the logarithmic mapping spaces constructed by Wise \cite{MR3519093} can be viewed as subfunctors of a logarithmic Hilbert scheme, by a graph construction.
    
    \item \textbf{Hyperplane arrangements and del Pezzo surfaces.} Hacking--Keel--Tevelev study two related moduli spaces of KSBA type which are equal to (or at least closely related to) logarithmic Hilbert schemes; see \cite{HKT2006,HKT2009}. In each case one studies pairs $(X,D)$ where $X\backslash D$ admits a canonical torus embedding, so can be studied via the logarithmic Hilbert scheme of a toric variety.

    \item \textbf{Logarithmic Grassmannians.} The logarithmic Grassmannian \cite{LogGrass} is a component of the logarithmic Quot space and appears to coincide with a moduli space studied in Lafforgue's influential work on Grassmannian degenerations \cite{Lafforgue2003}. The relevant substack of $\supplog$ arises from the combinatorial geometry of the Dressian \cite{MaclaganSturmfels}.

    \item \textbf{Logarithmic Hilbert schemes in Donaldson--Thomas theory.} In the special case of the logarithmic Hilbert scheme of dimension one ideal sheaves in a threefold, models are studied in logarithmic Donaldson--Thomas theory \cite{MR20}. The theory gives degeneration formulae, and ultimately a new approach to the MNOP conjectures \cite{MR23}. 
    
    \item \textbf{Degenerations for classical constructions of hyperk\"ahler varieties}. Logarithmic Hilbert schemes of points, first constructed in \cite{MR20}, and later studied by Tschanz via GIT techniques \cite{Tschanz2023,Tschanz2024}, are expected to give a robust method for constructing good degenerations for hyperk\"ahler varieties of Kummer and $\text{K3}^{[n]}$ type.

    \item \textbf{Toric hypersurfaces and the secondary polytope.} The image of the logarithmic Hilbert scheme of hypersurfaces in a toric pair inside the stack of tropical supports recovers the famous secondary polytope of Gelfand--Kapranov--Zelevinsky; see \cite{KH20}. In particular, the logarithmic linear system is a toric stack.  
\end{enumerate}

Beyond these examples, the representability properties of the logarithmic Quot space connect to themes in \cite{MolchoWise,MR3962244}. The stack of expansions was introduced in \cite{MR20}, and its geometry further developed in \cite{CarocciNabijou1,CarocciNabijou2}. In particular, \cite[Theorem~A / Theorem~1.8]{CarocciNabijou1} motivates why our moduli space has finite stabilisers, which is part of the motivation for the notion of tropical support. 

Proper monomorphisms in the category of logarithmic schemes play a role in formulating logarithmic intersection theory \cite{LogChow,HerrLogProduct}.

Finally, we observe that the logarithmic Quot space can be defined for a more general class of logarithmic schemes than the normal crossing case. In the case that $(X,D)$ admits a strict and flat map to a smooth Artin fan - the \textit{regular crossing} situation - the construction follows without further work. Indeed by \cite[Proof of Proposition 4.1.2]{Quot2}, every regular crossing pair can be obtained as a closed subscheme inside a toric variety with log structure a subset of the toric log structure. Thus the general case can be deduced from the (toric) simple normal crossings case. While we do not presently have an application in mind, we record this observation for possible future use. 

After a draft of this paper was made public, the author was made aware of independent work on an alternative approach to constructing a logarithmic Hilbert space \cite{SiebTalpoThomas}.

\subsection{Future work} Our constructions suggest a definition of a logarithmic coherent sheaf: any surjection that arises in a Quot scheme. However, a more complete study of the deformation theory and moduli of these objects is an interesting challenge. Once the right notion has been found, the interaction between logarithmic coherent sheaves and the stability conditions of Gieseker and Bridgeland is likely to be important. In the classical situation, Gieseker stability is essentially extracted from the Quot scheme using GIT. We hope that the logarithmic Quot space will shed light on these directions. A related challenge is to explore the precise connection between the logarithmic Quot space and logarithmic Picard group \cite{MolchoWise}. The hope is to use a version of GIT to construct logarithmic versions of other moduli spaces which are not yet understood. 

Recent developments permit the study of enumerative geometry by thinking about Quot schemes associated to locally free sheaves on surface geometries\cite{MR4372634}. The existence of a virtual fundamental class suggests these enumerative theories can be understood by degenerating the target and studying the logarithmic Quot space of the resulting special fibre. Enumerative geometry of moduli of higher rank sheaves provides a further direction \cite{JoyceSong}.

The \textit{derived Quot scheme} and \textit{derived Hilbert scheme} \cite{DerivedQuot} fit into the program of understanding derived versions of moduli spaces. One can attempt to define a derived logarithmic Quot space, fitting our construction in with this story. The result may have fruitful applications to enumerative geometry.  

	\subsection{Acknowledgements}
	The author would like to express sincere gratitude to his supervisor Dhruv Ranganathan for numerous helpful conversations. The author learned a great deal from numerous conversations with Sam Johnston, Navid Nabijou, Thibault Poiret and Martin Ulirsch. He also benefited greatly from conversations with Dan Abramovich, Francesca Carocci, Robert Crumplin, Samouil Molcho, Bernd Siebert, Calla Tschanz and Jonathan Wise. Martin is owed thanks for informing the author of Tevelev's unpublished proof of Proposition~\ref{prop:GenTev} and recognition for suggesting the term \textit{tropical support}. Bernd is thanked for comments and questions on a first draft of this paper. This work appeared in the PhD thesis of the author, and he thanks the examiners Mark Gross and Martin Ulirsch for numerous comments which have improved this paper.

	\section{Piecewise linear geometry}\label{sec:PLSpaces}
	In this section we generalise the theory of cone complexes to incorporate non--convex cones, as outlined in Section~\ref{sec:PLcomplexes}. The contents of this section are thus parallel to the theory of cone complexes, see \cite{ModStckTropCurve}. By tropical geometry we mean the geometry of rational polyhedral cones.
	\subsection{Cones}\label{sec:backgroundconecx} We refer the reader to \cite[Section 2]{ModStckTropCurve} for the definition of the category \textbf{RPC} of rational polyhedral cones and the category \textbf{RPCC} of rational polyhedral cone complexes. There is a fully faithful embedding of categories $$ \textbf{RPC} \hookrightarrow \textbf{RPCC}.$$
	
	A cone complex $\Sigma$ has an associated topological space called the \textit{topological realisation} which we denote $|\Sigma|$. A morphism of cone complexes $f:\Sigma' \rightarrow \Sigma$ induces a morphism of topological spaces  $$|f|:|\Sigma'| \rightarrow |\Sigma|$$ called the \textit{topological realisation of $f$}.

 	A \textit{subdivision} of cone complexes is a morphism $$f:\Sigma_1 \rightarrow \Sigma_2$$ of cone complexes such that $|f|$ is an isomorphism of topological spaces and induces a bijection of lattice points.
  
	Let $\sigma$ be a cone and $\Sigma,\Theta$ cone complexes. Consider a morphism $\Sigma \xrightarrow{h} \Theta\times \sigma \rightarrow \sigma$ where $h$ is a subdivision. Such a composition is said to be \textit{combinatorially flat} if the image of each cone of $\Sigma$ is a cone of $\sigma$. We do not impose any condition on the lattice in our definition of combinatorial flatness and so a combinatorially flat morphism of fans need not correspond to a flat map of toric varieties. 
 
 We consider the category of rational polyhedral cones \textbf{RPC} a site by declaring the inclusion of any face to be an open morphism. We call this Grothendieck topology the \textit{face topology}. Similarly define the face topology on \textbf{RPCC}.
 
	In the remainder of this section we introduce the category \textbf{PLS} of piecewise linear spaces. To do so we define piecewise linear complexes by gluing piecewise linear cones. The situation is analagous to affine schemes, schemes and algebraic spaces, as described in Table \ref{table:analogy}.
	
	\begin{table}[ht] 
		\begin{tabular}{c|c|c}
			affine variety    & polyhedral cone         & piecewise linear cone         \\
			algebraic variety &  polyhedral cone complex & piecewise linear cone complex \\
			algebraic space   & cone space                       & piecewise linear space        \\
			algebraic stack   & cone stack                       & piecewise linear stack       
		\end{tabular}
		\caption{}\label{table:analogy}
	\end{table}

	\subsection{Piecewise linear cone complexes}\label{sec:piecewiselinspaces} The local model for a piecewise linear complex is a piecewise linear cone. To make progress we make an auxilliary definition.
	
	\begin{definition}
		A \textit{local cone} $\mathfrak{s}$ of dimension $k$ is a pair $(N_\mathfrak{s},U_\mathfrak{s})$ consisting of a finitely generated rank $k$ abelian group $N_\mathfrak{s}$ and a connected open subset $U_\mathfrak{s}$ of $N_\mathfrak{s}\otimes \mathbb{R} \cong \mathbb{R}^k$ such that there is a fan on $\mathbb{R}^k$ in which $U_\mathfrak{s}$ is the union of interiors of cones. For local cones $\mathfrak{s}_1,\mathfrak{s}_2$ a morphism $\iota:\mathfrak{s}_1 \rightarrow \mathfrak{s}_2$ of local cones is a monoid morphism $$N_{\mathfrak{s}_{1}}\rightarrow N_{\mathfrak{s}_{2}}$$ inducing a map from $U_{\mathfrak{s}_1}$ to the closure of $U_{\mathfrak{s}_2}$.
	\end{definition}
 
        The interior of any rational polyhedral cone defines a local cone and any morphism of cones induces a morphism of local cones. Not all local cones arise in this way, indeed local cones need not be convex. In the sequel the closure of a subset $\kappa$ of a topological space will be denoted $\overline{\kappa}$ with the ambient topological space understood from context. 
	
	\begin{definition}\label{defn:PLspace}
		
		A \textit{piecewise linear complex} $\mathscr{S}$ is a tuple $(|\mathscr{S}|,\mathcal{P}_\mathscr{S},{A}_\mathscr{S},{C}_\mathscr{S})$ consisting of the following data.
		\begin{enumerate}[(1)]
			\item A topological space $|\mathscr{S}|$ equipped with a locally closed stratification $\mathcal{P}_\mathscr{S}$. 
			\item The set ${A}_\mathscr{S}$ consists of a homeomorphism for each $\kappa$ written $$f_\kappa:\overline{\kappa}\rightarrow \overline{U}_{\mathfrak{s}_\kappa}$$ where $\mathfrak{s}_\kappa = (N_{\mathfrak{s}_\kappa},{U}_{\mathfrak{s}_\kappa})$ is a local cone and we use a bar to denote closure. This homeomorphism identifies the closure of the stratum $\kappa$ with the closure of ${U}_{\mathfrak{s}_\kappa}$ in $N_{\mathfrak{s}_\kappa}\otimes \mathbb{R}$. We require the restriction of $f_\kappa$ to $\kappa$ defines a homeomorphism to ${U}_{\mathfrak{s}_\kappa}$.
			\item The set ${C}_\mathscr{S}$ consists of morphisms of local cones $$g_{\kappa',\kappa}:\mathfrak{s}_{\kappa'} \rightarrow \mathfrak{s}_\kappa$$ for each pair $\kappa,\kappa'$ in $\mathcal{P}_\mathscr{S}$ with $\kappa'$ a subset of $\overline{\kappa}$. We require the topological realisation of $g_{\kappa',\kappa}$ is compatible with the inclusion of $\kappa'$ as a subset of $\overline{\kappa}$. Moreover, whenever $\kappa'' \subset \overline{\kappa}'\subset \overline{\kappa}$ we have $$g_{\kappa,\kappa'}\circ g_{\kappa',\kappa''} = g_{\kappa,\kappa''}.$$
		\end{enumerate} 
		A morphism of piecewise linear complexes $\varphi:\mathscr{S}_1 \rightarrow \mathscr{S}_2$ consists of the following data.
		\begin{enumerate}[(1)]
			\item A continuous map 
			$$|\varphi|:|\mathscr{S}_1|\rightarrow |\mathscr{S}_2|$$ such that for each $\kappa_1$ in $\mathcal{P}_{\mathscr{S}_1}$ there is some $\kappa_2$ in $\mathcal{P}_{\mathscr{S}_2}$ containing $|\varphi|(\kappa_1)$.
			\item For each pair $\kappa_1,\kappa_2$ with $|\varphi|(\kappa_1)\subset \kappa_2$ a monoid morphism $\varphi_{\kappa_1,\kappa_2}:N_{\mathfrak{s}_{\kappa_1}} \rightarrow N_{\mathfrak{s}_{\kappa_2}}$ inducing a map of local cones $\mathfrak{s}_{\kappa_1}\rightarrow \mathfrak{s}_{\kappa_2}$. The $\varphi_{\kappa_1,\kappa_2}$ must be compatible with passing to closures.
		\end{enumerate}
	\end{definition}
	The reader is issued with three warnings. First, a local cone is not a piecewise linear complex - this follows from the definition of $A_\mathscr{T}$. Second, the category of piecewise linear complexes is not simply obtained by inverting subdivisions in \textbf{RPCC.} Finally, the definition of stratification states that whenever $\kappa,\kappa'$ strata of $\mathcal{P}_\mathscr{S}$ if $\kappa'$ intersects the closure $\overline{\kappa}$ of $\kappa$ then $\kappa'$ is contained in $\overline{\kappa}$. The category of piecewise linear cone complexes is denoted \textbf{PLCC}.

 \begin{definition}
    A \textit{piecewise linear cone} is a piecewise linear space $\mathscr{S}$ such that $\mathcal{P}_\mathscr{S}$ contains a dense subset of $|\mathscr{S}|$.
\end{definition}
 
	\begin{example}\label{example: cone complex as PL space}
		A cone complex $\Sigma$ specifies a piecewise linear complex $\mathscr{S}$. Writing $|\sigma|^o$ for the interior of the topological realisation of a cone $\sigma$, we specify $$|\mathscr{S}|=|\Sigma| \textrm{   and   } P_\mathscr{S} = \left\{|\sigma|^o: \sigma \textrm{ a cone of }\Sigma\right\}.$$ The closure of the stratum corresponing to $\sigma^o$ is identified with $|\sigma|$ to give the set $A_\mathscr{S}$ and the set $C_\mathscr{S}$ is defined by face inclusions. This assignment is functorial, giving a fully faithful embedding of the category of cone complexes into the category of piecewise linear complexes. 
	\end{example}

In this way we obtain fully faithful embeddings of categories 
$$
\begin{tikzcd}
\mathbf{RPC} \arrow[d] \arrow[r] & \mathbf{PLC} \arrow[d] \\
\mathbf{RPCC} \arrow[r]          & \mathbf{PLCC}.         
\end{tikzcd}$$
We say a piecewise linear complex $\Sigma$ is a \textit{cone complex} if $\Sigma$ lies in the image of the embedding of Example \ref{example: cone complex as PL space}. We say a morphism $\varphi:\mathscr{T}_1\rightarrow \mathscr{T}_2$ of piecewise linear complexes is a \textit{subdivision} if $|\varphi|$ is an isomorphism of topological spaces; the maps of local cones $\varphi_{\kappa_1,\kappa_2}$ induce a bijection between lattice points in $\kappa_1$ and lattice points in $\kappa_2 \cap |\varphi|(\kappa_1)$, and the preimage under $|\varphi|$ of every stratum of $\mathcal{P}_{\mathscr{T}_2}$ is a finite union of elements of $\mathcal{P}_{\mathscr{T}_1}$. It follows that a morphism of cone complexes is a subdivision if and only if it is a subdivision in the usual sense. We say that a subdivision is a \textit{tropical model} if the domain is a cone complex. 
	
	\subsubsection{Subdivisions from conical stratifications} For $\mathscr{S}$ a piecewise linear space, we say a locally closed stratification $\mathcal{P}$ of $|\mathscr{S}|$ refining $\mathcal{P}_\mathscr{S}$ is \textit{conical} if there is a tropical model $$\Sigma\rightarrow \mathscr{S}$$ such that each stratum of $\mathcal{P}$ is the union of interiors of images of cones of $\Sigma$. 
 
 \begin{proposition}\label{prop:} There is an initial subdivision of piecewise linear spaces $$\mathscr{S}(\mathcal{P})\rightarrow \mathscr{S}$$ such that given any tropical model $\Sigma \rightarrow \mathscr{S}$ for which each stratum of $\mathcal{P}$ is a union of interiors of cones of $\Sigma$, we may factor $$\Sigma \rightarrow \mathscr{S}(P)\rightarrow \mathscr{S}.$$ 
 \end{proposition}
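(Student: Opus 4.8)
The plan is to construct $\mathscr{S}(\mathcal{P})$ by hand as the piecewise linear complex that keeps the ambient space and charts of $\mathscr{S}$ unchanged but replaces the stratification $\mathcal{P}_\mathscr{S}$ by its refinement $\mathcal{P}$, and then to verify directly the stated universal factorisation. Concretely, I would set $|\mathscr{S}(\mathcal{P})| = |\mathscr{S}|$ and $\mathcal{P}_{\mathscr{S}(\mathcal{P})} = \mathcal{P}$, after first refining $\mathcal{P}$ into its connected strata (which does not affect conicality). Since $\mathcal{P}$ refines $\mathcal{P}_\mathscr{S}$, every stratum $\kappa$ of $\mathcal{P}$ lies in a unique stratum $\kappa_0$ of $\mathcal{P}_\mathscr{S}$ carrying a chart $f_{\kappa_0}\colon \overline{\kappa_0}\to \overline{U}_{\mathfrak{s}_{\kappa_0}}$; I would equip $\kappa$ with the local cone $(N_{\mathfrak{s}_{\kappa_0}}, f_{\kappa_0}(\kappa))$ and the chart given by restricting $f_{\kappa_0}$ to $\overline{\kappa}$, and I would define the morphisms in $C_{\mathscr{S}(\mathcal{P})}$ by combining the face inclusions of $\mathscr{S}$ with those among the finer strata sharing a chart. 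Note that this data depends only on $\mathscr{S}$, not on any tropical model; the role of conicality is solely to guarantee that each $f_{\kappa_0}(\kappa)$ is genuinely a local cone, i.e.\ a union of interiors of cones of a fan on $N_{\mathfrak{s}_{\kappa_0}}\otimes\mathbb{R}$, which is exactly what a witnessing tropical model $\Sigma\to\mathscr{S}$ provides after transporting along $f_{\kappa_0}$.

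Next I would produce the morphism $\varphi\colon\mathscr{S}(\mathcal{P})\to\mathscr{S}$ as the identity on $|\mathscr{S}|$, sending each $\kappa$ to the containing stratum $\kappa_0$ with the identity lattice map $N_{\mathfrak{s}_{\kappa_0}}\to N_{\mathfrak{s}_{\kappa_0}}$. Verifying that $\varphi$ is a subdivision is then immediate from the definition: $|\varphi|$ is an isomorphism, the identity lattice maps give the required bijection on lattice points (since $\kappa_0\cap|\varphi|(\kappa)=\kappa$), and the preimage of each stratum $\kappa_0$ of $\mathcal{P}_\mathscr{S}$ is precisely the finite collection of strata of $\mathcal{P}$ it contains.

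For the universal property I would take any tropical model $\Sigma\to\mathscr{S}$ for which each stratum of $\mathcal{P}$ is a union of interiors of images of cones of $\Sigma$, and build the factoring $\psi\colon\Sigma\to\mathscr{S}(\mathcal{P})$. Its topological realisation is the same isomorphism $|\Sigma|\xrightarrow{\sim}|\mathscr{S}|=|\mathscr{S}(\mathcal{P})|$; the hypothesis on $\Sigma$ says exactly that the interior of each cone of $\Sigma$ maps into a single stratum of $\mathcal{P}$, which is condition~(1) for a morphism of piecewise linear complexes, and the lattice maps of $\psi$ are those of $\Sigma\to\mathscr{S}$ read into the identified lattices $N_{\mathfrak{s}_\kappa}=N_{\mathfrak{s}_{\kappa_0}}$. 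One checks that $\psi$ is again a subdivision and that $\varphi\circ\psi$ recovers $\Sigma\to\mathscr{S}$. The factoring is unique, hence $\varphi$ is initial: any factoring must have topological realisation equal to $|\Sigma|\to|\mathscr{S}|$, and compatibility of the lattice maps with $\varphi$ forces the remaining data.

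I expect the main obstacle to be the verification in the first step that the inherited data truly assembles into a piecewise linear complex in the sense of Definition~\ref{defn:PLspace}: that each refined stratum is a bona fide local cone (connected, and a union of interiors of cones of a fan), and that the cocycle relation $g_{\kappa,\kappa'}\circ g_{\kappa',\kappa''}=g_{\kappa,\kappa''}$ survives passage to the finer stratification. Connectivity is arranged by subdividing $\mathcal{P}$ into connected components; the fan condition is the precise content of conicality; and the compatibility relations are inherited from those already holding in $\mathscr{S}$. Everything else is formal, since the construction alters only the combinatorics of the stratification while fixing the underlying space, charts, and lattices.
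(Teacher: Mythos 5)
Your construction is not the paper's, and it contains a genuine gap at exactly the point you flag as the ``main obstacle''. You keep the stratification $\mathcal{P}$ itself (up to connected components) and claim that ``the fan condition is the precise content of conicality'', i.e.\ that conicality makes each stratum of $\mathcal{P}$ a local cone. This is false. A local cone $(N_\mathfrak{s},U_\mathfrak{s})$ requires $U_\mathfrak{s}$ to be an \emph{open} subset of $N_\mathfrak{s}\otimes\mathbb{R}$, whereas conicality only says each stratum is a union of interiors of cones of some tropical model --- and such a union may mix dimensions in a way that is not open in its linear span. Concretely, take $\mathscr{S}=\sigma=\mathbb{R}^3_{\geq 0}$, let $\Sigma$ be the star subdivision of $\sigma$ at the ray $r$ through $(1,1,1)$, and take $\mathcal{P}$ to consist of the usual boundary strata of $\sigma$ together with
$$\kappa_1 = r^o \cup \langle r, e_1\rangle^o, \qquad \kappa_2 = \sigma^o \setminus \kappa_1.$$
One checks that $\mathcal{P}$ is a locally closed stratification refining $\mathcal{P}_\sigma$ (the closure of $\kappa_1$ is the $2$--dimensional cone $\langle r,e_1\rangle$, which meets only $\kappa_1$, the ray $e_1$, and the origin, all of which it contains), and it is conical with witness $\Sigma$. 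But $\kappa_1$ is a connected, half-open two-dimensional set, $\{ar+be_1 : a>0,\ b\geq 0\}$, which is not open in the plane it spans and indeed is not homeomorphic to any open subset of $\mathbb{R}^2$; so no chart $f_{\kappa_1}$ as required by Definition~\ref{defn:PLspace} can exist, and your $\mathscr{S}(\mathcal{P})$ is simply not a piecewise linear complex. Passing to connected components does not help, since $\kappa_1$ is already connected. (A smaller symptom of the same confusion: you assign $\kappa$ the full lattice $N_{\mathfrak{s}_{\kappa_0}}$ of the ambient stratum, which has the wrong rank for lower-dimensional strata.)

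This is precisely why the paper's Construction~\ref{cons:} does \emph{not} set $\mathcal{P}_{\mathscr{S}(\mathcal{P})}=\mathcal{P}$: it defines the strata as equivalence classes of the relation generated by ``$p$ and $q$ lie in the interior of the same cone of some compatible tropical model'', which is in general a strict refinement of $\mathcal{P}$. In the example above, no compatible model has a cone whose interior meets both $r^o$ and $\langle r,e_1\rangle^o$ (such a cone's interior would be open in the plane and hence would spill into $\kappa_2$), so the paper's construction splits $\kappa_1$ into the two genuine local cones $r^o$ and $\langle r,e_1\rangle^o$. The entire content of the paper's proof --- the subgroups $G_\rho$ and facts (1) and (2) --- is the verification that these refined strata are open subsets of linear subspaces; this is the step your proposal assumes away. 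Note also that even if your coarser object did exist as a piecewise linear space, it would not be \emph{initial}: the paper's refined subdivision also has the factorisation property, and there is no morphism over $\mathscr{S}$ from a strictly coarser stratified object to a finer one, since a morphism must send each stratum into a single stratum of the target.
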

	
	\begin{construction}\label{cons:}
		We define $\mathscr{S}(\mathcal{P})$. Set $|\mathscr{S}(\mathcal{P})| = |\mathscr{S}|$ and let $\sim$ be the equivalence relation on points of $|\mathscr{S}(\mathcal{P})|$ generated by $p \sim q$ whenever there is a tropical model $$\Sigma \rightarrow \mathscr{S}$$ such that $p$ and $q$ lie in the interior of the same cone. We obtain a locally closed stratification $\mathcal{P}_{\mathscr{S}(\mathcal{P})}$ of $|\mathscr{S}|$ by declaring two points to be the same stratum if they are related by $\sim$. Since each stratum of $\mathcal{P}_{\mathscr{S}(\mathcal{P})}$ is an open subset of a linear subspace of a local cone, each stratum inherits the structure of a local cone. The sets $\mathcal{A}_{\mathscr{S}(\mathcal{P})}$ and $\mathcal{C}_{\mathscr{S}(\mathcal{P})}$ are inherited from $\mathscr{S}$.
	\end{construction}
To prove Proposition \ref{prop:} it suffices to verify that Construction \ref{cons:} yielded a valid piecewise linear space. The universal property is then clear. The key claim to check is that each stratum of $\mathcal{P}_{\mathscr{S}(\mathcal{P})}$ is an open subset of a linear subspace of a local cone. 
 
\begin{proof}[Proof of Proposition \ref{prop:}] A definition will help us. Suppose we are given a conical locally closed stratification $\mathcal{Q}$ of $U_\mathfrak{s}$ for some local cone $(N_\mathfrak{s},U_\mathfrak{s})$ and a ray $\rho$ inside a stratum $\kappa$ of $\mathcal{Q}$. We think of $\kappa$ as a subset of $U_\mathfrak{s}$. We define a subgroup $G_\rho$ of $N_\mathfrak{s}$ to be the intersect of (necessarily saturated) subgroups $G_i\leq N_\mathfrak{s}$ maximal with the following property. There is an open subset $V_i$ of $U_\mathfrak{s}$ containing $\rho$ such that $G_i\otimes \mathbb{R}\cap V_i$ lies inside $\kappa\cap V_i$.

To understand why each stratum of $\mathcal{P}_{\mathscr{S}(\mathcal{P})}$ is an open subset of a linear subspace one can use the following two facts. \begin{enumerate}[(1)]
\item Each stratum of $\mathcal{P}_{\mathscr{S}(\mathcal{P})}$ is a union of interiors of cones of any tropical model $\Sigma \rightarrow \mathscr{T}$ provided the image of every cone of $\Sigma$ lies within a stratum of $\mathcal{P}$.

\item Suppose lattice points $p,q$ lie in the same stratum of $\mathcal{P}_{\mathscr{S}(\mathcal{P})}$. Necessarily $p,q$ are mapped to the same local cone $(N_\mathfrak{s},U_\mathfrak{s})$ in $\mathscr{S}$ and specify rays $\rho_p,\rho_q$ inside $N_\mathfrak{s}$.  Then we have an equality of subgroups of $N_\mathfrak{s}$ $$G_{\rho_q}= G_{\rho_p}.$$ 
 \end{enumerate}
 The first fact is clear. The second fact follows because if there is a tropical model  $\Sigma$ in which $p$ and $q$ lie in the interior of the same cone then the cones of $\Sigma$ which contain $p$ and $q$ are the same.

 Armed with these facts we will show a stratum $\kappa$ of $\mathcal{P}_{\mathscr{S}(\mathcal{P})}$ is an open subset of a linear subspace of a local cone. Fix a tropical model $\Sigma$ and choose a face $\gamma$ maximal with the property that $\gamma$ is a subset of $\kappa$. The aforementioned local cone is $(N_\kappa,U_\kappa)$ and the linear subspace $$\gamma^\mathrm{gp} \otimes \mathbb{R} \subset U_\kappa.$$ If $\kappa$ did not lie within this linear subspace then we could find $p,q\in \kappa$ in the interior of the same cone of a tropical model $\Sigma'$ and fact (2) would force $\gamma$ not to be maximal. Fact 2 also forces openness. 
\end{proof}

\subsubsection{Examples of piecewise linear spaces}	We record two examples of piecewise linear spaces that arise in nature.
	\begin{example}\label{Ex:ConeOverEx}
		Consider the cone $\sigma$ in $\mathbb{R}^3$ with rays $$\{(1,0,0), (0,1,0), (0,0,1)\}.$$ Define a piecewise linear complex which is a subdivision of $\sigma$ obtained by replacing the dense stratum $\sigma^o$ with two strata: the ray in direction $(1,1,1)$ and its complement in $\sigma^o$. See Figure \ref{fig:KeyExample2}. This is piecewise linear subdivision of a certain cone in the fan of $\mathbb{P}^4$ which appears when taking the tropical support in Example~\ref{ex:keyexplspace}.
        
        \begin{figure}[ht]
			\centering
            \begin{tikzpicture}
                \begin{scope}[xshift=0cm]
                \coordinate (A) at (0,0);
                \coordinate (B) at (3,0);
                \coordinate (C) at (0,3);
                
                \fill[blue!20] (A) -- (B) -- (C) -- cycle;
                \draw[black, thick] (A) -- (B) -- (C) -- cycle;
                
                \foreach \pt in {A,B,C} {
                  \filldraw[black] (\pt) circle (2pt);
                }
            
                \coordinate (G) at ($ (A)!.333!(B)!0.333!(C) $);
                \filldraw[black] (G) circle (2pt);
              \end{scope}
              
              \begin{scope}[xshift=5cm]
                \coordinate (P) at (0,0);
                \coordinate (Q) at (3,0);
                \coordinate (R) at (0,3);
                
                \fill[blue!20] (P) -- (Q) -- (R) -- cycle;
                \draw[black, thick] (P) -- (Q) -- (R) -- cycle;
            
                \foreach \pt in {P,Q,R} {
                  \filldraw[black] (\pt) circle (2pt);
                }
            
                \coordinate (M) at ($ (P)!0.5!(Q) $);
                \filldraw[black] (M) circle (2pt);
              \end{scope}
            \end{tikzpicture}
			\caption{Examples of a piecewise linear complexes can be obtained by taking the cones over the above (subdivided) triangles. Left is piecewise linear space discussed in Examples \ref{ex:keyexplspace} and \ref{Ex:ConeOverEx}. Right gives a different example. The only examples of piecewise linear complexes of dimension two which are subdivisions of cone complexes are cone complexes.}\label{fig:KeyExample2}
		\end{figure}
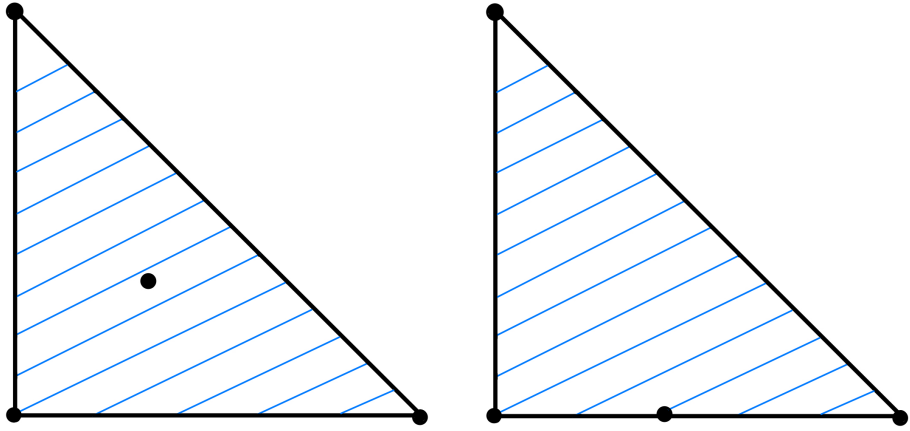
		
	\end{example}

\begin{example}[Tropical moduli spaces of points are naturally piecewise linear spaces]\label{ex:Not a cone cx}
    Let $\sigma$ be the (rational polyhedral) cone of dimension two with maximal dimension face $\mathbb{R}^2_{\geq0}$. This is simply the fan of $\mathbb{A}^2$. The space of two labelled points $p,q$ in $\sigma$ bijects with points $(p_1,p_2,q_1,q_2)\in \mathbb{R}^4_{\geq 0}$. The first two coordinates specify the location of $p$ and the second two give the location of $q$. 
    
    We define a piecewise linear structure on $\mathbb{R}^4_{\geq 0}$ by specifying a conical locally closed stratification of $\mathbb{R}^4_{\geq 0}$. Two points $(p_1,p_2,q_1,q_2)$ and $(p_1',p_2',q_1',q_2')$ lie in the same stratum if and only if:
    \begin{enumerate}[(1)]
    \item The points $(p_1,p_2)$ and $(p_1',p_2')$ lie in the interior of the same face of $\sigma$. Similarly replacing $p$ by $q$.
    \item We require $(p_1,p_2) = (q_1,q_2)$ if and only if $(p_1',p_2') = (q_1',q_2')$.
    \end{enumerate}
    The resulting locally closed stratification is readily seen to be conical. The output of Construction~\ref{cons:} is not a cone complex. Indeed, this piecewise linear space has a stratum $\kappa$ of dimension two corresponding to the locus where $p=q$ lie in the maximal dimension face of $\sigma$. This stratum does not lie in the closure of any stratum of dimension three. Since $\kappa$ lies in the closure of a stratum of dimension four, we do not have a cone complex.
\end{example}

\subsection{Base change and flatness for piecewise linear complexes} Let $\Theta$ be a cone complex. 
\subsubsection{Fibre products} Proposition \ref{prop:fibreproductPLspaces} is closely related the discussion in \cite[Section 2.2]{molcho2019universal}.
\begin{proposition}\label{prop:fibreproductPLspaces}
    Fibre products exist in the category of piecewise linear complexes.
\end{proposition}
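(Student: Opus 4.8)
The plan is to construct fibre products explicitly by working stratum-by-stratum, mirroring the way fibre products of schemes are built from fibre products of affine pieces. Given morphisms of piecewise linear complexes $\varphi_1:\mathscr{S}_1 \rightarrow \mathscr{T}$ and $\varphi_2:\mathscr{S}_2 \rightarrow \mathscr{T}$, I would first form the fibre product $|\mathscr{S}_1|\times_{|\mathscr{T}|}|\mathscr{S}_2|$ of the underlying topological spaces. The candidate for $|\mathscr{S}_1\times_\mathscr{T}\mathscr{S}_2|$ is this topological fibre product, and I would need to equip it with a locally closed stratification, a set $A$ of local-cone charts, and a set $C$ of compatibility morphisms satisfying Definition~\ref{defn:PLspace}.

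The core local computation is the following. For strata $\kappa_1 \in \mathcal{P}_{\mathscr{S}_1}$ and $\kappa_2 \in \mathcal{P}_{\mathscr{S}_2}$ whose images both land inside a common stratum $\lambda$ of $\mathcal{P}_\mathscr{T}$, the relevant piece of the fibre product is governed by the diagram of monoid maps $N_{\mathfrak{s}_{\kappa_1}} \rightarrow N_{\mathfrak{s}_\lambda} \leftarrow N_{\mathfrak{s}_{\kappa_2}}$ supplied by the $C$-data of the two morphisms. The fibre product of local cones should be read off from the open subset of $(N_{\mathfrak{s}_{\kappa_1}} \times_{N_{\mathfrak{s}_\lambda}} N_{\mathfrak{s}_{\kappa_2}})\otimes \mathbb{R}$ cut out by requiring both factors to land in the appropriate open sets $U_{\mathfrak{s}_{\kappa_1}}$ and $U_{\mathfrak{s}_{\kappa_2}}$. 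I would define the strata of the fibre product as the connected components of the intersections of these preimages, check that each such component is an open subset of a linear subspace of a local cone (so that it genuinely inherits local-cone structure), and assemble the charts $f_\kappa$ from the product charts. The inclusion morphisms in $C$ are then induced by functoriality of the fibre product of abelian groups, and their cocycle compatibility $g_{\kappa,\kappa'}\circ g_{\kappa',\kappa''} = g_{\kappa,\kappa''}$ follows directly from that same compatibility in $\mathscr{S}_1,\mathscr{S}_2$ and $\mathscr{T}$.

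The hard part will be verifying that the stratification so produced is a \emph{legitimate} locally closed stratification in the sense required by Definition~\ref{defn:PLspace} --- in particular that whenever one stratum meets the closure of another it is contained in that closure --- and that each stratum is connected and carries the structure of a local cone (the condition that $U_\mathfrak{s}$ be a union of interiors of cones in some fan on $\mathbb{R}^k$). The subtlety is that fibre products of monoids along maps that are not injective can produce non-saturated or lower-rank groups, and the relevant open regions can fail to be connected; this is precisely why I would pass to connected components when defining strata. I expect to control this by choosing tropical models $\Sigma_1 \rightarrow \mathscr{S}_1$, $\Sigma_2\rightarrow \mathscr{S}_2$ and $\Theta \rightarrow \mathscr{T}$ and invoking the fact that fibre products of \emph{cone complexes} (equivalently, common refinements computed with respect to the fan structures) are well understood, so that the fibre product of piecewise linear complexes is recovered as the appropriate conical stratification via Construction~\ref{cons:}. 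Once the object is shown to be a valid piecewise linear complex, the universal property is formal: a cone (or local cone) mapping compatibly to $\mathscr{S}_1$ and $\mathscr{S}_2$ over $\mathscr{T}$ factors uniquely through the stratumwise fibre products by the universal property of fibre products of abelian groups, and these factorisations glue by the compatibility conditions.
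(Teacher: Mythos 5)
Your proposal is correct and follows essentially the same route as the paper: the fibre product is governed by the fibre product of the underlying groups $N_{\mathfrak{s}_{\kappa_1}}\times_{N_{\mathfrak{s}_\lambda}} N_{\mathfrak{s}_{\kappa_2}}$ together with the common refinement of the pulled-back stratifications, which remains conical precisely because the linear (in)equalities cutting out each stratum pull back to linear (in)equalities. The only real difference is organizational: the paper first notes the statement is local in the face topology and so reduces immediately to a single square of piecewise linear cones (one lattice fibre product, one common refinement), which disposes of the global bookkeeping --- stratum pairs, connected components, and the detour through tropical models --- that occupies the second half of your argument.
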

To parse the proof it will help to note that we may characterise a piecewise linear complex as a colimit over piecewise linear cones glued along inclusions of piecewise linear cones.
\begin{proof}
The statement is local in the face topology so it suffices to consider a pre-fibre square of piecewise linear cones $$ 
\begin{tikzcd}
                        & \mathscr{T}_1 \arrow[d] \\
\mathscr{T}_2 \arrow[r] & \mathscr{T}_3.          
\end{tikzcd}$$
Write $\mathfrak{s}_i = (N_{\mathfrak{s}_i},U_{\mathfrak{s}_i})$ for the maximal local cone of $\mathscr{T}_i$. Thus $\mathscr{T}_i$ is the data of a conical locally closed stratification of a closed subset of $N_{\mathfrak{s}_i}\otimes \mathbb{R}$. There is an associated pre-fibre square of monoids whose fibre product is $$N_{\mathfrak{s}_1}\times_{N_{\mathfrak{s}_3}} N_{\mathfrak{s}_2} = N.$$ Taking the common refinement of the pullbacks of the locally closed stratifications $\mathcal{P}_{\mathscr{T}_1}$ and $\mathcal{P}_{\mathscr{T}_2}$ defines a conical locally closed stratification of $N\otimes \mathbb{R}$ and thus a piecewise linear cone. Note any conical locally closed stratification pulls back to a conical locally closed stratification: the linear (in)equalities cutting each stratum out pull back to linear (in)equalities. It is easy to see this piecewise linear cone is the fibre product. 
\end{proof}
\subsubsection{Combinatorial flatness for piecewise linear spaces} To make sense of combinatorial flatness for morphisms of piecewise linear spaces we require Construction~\ref{cons:BddPLSpace}. 

Let $\sigma$ be a cone, $\Theta$ a cone complex and consider the cone complex $\sigma \times \Theta$. Let $\mathcal{P}^o$ be a locally closed stratification of the open subset $|\sigma|^o \times |\Theta|$ which refines the locally closed stratification pulled back from $\Theta$ and such that there exists a subdivision of $\sigma\times \Theta$ for which each stratum of $\mathcal{P}^o$ is a union of interiors of cones.
 
	\begin{construction}\label{cons:BddPLSpace}
		 We define a locally closed stratification $\mathcal{P}$ of $|\sigma \times \Theta|$. Two points lie in the same locally closed stratum of a locally closed stratification $\mathcal{P}'$ if they lie in the interior of the same cone of $\sigma \times \Theta$ and they lie in the closure of precisely the same strata of $\mathcal{P}^o$. Define the final stratification $\mathcal{P}$ by replacing strata of $\mathcal{P}'$ with their connected components. 
	\end{construction}
	Observe there is a cone complex structure on $\sigma \times \Theta$ such that each stratum of $\mathcal{P}$ is a union of cones. We need a notion of continuity for families of piecewise linear spaces. Our definition generalises the definition of combinatorial flatness, discussed for example in \cite[proof of Lemma~3.3.5]{MR20}. 
	
	\begin{definition}\label{defn:CombFlat}
		Fix $\sigma$ a rational polyhedral cone and $\Theta$ a cone complex. Consider a subdivision $\mathscr{T}$ of $\Theta \times \sigma$ and denote the composition $$ p:\mathscr{T}\rightarrow \Theta \times \sigma\rightarrow \sigma.$$
		\begin{enumerate}[(1)]
			\item We say $p$ satisfies axiom F1 if for every stratum $\kappa$ in $\mathcal{P}_\mathscr{T}$ the image $|p|(\kappa)$ lies in $\mathcal{P}_{\sigma}$.
			\item Consider faces $\tau_1\leq \tau_2$ of $\sigma$ and denote the restriction of $p$ to the preimage of $\tau_i$ by $$p_i: \mathscr{T}_i\rightarrow \Theta\times \tau_i\rightarrow \sigma.$$ The restriction of $\mathcal{P}_{\mathscr{T}_i}$ to the preimage of the interior of $\tau_i$ defines a locally closed stratification $\mathcal{P}_{\mathscr{T}_i}^o$. We say $p$ satisfies axiom F2 if for all choices of $\tau_1 \leq \tau_2$ the output of $\mathcal{P}_{\mathscr{T}_2}^o$ under Construction \ref{cons:BddPLSpace} restricts on the preimage of the interior of $\tau_1$ to $\mathcal{P}_{\mathscr{T}_1}^o$. 
		\end{enumerate}    
		We say $p$ is \textit{combinatorially flat} if it satisfies axioms F1 and F2.
	\end{definition}
 
Note if $\mathscr{T}$ is a cone complex then axiom F1 simply asks the image of each cone is a cone and axiom F2 is automatic. We simplify checking axiom F2 with a lemma.

\begin{lemma}\label{lem:F2}
    Assume $$p:\mathscr{T}\rightarrow \Theta \times \sigma\rightarrow \sigma$$ satisfies axiom F1. Assume moreover that axiom F2 holds for $\tau_2 = \sigma$ and any choice of $\tau_1$. Then $p$ is combinatorially flat.
\end{lemma}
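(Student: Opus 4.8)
The plan is to fix an arbitrary pair of faces $\tau_1 \leq \tau_2 \leq \sigma$ and to verify axiom F2 for $(\tau_1,\tau_2)$ by comparing two stratifications of the preimage of $|\tau_1|^o$: the one produced by running Construction~\ref{cons:BddPLSpace} relative to $\tau_2$ on $\mathcal{P}_{\mathscr{T}_2}^o$ and then restricting, against $\mathcal{P}_{\mathscr{T}_1}^o$ itself. Since $\mathcal{P}_{\mathscr{T}_i}^o$ is by definition the restriction of $\mathcal{P}_\mathscr{T}$ to the preimage of $|\tau_i|^o$, the hypothesis that F2 holds for $(\tau_1,\sigma)$ lets me rewrite $\mathcal{P}_{\mathscr{T}_1}^o$ as the restriction to the preimage of $|\tau_1|^o$ of the output of Construction~\ref{cons:BddPLSpace} relative to $\sigma$. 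Thus it suffices to show that the two boundary constructions -- one performed relative to $\sigma$, the other relative to the intermediate face $\tau_2$ -- agree after restriction. Both constructions sort points by data of the same two kinds: the product cone of the ambient cone complex containing the point, and the set of interior strata whose closure contains it. So the comparison splits into a statement about product cones (which match, because the product cone $\tau_1\times C$ of a point, with $C$ a cone of $\Theta$, is simultaneously a cone of $\tau_2\times\Theta$ and of $\sigma\times\Theta$) and a statement about closure-incidence, which is the heart of the matter.

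For the closure-incidence comparison I would first pass to a combinatorially flat refinement $\Sigma$ of $\mathscr{T}$; axiom F1 is what guarantees such a refinement exists, and on it both incidence sets are constant along the interior of each cone, so everything becomes a combinatorial statement about the poset of cones of $\Sigma$ and the map to the faces of $\sigma$. The key combinatorial input is a going-down lemma: given a top cone $\gamma'$ with image $\sigma$ and a face $\delta \leq \gamma'$ with image $\tau_1$, there is an intermediate face $\gamma$ with $\delta \leq \gamma \leq \gamma'$ and image exactly $\tau_2$. This is proved by choosing a supporting functional $\ell$ on $\sigma$ cutting out $\tau_2$ and setting $\gamma = \gamma' \cap \{\ell\circ|p| = 0\}$: nonnegativity of $\ell\circ|p|$ on $\gamma'$ makes this a face, surjectivity of $|p|$ on $\gamma'$ makes its image exactly $\tau_2$, and $|p|(\delta)=\tau_1 \leq \tau_2$ forces $\delta \leq \gamma$. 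Feeding this lemma into the closures, together with the frontier property of $\mathcal{P}_\mathscr{T}$ (every stratum's closure is a union of strata, part of Definition~\ref{defn:PLspace}), yields a transitivity of incidence which I call $(\heartsuit)$: for $x$ over $|\tau_1|^o$, the set of $\sigma$-interior strata whose closure contains $x$ is the union, over the $\tau_2$-interior strata $\beta$ whose closure contains $x$, of the $\sigma$-interior strata whose closure contains $\beta$. In particular the $\tau_2$-incidence of $x$ determines its $\sigma$-incidence, so the construction relative to $\tau_2$ refines the one relative to $\sigma$ over $|\tau_1|^o$.

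It remains to rule out the reverse, i.e. that passing through $\tau_2$ creates spurious distinctions. Here I would use the frontier property again, now in the other direction: if $K$ is a stratum of $\mathcal{P}_\mathscr{T}$ over $|\tau_1|^o$ and $\beta$ is any $\tau_2$-interior stratum (automatically a stratum of $\mathcal{P}_\mathscr{T}$), then $K$ meets $\overline{\beta}$ only if $K \subseteq \overline{\beta}$; hence the $\tau_2$-incidence set is constant on $K$, as is the product cone. Since $K$ is connected and Construction~\ref{cons:BddPLSpace} finishes by passing to connected components, $K$ is not subdivided by the $\tau_2$-construction. Combining this with the refinement coming from $(\heartsuit)$ shows that the two stratifications of the preimage of $|\tau_1|^o$ coincide stratum by stratum, which is exactly axiom F2 for $(\tau_1,\tau_2)$.

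The step I expect to be the main obstacle is the going-down lemma and the resulting transitivity $(\heartsuit)$: it is where combinatorial flatness (F1) is genuinely used, and where one must be careful that the two families of interior strata, living over $|\sigma|^o$ and over $|\tau_2|^o$ respectively, communicate correctly through their closures. By contrast, the reverse inclusion is comparatively soft once one notices that the frontier property of $\mathcal{P}_\mathscr{T}$ makes closure-incidence locally constant along strata. A subtlety to handle with care is the existence and role of the combinatorially flat refinement $\Sigma$: one must check that refining does not change the output of Construction~\ref{cons:BddPLSpace} and that the top cones of $\Sigma$ really surject onto $\sigma$, so that the supporting-functional argument applies.
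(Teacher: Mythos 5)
Your overall skeleton is close to the paper's: both arguments reduce axiom F2 for a pair $(\tau_1,\tau_2)$ to the statement that a stratum of $\mathcal{P}_{\mathscr{T}}(\sigma)$ whose closure contains a given stratum over $|\tau_1|^o$ can be ``pushed down'' to a stratum of $\mathcal{P}_{\mathscr{T}}(\tau_2)$ with the same incidence behaviour, both prove that push-down by a convexity (supporting functional / going-down) argument, and both use the frontier property of the stratification for the soft converse direction. The difference is where the convex cone surjecting onto $\sigma$ comes from, and this is where your proof has a genuine gap. You obtain it from ``a combinatorially flat refinement $\Sigma$ of $\mathscr{T}$,'' asserting that axiom F1 guarantees such a refinement exists. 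It does not. F1 is a condition on the strata of $\mathcal{P}_\mathscr{T}$, and it does not transfer to the cones of a cone complex refining $\mathscr{T}$: a tropical model of $\mathscr{T}$ must subdivide the non-convex strata of $\mathcal{P}_{\mathscr{T}}(\sigma)$, and the rays introduced to do this lie inside those strata, hence map into $|\sigma|^o$; for $\dim\sigma\geq 2$ such a ray can never map onto a face of $\sigma$, and no further subdivision of the source repairs this, since rays cannot be subdivided. The flattening results one might want to invoke (refining a map of cone complexes so that cones map onto cones) all allow subdividing the base, which is forbidden here because the base is the fixed cone $\sigma$. So the existence of your $\Sigma$ is an unproved claim of roughly the same depth as the lemma itself; your own closing caveat flags it but does not discharge it.

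The paper's proof is engineered precisely to avoid this. It argues by contradiction: if two strata $\kappa,\kappa'$ over $|\tau_1|^o$ had identical incidence with all strata of $\mathcal{P}_{\mathscr{T}}(\tau_2)$, the hypothesis (F2 for $\tau_2=\sigma$) supplies a stratum $\hat{\kappa}\in\mathcal{P}_{\mathscr{T}}(\sigma)$ whose closure contains $\kappa'$ but not $\kappa$. It then writes $\hat{\kappa}$ locally as $\bigl(\alpha_0\cap p^{-1}(|\sigma|^o)\bigr)\setminus\bigcup_{i>0}\alpha_i$ for polyhedral cones $\alpha_i$, and applies the going-down/convexity argument to the single enveloping convex cone $\alpha_0$, whose surjectivity onto $\sigma$ is immediate from F1 applied to the stratum $\hat{\kappa}$ itself (its image is all of $|\sigma|^o$), rather than to cones of any refinement. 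The non-convexity of $\hat{\kappa}$, which in your approach is what forces the problematic refinement, is instead handled by a minimality trick: $\hat{\kappa}$ is replaced by a stratum in its closure minimal among those whose closure contains $\kappa'$, which guarantees $\kappa'$ avoids the closures of the removed cones $\alpha_i$, so that $\kappa'$ really lies in the closure of $\overline{\hat{\kappa}}\cap p^{-1}(|\tau_2|^o)$. To repair your proof, you should either prove the existence of a combinatorially flat cone complex refinement over the fixed base $\sigma$ (a substantial piece of work, essentially requiring the corner-based structure theory of families of tropical supports), or replace the refinement by the enveloping-cone-plus-minimality device, at which point your argument becomes the paper's.
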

In the sequel for $\tau$ a face of ${\sigma}$ we write $\mathcal{P}_\mathscr{T}(\tau)$ for the subset of $\mathcal{P}_\mathscr{T}$ mapped under $p$ to the interior of $\tau$.
\begin{proof}
    If the lemma were false then we could find a pair of faces $\tau_1 \leq \tau_2$ of $\sigma$ and strata $\kappa,\kappa'\in \mathcal{P}_\mathscr{T}(\tau_1)$ such that every stratum $\kappa''\in \mathcal{P}_{\mathscr{T}}(\tau_2)$ contains $\kappa$ in its closure if and only if it contins $\kappa'$ in its closure. 

    Since F2 holds for $\tau_2 = \sigma$ and $\tau_1$ there exists a stratum  $\hat{\kappa}$ in $\mathcal{P}_{\mathscr{T}}(\sigma)$ containing $\kappa'$ in its closure but not containing $\kappa$ (swapping $\kappa,\kappa'$ if needed). Write $\overline{\kappa}$ for the closure of $\hat{\kappa}$ in $\Theta \times \sigma$. We will show that $\overline{\kappa}\cap p^{-1}(|\tau_2^o|)$ contains $\kappa'$ in its closure. Note $\kappa$ cannot lie in the closure of $\overline{\kappa}\cap p^{-1}(|\tau_2^o|)$ as it does not lie in $\overline{\kappa}$. This completes the proof because certainly $\overline{\kappa}\cap p^{-1}(|\tau_2^o|)$ is a union of strata so by the definition of piecewise linear space, $\kappa'$ must lie entirely within the closure of one such stratum.  

    It remains to show that the closure in $\Theta \times \sigma$ of $\overline{\kappa}\cap p^{-1}(|\tau_2^o|)$ contains $\kappa'$. Without loss of generality replace $\hat{\kappa}$ with a stratum of $\mathcal{P}_\mathscr{T}(\sigma)$ lying in the closure of $\hat{\kappa}$ which is minimal with the property that its closure contains $\kappa'$. We think of $\hat{\kappa}$ as a subset of $\mathbb{R}^n$ for some $n$, obtained by removing polyhedral cones $\alpha_1,...,\alpha_n$ from the intersect of a polyhedral cone $\alpha_0$ with $p^{-1}(|\sigma|^o)$. Note $\alpha_0$ necessarily surjects to $\sigma$ and since axiom F2 holds for morphisms of cones we know $\kappa$ lies in the closure of $\alpha_0 \cap p^{-1}(|\tau_2^o|)$. To complete the proof note minimality of $\hat{\kappa}$ means $\kappa'$ does not lie in the closure of $\alpha_i$ for any $i>0$. We are done because $\kappa'$ lies in the closure of $\alpha_0\cap |\tau_2|^o$, but not in the closure of any other $\alpha_i$.
\end{proof}
\subsubsection{Combinatorial flatness and base change}\label{CombFlatandBaseChange} Let $\tau \rightarrow \sigma$ be a morphism of rational polyhedral cones. Let $\mathscr{T}$ be a subdivision of $\sigma \times \Theta$ and consider the following diagram where all squares are cartesian. 
$$
\begin{tikzcd}
\mathscr{T}_\tau \arrow[d] \arrow[r] & \tau \times \Theta \arrow[d] \arrow[r] & \tau \arrow[d] \\
\mathscr{T} \arrow[r]                & \sigma \times \Theta \arrow[r]         & \sigma        
\end{tikzcd}
$$

 \begin{lemma}
 Assume $\mathscr{T}\rightarrow \sigma$ is combinatorially flat. Then $\mathscr{T}_\tau\rightarrow \tau$ is also combinatorially flat.
 \end{lemma}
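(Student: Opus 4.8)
The plan is to reduce to Lemma \ref{lem:F2} and then transport the two remaining axioms across the base change. The morphism $h$ realises as $|h|\colon|\tau|\to|\sigma|$ and induces an order preserving map $\phi$ on faces, sending a face $\tau_1\leq\tau$ to the unique face $\phi(\tau_1)\leq\sigma$ whose relative interior contains $|h|(|\tau_1|^o)$. Writing $\mathscr{T}_\tau=\mathscr{T}\times_{\sigma\times\Theta}(\tau\times\Theta)$ for the fibre product of Proposition \ref{prop:fibreproductPLspaces}, its realisation is $|\tau|\times|\Theta|$ and its stratification is the common refinement of the pullback of $\mathcal{P}_\mathscr{T}$ along $|h|\times\mathrm{id}$ with the product stratification $\mathcal{P}_{\tau\times\Theta}$. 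By Lemma \ref{lem:F2} it suffices to verify axiom F1 for $p_\tau\colon\mathscr{T}_\tau\to\tau$ and axiom F2 for every pair of faces $\tau_1\leq\tau_2=\tau$. The recurring principle is that all the operations involved commute with base change along the linear map $|h|$.

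For F1, note first that since $\mathscr{T}$ subdivides $\sigma\times\Theta$, each stratum $\kappa$ of $\mathcal{P}_\mathscr{T}$ lies in a single product stratum $|f|^o\times|c|^o$, and axiom F1 for $\mathscr{T}$ gives $|p_\sigma|(\kappa)=|f|^o$. Because we have refined by $\mathcal{P}_{\tau\times\Theta}$, every stratum of $\mathscr{T}_\tau$ lies over a single $|\tau_1|^o$, and it is a connected component of $(|h|\times\mathrm{id})^{-1}(\kappa)\cap(|\tau_1|^o\times|\Theta|)$ for some $\kappa$ with $f=\phi(\tau_1)$. Since $|h|(|\tau_1|^o)\subseteq|f|^o=|p_\sigma|(\kappa)$, this region surjects onto $|\tau_1|^o$; using that each stratum of $\mathscr{T}$ is a conical open subset of a linear subspace, one checks the components continue to surject, so $|p_\tau|$ of each stratum is exactly $|\tau_1|^o\in\mathcal{P}_\tau$.

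For F2, set $\sigma_1=\phi(\tau)$. Restricting $\mathscr{T}_\tau$ over $|\tau|^o$ identifies the restriction of $\mathcal{P}_{\mathscr{T}_\tau}$ to $p_\tau^{-1}(|\tau|^o)$ with the pullback along $|h|$ of the restriction $\mathcal{P}^o_{\mathscr{T}|_{|\sigma_1|^o}}$, and likewise the restriction over $|\tau_1|^o$ with the pullback of $\mathcal{P}^o_{\mathscr{T}|_{|\phi(\tau_1)|^o}}$. Granting that Construction \ref{cons:BddPLSpace} commutes with this base change, one applies axiom F2 for $\mathscr{T}$ to the pair $\phi(\tau_1)\leq\sigma_1$ (available because $\mathscr{T}\to\sigma$ is combinatorially flat, hence satisfies F2 for all pairs of faces) and pulls the resulting equality of stratifications back along $|h|$; restricting over $|\tau_1|^o$ gives exactly axiom F2 for the pair $\tau_1\leq\tau$.

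The main obstacle is precisely the commutation of Construction \ref{cons:BddPLSpace} with base change. Its two defining conditions are lying in the interior of the same cone of the product and lying in the closure of exactly the same open strata; the first pulls back because the cone structure pulls back, while the second requires the closure identity $(|h|\times\mathrm{id})^{-1}(\overline{\kappa})=\overline{(|h|\times\mathrm{id})^{-1}(\kappa)}$ together with control of connected components. The difficulty is genuine because $\phi$ need neither be injective nor surjective, so $|h|$ may collapse faces and its image may meet $\overline{\kappa}$ in a lower dimensional set, and one must ensure the closure relations recorded by the construction are still detected after restricting to the image of $|h|$. I expect this to reduce, exactly as in the proof of Proposition \ref{prop:fibreproductPLspaces}, to the observation that each stratum and its closure are cut out by linear equalities and inequalities, which pull back to linear equalities and inequalities along the linear map $|h|$.
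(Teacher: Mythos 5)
Your reduction via Lemma \ref{lem:F2} is legitimate, and your F1 argument is in substance the paper's own (the paper phrases it by identifying strata of $\mathscr{T}_\tau$ with pairs $(\kappa,\kappa')\in\mathcal{P}_\mathscr{T}\times\mathcal{P}_\tau$ with matching image in $\sigma$, and noting that surjections are preserved under pullback of sets). The genuine gap is in F2: your whole argument routes through the claim that Construction \ref{cons:BddPLSpace} commutes with base change along $|h|$, which amounts to the closure identity $(|h|\times\mathrm{id})^{-1}(\overline{\kappa})=\overline{(|h|\times\mathrm{id})^{-1}(\kappa)}$ for strata $\kappa$ lying over $|\phi(\tau)|^o$ --- and you leave exactly this unproven, ``expecting'' it to follow because strata and their closures are cut out by linear (in)equalities. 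Linearity is not sufficient. The containment $\overline{(|h|\times\mathrm{id})^{-1}(\kappa)}\subseteq(|h|\times\mathrm{id})^{-1}(\overline{\kappa})$ is formal, but the reverse containment can fail for purely linear data: take $\kappa$ an open half-plane inside a $2$-plane $L$ and let the image of $|h|\times\mathrm{id}$ be a line meeting the closed half-plane $\overline{\kappa}$ only in a boundary point; then the preimage of $\kappa$ is empty while the preimage of $\overline{\kappa}$ is not. What rules this behaviour out here is not linearity but the hypothesis that $\mathscr{T}\rightarrow\sigma$ is combinatorially flat: one must use F1 (each stratum surjects onto $|\phi(\tau)|^o$, so its trace on the slab $|h|(|\tau|^o)\times|\Theta|$ is nonempty over every base point) together with F2 for the pair of faces $\phi(\tau_1)\leq\phi(\tau)$ of $\sigma$ to manufacture approximating sequences that stay inside the image of $|h|\times\mathrm{id}$. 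In other words, the statement you ``grant'' is precisely the content of the lemma; the only missing step of your proof is the entire difficulty.

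The paper sidesteps the commutation statement altogether. After the pair description of strata, it observes that the closure relation in the fibre product is componentwise --- $(\kappa_1,\kappa_1')$ lies in the closure of $(\kappa_2,\kappa_2')$ if and only if $\kappa_1\subseteq\overline{\kappa_2}$ and $\kappa_1'\subseteq\overline{\kappa_2'}$ --- and then argues by contradiction: a violation of F2 for $\mathscr{T}_\tau$ yields faces $\tau_1\leq\tau_2$ of $\tau$ and two strata over $|\tau_1|^o$ lying in the closures of exactly the same strata over $|\tau_2|^o$; splitting into the case where $\tau_1,\tau_2$ map to the interior of the same cone of $\sigma$ and the case where they map to distinct cones $\sigma_1\leq\sigma_2$, a distinguishing stratum is produced directly from combinatorial flatness of $\mathscr{T}$ (invoking Lemma \ref{lem:F2} in the second case). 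To repair your write-up you would either need to prove the base-change compatibility of Construction \ref{cons:BddPLSpace} by essentially this case analysis, or abandon it in favour of the direct componentwise argument.
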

 \begin{proof}
Strata of $\mathscr{T}_\tau$ biject with pairs $(\kappa,\kappa')\in \mathcal{P}_\mathscr{T}\times \mathcal{P}_\tau$ whose image in $\sigma$ lie in the same stratum. Axiom F1 follows from the fact that surjections are preserved under pullback in the category of sets. For axiom F2 first observe a stratum $(\kappa_1,\kappa_1')$ lies in the closure of $(\kappa_2,\kappa_2')$ if and only if $\kappa_1$ lies in the closure of $\kappa_2$ and $\kappa_1'$ lies in the closure of $\kappa_2'$. If axiom F2 were violated we could find faces $\tau_1\leq\tau_2$ of $\tau$ and strata $(\kappa,|\tau_1^o|), (\gamma,|\tau_1^o|)$ of $\mathscr{T}_\tau(\tau_1)$ which lie in the closure of precisely the same strata of $\mathscr{T}_\tau(\tau_2)$. Without loss of generality $(\kappa,|\tau_1^o|)$ lies in the closure of $(\gamma,|\tau_1^o|)$. We have two cases.

\textbf{Case 1.} If the interiors of $\tau_1,\tau_2$ are mapped to the interior of the same cone of $\sigma$ then $(\kappa, |\tau_2^o|)$ contains $(\kappa,|\tau_1^o|)$ in its closure but does not contain $(\gamma,|\tau_1^o|)$.

\textbf{Case 2} If the interiors of $\tau_1,\tau_2$ are mapped to the interior of cones $\sigma_1\leq \sigma_2$ respectively then we appeal to Lemma \ref{lem:F2} to find a stratum $\hat{\kappa}$ of $\mathscr{T}(\sigma_2)$ whose closure contains $\kappa$ but does not contain $\gamma$. Then $(\hat{\kappa}, |\tau_2^o|)$ contains $(\kappa,|\tau_1^o|)$ in its closure but does not contain $(\gamma,|\tau_1^o|)$. 
\end{proof}

We say a subdivision $\mathscr{T} \rightarrow \mathscr{G}\times \Theta\rightarrow \mathscr{G}$ is \textit{combinatorially flat} if pulling back any map from a cone $\sigma$ to $\mathscr{G}$ yields a combinatorially flat morphism in the sense of Definition \ref{defn:CombFlat}.
 \subsection{Piecewise linear spaces}\label{sec:IntroPLSpaces} In this subsection we develop the piecewise linear analogue of an algebraic space. We will use the theory of geometric contexts, see \cite[Section 1.1]{ModStckTropCurve} or \cite{simpson1996algebraic} for background. Geometric contexts are the minimal structure necessary to define a version of algebraic spaces. 
	
	We define a Grothendieck topology $\tau_{str}$ on the category of piecewise linear complexes generated by morphisms from piecewise linear cones $$\iota: \mathfrak{T} \rightarrow \mathscr{S}$$ which are isomorphisms to their images. We say a morphism $\varphi$ of piecewise linear spaces is \textit{strict} if $\varphi_{\alpha,\beta}$ is an isomorphism of local cones for all $\alpha,\beta$. We denote the set of strict morphisms by $\mathbb{S}$.
	
	\begin{lemma}
		The triple $(\textbf{PLCC},\tau_{str},\mathbb{S})$ forms the data of a geometric context.
	\end{lemma}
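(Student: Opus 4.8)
The plan is to verify the three axioms of a geometric context for the triple $(\textbf{PLCC},\tau_{str},\mathbb{S})$. Following the framework of \cite[Section 1.1]{ModStckTropCurve}, a geometric context requires: (i) that $\mathbb{S}$ is a class of morphisms closed under composition and base change, containing isomorphisms; (ii) that $\tau_{str}$ is a subcanonical Grothendieck topology; and (iii) a compatibility ensuring that covers in $\tau_{str}$ together with the morphisms in $\mathbb{S}$ interact correctly, typically that $\mathbb{S}$-morphisms are local on the target for $\tau_{str}$ and that $\tau_{str}$-covers lie in $\mathbb{S}$. I would first set up these axioms precisely and then dispatch them one at a time.

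First I would check that $\mathbb{S}$ is well-behaved. That strict morphisms are closed under composition is immediate, since a composite of isomorphisms of local cones is an isomorphism. Stability under base change is the crucial input and follows from Proposition~\ref{prop:fibreproductPLspaces}: fibre products exist, and by inspecting the explicit construction in its proof one sees that pulling back a strict morphism yields the identity on each lattice $N_\mathfrak{s}$ in the relevant factor, hence a strict morphism. Identities and isomorphisms are visibly strict. Next I would verify that $\tau_{str}$ is a genuine Grothendieck topology generated by the stated strict-open inclusions from piecewise linear cones; here I would use that piecewise linear complexes are, by the remark preceding the proof of Proposition~\ref{prop:fibreproductPLspaces}, colimits of piecewise linear cones glued along strict inclusions, so that every piecewise linear complex admits a cover by such cones, making the topology non-degenerate. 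Subcanonicity amounts to checking that the functor of points (the representable presheaf on $\textbf{RPC}$) is a sheaf for $\tau_{str}$, which reduces to the sheaf condition on the underlying stratified topological space glued from local cones.

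The final compatibility axiom is where I would concentrate the argument. One must show that the covers in $\tau_{str}$ are themselves strict and that strictness can be checked locally on a $\tau_{str}$-cover. The strict-open inclusions $\iota:\mathfrak{T}\rightarrow\mathscr{S}$ generating the topology are by definition isomorphisms onto their images, and on each stratum the map $\iota_{\kappa,\kappa'}$ is an isomorphism of local cones, so they lie in $\mathbb{S}$. For the descent direction, given $\varphi:\mathscr{S}_1\rightarrow\mathscr{S}_2$ and a cover $\{\mathfrak{T}_i\rightarrow\mathscr{S}_2\}$ such that each base change $\varphi_i$ is strict, I would argue stratumwise: since each stratum $\kappa_1$ of $\mathscr{S}_1$ maps into some stratum $\kappa_2$ of $\mathscr{S}_2$, and the covering cones meet every stratum of $\mathscr{S}_2$, the local-cone map $\varphi_{\kappa_1,\kappa_2}$ agrees with the corresponding map in one of the strict pullbacks $\varphi_i$ and is therefore an isomorphism. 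Assembling these gives that $\varphi$ itself is strict.

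The main obstacle I anticipate is subcanonicity, namely verifying that representable functors are sheaves for $\tau_{str}$. Unlike the cone-complex setting, piecewise linear complexes carry the extra data of the locally closed stratification $\mathcal{P}_\mathscr{S}$ together with the maps $g_{\kappa',\kappa}$, and one must confirm that this gluing data descends correctly: a compatible family of morphisms from the pieces of a strict-open cover must glue to a unique morphism of piecewise linear complexes. The delicate point is matching the stratifications and the closure-compatibility conditions in Definition~\ref{defn:PLspace}(3) across overlaps, since strict covers only control behaviour on images and one must ensure no stratification data is lost on boundaries. I expect this to follow from the observation that the stratification and the transition morphisms $g_{\kappa',\kappa}$ are determined by their restriction to any open cover in the face topology, so that uniqueness and existence of the glued morphism reduce to the analogous, already understood, statement for cone complexes recorded in \cite{ModStckTropCurve}.
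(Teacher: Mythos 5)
Your proposal is correct and follows essentially the same route as the paper: both verify the geometric-context axioms directly, citing Proposition~\ref{prop:fibreproductPLspaces} for finite limits and base-change stability of strict morphisms, the colimit-over-local-cones description (with open gluing maps) for subcanonicity, and the observation that strictness is local because covering cones capture the local cone at each point. The only omission is the paper's one-line remark that disjoint unions of piecewise linear complexes exist, which is immediate.
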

	
	\begin{proof}
		We check the axioms of a geometric context. To confirm \textbf{PLCC} contains all finite limits we appeal to Proposition \ref{prop:fibreproductPLspaces}. Disjoint unions of piecewise linear spaces are clearly piecewise linear spaces.
		
		The topology $\tau_{str}$ is subcanonical because piecewise linear cone complexes can be defined as a colimit over a diagram of local cones with all morphisms open.
		
		The definition of strict is closed under composition since the composition of isomorphisms is an isomorphism, and examining the construction of fibre product given above, being strict is preserved under base change. Isomorphisms and open maps are strict. 
		
		Finally strictness is a local property because any open neighbourhood covering a point $p$ contains the local cone in which $p$ lies.
	\end{proof}
	
	We now define a \textit{piecewise linear space} to be a geometric space in the context $(\textbf{PLCC},\tau_{str},\mathbb{S})$, see \cite[Definition 2.7]{ModStckTropCurve}. A morphism of piecewise linear spaces $$f:\mathscr{T}\rightarrow \mathscr{S}$$ is called a \textit{subdivision}/\textit{tropical model} if, given any morphism from a piecewise linear cone $\mathscr{S}'\rightarrow \mathscr{S}$, the pullback of $f$ is a subdivision/ tropical model of piecewise linear complexes.
	
	\subsection{Properties of piecewise linear spaces.} In this section we prove basic properties of piecewise linear spaces. 
 
 \subsubsection{Subdividing piecewise linear spaces} A \textit{subcomplex} of a piecewise linear complex $\mathscr{S}$ is a morphism of piecewise linear complexes $\mathscr{T}\rightarrow \mathscr{S}$ which is an isomorphism to its image. A subcomplex of a piecewise linear space is a morphism of piecewise linear spaces such that the base change along any morphism from a piecewise linear complex is a subcomplex in the above sense.
	\begin{proposition}\label{prop:PLsubdivisions}
		Every piecewise linear space $\mathscr{G}$ admits a tropical model $\Sigma\rightarrow \mathscr{G}$. Moreover if $\mathscr{G} \leq \mathscr{G}'$ is a subcomplex and $\Sigma \rightarrow \mathscr{G}$ a tropical model then there exists a tropical model $\Sigma'\rightarrow \mathscr{G}'$ such that the following diagram commutes:
		$$
		\begin{tikzcd}
		\Sigma' \arrow[r]             & \mathscr{G}'                \\
		\Sigma \arrow[r] \arrow[u, hook] & \mathscr{G} \arrow[u, hook]
		\end{tikzcd}
		$$
		In particular the vertical morphisms are both inclusions of subcomplexes.
	\end{proposition}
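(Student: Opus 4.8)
The plan is to prove the two assertions in turn. For the existence of a tropical model I would work locally in the topology $\tau_{str}$: a piecewise linear space is by definition a geometric space in $(\textbf{PLCC},\tau_{str},\mathbb{S})$, hence is covered by piecewise linear complexes along strict morphisms, so it suffices to produce a tropical model of a single piecewise linear complex $\mathscr{S}$ and glue. On a complex, each local cone $\mathfrak{s}_\kappa=(N_{\mathfrak{s}_\kappa},U_{\mathfrak{s}_\kappa})$ comes, by the very definition of a local cone, equipped with a fan on $N_{\mathfrak{s}_\kappa}\otimes\mathbb{R}$ in which $U_{\mathfrak{s}_\kappa}$ is a union of interiors of cones. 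I would assemble these fan structures along the face morphisms $g_{\kappa',\kappa}$ by induction on the dimension of strata of $\mathcal{P}_\mathscr{S}$, taking common refinements on the finitely many overlaps; since strict morphisms are isomorphisms of local cones, the local choices patch to a cone complex $\Sigma$ together with a subdivision $\Sigma\to\mathscr{G}$.

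For the extension statement the subtle point, and the main obstacle, is that $\Sigma\hookrightarrow\Sigma'$ is required to be a \emph{subcomplex}, which forces $\Sigma'$ to restrict to $\Sigma$ on the nose over $|\mathscr{G}|$ rather than merely to refine it. A naive extension of some ambient fan followed by a common refinement only yields a $\Sigma'$ whose restriction refines $\Sigma$. I would therefore encode $\Sigma$ as a stratification and invoke Proposition~\ref{prop:}. Concretely, using that $\mathscr{G}\le\mathscr{G}'$ being a subcomplex means $|\mathscr{G}|$ is a closed union of closures of strata of $\mathcal{P}_{\mathscr{G}'}$ on which $\mathcal{P}_{\mathscr{G}'}$ restricts to $\mathcal{P}_{\mathscr{G}}$, I define a locally closed stratification $\mathcal{P}'$ of $|\mathscr{G}'|$ whose strata are the open cones $|\sigma|^o$ of $\Sigma$ over $|\mathscr{G}|$ together with the strata of $\mathcal{P}_{\mathscr{G}'}$ over the complement.

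The crux is that $\mathcal{P}'$ is conical: starting from any tropical model $\Sigma'_0\to\mathscr{G}'$ produced by the existence statement and cutting its cones along the (locally linear) walls of the cones of $\Sigma$, I obtain a tropical model whose open cones refine $\mathcal{P}_{\mathscr{G}'}$ over the complement and refine the open cones of $\Sigma$ over $|\mathscr{G}|$, hence refine $\mathcal{P}'$. Construction~\ref{cons:} then yields an initial subdivision $\mathscr{G}'(\mathcal{P}')\to\mathscr{G}'$, and I claim its restriction to $|\mathscr{G}|$ is precisely the cone complex $\Sigma$, viewed as a piecewise linear complex through Example~\ref{example: cone complex as PL space}. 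Indeed, for $p\in|\sigma|^o$ the subgroup $G_{\rho_p}$ appearing in the proof of Proposition~\ref{prop:} equals the linear span $\sigma^{\gp}\otimes\mathbb{R}$, so two points of a common open cone of $\Sigma$ are never separated and $|\sigma|^o$ survives as a single stratum; the initial subdivision does not cut $\Sigma$ any further.

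Finally I would apply the existence statement to $\mathscr{G}'(\mathcal{P}')$ to obtain $\Sigma'\to\mathscr{G}'(\mathcal{P}')\to\mathscr{G}'$. Since $|\mathscr{G}|$ already carries the cone complex structure $\Sigma$ inside $\mathscr{G}'(\mathcal{P}')$, the gluing procedure of the first part can be run taking $\Sigma$ verbatim over $|\mathscr{G}|$ and refining only the cones meeting the complement; the compatibility along shared faces is automatic because those faces are literally cones of $\Sigma$, over which the complement cones are coned off by stellar subdivision without disturbing the boundary. This produces $\Sigma'$ with $\Sigma'|_{\mathscr{G}}=\Sigma$, so that $\Sigma\hookrightarrow\Sigma'$ is a subcomplex, both vertical maps are subcomplex inclusions, and the square commutes by construction. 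I expect the delicate bookkeeping to lie exactly in verifying that $\mathscr{G}'(\mathcal{P}')$ restricts to $\Sigma$ and that the stellar extension preserves it; this is precisely where the subcomplex hypothesis, rather than a mere subdivision, is indispensable.
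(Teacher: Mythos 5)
There is a genuine gap, and it sits at exactly the point where the paper has to do real work. Both halves of your argument silently assume an \emph{extension} principle: that a fan structure already fixed on part of a piecewise linear cone (the boundary strata in your inductive existence argument, the subcomplex $\Sigma$ in your final gluing step) can be extended to a tropical model of the whole cone \emph{without refining the part already fixed}. In the existence step you instead propose ``taking common refinements on the finitely many overlaps,'' but a common refinement subdivides the lower-dimensional strata you have already processed, which invalidates the fan structures previously chosen on the other strata whose closures contain them; the induction does not close up. In the last step you assert the complement cones are ``coned off by stellar subdivision without disturbing the boundary.'' Coning a boundary fan off from an interior ray works only for convex (indeed star-shaped) supports; local cones are precisely the non-convex objects this theory was built for, so this step fails in exactly the cases that matter. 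The paper's Lemma~\ref{lem:onecone} is devoted to this extension statement, and its proof needs a genuine input: embed the partial cone complex in $N_\mathfrak{t}\otimes\mathbb{R}$, invoke the theorem that every fan extends to a \emph{complete} fan (equivariant completion of toric varieties, \cite{MR2248429}), and restrict the complete fan to the closure of $U_\mathfrak{t}$. Nothing in your proposal plays the role of this completion theorem.

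Two further problems. First, a piecewise linear space is a geometric space and can carry nontrivial groupoid data, so a strict cover by complexes does not let you simply ``glue'' subdivisions: the local tropical models must be chosen equivariantly in order to descend, which is why the paper works with quotient piecewise linear cones $[\mathscr{T}_G/G]$ and applies the averaging trick of \cite[Lemma~3.3.7]{MR20}; your reduction ignores descent entirely. Second, your argument that $\mathscr{G}'(\mathcal{P}')$ restricts to $\Sigma$ on the nose is a non sequitur: fact (2) in the proof of Proposition~\ref{prop:} says that points in a common stratum have equal subgroups $G_\rho$, not the converse, so computing $G_{\rho_p}=\sigma^{\mathrm{gp}}\otimes\mathbb{R}$ does not prevent Construction~\ref{cons:} from cutting $|\sigma|^o$ into several strata. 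To see that $|\sigma|^o$ survives as a single stratum you would need, for each pair of its points, a tropical model compatible with $\mathcal{P}'$ keeping them in one cone --- that is, a model restricting near those points to a structure no finer than $\Sigma$, which is again the extension statement being proved. So this route is close to circular unless an extension lemma in the style of the paper's Lemma~\ref{lem:onecone} is established first.
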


A basic example of a piecewise linear space is obtained by defining the free action of a finite group $G$ on a piecewise linear cone complex $\mathscr{T}_G$. This induces a groupoid presentation of $[\mathscr{T}_G/G]$. A \textit{quotient piecewise linear cone} is defined to be a piecewise linear space which may be written $[\mathscr{T}_G/G]$ such that the image of some stratum of $\mathcal{P}_{\mathscr{T}_G}$ is dense. Any piecewise linear space may be obtained by taking a colimit over a diagram of quotient piecewise linear cones where morphisms are inclusions of subcomplexes. This follows from the argument of \cite[TAG 0262]{stacks-project}.

	\begin{lemma}\label{lem:onecone}
		We prove the following local analogue of Proposition \ref{prop:PLsubdivisions}.
		\begin{enumerate}[(1)]
			\item Set $\mathscr{G}$ a quotient piecewise linear cone. There exists a tropical model $\Sigma\rightarrow \mathscr{G}$.
			\item Consider a subcomplex $$\Sigma' \hookrightarrow \mathscr{G}$$ where $\Sigma'$ is a cone complex. There is a tropical model $$\Sigma \rightarrow \mathscr{G}$$ extending the original embedding of cone complexes. 
		\end{enumerate}
	\end{lemma}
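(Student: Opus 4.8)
The plan is to reduce both claims to $G$-equivariant statements about fans on the cover and then descend. Write $\mathscr{G} = [\mathscr{T}_G/G]$, so that the quotient presentation exhibits $\mathscr{T}_G \to \mathscr{G}$ as a strict cover in $\tau_{str}$, with $G$ acting freely away from the apex. First I would record the descent principle: a $G$-equivariant tropical model of $\mathscr{T}_G$ descends to a tropical model of $\mathscr{G}$. Indeed, a nontrivial $g \in G$ stabilising a positive--dimensional cone $\sigma$ of a $G$-equivariant refinement would fix the sum of the primitive generators of the rays of $\sigma$, a point of the relative interior of $\sigma$; this contradicts freeness, so $G$ permutes the positive--dimensional cones of any such refinement freely and the quotient is again a cone complex. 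Since the property of being a subdivision is local in $\tau_{str}$ and the pullback of $\widehat{\Sigma}/G \to \mathscr{G}$ along the cover is the equivariant subdivision $\widehat{\Sigma}\to \mathscr{T}_G$, it suffices to build the required models upstairs.

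For part (1), the dense stratum of $\mathscr{T}_G$ supplies a maximal local cone $\mathfrak{s}=(N_\mathfrak{s},U_\mathfrak{s})$ together with a fan on $N_\mathfrak{s}\otimes \mathbb{R}$ realising $U_\mathfrak{s}$ as a union of cone interiors, with $|\mathscr{T}_G| = \overline{U}_\mathfrak{s}$ the support of the associated subfan. By Proposition \ref{prop:} every stratum of the conical stratification $\mathcal{P}_{\mathscr{T}_G}$ is an open subset of a rational linear subspace, so the finitely many strata are cut out by rational linear equalities and inequalities. I would form the common refinement of this fan with the rational hyperplane arrangement defining the strata, obtaining a fan $\widehat{\Sigma}$ in which each stratum of $\mathcal{P}_{\mathscr{T}_G}$ is a union of interiors of cones. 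Replacing $\widehat{\Sigma}$ by the common refinement of its finitely many $G$-translates makes it $G$-equivariant without destroying this property, and $\widehat{\Sigma}\to \mathscr{T}_G$ is then a subdivision: it is a homeomorphism on realisations, a bijection on lattice points, and the preimage of each stratum is a union of cones. Descending yields the tropical model $\Sigma = \widehat{\Sigma}/G \to \mathscr{G}$.

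For part (2), I would pull back the subcomplex $\Sigma'\hookrightarrow \mathscr{G}$ along the cover to a $G$-equivariant subcomplex $\Sigma'_G\hookrightarrow \mathscr{T}_G$ which is a cone complex. Running part (1) gives a $G$-equivariant tropical model $\Sigma_0\to \mathscr{T}_G$; the real content is to arrange that its restriction over $|\Sigma'_G|$ is exactly $\Sigma'_G$ rather than a further refinement. This is the extension--of--subdivisions problem for cone complexes: a subdivision of a subcomplex extends to a subdivision of the whole complex restricting to the given one. I would carry this out by extending, in order of increasing dimension, the rays and walls of $\Sigma'_G$ across the stars of the cones of $\Sigma_0$ meeting $|\Sigma'_G|$, and subdividing the remaining cones finely enough to stay a cone complex refining $\mathcal{P}_{\mathscr{T}_G}$. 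Because $\Sigma'_G$ is $G$-invariant, this can be performed $G$-equivariantly by averaging the extension over the $G$-orbit, which leaves $\Sigma'_G$ untouched since the translates already agree there. Descending produces $\Sigma\to \mathscr{G}$ together with the inclusion $\Sigma'\hookrightarrow \Sigma$ of subcomplexes extending the original embedding.

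The main obstacle I expect is the extension step in part (2). One cannot simply take a common refinement, since that would over--subdivide $\Sigma'_G$; one must genuinely extend the prescribed cone structure off the subcomplex, which requires the combinatorial extension lemma for fans, and must be done both $G$-equivariantly and compatibly with the conical stratification $\mathcal{P}_{\mathscr{T}_G}$, so that the descended object is still a subdivision of $\mathscr{G}$ and not merely of its underlying space.
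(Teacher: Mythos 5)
Your overall strategy---descend from a $G$-equivariant tropical model of the cover $\mathscr{T}_G$, with equivariance produced by averaging over $G$-translates---is the same as the paper's, your descent argument (a nontrivial element stabilising a cone would fix the sum of its primitive ray generators) is sound, and your part (1), via common refinement of a fan realising $U_\mathfrak{s}$ with the finitely many rational cones whose relative interiors make up the strata, is fine. The genuine gap is exactly where you flag it: the extension step in part (2). You reduce everything to the claim that a cone-complex subdivision of a subcomplex extends to a tropical model of the whole piecewise linear cone \emph{without further subdividing the prescribed part}, and then address it by proposing to extend rays and walls across stars in order of increasing dimension. That is not a proof: extending a fan prescribed on part of a space to a fan on a larger region is a genuinely delicate problem (naive star-by-star extensions produce cones that overlap or fail to close up into a fan), and ``subdividing the remaining cones finely enough'' cannot repair a partial extension that is not already a fan, since refinement never separates two overlapping maximal cones. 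This extension statement is a theorem, not a routine construction, and your proposal neither proves it nor cites it.

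The paper resolves this with two ingredients your sketch is missing. First, it organises the whole lemma as an induction on dimension, so that before touching the maximal stratum $\mathscr{T}$ of $\mathscr{T}_G$ the boundary (all non-maximal strata) already carries a cone-complex structure $\partial\Sigma$ extending $\partial\Sigma'$, by the inductive hypothesis applied to part (2). Second, with the boundary fixed, the union of $\Sigma'_{\mathscr{T}}$ with $\partial\Sigma_{\mathscr{T}}$ is an honest fan embedded in $N_\mathfrak{t}\otimes\mathbb{R}$, and the paper invokes the fan-completion theorem (every toric variety admits an equivariant compactification, \cite{MR2248429}) to extend it to a complete fan; because the topological boundary of $U_\mathfrak{t}$ is then a union of cones of this complete fan, restricting to $\overline{U}_\mathfrak{t}$ yields a tropical model of $\mathscr{T}$ restricting to the prescribed data, and the averaging trick of \cite[Lemma~3.3.7]{MR20} restores equivariance without subdividing the already $G$-invariant subcomplexes $\Sigma'_G$ and $\partial\Sigma_G$. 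To repair your argument, replace the ad hoc extension with an appeal to fan completion, and restructure the proof as this induction so that the completion is applied to a fan that already contains a subdivision of the boundary---otherwise the restriction step has no reason to produce a fan supported on $\overline{U}_\mathfrak{t}$.
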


\begin{proof}[Proof of Proposition \ref{prop:PLsubdivisions}] 
The subdivision can be constructed recursively on the dimension of each quotient piecewise linear cone in our colimit, starting with cones of dimension zero (where no subdivision is needed). To handle cones of dimension $k$ subdivide each quotient piecewise linear cone using Lemma \ref{lem:onecone} without further subdividing any strata of dimension less than $k$. We avoid further subdivision of lower dimensional strata by including the previously constructed subdivision of these strata in the data of $\Sigma'$.
\end{proof}
 
\begin{proof}[Proof of Lemma \ref{lem:onecone}]
    We induct on the dimension of $\mathscr{G}$. For the base case note dimension zero cones are a single point and there is nothing to do. For the inductive step choose a presentation $$\mathscr{G}_G \rightarrow \mathscr{G} = [\mathscr{G}_G/G]$$ for $G$ a finite group. To specify a tropical model of $\mathscr{G}$ it suffices to specify a $G$ equivariant tropical model of $\mathscr{G}_G$. The subcomplex $\Sigma'\rightarrow \mathscr{G}$ pulls back to specify a G equivariant subcomplex $\Sigma'_G \rightarrow \mathscr{G}_G$. There is a subcomplex $\partial \mathscr{G}$ of $\mathscr{G}$ obtained as the quotient $\partial \mathscr{G}_G \rightarrow \partial \mathscr{G} = [\partial \mathscr{G}_G/G]$ where $\partial \mathscr{G}_G$ is the subcomplex of $\mathscr{G}_G$ obtained by discarding all strata of maximal dimension in $\mathcal{P}_{\mathscr{G}_G}$. Define $\partial \Sigma'$ as the preimage of $\partial \mathscr{G}$ in $\Sigma'$. By the inductive hypothesis find a subdivision $\partial \Sigma \rightarrow \partial \mathscr{G}$ extending $\partial \Sigma'.$ Write $\partial \Sigma_G$ for the $G$ equivariant subcomplex of $\mathscr{G}_G$ obtained by pullback.

    To complete the inductive step fix a maximal stratum of $\mathscr{G}_G$ and let $\mathscr{T}$ be the corresponding piecewise linear cone. Write $\Sigma_\mathscr{T}', \partial \Sigma_\mathscr{T}$ for the restriction to $\mathscr{T}$ of $\Sigma'_G, \partial \Sigma_G$ respectively. We think of $\mathscr{T}$ as a sudivision of $N_\mathfrak{t}\otimes \mathbb{R}$ where the local cone of the dense stratum of $\mathscr{T}$ is $(N_\mathfrak{t},U_\mathfrak{t})$. We may assume that $\Sigma_\mathscr{T}'$ contains $\partial \Sigma_\mathscr{T}$ as a subcomplex.

    We now have in particular the data of a cone complex $\Sigma_\mathscr{T}'$ embedded in $N_\mathfrak{t} \otimes \mathbb{R}$. Since every toric variety admits an equivariant compactification \cite{MR2248429}, we can extend this to a complete fan on $N_\mathfrak{t} \otimes \mathbb{R}$. Restricting this fan to the closure of $U_\mathfrak{t}$ defines a tropical model $\Sigma_\mathscr{T}\rightarrow \mathscr{T}$. This tropical model defines a subdivision of $\mathscr{G}_G$ by subdividing $\mathscr{T}$ and leaving the remainder unchanged. This subdivision is not equivariant, however applying the \textit{averaging trick} explained in \cite[Lemma~3.3.7]{MR20} we obtain an equivariant tropical model of $\mathscr{G}_G$. Since $\partial \Sigma_G$ and $ \Sigma'_G$ are $G$ equivariant, the averaging trick does not subdivide them.
\end{proof}
 \subsubsection{Sheaves on \textbf{RPC}} Piecewise linear spaces are a geometric way to understand a particular collection of sheaves on \textbf{RPC}. We denote the category of sheaves on $\textbf{RPC}$ with the face topology by $\textbf{Sh}(\textbf{RPC})$.
 
 \begin{proposition}\label{prop:PLasSheaves}
    There are fully faithful embeddings of categories
    $$ \textbf{RPCC} \hookrightarrow \textbf{PLS} \hookrightarrow \textbf{Sh}(\textbf{RPC}).$$
 \end{proposition}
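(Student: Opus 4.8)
The plan is to construct the two arrows separately and then concentrate on full faithfulness of the second. For the first arrow I would factor $\textbf{RPCC}\hookrightarrow\textbf{PLS}$ as
$$\textbf{RPCC}\hookrightarrow\textbf{PLCC}\hookrightarrow\textbf{PLS}.$$
The left functor is the fully faithful embedding of Example \ref{example: cone complex as PL space}. The right functor is the Yoneda embedding of representables into geometric spaces: since the preceding lemma shows $(\textbf{PLCC},\tau_{str},\mathbb{S})$ is a geometric context and $\tau_{str}$ is subcanonical, every piecewise linear complex is a $\tau_{str}$--sheaf and is tautologically a geometric space with trivial atlas, so Yoneda is fully faithful (cf. \cite{ModStckTropCurve,simpson1996algebraic}). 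A composite of fully faithful functors is fully faithful.

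For the second arrow I would define $\rho\colon\textbf{PLS}\to\textbf{Sh}(\textbf{RPC})$ as restriction along the inclusion of sites $\iota\colon\textbf{RPC}\hookrightarrow\textbf{PLCC}$, so that $\rho\mathscr{S}(\tau)=\Hom_{\textbf{PLS}}(\tau,\mathscr{S})$; this is exactly the functor of points anticipated by the $\mathscr{G}_k$ of Figure \ref{fig:FirstPLcones}. First one verifies $\rho\mathscr{S}$ is a sheaf: a face cover of a cone $\tau$ is a $\tau_{str}$--cover of $\iota\tau$ and $\iota$ respects the relevant fibre products, so $\iota$ is continuous and restriction of a sheaf is a sheaf. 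I would record two concrete colimit formulas for later use. For a cone $\sigma$ one has $\rho\sigma=h_\sigma$, the representable sheaf, because $\textbf{RPC}\hookrightarrow\textbf{PLCC}$ is fully faithful; for a cone complex $\Sigma$ the standard identification of \cite{ModStckTropCurve} gives $\rho\Sigma=\colim_{\sigma\in\Sigma}\rho\sigma$ in $\textbf{Sh}(\textbf{RPC})$; and for a piecewise linear complex $\mathfrak{T}$, exactly as for $\mathscr{G}_k$, its functor of points is the filtered colimit $\rho\mathfrak{T}=\colim_{\Sigma\to\mathfrak{T}}\rho\Sigma$ over tropical models, which exist by Proposition \ref{prop:PLsubdivisions} and through which every monoid map from a cone factors since its image is a convex subcone.

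Full faithfulness of $\rho$ I would then establish in three stages, the target $\mathscr{S}'$ remaining an arbitrary piecewise linear space throughout. \textbf{(i)} For a cone complex source $\Sigma$: the face inclusions make $\Sigma$ a $\tau_{str}$--cover of itself by its cones, so $\tau_{str}$--descent for the sheaf $\mathscr{S}'$ gives $\Hom_{\textbf{PLS}}(\Sigma,\mathscr{S}')=\mathscr{S}'(\Sigma)=\lim_{\sigma}\mathscr{S}'(\sigma)$, while $\rho\Sigma=\colim_\sigma\rho\sigma$ yields $\Hom_{\textbf{Sh}(\textbf{RPC})}(\rho\Sigma,\rho\mathscr{S}')=\lim_\sigma\rho\mathscr{S}'(\sigma)=\lim_\sigma\mathscr{S}'(\sigma)$; the two agree. \textbf{(ii)} For a piecewise linear complex source $\mathfrak{T}$: combining $\rho\mathfrak{T}=\colim_{\Sigma\to\mathfrak{T}}\rho\Sigma$ with (i) identifies $\Hom_{\textbf{Sh}(\textbf{RPC})}(\rho\mathfrak{T},\rho\mathscr{S}')$ with $\lim_{\Sigma\to\mathfrak{T}}\Hom_{\textbf{PLS}}(\Sigma,\mathscr{S}')$, so the statement reduces to the assertion that $\mathfrak{T}=\colim_{\Sigma\to\mathfrak{T}}\Sigma$, i.e. that a morphism out of $\mathfrak{T}$ is the same datum as a family of morphisms out of all its tropical models, compatible under refinement. \textbf{(iii)} For a general source $\mathscr{S}$ I would choose an atlas $\mathfrak{U}\to\mathscr{S}$ with $\mathfrak{U}$ and $R=\mathfrak{U}\times_{\mathscr{S}}\mathfrak{U}$ in $\textbf{PLCC}$, so that $\mathscr{S}=\operatorname{coeq}\bigl(R\rightrightarrows\mathfrak{U}\bigr)$; since strict covers restrict to face covers, $\rho$ carries this presentation to the corresponding coequaliser of sheaves, and $\Hom(\mathscr{S},\mathscr{S}')$ is computed as the same equaliser on both sides, collapsing the problem to the case (ii) already handled (applied to $\mathfrak{U}$ and to $R$).

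The hard part will be the colimit claim $\mathfrak{T}=\colim_{\Sigma\to\mathfrak{T}}\Sigma$ of step (ii): reconstructing a genuine morphism of piecewise linear complexes from a refinement--compatible family of morphisms defined on every tropical model. On topological realisations this is immediate, since subdivisions are homeomorphisms and refinements are the identity on $|\mathfrak{T}|$, so the family assembles to a single continuous $|\varphi|\colon|\mathfrak{T}|\to|\mathscr{S}'|$. The delicate point is that the cone--level monoid maps must glue to the linear data $A_{\mathfrak{T}},C_{\mathfrak{T}}$ on each stratum $\kappa$, which may be non--convex. Here I would invoke the description obtained in the proof of Proposition \ref{prop:} that each stratum is an open subset of a linear subspace $\gamma^{\mathrm{gp}}\otimes\mathbb{R}$ of its local cone $N_{\mathfrak{s}_\kappa}$, together with the fact that subdivisions induce bijections on lattice points, to conclude that the cone--wise maps agree on overlaps and extend uniquely to a single linear map on $N_{\mathfrak{s}_\kappa}$. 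This is precisely where non--convexity makes the argument genuinely more subtle than the corresponding statement for cone complexes.
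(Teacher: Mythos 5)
Your overall route is genuinely different from the paper's. The paper verifies full faithfulness of $\mathscr{S}\mapsto h_\mathscr{S}$ directly: faithfulness by lifting a pair of morphisms $\varphi_1,\varphi_2:\mathscr{S}_1\rightarrow\mathscr{S}_2$ to morphisms of tropical models and invoking faithfulness of the Yoneda embedding on $\textbf{RPC}$, and fullness by asserting that morphisms of piecewise linear spaces are local on cones, so that a natural transformation is determined by its values $h_\sigma\rightarrow h_\mathscr{T}$ on single cones, where Yoneda applies. The sheaf-level identity $\varinjlim_{\Sigma}h_\Sigma=h_\mathscr{S}$ is proved only afterwards, as a separate lemma, and the stronger categorical statement your argument rests on --- that $\mathfrak{T}$ is the colimit of its tropical models \emph{in} $\textbf{PLS}$, i.e. $\Hom_{\textbf{PLS}}(\mathfrak{T},\mathscr{S}')=\varprojlim_{\Sigma\to\mathfrak{T}}\Hom_{\textbf{PLS}}(\Sigma,\mathscr{S}')$ --- is never needed by the paper. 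Your stages (i) and (iii) are standard and fine; everything hinges on that colimit claim.

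There your sketch has a genuine gap, in two places. First, you never verify condition (1) in the definition of a morphism of piecewise linear complexes: that $|\varphi|$ carries each stratum $\kappa$ of $\mathfrak{T}$ into a \emph{single} stratum of $\mathscr{S}'$. Within one tropical model $\Sigma$, $\kappa$ is a union of interiors of cones, and $f_\Sigma$ only sends each such interior into \emph{some} stratum; nothing you say forces these target strata to coincide, and when $\kappa$ is non-convex no single cone of $\Sigma$ contains any two given points of $\kappa$. Second, the step ``the cone-wise maps agree on overlaps and extend uniquely to a single linear map on $N_{\mathfrak{s}_\kappa}$'' fails if ``overlaps'' means the shared faces inside one model: a folding map --- linear on each of two adjacent cones and agreeing on their common facet, e.g. $(x,y)\mapsto(|x|,y)$ on adjacent quadrants --- satisfies exactly those conditions, respects lattice points, and is not linear. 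Both defects can only be repaired by exploiting compatibility across \emph{different} tropical models: since $\kappa$ is open in its linear span, for any two adjacent cones of $\Sigma$ inside $\kappa$ one can build another tropical model (extending a suitable subdivision via Proposition \ref{prop:PLsubdivisions}) containing a cone whose interior straddles their common face; compatibility of the family on a common refinement then forces equal target strata and a common linear extension, and connectedness of $\kappa$ propagates this across the stratum. Until that straddling argument is supplied, $\mathfrak{T}=\varinjlim_{\Sigma\to\mathfrak{T}}\Sigma$, and with it your step (ii), is unproven. (To be fair, the paper's own fullness proof faces the same gluing issue and dispatches it with the phrase ``the definition of morphism is local on cones''; but the specific argument you wrote down is the one that breaks.)
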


 \begin{proof}
    \underline{\textbf{RPCC} to \textbf{PLS}.} The assignment of Example \ref{example: cone complex as PL space} defines a fully faithful embedding $$\textbf{RPCC} \rightarrow \textbf{PLS}.$$ Indeed Definition \ref{defn:PLspace} specialises to the definition of a morphism of cone complexes. The topology $\mathbf{\tau}_\mathrm{str}$ restricts to the face topology.
	
	\underline{\textbf{PLS} to \textbf{Sh}(\textbf{RPC}).} We denote the image of a piecewise linear space $\mathscr{S}$ under the Yoneda embedding by $H_\mathscr{S}$, a sheaf on \textbf{PLS}. We denote the restriction of $H_\mathscr{S}$ to $\textbf{RPC}$ $h_\mathscr{S}$. Since the topology on \textbf{RPC} is pulled back from the topology on the category of piecewise linear spaces, $h_\mathscr{S}$ is a sheaf. The assignment $$\mathscr{S} \rightarrow h_\mathscr{S}$$ defines our second map. We now verify this map is fully faithful.

		 Given two morphisms in \textbf{PLS} $$\varphi_1,\varphi_2: \mathscr{S}_1\rightarrow \mathscr{S}_2$$ there are tropical models $$\Sigma_1 \rightarrow \mathscr{S}_1, \Sigma_2 \rightarrow \mathscr{S}_2$$ for which there are commutative squares $$
		\begin{tikzcd}
		\mathscr{S}_1 \arrow[r, "\varphi_1"]       & \mathscr{S_2}      & \mathscr{S}_1 \arrow[r, "\varphi_2"]       & \mathscr{S}_2     \\
		\Sigma_1 \arrow[r, "\varphi_1'"] \arrow[u] & \Sigma_2 \arrow[u] & \Sigma_1 \arrow[u] \arrow[r, "\varphi_2'"] & \Sigma_2 \arrow[u]
		\end{tikzcd}.$$ Note $\varphi_1=\varphi_2$ if and only if $\varphi_1'=\varphi_2'$ and since the Yoneda embedding $\textbf{RPC}\hookrightarrow \textbf{Sh}(\textbf{RPC})$ is faithful, our functor is faithful. 
		
		We verify our embedding is full. Since tropical models are monomorphisms and the definition of morphism is local on cones, it suffices to consider a natural transformation $h_\sigma\rightarrow h_\mathscr{T}$ where $\sigma$ is a cone complex consisting of a single cone and its faces. The image of $h_\sigma(\sigma)$ defines the morphism from $\sigma$ to the piecewise linear space $\mathscr{T}$.
	\end{proof}

 \subsubsection{Prorepresentability and piecewise linear spaces}
	Proposition \ref{prop:PLasSheaves} implies the functor $h_\mathscr{S}$ is represented by a cone complex if and only if $\mathscr{S}$ is a cone complex. The next lemma shows in general a piecewise linear space is prorepresentable by cone complexes. We denote the system of tropical models of a piecewise linear complex $\mathscr{S}$ by $S_\mathscr{S}$.
	
	\begin{lemma}
		There is an equality of sheaves on \textbf{RPC} with the face topology $$\varinjlim_{\Sigma \in S_\mathscr{S}} h_\Sigma= h_\mathscr{S}.$$
	\end{lemma}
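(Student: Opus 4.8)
The plan is to realise each $h_\Sigma$ as a subsheaf of $h_\mathscr{S}$ and to identify the colimit with a union of these subsheaves that exhausts $h_\mathscr{S}$ sectionwise. First I would record that for every tropical model $\Sigma \to \mathscr{S}$ the induced map $h_\Sigma \to h_\mathscr{S}$ is a monomorphism of sheaves on \textbf{RPC}; this is exactly the observation, used in the proof of Proposition \ref{prop:PLasSheaves}, that tropical models are monomorphisms. Thus $\{h_\Sigma\}_{\Sigma \in S_\mathscr{S}}$ is a family of subsheaves of $h_\mathscr{S}$, and for a refinement $\Sigma' \to \Sigma$ of tropical models the transition map $h_{\Sigma'} \to h_\Sigma$ is the corresponding inclusion of subsheaves: on a test cone $\tau$, a morphism $\tau \to \Sigma'$ has the same underlying monoid map as its composite $\tau \to \Sigma' \to \Sigma$, since a subdivision is the identity on realisations and on lattice points.

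Next I would check that the diagram is closed under pairwise intersection. Given tropical models $\Sigma_1, \Sigma_2 \to \mathscr{S}$, Proposition \ref{prop:fibreproductPLspaces} produces the fibre product $\Sigma_1 \times_\mathscr{S} \Sigma_2$; because each $\Sigma_i \to \mathscr{S}$ is a subdivision, this fibre product is again a subdivision of $\mathscr{S}$ whose domain is the common refinement of the two cone complexes, hence a cone complex, so $\Sigma_1 \times_\mathscr{S} \Sigma_2 \in S_\mathscr{S}$. On functors of points this gives $h_{\Sigma_1} \cap h_{\Sigma_2} = h_{\Sigma_1 \times_\mathscr{S} \Sigma_2}$ inside $h_\mathscr{S}$. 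Consequently $\{h_\Sigma\}$ is a family of subsheaves closed under intersection and indexed by inclusions, and the colimit of such a diagram (one whose pairwise intersections again lie in the diagram) is computed sectionwise by the naive union, so $\varinjlim_\Sigma h_\Sigma(\tau) = \bigcup_\Sigma h_\Sigma(\tau)$ for every cone $\tau$. This presheaf union is already the sheaf $h_\mathscr{S}$ as soon as it exhausts every section group, so no separate sheafification argument is needed.

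It then remains to prove the key statement $\bigcup_\Sigma h_\Sigma = h_\mathscr{S}$ sectionwise: every morphism $\varphi \colon \tau \to \mathscr{S}$ from a cone factors through some tropical model. Here I would use the morphism axioms of Definition \ref{defn:PLspace}, namely that the open stratum of $\tau$ maps into a single stratum of $\mathscr{S}$ and each face of $\tau$ into a single stratum. It follows that the image $C = |\varphi|(\tau)$ is a single rational polyhedral cone, each of whose faces lies in one stratum of $\mathscr{S}$, so that $C$ with its face structure is a cone complex and a subcomplex of $\mathscr{S}$. Applying Proposition \ref{prop:PLsubdivisions} to the subcomplex $C \hookrightarrow \mathscr{S}$ together with the trivial tropical model $C \to C$ yields a tropical model $\Sigma \to \mathscr{S}$ extending it, in which $C$ survives as a single cone; then $\varphi$ factors as $\tau \to C \hookrightarrow \Sigma \to \mathscr{S}$ (with $\tau \to C$ landing in the one cone $C$, even when $\varphi$ is not injective), so $\varphi \in h_\Sigma(\tau)$.

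The main obstacle is precisely this last factorisation: one must produce a tropical model of $\mathscr{S}$ in which the image cone $C$ is \emph{not} further subdivided, which is exactly what the extension half of Proposition \ref{prop:PLsubdivisions} — and, locally near the stratum containing $|\varphi|(\tau)$, Lemma \ref{lem:onecone}(2) — is designed to supply. The delicate point I would be careful about is verifying that $C$ together with its faces genuinely defines a subcomplex of $\mathscr{S}$: that the morphism axioms force each face of $C$ to lie in a single stratum and that the resulting face stratification agrees with the stratification induced from $\mathcal{P}_\mathscr{S}$, so that Proposition \ref{prop:PLsubdivisions} applies with $C$ as its domain.
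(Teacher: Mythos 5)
Your proof is correct, and its mathematical core is the same as the paper's: both arguments turn on the single key fact that every morphism from a cone $\tau$ to $\mathscr{S}$ factors through some tropical model, extracted from Proposition \ref{prop:PLsubdivisions}. The difference is in the packaging of the colimit. The paper builds two explicit mutually inverse morphisms: the canonical map $\varinjlim_\Sigma h_\Sigma \to h_\mathscr{S}$ from the universal property, and an inverse defined by the factorisation, with well-definedness (``independence of the choice of $\Sigma$'') left as ``readily seen''. You instead realise each $h_\Sigma$ as a subsheaf of $h_\mathscr{S}$ and compute the colimit as the sectionwise union, which obliges you to check closure of the diagram under pairwise intersections via Proposition \ref{prop:fibreproductPLspaces}. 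These are two faces of the same coin: the fibre-product fact you invoke is precisely what makes the paper's independence claim true, since two factorisations of $\varphi$ through $\Sigma_1$ and $\Sigma_2$ induce, by the universal property of $\Sigma_1 \times_\mathscr{S} \Sigma_2$, a factorisation through the common refinement, identifying the two elements of the colimit. Your version buys two things the paper leaves implicit: the observation that no sheafification is needed once the union exhausts every section, and, more usefully, the actual mechanism by which Proposition \ref{prop:PLsubdivisions} yields the factorisation --- namely that the image cone $C$ of $\tau$, with its faces, is a subcomplex of $\mathscr{S}$ (your flagged delicate point, which indeed requires noting that the preimage of a face of $C$ under the linear surjection $|\tau| \to C$ is a face of $\tau$, so each face of $C$ lies in a single stratum), after which the extension half of the proposition applied to the trivial model $C \to C$ produces a tropical model of $\mathscr{S}$ in which $C$ survives unsubdivided.
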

	\begin{proof}
		There is a compatible system of morphisms from $S_\mathscr{S}$ to $\mathscr{S}$ so the universal property of colimit defines a morphism $$\varinjlim_{S_\mathscr{S}}h_\Sigma \rightarrow h_\mathscr{S}.$$ To prove the lemma we write down the inverse morphism. Let $\sigma$ be the piecewise linear complex associated to an element of \textbf{RPC}. Any morphism $\sigma \rightarrow \mathscr{S}$ factors through a tropical model $\Sigma \rightarrow \mathscr{S}$ by Proposition \ref{prop:PLsubdivisions}. In this way a map from $\sigma$ to $\mathscr{S}$ defines an element of $h_\Sigma(\sigma)$. Composing with the canonical map $h_\Sigma \rightarrow \varinjlim_{\Sigma \in S_\mathscr{S}}h_\Sigma$ we have defined an element of $\varinjlim_{\Sigma \in S_\mathscr{S}}h_\Sigma(\sigma)$. The element is readily seen to be independent of the choice of $\Sigma$. In this way we define a morphism of sheaves $$h_\mathscr{S} \rightarrow \varinjlim_{\Sigma \in S_\mathscr{S}}h_\Sigma.$$ Our two maps are inverse.
	\end{proof}
 \section{The space of tropical supports}\label{Bigsec:TropSupp}
Logarithmic modifications of the logarithmic Quot space will be controlled by a piecewise linear space called the \textit{moduli space of tropical supports}.
Let $\Theta$ be a cone complex. 
	\begin{definition}\label{defn:FamilyTropSupp}
		A \textit{family of tropical supports} over a cone $\sigma$ is a subdivision $\mathscr{T}$ of $\Theta \times \sigma$ combinatorially flat over $\sigma$. 
	\end{definition}
Define the \textit{moduli space of tropical supports}, denoted $\supptrop(\Theta)$, to be the sheaf on $\textbf{RPC}$ assigning to a cone $\sigma$ the collection of families of tropical supports over $\sigma$. We imagine $\supptrop(\Theta)$ is a tropical version of a Hilbert scheme.
	
	\subsection{The moduli space of tropical supports is a piecewise linear space}
	We routinely confuse the space of tropical supports with its functor of points.
	
	\begin{theorem}\label{thm: Universal property Supptrop}
		There is a diagram of piecewise linear spaces $$
		\begin{tikzcd}
		\mathscr{X} \arrow[r] \arrow[rd] & \Theta \times \supptrop(\Theta) \arrow[d, "\varpi"] \\
		& \supptrop(\Theta)                                            
		\end{tikzcd}$$
		such that given a family of tropical supports on $\Theta$ over a rational polyhedral cone $\sigma$, say $$\mathscr{X}(\sigma) \rightarrow \Theta \times \sigma\rightarrow \sigma,$$ there is a unique morphism $\sigma \rightarrow \supptrop(\Theta)$ along which $\mathscr{X}$ pulls back to $\mathscr{X}_\sigma$.
	\end{theorem}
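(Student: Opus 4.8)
The plan is to promote the sheaf $\supptrop(\Theta)$ to a piecewise linear space by realising it as a geometric space in the context $(\textbf{PLCC},\tau_{str},\mathbb{S})$, and then to read off both the universal family $\mathscr{X}$ and the Yoneda-type uniqueness statement. By Proposition~\ref{prop:PLasSheaves} the category $\textbf{PLS}$ embeds fully faithfully into $\textbf{Sh}(\textbf{RPC})$, and the preceding lemma already establishes that $\supptrop(\Theta)$ is an honest sheaf. It therefore suffices to produce a cover of $\supptrop(\Theta)$ by piecewise linear complexes through strict morphisms (morphisms in $\mathbb{S}$) that are jointly surjective, together with a proof that the diagonal is representable by piecewise linear complexes. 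Granting this, the last clause --- existence and uniqueness of the classifying morphism $\sigma\rightarrow\supptrop(\Theta)$ attached to a family $\mathscr{X}_\sigma$ --- is immediate from full faithfulness once $\mathscr{X}$ is in hand.

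To build the charts I would fix a point of $\supptrop(\Theta)$, that is a subdivision $\mathscr{T}_0$ of $\Theta$, and analyse its deformations. Choosing by Proposition~\ref{prop:PLsubdivisions} a cone complex $\Gamma\rightarrow\Theta$ that is a common tropical model, the combinatorial type of a nearby family over $\sigma$ is recorded by which unions of cones of $\Gamma\times\sigma$ form its strata, and the admissible deformations are precisely the motions of the walls of $\mathscr{T}_0$ that keep each stratum a union of cones while respecting axioms F1 and F2 of Definition~\ref{defn:CombFlat}. I would check that these constraints are cut out by rational linear (in)equalities in the wall positions, so that the families with central fibre refining $\mathscr{T}_0$ and fixed common refinement sweep out a conical locally closed stratification of a rational parameter space. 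Construction~\ref{cons:}, with Lemma~\ref{lem:F2} reducing the flatness check to the top face, then packages this into a (quotient) piecewise linear cone $\mathscr{U}_{\mathscr{T}_0}$ carrying a tautological combinatorially flat family, hence a strict morphism $\mathscr{U}_{\mathscr{T}_0}\rightarrow\supptrop(\Theta)$; the appearance of quotients reflects automorphisms of $\mathscr{T}_0$. As Example~\ref{ex:Not a cone cx} already illustrates, convexity may fail, which is exactly why the charts are piecewise linear cones rather than cones and why $\supptrop(\Theta)$ is generally not a cone complex.

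I would then glue the charts $\mathscr{U}_{\mathscr{T}_0}$ along their overlaps. Over a test cone two families agree on the locus where their stratifications coincide, which is a closed piecewise linear condition; the fibre products computing the overlaps exist by Proposition~\ref{prop:fibreproductPLspaces} and are again piecewise linear complexes, so the same computation simultaneously yields representability of the diagonal. The charts are jointly surjective because combinatorial flatness forces bounded discrete type, so over each face every family refines one of finitely many common tropical models. This exhibits $\supptrop(\Theta)$ as a geometric space, i.e.\ a piecewise linear space. For the universal object I take $\mathscr{X}$ to be glued from the tautological families: over each chart its total space is a subdivision of $\Theta\times\mathscr{U}_{\mathscr{T}_0}$ that is combinatorially flat over $\mathscr{U}_{\mathscr{T}_0}$ by construction, and the base-change lemma of Section~\ref{CombFlatandBaseChange} guarantees these patches glue to a piecewise linear space $\mathscr{X}$ over $\supptrop(\Theta)$. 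Its pullback along a family $\sigma\rightarrow\supptrop(\Theta)$ recovers that family tautologically, and uniqueness of the classifying map is again Proposition~\ref{prop:PLasSheaves}.

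The crux is the second step: the claim that combinatorial flatness translates into genuinely conical (piecewise linear) conditions on the wall positions, so that the deformation loci are piecewise linear cones to which Construction~\ref{cons:} applies. Axiom F2 is a compatibility ranging over all pairs of faces; Lemma~\ref{lem:F2} cuts this down to the comparison with the top-dimensional face, but one must still verify that the resulting ``flatness across strata'' locus is conical and not merely semilinear. The remaining technical point is confirming that the glued total space $\mathscr{X}$ is combinatorially flat over the non-conical base $\supptrop(\Theta)$, for which the face-local formulation of Definition~\ref{defn:CombFlat} together with the base-change lemma is the right tool, with boundedness of combinatorial types keeping the cover finite on each face.
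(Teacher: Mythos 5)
Your overall strategy is the same as the paper's: cover $\supptrop(\Theta)$ by charts that are deformation spaces of a fixed PL polyhedral structure, quotient each chart by its finite symmetry group, glue along open inclusions indexed by combinatorial degeneration, and carry along a tautological family. But there is a genuine gap at exactly the point you flag as ``the crux'': you never establish that the deformation locus of $\mathscr{T}_0$ is conical, i.e.\ that it is a (quotient) piecewise linear cone to which Construction~\ref{cons:} applies. Worse, your proposed parametrization does not get off the ground: you record a nearby family over $\sigma$ by ``which unions of cones of $\Gamma\times\sigma$ form its strata'' for a fixed common tropical model $\Gamma$ of $\mathscr{T}_0$, but a deformed family is a subdivision of $\Theta\times\sigma$ whose walls have moved, so its strata are not unions of cones of $\Gamma\times\sigma$ at all; a single auxiliary $\Gamma$ cannot serve all nearby families. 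Moreover, ``motions of the walls'' is not a well-defined finite list of parameters until one knows that a family is determined by finitely many incidence data plus finitely many positions --- precisely what has to be proved.

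The paper closes this gap with a different parametrization: by positions of \emph{finite corners} rather than walls. Lemma~\ref{lem:justneedcorner} shows every stratum of a PL subdivision is a union of interiors of convex hulls of its corners, and the ensuing corollary shows a family of tropical supports is uniquely determined by its discrete data together with the strata containing the finite corners. This gives a faithful map $\Phi$ to $|\Theta|^n/\mathrm{Sym}(D)$, and Proposition~\ref{prop:localmodel} shows its image is cut out inside a product of cones $\sigma_1\times\cdots\times\sigma_n$ by linear \emph{equalities} (the subgroup datum in the discrete data) together with \emph{open} conditions (the closure datum $W$); this is exactly the conicality your argument assumes. Two further points your sketch elides: the tautological family of convex hulls of the universal sections over the local model is \emph{not} combinatorially flat (it violates axiom F1), and the paper must push forward the stratification, refine the base, and invoke Construction~\ref{cons:} and Lemma~\ref{lem:F2} to repair this before quotienting; and the universal property is proved directly (enumerate corners of a given family to get $\sigma\to\Theta^n\to\mathscr{U}_D$), rather than by appeal to full faithfulness, since one must check the classifying map lands in the correct chart and that the weight function is determined. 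Without the corner lemma and the local-model proposition, your charts, their strictness, and the pullback statement all remain unproved assertions.
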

 
	\subsubsection{PL polyhedral structures.} Given a morphism of cone complexes $\Theta \times \sigma \rightarrow \sigma$ there is a correspondence between tropical models of $\Theta \times \sigma$ and polyhedral subdivisions of $\Theta$ with edge lengths metrised by $\sigma$. We now explain the piecewise linear analogue of this correspondence. 
 
 Consider a combinatorially flat piecewise linear subdivision $$\mathscr{S}\rightarrow \Theta \times \mathbb{R}_{\geq 0} \textrm{ and write }\pi_2:\Theta \times \mathbb{R}_{\geq 0} \rightarrow \mathbb{R}_{\geq 0}$$ for the second projection map. 
	
	\begin{definition}
		A \textit{PL polyhedral structure} of $\Theta$ is a locally closed stratification $\mathscr{T}_1$ of $|\Theta|$ such that there exists a subdivision $\mathscr{S}$ with notation in the previous paragraph such that $\mathcal{P}_\mathscr{S}$ pulls back along the inclusion $$|\Theta|\times \{1\}  \hookrightarrow |\Theta\times \mathbb{R}_{\geq 0}|$$ to $\mathscr{T}_1$.
	\end{definition}
	
	Observe a family of tropical supports on $\Theta$, written $$\mathscr{T}\xrightarrow{\pi} \Theta \times \sigma \rightarrow \sigma$$ specifies for each point $p$ of $|\sigma|$ a PL polyhedral structure $\mathscr{T}_p=\mathcal{P}_\mathscr{T} \cap |\pi^{-1}(p)|$. Points of the topological realisation of the piecewise linear space representing $\supptrop(\Theta)$ biject with PL polyhedral structures of $\Theta$. 

    Note if $\mathscr{T}$ is a cone complex then $\mathscr{T}_p$ is a polyhedral subdivision. A stratum of a PL polyhedral structure on $\Theta$ consisting of a single point is called a \textit{finite corner}. In the special case $\mathscr{T}$ is a polyhedral structure on $\Theta$, finite corners are the vertices. 
    \subsubsection{Tropical degree} Given a family of tropical supports $(\mathscr{T},\varphi)$ on $\Theta$ over a fan $\sigma$ we obtain a piecewise linear subdivision of $\Theta$ by pulling back $\mathscr{T}$ to the preimage of $0$ in $\sigma$. We call the resulting tropical support  $(\Delta,\varphi_\Delta)$ over a point the \textit{tropical degree}. Clearly tropical degree is constant in families.

    The tropical degree of a PL polyhedral structure $\mathscr{T}_p$ is the tropical degree of any family of tropical supports over some cone $\sigma$ for which $\mathscr{T}_p = \mathscr{T}_q$ for some point $q$ in $\sigma$. The \textit{infinite corners} of  $\mathscr{T}_p$ are the dimension one strata in $\mathcal{P}_\Delta$ where $(\Delta,\varphi_\Delta)$ is the tropical degree of $\mathscr{T}_p$. The infinite corners are thus the minimal unbounded strata.
    
    The set of \textit{corners} of a polyhedral structure is the union of the finite corners and the set of infinite corners. 
	\subsubsection{Discrete data - a first shot at combinatorial type} We construct the piecewise linear space $\supptrop$ by gluing piecewise linear cones. The gluands are indexed by data called the \textit{combinatorial type} of a tropical support. We now define the \textit{discrete data} of a tropical support- there will be finitely many combinatorial types for fixed discrete data.

    Let $\mathscr{T}\rightarrow \Theta \times \sigma$ be a family of tropical supports on $\Theta$. Choose a point $q$ and let $\mathscr{T}_q$ be the associated PL polyhedral structure.
	\begin{definition}
		The \textit{discrete data of a PL polyhedral structure}  on $\Theta$ is the following information.
		
		\begin{enumerate}[(1)]
			\item The set of $C$ of corners of $\mathscr{T}_q$.
			\item An assignment of corners of $\mathscr{T}_q$ to cones of $\Theta$. A corner $p$ is taken to the cone $\theta$ for which $p$ lies in the preimage of the interior of $\theta$ under the projection map to $\Theta$.
			\item A subset $W$ of the power set of $C$. For a stratum $\kappa$ in $\mathcal{P}_\mathscr{T}$ which maps to the interior of $\sigma$ we define $S_\kappa$ to be the collection of corners in the closure of $\kappa$.  The set $W$ is the set of $S_\kappa$ for $\kappa$ in $\mathcal{P}_\mathscr{T}$.
            \item For each set $S_\kappa$ a subgroup of $\theta^\mathrm{gp}$ where the interior of $\kappa$ is a subset of $\theta$ a cone of $\Theta$. This subgroup is generated by $\{s-s'|s,s' \in N_{\mathfrak{s}_\kappa}\}$.
		\end{enumerate}
		The discrete data of PL polyhedral structures $\mathscr{T}_1$ and $\mathscr{T}_2$ are the same if there is a bijection of finite corner sets $\varphi: C_1 \rightarrow C_2$ satisfying the hypotheses
		\begin{enumerate}[(1)]
			\item The corners $\varphi(c)$ and $c$ are assigned the same cone of $\Theta$.
			\item The map $\varphi$ induces a well defined bijection $$W_1 \rightarrow W_2$$ which preserves the associated subgroups of $\theta^\mathrm{gp}$.
		\end{enumerate}
	\end{definition}
	
	We now verify that discrete data is constant on the interiors of cones.
	
	\begin{proposition}\label{prop:Tropconnectedness}
		Consider a family of tropical supports $$\mathscr{T}\times \Theta \rightarrow \sigma$$ and let $p,q$ be points in the interior of $|\sigma|$. The discrete data of $\mathscr{T}_p$ and $\mathscr{T}_q$ coincide.
	\end{proposition}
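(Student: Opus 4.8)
The plan is to show that every piece of the discrete data is read off from the stratification $\mathcal{P}_\mathscr{T}$ of the total space together with the tropical degree, neither of which refers to the particular interior point, and then to package this into the bijection $\varphi$ demanded by the definition. As scaffolding I would first record that $|\sigma|^o$ is convex, hence connected, so that any quantity shown to be locally constant on $|\sigma|^o$ is constant; in fact the argument below will produce the equality of discrete data directly.

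The key structural observation is that the strata of the fibre $\mathscr{T}_q = \mathcal{P}_\mathscr{T} \cap |\pi^{-1}(q)|$ are exactly the nonempty intersections $\kappa \cap \pi^{-1}(q)$, where $\kappa$ ranges over the strata of $\mathcal{P}_\mathscr{T}$ mapping to the interior of $\sigma$. Indeed, by axiom F1 of Definition \ref{defn:CombFlat} such a $\kappa$ surjects onto $|\sigma|^o$, so it meets every fibre over $|\sigma|^o$, whereas a stratum mapping to the interior of a proper face of $\sigma$ is disjoint from $\pi^{-1}(q)$. Assigning to the fibre stratum $\kappa \cap \pi^{-1}(q)$ the stratum $\kappa$ therefore sets up, for every pair $p,q \in |\sigma|^o$, a canonical bijection between the strata of $\mathscr{T}_p$ and those of $\mathscr{T}_q$, and I would check that this bijection preserves the three remaining pieces of data.

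For the corner set I would separate finite and infinite corners. The infinite corners are by definition the one-dimensional strata of the tropical degree $(\Delta,\varphi_\Delta)$, the fibre over $0 \in \sigma$, which is constant in families; hence they and their assignment to cones of $\Theta$ are manifestly independent of the interior point. A finite corner is a single-point stratum $\kappa \cap \pi^{-1}(q)$, which occurs exactly when $\dim\kappa = \dim\sigma$. To see that this condition does not depend on $q$ I would pass to a combinatorially flat tropical model $\Sigma \rightarrow \mathscr{T}$, which exists after refining by Proposition \ref{prop:PLsubdivisions}: each cone $\gamma$ of $\Sigma$ surjecting onto $\sigma$ has fibre of dimension $\dim\gamma - \dim\sigma$ over every point of $|\sigma|^o$, so the fibre dimension of the union $\kappa$ is constant there. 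Thus the set of corner strata coincides for $\mathscr{T}_p$ and $\mathscr{T}_q$; and since a subdivision refines the product stratification of $\Theta \times \sigma$, the projection of $\kappa$ lands in the interior of a single cone $\theta$, so the assignment of corners to cones of $\Theta$ is forced by $\kappa$ alone.

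Finally I would treat $W$ and the subgroups. Given a corner $c = \kappa_c \cap \pi^{-1}(q)$ and an interior stratum $\kappa$, the stratification axiom in Definition \ref{defn:PLspace} gives the dichotomy that $\kappa_c$ either lies in $\overline{\kappa}$ or is disjoint from it; hence $c \subset \overline{\kappa}$ if and only if $\kappa_c \subset \overline{\kappa}$, a condition on $\mathscr{T}$ alone. Therefore each set $S_\kappa$, and so $W$, is the same for all interior points, and the associated subgroup of $\theta^{\mathrm{gp}}$ --- generated by the differences of the fixed lattice $N_{\mathfrak{s}_\kappa}$ under the projection to $\Theta$ --- depends only on $\kappa$. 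Assembling these facts, the bijection $\varphi$ above matches corners, their cone assignments, the set $W$, and all subgroups, which is precisely the claim that the discrete data of $\mathscr{T}_p$ and $\mathscr{T}_q$ coincide. The main obstacle, and the single place where genuine combinatorial flatness is used, is the equidimensionality of the fibres of an interior stratum over $|\sigma|^o$; once that is in hand, everything else is bookkeeping with the stratification poset and the fixed lattices.
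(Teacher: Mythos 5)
Your proposal follows essentially the same route as the paper's (much terser) proof: strata of the fibres $\mathscr{T}_p$ and $\mathscr{T}_q$ are paired by lying in the same stratum of $\mathcal{P}_\mathscr{T}$, this pairing preserves the assignment to cones of $\Theta$, infinite corners are handled by constancy of the tropical degree, and the sets $S_\kappa$ and their subgroups are read off from the ambient stratification, so the pairing is an isomorphism of discrete data. Your write-up supplies the details the paper compresses into ``follows from continuity,'' and the structure is sound.

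One justification, however, does not work as written. You deduce constancy of the finite-corner condition by passing to ``a combinatorially flat tropical model $\Sigma \rightarrow \mathscr{T}$, which exists after refining by Proposition \ref{prop:PLsubdivisions}.'' That proposition only produces \emph{some} tropical model; it says nothing about flatness over $\sigma$, and refining the source alone cannot in general force every cone to map onto a face of $\sigma$ (a ray of $\Sigma$ whose image is, say, a diagonal ray through $|\sigma|^o$ survives every refinement of $\Sigma$). Moreover, the cones of $\Sigma$ that do \emph{not} surject onto $\sigma$ are precisely the ones your dimension count ignores, and ``constant fibre dimension'' is in any case weaker than ``single-point fibre.'' The repair is more elementary and avoids tropical models entirely: by Definition \ref{defn:PLspace} a stratum $\kappa$ is identified with an open subset $U_{\mathfrak{s}_\kappa}$ of $N_{\mathfrak{s}_\kappa}\otimes\mathbb{R}$, and the projection to $\sigma$ is induced by a linear map $L$. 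Hence every nonempty fibre $\kappa\cap\pi^{-1}(q)$ is an open subset of a translate of $\ker L$; since the image of $\kappa$ is $|\sigma|^o$ (axiom F1), $L$ is surjective, so all nonempty fibres have pure dimension $\dim\kappa-\dim\sigma$, are nonempty for every $q\in|\sigma|^o$, and are single points exactly when $\dim\kappa=\dim\sigma$. This gives precisely the $q$-independence your argument needs, after which the rest of your proof goes through unchanged.
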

	\begin{proof}
		Suffices to think about the finite corner sets. Finite corners of $\mathscr{T}_p$ are paired with finite corners of $\mathscr{T}_q$ if they lie in the same stratum of $\mathcal{P}_\mathscr{T}$, inducing a bijection $\varphi$ from the finite corners of $\mathscr{T}_p$ to the finite corners of $\mathscr{T}_q$ which preserves the cone of $\Theta$. The second property in the definition of isomorphism of discrete data follows from continuity.
	\end{proof}
	\begin{lemma}\label{lem:justneedcorner}
		Strata of a piecewise linear subdivision of a cone complex $\Theta \times \sigma$ are of the form $$\bigcup \mathrm{Int}(\mathrm{Conv}(C_i))$$ for strictly convex sets $C_i$ of rays.
	\end{lemma}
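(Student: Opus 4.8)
The plan is to build a tropical model of the given subdivision and read the stratum structure off the cones of that model, using that its domain is an honest cone complex. First I would record the reduction to a single cone: since $\mathscr{T}\to \Theta\times\sigma$ is a subdivision, the preimage of each open cone $\tau^o$ of the cone complex $\Theta\times\sigma$ is a finite union of strata of $\mathcal{P}_\mathscr{T}$, so every stratum $\kappa$ is contained in the interior of a single cone $\tau$ of $\Theta\times\sigma$. This lets me regard $\kappa$ as a subset of the genuine polyhedral cone $\tau$, which is where the convex-hull description will live.

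The main step is to invoke a tropical model. By Proposition \ref{prop:PLsubdivisions}, the piecewise linear complex $\mathscr{T}$ admits a tropical model $h:\Sigma \to \mathscr{T}$ with $\Sigma$ a cone complex. The composite $\Sigma \to \mathscr{T}\to \Theta\times\sigma$ is then a subdivision whose source is a cone complex, i.e. an honest refinement of $\Theta\times\sigma$; in particular each cone $\gamma$ of $\Sigma$ maps isomorphically onto a polyhedral subcone of the cone $\tau$ containing it. Now apply the defining property of a subdivision of piecewise linear complexes: the preimage under $|h|$ of each stratum $\kappa$ of $\mathscr{T}$ is a finite union of strata of $\Sigma$, and since $\Sigma$ is a cone complex its strata are exactly relative interiors of cones. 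Because $|h|$ is a homeomorphism of topological realisations, this exhibits $\kappa = \bigcup_i \mathrm{Int}(\gamma_i)$ as a finite union of relative interiors of cones $\gamma_i$ of $\Sigma$. This is precisely the reasoning behind the first fact in the proof of Proposition \ref{prop:}.

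It then remains to repackage and check convexity. Writing $C_i$ for the set of rays of $\gamma_i$, we have $\gamma_i = \mathrm{Conv}(C_i)$ and hence $\mathrm{Int}(\gamma_i)=\mathrm{Int}(\mathrm{Conv}(C_i))$, giving $\kappa = \bigcup_i \mathrm{Int}(\mathrm{Conv}(C_i))$ as required. Strict convexity of each $C_i$ (equivalently, pointedness of $\gamma_i$) follows from the reduction in the first paragraph: $\gamma_i$ is a subcone of the cone $\tau$ of $\Theta\times\sigma$, cones in \textbf{RPCC} are strictly convex, and a convex subcone of a strictly convex cone is again strictly convex.

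The step I expect to require the most care is not any single computation but the interface between the abstract local-cone data of $\mathscr{T}$ and the honest polyhedral geometry of $\Theta\times\sigma$: a priori a stratum of $\mathscr{T}$ is only an open subset of a (possibly non-convex) local cone, and one must be sure that its pieces really are relative interiors of convex hulls of rays \emph{inside} $\tau$, rather than merely abstract local-cone interiors. This is exactly what passing to a tropical model resolves, since by definition the domain of a tropical model is a cone complex and a subdivision induces a homeomorphism of realisations and a bijection on lattice points, forcing each cone of $\Sigma$ to map isomorphically onto a genuine strictly convex polyhedral subcone of $\Theta\times\sigma$.
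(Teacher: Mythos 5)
Your proof is correct, and it is not circular: Proposition \ref{prop:PLsubdivisions} is proved earlier (via Lemma \ref{lem:onecone}) and does not depend on this lemma. But your route is genuinely different from the paper's. You pass to an auxiliary tropical model $\Sigma \to \mathscr{T}$, note that the composite $\Sigma \to \mathscr{T} \to \Theta\times\sigma$ is an honest subdivision of cone complexes, and then read each stratum off as a finite union of relative interiors of cones of $\Sigma$ directly from the definition of subdivision, with strict convexity free because cones of a cone complex are pointed. The paper instead argues intrinsically on the stratification: the closure of a stratum is a union of strata, its convex hull is generated by finitely many rays which are \emph{themselves strata} (by induction on dimension), and the stratum is an open subset of the interior of this hull, such opens being complements of convex hulls of rays. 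Your argument is tighter but leans on heavier machinery (existence of tropical models, hence toric compactification and the averaging trick); the paper's is more elementary though sketchier. What the paper's version buys, and yours does not, is that the generating rays are strata of $\mathcal{P}_{\mathscr{T}}$ itself, i.e.\ corners --- exactly what the Corollary after the lemma invokes when it says the stratification is ``specified by the corners.'' Your $C_i$ consist of rays of the auxiliary model $\Sigma$, which need not be strata of $\mathscr{T}$; this is harmless for the lemma as stated, but substituting your proof into the paper would require a small supplementary argument before the Corollary.
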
 
	\begin{proof}
		Consider a stratum $s$ equipped with homeomorphism to local cone $\mathfrak{s}_1$ which is itself an open subset of $\mathbb{R}^k$ for some $k$. By part (1) of the definition of piecewise linear complexes, the closure of $s$ is a union of strata. The convex hull of $s$ is the convex hull of finitely many points $p_1,...,p_k$. All points $p_i$ are strata of $\mathcal{P}_S$; this follows by induction on dimension. The stratum $s$ is an open subset of the interior of $\mathrm{conv}(p_1,...,p_k)$. Necessarily any such open set is the complement of closed sets which are also convex hulls of rays. 
	\end{proof}
	
	\begin{corollary}
		A family of tropical supports on $\Theta$ over a cone $\sigma$ $$\mathscr{T}\rightarrow \Theta \times \sigma \rightarrow \sigma$$ is uniquely specified by two pieces of data.
		\begin{enumerate}
			\item The discrete data of $\mathscr{T}_p$ for $p$ any point in the interior of $\sigma$.
			\item The elements of $\mathcal{P}_\mathscr{T}$ corresponding to finite corners $\mathscr{T}_p$.
		\end{enumerate}
	\end{corollary}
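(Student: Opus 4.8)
The plan is to read the statement as an injectivity claim: I will show that a family of tropical supports $\mathscr{T}\to \Theta\times\sigma\to\sigma$ is recovered from its discrete data together with the strata of $\mathcal{P}_\mathscr{T}$ that are finite corners, by reconstructing each of the three ingredients $(\mathcal{P}_\mathscr{T},A_\mathscr{T},C_\mathscr{T})$ of the piecewise linear structure of Definition \ref{defn:PLspace}. The reconstruction will be constructive, so it simultaneously pins down the image of the specifying data.

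First I would determine the corners as loci in $\Theta\times\sigma$. The finite corners are given outright by datum (2). For the infinite corners, recall that the tropical degree is constant in families, so the infinite corners of every fibre $\mathscr{T}_p$ are the rays of the single piecewise linear subdivision $(\Delta,\varphi_\Delta)$. Each such ray is determined by the discrete data: item (2) records the cone $\theta$ of $\Theta$ containing it, and item (4) records a rank-one subgroup of $\theta^\mathrm{gp}$; since $\theta$ is strictly convex, the line spanned by this subgroup meets $\theta$ in a unique ray. Hence the entire corner set $C$ is determined.

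Next I would reconstruct the stratification $\mathcal{P}_\mathscr{T}$ by induction on the dimension of strata, with Lemma \ref{lem:justneedcorner} as the essential input: every stratum is a union of interiors of convex hulls of corner rays. The base case is the corners themselves, already determined. For the inductive step, a stratum $\kappa$ carries the incidence datum $S_\kappa\in W$ recording exactly which corners lie in $\overline{\kappa}$; combined with the known corner positions this determines the region spanned by $S_\kappa$, and $\kappa$ is obtained from it by deleting the closures of all lower-dimensional strata, which are known by induction and detected combinatorially through the sets $S_{\kappa'}\subsetneq S_\kappa$ in $W$. The local cone $(N_{\mathfrak{s}_\kappa},U_{\mathfrak{s}_\kappa})$ in $A_\mathscr{T}$ is then read off: $U_{\mathfrak{s}_\kappa}$ is the reconstructed stratum and $N_{\mathfrak{s}_\kappa}$ comes from the subgroup attached to $S_\kappa$ in item (4). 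Finally, because $\mathscr{T}$ is a subdivision of the cone complex $\Theta\times\sigma$, the gluing morphisms $C_\mathscr{T}$ are inherited from the ambient face inclusions and carry no additional freedom, so they too are determined.

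The main obstacle is the inductive step for strata that are not convex, where $\overline{\kappa}$ differs from $\mathrm{Conv}(S_\kappa)$ because a reflex corner of $\kappa$ is a finite corner lying in $S_\kappa$ without being an extreme point of the convex hull. Here Lemma \ref{lem:justneedcorner} must be used in full strength: $\kappa$ is a union $\bigcup\mathrm{Int}(\mathrm{Conv}(C_i))$ of open convex cells whose vertices are corners, and the correct union is singled out precisely by requiring $\kappa$ to be the complement, inside the region spanned by $S_\kappa$, of the lower strata already reconstructed. Verifying that this peeling procedure returns exactly $\kappa$ — equivalently, that the incidence data in $W$ together with the corner positions leave no ambiguity among candidate non-convex regions — is the technical heart of the argument, and Proposition \ref{prop:Tropconnectedness} ensures that the discrete data being used is well defined and constant across the interior of $\sigma$.
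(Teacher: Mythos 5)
Your proposal is correct and takes essentially the same approach as the paper's own proof: both reduce the claim, via Lemma~\ref{lem:justneedcorner}, to the fact that the stratification of $|\Theta\times\sigma|$ is determined by the corners together with the grouping of convex cells into strata, with the discrete data supplying that grouping (and, in your more explicit version, the positions of the infinite corners and the local cone data). The paper's proof is a three-sentence compression of exactly this argument, so your extra detail — the induction on dimension and the treatment of non-convex strata — is elaboration rather than a different route.
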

	\begin{proof}
		Such a  family of tropical supports is specified by the data of a locally closed stratification of $$|\Theta \times \sigma|.$$ By Lemma \ref{lem:justneedcorner} such a  locally closed stratification is specified by the corners and which convex hulls are the same strata. This latter datum is specified by the discrete data. 
	\end{proof}
	
	\subsubsection{Local model} Let $D$ be a discrete datum. We say two finite corners $c_1,c_2$ of $D$ are \textit{similar} if there is an isomorphism of discrete data with associated bijection of finite corner sets $\varphi$ satisfying $\varphi(c_1) =c_2$. Suppose PL polyhedral structures in the combinatorial type $[\mathscr{T}]$ have $n$ finite corners and let $\mathrm{Sym}([\mathscr{T}])$ be the subgroup of the symmetric group on $n$ letters which maps each corner to a similar corner. The position of each finite corner gives a map of sets
	$$\Phi:\{\textrm{PL-polyhedral structures with discrete data }D\}\rightarrow |\Theta|^n/\mathrm{Sym}(D).$$
	Our strategy is to upgrade this map of sets to a map of piecewise linear spaces.
	
	\begin{proposition}\label{prop:localmodel}
		There is a disjoint union of local cones $\sqcup_i \mathfrak{s}_i$ and a $\mathrm{Sym}(D)$ equivariant embedding $$\bigsqcup_i\mathfrak{s}_i\hookrightarrow \Theta^n$$ such that the inclusion of topological realisations $$|\mathfrak{s}_i|/\mathrm{Sym}(D)\hookrightarrow|\Theta|^n/\mathrm{Sym}(D)$$ 
		is the image of $\Phi$. Moreover the image is disjoint from the big diagonal of $|\Theta|^d$.
	\end{proposition}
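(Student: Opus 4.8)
The plan is to realise the image of $\Phi$ as the quotient of a union of local cones by exhibiting the locus of admissible finite-corner positions inside a product cone of $\Theta^n$ and checking that it is carved out by homogeneous linear conditions. First I would fix the assignment of corners to cones recorded in the discrete datum $D$, say corner $j$ is assigned to the cone $\theta_j$ of $\Theta$. The positions of the $n$ finite corners then live in the interior of the product cone $$\sigma_D = \theta_1 \times \cdots \times \theta_n \subset \Theta^n,$$ whose lattice is $N_D = \bigoplus_j N_{\theta_j}$. Working in these coordinates, Lemma \ref{lem:justneedcorner} together with the reconstruction in the preceding Corollary (a family of tropical supports is determined by its discrete data together with its finite-corner positions) shows that $\Phi$ is injective on PL polyhedral structures with discrete data $D$; its image is exactly the set $Z_D \subset |\sigma_D|^o$ of position vectors $(p_1,\dots,p_n)$ that reconstruct, via the convex-hull recipe of Lemma \ref{lem:justneedcorner}, a PL polyhedral structure whose discrete data equals $D$.

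Next I would show $Z_D$ is a conical, locally closed subset. Since the entire reconstructed structure is a function of the corner positions, requiring its discrete data to coincide with $D$ unwinds into: (i) each $p_j$ lying in the relative interior of $\theta_j$; (ii) the incidence pattern $W$ of which corners lie in the closure of each reconstructed stratum holding exactly as prescribed; and (iii) the direction lattice attached to each $S_\kappa$ in item (4) of the discrete datum agreeing with the prescribed subgroup of $\theta^{\mathrm{gp}}$. Each of these is a conjunction of homogeneous linear equalities and strict homogeneous linear inequalities in $(p_1,\dots,p_n)$, because every datum involved is conical and hence invariant under simultaneous scaling: the equalities come from (iii) and the ``same stratum'' half of (ii), and the strict inequalities from (i) and the ``distinct stratum'' half of (ii). Consequently the equalities confine $Z_D$ to a rational linear subspace $L \subset N_D \otimes \mathbb{R}$, inside which the strict inequalities cut out an open region that is a union of relative interiors of cones of the fan induced by the relevant hyperplanes. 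Grouping these pieces by the combinatorial type they realise (finitely many, since $D$ refines to finitely many types) and by connected component yields the disjoint union $\bigsqcup_i \mathfrak{s}_i$ of local cones, each with $N_{\mathfrak{s}_i} = (L \cap N_D)^{\mathrm{sat}}$ and $U_{\mathfrak{s}_i}$ the corresponding component. The inclusion $N_{\mathfrak{s}_i} \hookrightarrow N_D$ together with $\sigma_D \subset \Theta^n$ induces the embedding $\bigsqcup_i \mathfrak{s}_i \hookrightarrow \Theta^n$.

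For equivariance I would observe that $\mathrm{Sym}(D)$ permutes similar corners and hence permutes the factors of $\Theta^n$; since similar corners are assigned the same cone of $\Theta$, this permutation preserves $\sigma_D$, and as an automorphism of the discrete datum it preserves the defining (in)equalities of $Z_D$, so it preserves $\bigsqcup_i \mathfrak{s}_i$ and makes the embedding equivariant. Passing to quotients by $\mathrm{Sym}(D)$ then identifies $|\mathfrak{s}_i|/\mathrm{Sym}(D)$ inside $|\Theta|^n/\mathrm{Sym}(D)$ with the image of $\Phi$, using Proposition \ref{prop:Tropconnectedness} to know the discrete type is constant on each piece. Disjointness from the big diagonal of $|\Theta|^n$ is then immediate: distinct finite corners are distinct zero-dimensional strata of $\mathcal{P}_\mathscr{T}$, so they occupy distinct points of $|\Theta|$, whence no coordinate collision $p_j = p_k$ with $j\neq k$ can occur on $Z_D$.

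The hard part will be the middle step, namely the precise translation of ``the discrete data equals $D$'' into a finite system of homogeneous linear equalities and strict inequalities. In particular I expect the subtle point to be verifying that the subgroup condition (4) — the direction lattice of each stratum — is genuinely a linear-algebraic constraint on the corner positions rather than something more delicate, and that conicality (scale invariance) survives the convex-hull reconstruction, so that the admissible locus is really a union of cone interiors inside a rational linear subspace rather than a more general polyhedral region. Once this is established, the identification of $\mathfrak{s}_i$, the equivariance, and the avoidance of the diagonal are all formal.
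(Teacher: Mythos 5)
Your strategy coincides with the paper's own proof in its skeleton: both place the corner positions inside the product cone $\theta_1 \times \cdots \times \theta_n$ determined by the corner-to-cone assignment, both extract a saturated sublattice $N_{\mathfrak{s}}$ from the linear equalities imposed by item (4) of the discrete datum, both treat the incidence data $W$ as an open condition, and both take connected components to produce the $U_{\mathfrak{s}_i}$. Your handling of equivariance and of disjointness from the big diagonal is also sound (the paper leaves both essentially implicit).

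The gap is precisely in the step you yourself flagged as the hard part, and it is not merely delicate --- as stated it fails. You claim that ``the discrete data equals $D$'' unwinds into a single conjunction of homogeneous linear equalities and strict linear inequalities in $(p_1,\dots,p_n)$, so that the admissible locus is the complement of a hyperplane arrangement inside the subspace $L$. But the incidence half of the condition is not linear in the variable corner positions: whether a corner lies in the closure of a reconstructed stratum is a convex-hull membership condition with \emph{variable} vertices. Already in one dimension, $p_c \in \mathrm{conv}(p_1,p_2)$ is the condition $(p_c-p_1)(p_c-p_2)\le 0$, a union of two polyhedral pieces rather than a conjunction of linear inequalities; in general such incidence loci are only piecewise linear, i.e.\ finite unions of rational polyhedral cones. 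Consequently the removed bad locus is not a union of hyperplanes, and ``the fan induced by the relevant hyperplanes'' does not exist in the form you invoke. The paper closes this point by a different device: it argues by induction on the dimension of $\Theta$ that the closed subset removed from $N_{\mathfrak{s}}\otimes\mathbb{R}$ by the conditions in $W$ is the support of a cone complex, whence each connected component $U_{\mathfrak{s}_i}$ is a union of interiors of cones and $(N_{\mathfrak{s}},U_{\mathfrak{s}_i})$ is a genuine local cone. To repair your argument, replace the linear-inequality claim either with that inductive statement or with a direct proof that convex-hull incidence loci of variable points inside $L$ are finite unions of rational polyhedral cones; the rest of your proposal (injectivity of $\Phi$ via Lemma \ref{lem:justneedcorner}, equivariance of the embedding, avoidance of the diagonal) then goes through as written.
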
 
	\begin{proof}
		We set $r: |\Theta^n| \rightarrow |\Theta|^n/\mathrm{Sym}(D)$ the quotient map. We will show $r^{-1}(\mathrm{Im}(\Phi))$ defines a finite union of local cones. Since the action of $\mathrm{Sym}(\mathscr{T})$ preserves which cone a point lies in, there are a sequence of cones $\sigma_1,...,\sigma_n$ of $\Theta$ such that $\mathscr{U}_D=r^{-1}(\mathrm{Im}(\Phi))$ lies in the topological realisation of the product $$\sigma_1 \times \sigma_2 \times ... \times \sigma_n \subset \Theta^d$$
		The product of cones is a cone and thus we may consider $$\sigma = \sigma_1 \times \sigma_2 \times ... \times \sigma_m \subset N$$ for some finitely generated free abelian group $N$. 
		
		The image of $\Phi$ is not the whole of $\sigma$. The third and fourth points in the discrete data cut out a subset. The fourth piece of data defines linear equalities. We thus determine a (saturated) subgroup $N_{\mathfrak{s}} \subset N$. We will take $N_{\mathfrak{s}}$ to be the torsion free abelian group in the definition of the local cones we construct.

        We are left to identify open subsets $U_{\mathfrak{s}_i}$ of $N_{\mathfrak{s}}\otimes \mathbb{R}$. The set $W$ imposes an open condition. Indeed fixing which sets lie in the closure of a stratum is an open condition - we are asking no corner lies in the closure of a stratum it should not lie in the closure of. Thus we have specified an open subset $U_\mathfrak{s}$ of $N_{\mathfrak{s}}\otimes \mathbb{R}$. The open subset is not obviously connected - set $U_{\mathfrak{s}_i}$ to be the connected components. 

        We define $\mathfrak{s}_i = (N_{\mathfrak{s}},U_{\mathfrak{s}_i})$ and claim we have specified local cones. By induction on dimension of $\Theta$ we know the closed subset removed to define $U_{\mathfrak{s}_i}$ is the support of a cone complex, and thus $U_{\mathfrak{s}}$ is the union of interiors of cones.
	\end{proof}
	
	We say PL subdivisions $\mathscr{T}_p$ and $\mathscr{T}_q$ are of the same \textit{combinatorial type} if we can take $p,q$ to lie in the same $\mathfrak{s}_i$. Write $[\mathscr{T}_p]=[\mathscr{T}_q]$ for the combinatorial type of $\mathscr{T}_p$
	
	Associated to any local cone $\mathfrak{s} = \mathfrak{s}_i$ is a piecewise linear complex which we now describe. Take the closure of $\mathfrak{s}$ in $\mathbb{R}^k$. This topological space has a conical locally closed stratification with two strata, one of which is $\mathfrak{s}$. The output of Construction~\ref{cons:} is a piecewise linear cone $\mathscr{S}_{[\mathscr{T}]}$. 
	
	We must subdivide the piecewise linear complex $\mathscr{S}_{[\mathscr{T}]}$ without subdividing its dense stratum. This is because multiple combinatorial types may appear in a single boundary stratum. To obtain the right subdivision we need to construct a universal family. In the sequel whenever $\mathscr{T}$ has discrete data $D$ we write $\mathrm{Sym}(D) = \mathrm{Sym}([\mathscr{T}])$.
	
	\subsubsection{Universal family over local model} We now construct the universal family associated to $\mathscr{S}_{[\mathscr{T}]}$ as a subdivision of $\mathscr{S}_{[\mathscr{T}]}\times \Theta$. In light of Construction \ref{cons:BddPLSpace} it suffices to construct a conical locally closed stratification $\mathcal{P}_{uni}$ of $$|\mathscr{S}_{[\mathscr{T}]}\times \Theta|\subset |\Theta^n\times \Theta|.$$ This locally closed stratification, and thus the universal family we construct, is $\mathrm{Sym}([\mathscr{T}])$ equivariant.
	
	We now explain how to construct $\mathcal{P}_{uni}$. The map $$\Theta^n \times \Theta\rightarrow \Theta^n$$ has $n$ universal sections $s_1,...,s_n$. We have fixed a bijection between $\{s_i\}$ and the set of finite corners in the discrete data $D$ of $\mathscr{T}_p$ when we thought of $\mathscr{S}_\mathscr{T}$ as a subset of $\Theta^n$. Infinite corners naturally biject with dimension one strata of $\mathcal{P}_{\mathscr{T}_0}$. Thus associated to each corner we have a locally closed stratum inside $\Theta^n \times \Theta$. 
 
 Write $\{x_i\}$ for the set of strata corresponding to corners. We say a subset $T\subset \{x_i\}$ is strictly convex if for all $x_i \in T$ the convex hull of $T \backslash \{x_i\}$ is not equal to the convex hull of $T$. We take strata over the interior of $\mathscr{S}_{[\mathscr{T}]}$ to be unions of convex hulls of the sections corresponding to the strictly convex subsets. Which unions to take is specified by the discrete data $D$. Strata which lie neither over the interior of $\mathscr{S}_{[\mathscr{T}]}$ nor the zero stratum in $\mathscr{S}_{[\mathscr{T}]}$ are dictated by combinatorial flatness.
	
	Denote the resulting map of piecewise linear spaces by $\mathscr{X}_{[\mathscr{T}]}\rightarrow \mathscr{S}_{[\mathscr{T}]}$. This map is not in general combinatorially flat because the image of each stratum of $\mathcal{P}_{\mathscr{X}_{[\mathscr{T}]}}$ may not be an entire stratum, violating axiom F1. Pushing forward the stratification on  $\mathscr{X}_{[\mathscr{T}]}$ defines a conical refinement of the stratification $\tilde{\mathcal{P}}$ on $\mathscr{S}_{[\mathscr{T}]}$. Pulling back $\tilde{\mathcal{P}}$ defines a conical refinement of $\mathcal{P}_{\mathscr{X}_{[\mathscr{T}]}}$. Applying Construction \ref{cons:} to both conical stratifications, we obtain a new morphism of piecewise linear spaces $$\varpi:\tilde{\mathscr{X}} \rightarrow \tilde{\mathscr{S}}_{[\mathscr{T}]}$$
	This map is combinatorially flat: axiom F1 holds by construction and we deduce F2 from Lemma~\ref{lem:F2}. On the interior of ${\mathscr{S}}_{[\mathscr{T}]}$ no subdivision was made to yield $\tilde{\mathscr{S}}_{[\mathscr{T}]}$.
	
	\subsection{Passing to the quotient.} Consider $|\mathrm{Sym}([\mathscr{T}])|$ copies of $\tilde{\mathscr{S}}_{[\mathscr{T}]}$ labelled by the elements of $\mathrm{Sym}([\mathscr{T}])$. Glue these complexes as follows. Whenever $\sigma_1,\sigma_2$ are elements of $\mathrm{Sym}([\mathscr{T}])$ such that $\sigma_1\sigma_2^{-1}$ fixes a subcomplex $\mathscr{G}\hookrightarrow \mathscr{T}$, we glue along the subcomplex $\mathscr{G}$. Denote the resulting piecewise linear complex $\mathscr{G}$ and observe $\mathrm{Sym}([\mathscr{T}])$ acts freely upon it.

Define a subcomplex $R$ of $\mathscr{G}\times \mathscr{G}$ whose strata are pairs $(\kappa,\sigma(\kappa))$ whenever $\kappa$ lies in $\mathcal{P}_\mathscr{G}$ and $\sigma$ in $\mathrm{Sym}([\mathscr{T}])$. This is a strict equivalence relation and thus the quotient is a piecewise linear space
 $\mathscr{U}_{[\mathscr{T}]}.$
	\subsubsection{Gluing local patches.} We now construct a diagram $$g:J \rightarrow \textbf{PLS} $$ of piecewise linear spaces with all morphisms open: the colimit over this diagram will be $\mathrm{Supp}(\Theta)$ and since all morphisms will be inclusions of subcomplexes, the result is a piecewise linear space. 

Objects of $J$ are pairs consisting of a copy of $\mathscr{U}_{[\mathscr{T}]}$ and a $\mathrm{Sym}([\mathscr{T}])$ equivariant weight function $$\varphi_{[\mathscr{T}]}:\mathcal{P}_{\tilde{\mathscr{X}}_{[\mathscr{T}]}} \rightarrow \bigcup_\theta G_\theta$$ where $\varphi_{[\mathscr{T}]}$ satisfies the conditions of Definition \ref{defn:FamilyTropSupp}. There is a morphism of pairs $$(\mathscr{U}_{[\mathscr{T}]},\varphi_{[\mathscr{T}]})\rightarrow (\mathscr{U}_{[\mathscr{T}']},\varphi_{[\mathscr{T}']})$$ whenever $[\mathscr{T}']$ arises as the combinatorial type of $\mathscr{T}_p$ for $p$ not in the dense stratum of $\tilde{\mathscr{S}}_{[\mathscr{T}]}$.

The image of a pair $(\mathscr{U}_{[\mathscr{T}]}, \varphi_{[\mathscr{T}]})$ under $g$ is the piecewise linear space $\mathscr{U}_{[\mathscr{T}]}$.  We now define the image of a morphism $$(\mathscr{U}_{[\mathscr{T}]},\varphi_{[\mathscr{T}]})\rightarrow (\mathscr{U}_{[\mathscr{T}']},\varphi_{[\mathscr{T}']})$$ under $g$. Certainly we know $[\mathscr{T}] \ne [\mathscr{T}']$. The proof of Proposition \ref{prop:localmodel} shows that there is a subcomplex $$ \iota:\tilde{\mathscr{S}}_{[\mathscr{T}']}\rightarrow \tilde{\mathscr{S}}_{[\mathscr{T}]}\textrm{ which descends to the quotient to define a subcomplex }\iota':\mathscr{U}_{[\mathscr{T}']}\hookrightarrow \mathscr{U}_{[\mathscr{T}]}.$$
And this is our morphism - a subcomplex is an open morphism.

The colimit over the piecewise linear spaces yields a new piecewise linear space $\supptrop(\Theta)$. There is a corresponding diagram of universal families with colimit denoted $\mathscr{X}$ and a morphism $\mathscr{X} \rightarrow \Theta \times \supptrop(\Theta)\rightarrow \supptrop(\Theta)$. There is a unique function $$\varphi: \mathcal{P}_\mathscr{X} \rightarrow \bigcup_\theta G_\theta$$ which pulls back to each of the $\varphi_{[\mathscr{T}]}$.
	
	\begin{proof}[Proof of Theorem \ref{thm: Universal property Supptrop}]
		We check the piecewise linear space $\supptrop(\Theta)$ satisfies the universal property of Theorem \ref{thm: Universal property Supptrop}. Consider a family of tropical supports over a cone $\sigma$ say $$\mathscr{X}_\sigma\rightarrow \sigma \times \Theta \rightarrow \sigma, \varphi_\sigma.$$ Enumerating the $n$ corners of the PL subdivision corresponding to $\mathscr{X}_\sigma$ gives a section of the map $\Theta^n\times \sigma\rightarrow \sigma$ which we think of as a map $\sigma \rightarrow \Theta^n$. The combinatorial type of a PL polyhedral structure arising from a point in the interior $\sigma$ dictates a choice of $\mathscr{U}_D$ and we have the composition $$\sigma \rightarrow \Theta^n \rightarrow \mathscr{U}_D.$$ The weight function thus defines a map from $\sigma$ to the image under $g$ of a unique pair $(\mathscr{U}_D,\varphi)$. The function $\varphi$ is determined uniquely by $\varphi_\sigma$. In this way we obtain a map to $\supptrop(\Theta)$ along which universal families pull back.
	\end{proof}

\section{Logarithmic flatness for toric varieties}
In this section we develop our understanding of the transversality condition captured by \textit{logarithmic flatness.} In the sequel we will define what it means for a coherent sheaf $\mathcal{F}$ on a logarithmic $S$ scheme $X/S$ to be logarithmically flat over $S$. In this section we restrict to the case that $S$ is a point with the trivial logarithmic structure and $X$ is a toric variety equipped with divisorial logarithmic structure from the toric boundary. 

\begin{situation}\label{sit:ToricWorld}
Let $X$ be an affine toric variety with dense torus $X^o$ and cocharacter lattice $M_X$. Define $\mathcal{E} = \mathcal{O}_X^n$ a locally free coherent sheaf on $X$. Consider a short exact sequence of sheaves on $X$ $$0 \rightarrow \mathcal{G} \rightarrow \mathcal{O}_X^n \xrightarrow{q} \mathcal{F}\rightarrow 0 \textrm{ with global sections } 0 \rightarrow G \rightarrow \C[X]^n \xrightarrow{q} {F}\rightarrow 0.$$ Write $G^o$ for $\mathcal{G}(X^o)$ and note since open immersions are flat, $\mathcal{G}(X^o)$ is a submodule of $\mathbb{C}[X^o]^n$. Assume $\mathcal{F}$ has no non--zero sections with support contained within the toric boundary of $X$.
\end{situation}

\subsection{The tropical support of $q$.} The tropical support of a surjection of sheaves $q$ on a toric variety $X$ is a piecewise linear space $\mathscr{T}$ subdividing $\mathbb{R}^{\mathrm{dim}(X)} = M_X \otimes \mathbb{R}$. Tropical support will play an important role in the sequel where it is defined in terms of torus actions. In this section we instead take the Gr\"obner theory approach.

\subsubsection{Monomials}
Observe both $\mathbb{C}[X^o]^n = \mathcal{O}_X^n(X^o)$ and $\mathbb{C}[X]^n = \mathcal{O}_X^n(X)$ admit natural actions of $X^o$. A monomial of $X^o$ (respectively $X$) is a character occuring in the $X^o$ representation $\mathbb{C}[X^o]^n$ (respectively $\mathbb{C}[X]^n$). Write $\mathrm{mon}(\mathcal{E}), \mathrm{Mon}(\mathcal{E})$ for the set of monomials of $X^o$ and $X$ respectively.

\subsubsection{Term orders} A \textit{term order} on $\mathrm{mon}(\mathcal{E})$ respectively $\mathrm{Mon}(\mathcal{E})$ is a total pre--order.
\begin{remark}\label{rem:Grobnerweak}
This is weaker than the notion of term order considered in Gr\"obner theory in two ways. First the relation need not be anti-symmetric. A more important subtlety is that for $M_1,M_2,P$ monomials with $M_1 \leq M_2$ it need not be true that $$M_1P\leq M_2P.$$ 
\end{remark}
Instead of unpacking this definition we construct all term orders which will be of interest to us in Example \ref{example:termorder}.
\begin{example}\label{example:termorder}
    The co-character/ character pairing assigns to a cocharacter $w$ of $X^o$ functions $$\mathrm{Mon}(\mathcal{E}) \rightarrow \mathbb{Z} \quad \mathrm{mon}(\mathcal{E}) \rightarrow \mathbb{Z}.$$ Pulling back the total order on $\mathbb{Z}$ assigns to $w$ a term order on $\mathrm{Mon}(\mathcal{E}),\mathrm{mon}(\mathcal{E})$ respectively. For $k$ a positive integer the term order associated to $w$ and $kw$ is the same. This allows us to define a term order associated to every $w'\in M\otimes \mathbb{Q} = \mathbb{Q}^n$: choose a positive integer $\ell$ such that $\ell w'$ lies in $\mathbb{Z}^n$ and use the term order associated to $\ell w$.
\end{example}

\subsubsection{Initial forms}
Any element $g^o \in \mathbb{C}[X^o]^n$ respectively $g \in \mathbb{C}[X]^n$ may be expressed as a $\mathbb{C}$ linear combination of monomials $$g^o = \sum_{m \in \mathrm{mon}(\mathbb{C}[X^o]^n)} a_m^o m,   \quad g=\sum_{m \in \mathrm{Mon}(\mathbb{C}[X]^n)} a_m m   \textrm{ where }a_m^o, a_m \in \mathbb{C}.$$
We define new elements of $\mathbb{C}[X^o]^n$ and $\mathbb{C}[X]^n$ respectively $$\mathrm{in}_w(g^o) = \sum_{w(m) \textrm{ maximal}} a_m^o m \quad \textrm{ and }\quad  \mathrm{In}_w(g) = \sum_{w(m) \textrm{ maximal}} a_m m.$$

\begin{remark}
    The coefficients $a_m^o, a_m$ are not well defined. Indeed for $\mathbb{C}[X] = k[x]$ and $n=2$ we have $$(x,0) +(0,x) = (x,x)$$ and note $(x,x), (x,0)$ and $(0,x)$ are all monomials. The initial form $\mathrm{In}_w(g)$ is none the less well defined.
\end{remark}

\begin{remark}
    The initial forms need not be monomials. For example setting $\mathbb{C}[X] = \mathbb{C}[x,y],n=1$ and $w=(1,1)$ we find $$x+y = \mathrm{in}_w(x+y).$$
\end{remark}
\subsubsection{Initial submodules}
Given a submodule $$G\leq \mathbb{C}[X]^n \textrm{ or }G^o\leq \mathbb{C}[X^o]^n$$ we define $\mathrm{In}_w(G)$ the submodule of $\mathbb{C}[X]^n$ generated by $\mathrm{In}_w(g)$ for $g\in G$. Similarly $\mathrm{in}_w(G^o)$ is the submodule of $\mathbb{C}[X^o]^n$
 generated by $\mathrm{in}_w(g)$ for $g\in G^o$.
\subsubsection{The Gr\"obner stratification} We now work in Situation \ref{sit:ToricWorld}. As $w$ varies in $M_X \otimes \mathbb{Q}$ the intial ideal $\mathrm{in}_w(G^o)$ also varies. There is a conical locally closed stratification of $M \otimes \mathbb{Q}$ on which this initial ideal is constant. Following \cite{Cartwright}, we call this stratification the \textit{Gr\"obner stratification}. The \textit{tropical support} of $q$ is the piecewise linear space $\mathscr{T}(q)$ associated to the conical stratification obtained from the common refinement of the fan of $X$ and the Gr\"obner stratification.
\subsection{Notions of transversality}\label{sec:transversalitynew} We begin by stating several notions of transversality. The remainder of this section is largely dedicated to the connections between these definitions and how we can ensure they hold.
 \subsubsection{Logarithmic flatness}\label{sec:toriclogflat} Logarithmic flatness will be defined in general in Section \ref{subsec:LogFlat}. For now we handle the special case relevant to this section. Denote the projection and torus multiplication maps $$X\xleftarrow{\pi_1} X\times X^o \xrightarrow{m} X.$$ A sheaf $\mathcal{F}$ on a toric variety $X$ is called \textit{logarithmically flat} over $\mathrm{Spec}(\mathbb{C})$ if $m^\star \mathcal{F}$ is flat over $X$ along the map $\pi_1$. 
\subsubsection{Transversality}\label{sec:toricTransversality} For $\sigma$ a cone in the fan of $X$ denote the associated torus orbit $O(\sigma)$ and set $V(\sigma)$ the closure of $O(\sigma)$. We say a sheaf $\mathcal{F}$ on $X$ is \textit{transverse} if the pullback of $\mathcal{F}$ to $V(\sigma)$ has no sections supported on the compliment of $O(\sigma).$

\begin{example}
Set $X = \mathbb{A}^2$, and note the fan of $X$ has two rays, one zero cone and one two dimensional cone. Left is a curve with transverse structure sheaf. Centre is not transverse because setting $\sigma$ any ray, the structure sheaf has a section supported on the orbit supported at the maximal cone - i.e. $(0,0)$. Right is not transverse by setting $\sigma$ the zero cone. 
\end{example}
\begin{figure}[ht]
			\centering
			\includegraphics[width= 120mm]{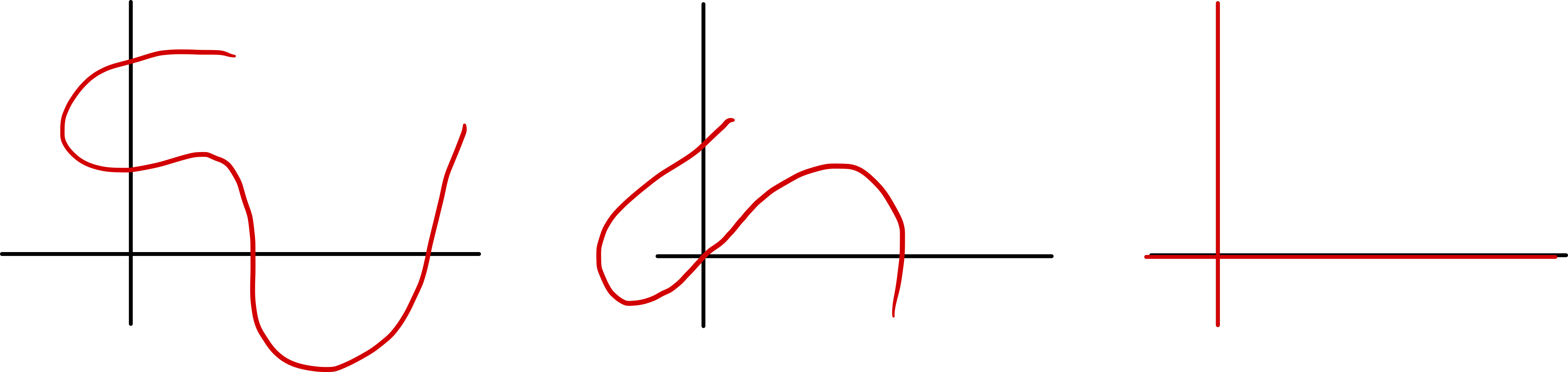}
			\caption{Curves in $\mathbb{A}^2$ depicted in red. The divisors $X=0,Y=0$ are depicted as black lines.}\label{fig:KeyExample}
		\end{figure}

\subsubsection{Strict and total transform}\label{sec:toric Strict and total} We say a sheaf $\mathcal{F}$ on $X$ has the strict and total transform property if given any toric modification $$\pi:X_\Gamma \rightarrow X$$ the strict transform $\pi^!\mathcal{F}$ and total transform $\pi^\star\mathcal{F}$ coincide.
\subsubsection{Tropical support}\label{subsec: toric tropical support} Our final condition requires that $X_\Gamma$ is a tropical model of the tropical support of $q$. 

\subsection{Tevelev's theorem} In the sequel we focus on the following upgrade of Situation \ref{sit:ToricWorld}.

\begin{situation}\label{sit:toricflattening}
    Continuing Situation \ref{sit:ToricWorld}, let $\pi_\Gamma: X_\Gamma\rightarrow X$ be a toric modification. Writing $\mathcal{F}_\Gamma$ for the strict transform of $\mathcal{F}$ there is a short exact sequence of coherent sheaves on $X_\Gamma$ $$0 \rightarrow \mathcal{G}_\Gamma \rightarrow \pi_\Gamma^\star\mathcal{E} \xrightarrow{q_\Gamma} \mathcal{F}_\Gamma\rightarrow 0 \textrm{ with global sections } 0 \rightarrow G_\Gamma \rightarrow C[X_\Gamma]^n \xrightarrow{q_\Gamma} {F}_\Gamma\rightarrow 0.$$ 
We assume that $\mathcal{F}_\Gamma$ is logarithmically flat over $\mathrm{Spec}(\mathbb{C})$.
\end{situation}

The importance of this situation reflects the following theorem first proved by Tevelev in unpublished notes \cite{TevNotes}, generalising his published result \cite{tevelev2005compactifications}. The theorem was first stated in this form in the literature by Ulirsch \cite{UlirschThesis}. We recall Tevelev's proof as we are unable to find a suitable reference.

\begin{theorem}\label{thm:OldTev}
    Let $\mathcal{F}$ be a sheaf on toric variety $X$ with no sections supported on the toric boundary. There is a logarithmic modification $X_\Gamma \rightarrow X$ such that the strict transform of $\mathcal{F}$ is logarithmically flat. 
\end{theorem}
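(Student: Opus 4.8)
The plan is to manufacture the modification from the combinatorics of a presentation of $\mathcal{F}$ and then verify logarithmic flatness one torus direction at a time by a Gr\"obner degeneration argument. First I would choose a presentation $q:\mathcal{O}_X^n \twoheadrightarrow \mathcal{F}$ with kernel $\mathcal{G}$, reducing on each affine chart to Situation \ref{sit:ToricWorld}. The hypothesis that $\mathcal{F}$ has no sections supported on the toric boundary guarantees that $\mathcal{G}$ is recovered from its restriction $G^o = \mathcal{G}(X^o)$ to the dense torus via $\mathcal{G}=G^o\cap \mathbb{C}[X]^n$, so that the entire problem is controlled by the torus-invariant Laurent data $G^o$. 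Forming the Gr\"obner stratification of $M_X\otimes\mathbb{R}$ attached to $G^o$ and taking its common refinement with the fan of $X$ produces the tropical support $\mathscr{T}(q)$, a piecewise linear space subdividing $\mathrm{Trop}(X)$. Since the Gr\"obner data is compatible with restriction to faces, the charts glue to a global piecewise linear subdivision.

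Next I would apply Proposition \ref{prop:PLsubdivisions} to choose a tropical model $\Gamma\to\mathscr{T}(q)$, that is, an honest cone complex subdividing $\mathscr{T}(q)$; under the correspondence of Section \ref{sec:intro:rep} this determines a logarithmic (equivalently toric) modification $\pi_\Gamma:X_\Gamma\to X$. The feature I intend to exploit is that, because $\Gamma$ refines the Gr\"obner stratification, each cone $\gamma$ of $\Gamma$ lies inside a single Gr\"obner stratum, so the initial submodule $\mathrm{in}_w(G^o)$ is constant as $w$ ranges over the relative interior of $\gamma$.

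The heart of the argument is to convert this constancy into logarithmic flatness of the strict transform $\mathcal{F}_\Gamma$. Unwinding the definition of Section \ref{sec:toriclogflat}, I must show that $m^\star\mathcal{F}_\Gamma$ is flat over $X_\Gamma$ along $\pi_1$, where $X_\Gamma \xleftarrow{\pi_1} X_\Gamma\times X^o \xrightarrow{m} X_\Gamma$. Flatness of an $X^o$-equivariant sheaf may be tested one orbit at a time, and the normal directions to an orbit $O(\gamma)$ are governed by the one-parameter degenerations $w$ in the interior of $\gamma$; the flat limit of $\mathcal{F}_\Gamma$ under such a degeneration is computed by the initial module, with associated quotient $\mathbb{C}[X^o]^n/\mathrm{in}_w(G^o)$. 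Constancy of $\mathrm{in}_w(G^o)$ on the interior of $\gamma$ shows this limit is independent of the chosen interior ray, so the translates of $\mathcal{F}_\Gamma$ assemble into a single flat family over $V(\gamma)$; running this over all cones of $\Gamma$ and gluing yields flatness of $m^\star\mathcal{F}_\Gamma$ over all of $X_\Gamma$.

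The main obstacle will be making the equivalence between constancy of initial modules and scheme-theoretic logarithmic flatness genuinely precise rather than merely morally correct. Logarithmic flatness is a condition on the total space $X_\Gamma\times X^o$ via $m$, not a fibrewise statement about individual limits, so one must control the family produced by $m^\star$ uniformly rather than limit by limit; and the weakened term orders flagged in Remark \ref{rem:Grobnerweak}, in particular their non-multiplicativity, mean that the standard flatness of a Gr\"obner degeneration cannot be quoted verbatim and must be re-derived at the level of modules over the relevant orbit charts. I would again use the absence of sections supported on the boundary at this step: it is exactly what rules out spurious embedded contributions along the boundary and lets me identify the flat limit with $\mathrm{in}_w(G^o)$ rather than a strictly larger module, closing the argument.
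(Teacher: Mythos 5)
Your proposal has a genuine gap, and it sits exactly where you flagged the ``main obstacle.'' The step you need --- that choosing $\Gamma$ to be a tropical model of the Gr\"obner/tropical-support subdivision $\mathscr{T}(q)$ forces the strict transform $\mathcal{F}_\Gamma$ to be logarithmically flat --- is precisely the implication $(4)\Rightarrow(1)$ of Theorem \ref{thm:CharacteriseLogFlat}. The paper explicitly does \emph{not} prove this implication: it is stated as Theorem \ref{conj:CharacteriseLogFlat}, with proof deferred to the companion paper \cite{Quot2}. What this paper proves is the chain $(1)\Leftrightarrow(2)\Rightarrow(3)\Rightarrow(4)$, i.e.\ only the directions you cannot use. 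Your sketch does not close the gap: testing flatness ``one orbit at a time'' along one-parameter degenerations $w$ in the interior of a cone $\gamma$ only probes the special traits coming from cocharacters (this is exactly what Lemma \ref{lem:inw is Gw} and Proposition \ref{prop:initialideal} compute, and it is how the paper proves $(1)\Rightarrow(4)$, the \emph{opposite} direction), whereas logarithmic flatness via the valuative criterion \cite[11.8.1]{EGA4} must be verified against \emph{all} traits mapping into the relevant total space, whose generic points need not degenerate along a cocharacter direction. So constancy of $\mathrm{in}_w(G^o)$ on cone interiors does not, by any argument available in this paper, ``assemble into a single flat family over $V(\gamma)$''; making that assembly rigorous is the entire content of the deferred theorem, and your final paragraph restates the needed claim rather than proving it.

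For contrast, the paper's own proof avoids Gr\"obner theory entirely and is a Grothendieck-style flattening argument: push $m^\star\mathcal{F}$ forward along $\mathbb{P}^k\times X\to X$ (viewing $\mathbb{G}_m^k$ as the dense torus of $\mathbb{P}^k$), extract a coherent equivariant subsheaf $\mathcal{G}$, form the relative Quot scheme $Q=\mathsf{Quot}(\mathbb{P}^k\times X/X,\mathcal{G})$, use generic flatness to obtain a section over a dense open $U\subset X$, let $\overline{X}$ be the scheme-theoretic image of $U$ in $Q$, invoke Grothendieck's theorem \cite[2.3.5]{EGA3} that this projective birational morphism is a blowup, and observe that flatness of the universal quotient plus equivariance of the blowup centre makes $\overline{X}\to X$ a logarithmic modification flattening the strict transform. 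If you want a proof along your tropical lines, you would in effect have to reprove the main theorem of \cite{Quot2}; with the tools available in this paper, the Quot-scheme route is the one that works.
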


\begin{proof}
    We show that $m^\star \mathcal{F}$ can be flattened by strict transform under an equivariant blowup of $X$. We think of $\mathbb{G}_m^k$ as the dense torus of $\mathbb{P}^k$ and we consider the projection map $$\mathbb{P}^k \times X\rightarrow X $$ The pushforward of $m^\star\mathcal{F}$ to $\mathbb{P}^k \times X$ is equivariant and quasicoherent but may not be coherent. It contains a coherent equivariant subsheaf. We denote this equivariant coherent sheaf $\mathcal{G}.$

    We now consider Grothendieck's Quot scheme $$Q = \mathsf{Quot}(\mathbb{P}^k \times (X)/X,\mathcal{G})$$ which is a union of projective varieties over $X.$

 By generic flatness $\mathcal{G}$ is flat over an open subscheme $U\subset X$ and so the map $Q \rightarrow Y$ admits a section $U \rightarrow Q$ over $U$. Take $\overline{X}$ a scheme theoretic image of $U$ and observe the map $\overline{X}\rightarrow X$ is birational as it is an isomorphism over $Q$ and the domain integral because $U$ is integral. By a theorem of Grothendieck it is a blowup \cite[2.3.5]{EGA3} of a closed subscheme.

 The morphism $\overline{X}\rightarrow Q$ carries a universal surjection $\overline{\mathcal{G}}\rightarrow \mathcal{N}$ whose kernel is supported outside of $U$ and thus $\mathcal{N}$ is the strict transform of $\mathcal{G}$ and is necessarily flat.

  It remains to check the map $\overline{X}\rightarrow X$ is a logarithmic modification. We know it is a blowup in a torus equivariant subscheme. Any such blowup is a logarithmic modification.
\end{proof}

\subsection{Types of transversality} We explain the connection between the transversality conditions introducted in Section \ref{sec:transversalitynew}. 

\begin{theorem}\label{thm:CharacteriseLogFlat}
    The following are equivalent
    \begin{enumerate}[(1)]
        \item The sheaf $\mathcal{F}_\Gamma$ is logarithmically flat, see Section \ref{sec:toriclogflat}.
        \item The sheaf $\mathcal{F}$ has the strict and total transform property, see Section \ref{sec:toric Strict and total}.
    \end{enumerate}
    Either of these conditions implies 
    \begin{enumerate}[(1)]
        \setcounter{enumi}{2}
        \item The sheaf $\mathcal{F}_\Gamma$ is transverse, see Section \ref{sec:toricTransversality}.
    \end{enumerate}
    Any of the above implies our next condition.
    \begin{enumerate}[(1)]
        \setcounter{enumi}{3}
        \item The fan of $X$ is a subdivision of $\mathscr{T}(q)$, see Section \ref{subsec: toric tropical support}. 
    \end{enumerate}
\end{theorem}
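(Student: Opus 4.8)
The plan is to prove the two equivalences and the two implications by translating each of the four conditions into a single statement about the one-parameter degenerations of $\mathcal{F}_\Gamma$, which in the toric setting are governed by the initial submodules $\mathrm{in}_w(G^o)$ of Situation \ref{sit:ToricWorld}. The unifying observation is that the torus multiplication map $m$ packages all such degenerations into one family over $X$, so that logarithmic flatness (Section \ref{sec:toriclogflat}) is exactly the statement that $m^\star\mathcal{F}_\Gamma$ is flat over $X$ via $\pi_1$. Once both equivalences are phrased in terms of initial degenerations, the matching is essentially bookkeeping.

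First I would establish $(1)\Leftrightarrow(2)$. By equivariance of $m^\star\mathcal{F}_\Gamma$ under the $X^o$-action, flatness over $X$ is constant along orbits, so by the local criterion it can be tested along the one-parameter subgroups $\lambda_w\colon \mathbb{A}^1\to X$ completing each cocharacter $w$ in the support of the fan. Over such a regular one-dimensional base flatness is torsion-freeness, and the fibre over $0$ is the Gr\"obner degeneration recorded by $\mathrm{in}_w(G^o)$; thus $(1)$ is equivalent to asking that no $w$-degeneration introduces a component supported over the special point. On the other hand a toric modification $\pi\colon X_\Gamma\to X$ introduces exceptional divisors indexed by its new rays $w$, the generic behaviour of $\pi^\star\mathcal{F}$ along such a divisor is precisely this $w$-degeneration, and the strict transform differs from the total transform exactly by the components of $\pi^\star\mathcal{F}$ supported on these divisors. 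Hence strict equals total transform for every modification (Section \ref{sec:toric Strict and total}) if and only if no $w$-degeneration carries such a component, which is the same condition. To realise every relevant $w$ by an honest modification and to compare the two descriptions on a common refinement I would appeal to Tevelev's Theorem \ref{thm:OldTev}, which furnishes a flattening logarithmic modification.

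Next, $(1)\Rightarrow(3)$. Here I would induct on the codimension of the torus orbits, the base case being the standing hypothesis of Situation \ref{sit:ToricWorld} that $\mathcal{F}$ has no sections supported on the toric boundary. Logarithmic flatness ensures that restriction to a boundary stratum $V(\sigma)$ is compatible with the degenerations in directions $w\in\mathrm{int}(\sigma)$ and introduces no torsion, so that the restriction of $\mathcal{F}_\Gamma$ to $V(\sigma)$ is again a sheaf with no sections supported on its deeper boundary. Iterating this down the orbit stratification yields transversality in the sense of Section \ref{sec:toricTransversality}. Finally, for $(3)\Rightarrow(4)$, transversality forces the restriction to each $V(\sigma)$ to have the expected support, which pins down $\mathrm{in}_w(G^o)$ for $w$ in the relative interior of $\sigma$ to be independent of the chosen $w$; thus the Gr\"obner stratification is refined by the fan, and since $\mathscr{T}(q)$ is by construction the common refinement of the fan of $X$ with the Gr\"obner stratification (Section \ref{subsec: toric tropical support}), this says precisely that the fan of $X$ is a subdivision of $\mathscr{T}(q)$.

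I expect the main obstacle to be the equivalence $(1)\Leftrightarrow(2)$: reducing the flatness of $m^\star\mathcal{F}_\Gamma$ over the singular, higher-dimensional toric base $X$ to the one-parameter degenerations via equivariance, and then matching the torsion-free condition exactly against the comparison of strict and total transform, is the technical heart of the argument. The two implications into $(3)$ and $(4)$ should be comparatively routine once the degenerations are controlled, since they only reorganise the same initial-submodule data along the orbit stratification.
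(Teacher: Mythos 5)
Your reduction of flatness to one-parameter subgroups is a genuine gap, and it sits at what you yourself identify as the heart of the argument. In your proof of $(1)\Leftrightarrow(2)$ you assert that, by equivariance of $m^\star\mathcal{F}$, flatness over $X$ ``can be tested along the one-parameter subgroups $\lambda_w$'' and is then just torsion-freeness over $\mathbb{A}^1$. Equivariance only shows that the non-flat locus of $m^\star\mathcal{F}$ over $X$ is a torus-invariant closed subset; it does not allow you to test flatness at a point of a deep stratum, where the local ring has dimension at least two, by restricting to monomial arcs. The valuative criterion for flatness \cite[11.8.1]{EGA4}, which is what the paper actually invokes, requires \emph{all} traits $S\to X$, and a general arc through a deep point is not a torus translate of any $\lambda_w$. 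The data your test collects is exactly the cone-wise behaviour of the initial modules $\mathrm{in}_w(G^o)$, i.e.\ the degeneration data recorded by condition (4); so your argument in effect claims $(4)\Rightarrow(1)$. The paper is explicit that this implication is a separate, harder result (Theorem \ref{conj:CharacteriseLogFlat}) whose proof is deferred to \cite{Quot2} and does not appear in this paper. The paper's own proof of $(2)\Rightarrow(1)$ (Proposition \ref{prop:Strict/Total implies log flat}) circumvents precisely this: given an arbitrary trait with generic point in the torus, it extracts the associated ray, subdivides so that this ray appears in the fan, factors the trait through the resulting modification by the universal property of blowup, and only then uses the strict-equals-total hypothesis (for \emph{all} modifications) together with Theorem \ref{thm:OldTev} to conclude flatness of the pullback. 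That factorization step is exactly what a test along the $\lambda_w$ alone cannot supply. A secondary defect in the same step: your dictionary ``strict $=$ total transform for every modification iff no $w$-degeneration has a component supported over the special point'' only sees exceptional divisors generically, whereas the two transforms can differ by torsion supported in codimension at least two of the exceptional locus, invisible at the generic points of the divisors indexed by rays $w$.

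The remaining pieces of your plan are essentially the paper's arguments and are fine in outline. Your $(1)\Rightarrow(3)$ is the paper's localization argument (injectivity of multiplication by an equation cutting out the boundary, which you reorganize as an induction over strata), and your $(3)\Rightarrow(4)$ states the right conclusion; but note that the identity ``restriction to a stratum equals the initial module'' (Proposition \ref{prop:initialideal}, Lemma \ref{lem:GsigmaIsinwD}) is proved in the paper \emph{under logarithmic flatness}, not under transversality, so to get $(3)\Rightarrow(4)$ you cannot simply quote it: the paper instead runs a contradiction argument, choosing a minimal cone on which $\mathrm{in}_w(G^o)$ jumps and using semicontinuity of initial modules to show the restriction to the corresponding stratum would then be a proper submodule, violating transversality. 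You would need to supply that step as well.
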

The following theorem is helpful for building intuition about tropical support. Its proof does not apppear in this paper

\begin{theorem}[\cite{Quot2}]\label{conj:CharacteriseLogFlat}
    Assume also that $X$ is smooth. Condition (4) implies Condition (1) and thus the conditions in Theorem~\ref{thm:CharacteriseLogFlat} are equivalent for smooth $X$.
\end{theorem}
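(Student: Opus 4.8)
The plan is to establish the one missing implication $(4)\Rightarrow(1)$, since Theorem~\ref{thm:CharacteriseLogFlat} already provides $(1)\Leftrightarrow(2)$ together with the downward chain $(1)\Rightarrow(3)\Rightarrow(4)$, so that $(4)\Rightarrow(1)$ closes the cycle. The first thing I would do is unwind condition $(4)$ in the setting of Situation~\ref{sit:toricflattening}: because the modification $\pi_\Gamma$ is an isomorphism over the dense torus we have $G_\Gamma^o = G^o$, so the Gröbner stratification of $q_\Gamma$ agrees with that of $q$, and condition $(4)$ says exactly that the fan $\Gamma$ of $X_\Gamma$ refines this stratification, i.e. that the initial submodule $\mathrm{in}_w(G^o)$ is constant as $w$ ranges over the relative interior of each cone of $\Gamma$. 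The goal is then to deduce scheme-theoretic flatness of the torus-translated family $m^\star\mathcal{F}_\Gamma$ over $X_\Gamma$ from this purely combinatorial constancy.

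Next I would reduce to a local, torus-equivariant statement. Flatness of $m^\star\mathcal{F}_\Gamma$ along $\pi_1$ can be checked on each invariant affine chart $U_\sigma = \mathrm{Spec}\,\mathbb{C}[\sigma^\vee\cap M]$ attached to a maximal cone $\sigma$ of $\Gamma$, and since the family is $X^o$-equivariant it suffices to control its fibres along the distinguished degenerations indexed by the faces $\tau\le\sigma$. Here I would use the standard identification of the flat limit $\lim_{t\to 0}$ of $\mathcal{F}$ along a cocharacter $w$ with the quotient by $\mathrm{in}_w(G^o)$, together with the fact that the one-parameter Gröbner degeneration $G^o \rightsquigarrow \mathrm{in}_w(G^o)$ is always flat. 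Combined with the hypothesis that $\mathcal{F}$ has no sections supported on the toric boundary, and with condition $(4)$ forcing the strict transform $\mathcal{F}_\Gamma$ to restrict on each orbit to precisely this flat limit, this already matches the multigraded Hilbert function of the fibre over the generic point of each orbit $O(\tau)$ with that of $\mathcal{F}$ itself.

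The heart of the argument is to upgrade this one-parameter flatness to flatness over the whole chart $U_\sigma$. I would exploit the face relations: for $\tau\le\sigma$ the deeper initial module is an iterated initial module, $\mathrm{in}_\sigma(G^o) = \mathrm{in}_{w}\bigl(\mathrm{in}_{w'}(G^o)\bigr)$ with $w'\in\mathrm{relint}(\tau)$ and $w$ pushed into $\mathrm{relint}(\sigma)$, and condition $(4)$ guarantees each such iterated degeneration stays within a single cone of $\Gamma$, so no initial module jumps along the chain of faces of $\sigma$. Assembling the degenerations attached to this chain into a single multigraded Rees-type family over $U_\sigma$, in the style of the Gröbner stratification of \cite{Cartwright}, the constancy of $\mathrm{in}_w(G^o)$ on every cone interior translates into constancy of the multigraded Hilbert function of the fibres of $m^\star\mathcal{F}_\Gamma$ over all of $U_\sigma$; for a torus-equivariant family this constancy is equivalent to flatness, yielding $(1)$.

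The main obstacle I anticipate is exactly this passage from one-parameter to multiparameter flatness: a single cocharacter probes only a ray, whereas logarithmic flatness is a statement over the full toric base, so one must show that the initial degenerations attached to all faces of a maximal cone are mutually compatible and glue to a single flat family. Equivalently, one must rule out embedded or boundary-supported components appearing in the intermediate strata, and it is here that the no-boundary-sections hypothesis and the constancy furnished by $(4)$ must be combined with care. This is the step where the genuinely multigraded Gröbner machinery of \cite{Cartwright}, rather than classical one-variable Gröbner theory, does the essential work, and it is where I would expect the detailed bookkeeping of \cite{Quot2} to be concentrated.
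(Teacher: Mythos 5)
A preliminary remark: the paper itself contains no proof of this theorem --- it is stated with the citation \cite{Quot2}, and the surrounding text says explicitly that the proof does not appear in this paper. So your proposal cannot be compared against an in-paper argument; it can only be assessed on its own terms, and on those terms it has two genuine gaps.

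The first gap is in your second paragraph, where you treat as given that condition (4) ``forc[es] the strict transform $\mathcal{F}_\Gamma$ to restrict on each orbit to precisely this flat limit.'' That identification is not a formal consequence of (4); it is essentially the whole content of the implication $(4)\Rightarrow(1)$. On a chart $U_\gamma$ the kernel of the strict transform is $G_\Gamma=G^o\cap\mathbb{C}[\gamma^\vee\cap M]^n$, and one inclusion is indeed easy: any element of $G_\Gamma$ restricts on $O(\gamma)$ either to zero or to (what is, up to sign conventions, exactly) its own initial form, so the restricted kernel lies inside $\mathrm{in}_w(G^o)$. The problematic inclusion is the converse. Given a $w$-homogeneous element $h$ of $\mathrm{in}_w(G^o)$ supported on $\gamma^\perp$, standard Gr\"obner theory supplies some $g\in G^o$ with $\mathrm{in}_w(g)=h$, but nothing guarantees that $g$ can be chosen regular on the chart, i.e.\ inside $\mathbb{C}[\gamma^\vee\cap M]^n$: its non-initial terms may pair negatively against other points of $\gamma$. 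Ruling this out --- equivalently, ruling out that the strict transform meets a deep orbit in something strictly smaller than the expected initial degeneration --- is exactly where the constancy furnished by (4) must be exploited by a genuine argument, and your proposal offers no mechanism for it; this is the module-theoretic analogue of Tevelev's tropical compactification argument and is where the work deferred to \cite{Quot2} is concentrated.

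The second gap is the flatness criterion you invoke to pass from one-parameter to multiparameter flatness: ``for a torus-equivariant family this constancy [of the multigraded Hilbert function] is equivalent to flatness.'' In this setting that criterion does not exist. The fibres of $m^\star\mathcal{F}_\Gamma$ over $X_\Gamma$ at points of the dense orbit are translates of $\mathcal{F}|_{X^o}$; they are coherent sheaves on the non-proper torus $X^o$, are not homogeneous for any grading, and so have no Hilbert function, multigraded or otherwise. Moreover the torus action for which the total family is equivariant, $h\cdot(x,g)=(hx,h^{-1}g)$, is not $\mathcal{O}_{X_\Gamma}$-linear, so Haiman--Sturmfels-style multigraded Hilbert scheme theory cannot be applied to the total family either. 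What would actually close the argument is either the valuative criterion of flatness over the reduced base $X_\Gamma$ (EGA IV 11.8.1, already used in the paper's proof of Proposition \ref{prop:Strict/Total implies log flat}), combined with the factorisation of an arbitrary arc as a unit times a cocharacter to reduce to toric one-parameter degenerations, plus an induction on strata for arcs generically contained in the boundary, or else a slicing induction on the codimension of torus orbits. Both routes consume precisely the orbit-restriction identification above, so the first gap cannot be bypassed by fixing the second step.
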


The proof of Theorem \ref{thm:CharacteriseLogFlat} will occupy the remainder of this subsection. 
\subsubsection{Consequences of logarithmic flatness} We start our proof of Theorem \ref{thm:CharacteriseLogFlat} by showing that logarithmic flatness implies two other conditions.
\begin{proof}[Proof $(1)\Rightarrow (2)$]
This is a cosmetic modification of \cite[TAG 080F]{stacks-project}.
\end{proof}

\begin{proof}[Proof $(1)\Rightarrow (3)$] 
Assume $\mathcal{F}$ were to have a section $s$ supported on the compliment of $O(\sigma)$ in $V(\sigma)$ for some cone $\sigma$. Since flatness and transversality are local conditions, without loss of generality $V(\sigma)$ is affine. Choose a non-zero function $f$ pulled back from the algebraic stack $[V(\sigma)/\mathbb{G}_m^{\mathrm{dim}(X)}]$ such that $V(f)$ contains the toric boundary of the toric variety $V(\sigma)$. Necessarily the zero set of such an $f$ is supported on the toric boundary. By flatness the map $(f^k) \otimes F_\Gamma\rightarrow F_\Gamma$ is injective. This implies that $f^ks$ is not zero for any integer $k$ and consequently the localisation map from $\mathcal{O}_{V(\sigma)}$ obtained by inverting $f$ does not map $s$ to zero. It follows that the restriction of $s$ to $O(\sigma) = V(\sigma) \backslash V(f)$ is not zero and so the support of $s$ was not contained within the toric boundary.  
\end{proof}

\subsubsection{Test cocharacters} Given a cocharacter $w\in M_X$ we define a morphism $$\varphi_w^o:S^o=\mathrm{Spec}(\mathbb{C}((t)))\rightarrow X^o$$ which on the level of coordinate rings is specified by $$(\varphi_w^o)^\#:\mathbb{C}[X_1^{\pm 1},...,X_n^{\pm 1}] \rightarrow \mathbb{C}((t))\quad X_i \mapsto t^{w_i}.$$
Assuming $w$ lies in the support of the fan of $X$, the map $\varphi_w^o$ extends to a map $$\phi_w:S=\mathrm{Spec}(\mathbb{C}[[t]])\rightarrow X.$$ 

The short exact sequence $$ 0 \rightarrow \mathcal{G} \rightarrow \mathcal{O}_X^n\rightarrow \mathcal{F} \rightarrow 0$$ pulls back along $m$ to a short exact sequence $$ 0 \rightarrow \mathcal{G}' \rightarrow \mathcal{O}_{X\times X^o}^n\rightarrow \mathcal{F}' \rightarrow 0.$$ We set $s$ the special point of $S$ and pull back along $s\rightarrow X$ to obtain a surjection of sheaves $$0 \rightarrow \mathcal{G}_w \rightarrow \mathcal{O}_{X^o}^n\rightarrow \mathcal{F}_w \rightarrow 0$$ on $X^o$. Evaluating this sheaf on global sections we obtain $$0 \rightarrow G_w \rightarrow \mathbb{C}[X^o]^n\rightarrow F_w \rightarrow 0.$$

\begin{lemma}\label{lem:inw is Gw}
    There is an equality of submodules of $\mathbb{C}[X^o]^n$ $$G_w = \mathrm{in}_w(G^o).$$
\end{lemma}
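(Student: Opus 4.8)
The plan is to recognise $G_w$ as the special fibre of the one-parameter Gr\"obner degeneration cut out by $w$, and then to identify this flat limit with the initial module $\mathrm{in}_w(G^o)$. First I would unwind the construction. The composite $\alpha\colon S\times X^o \xrightarrow{\phi_w\times\mathrm{id}} X\times X^o \xrightarrow{m} X$ sends $(t,g)$ to $g\cdot\varphi_w^o(t)$, so on coordinate rings a character $\chi^a$ of $X$ is carried to $\chi^a(\varphi_w^o(t))\,\chi^a = t^{\langle w,a\rangle}\chi^a \in \C[X^o]\otimes\C[[t]]$, the exponent being non-negative precisely because $w$ lies in the support of the fan of $X$. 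Writing $\lambda_w(t)$ for the induced automorphism $\chi^a\mapsto t^{\langle w,a\rangle}\chi^a$ of $\C[X^o]\otimes\C((t))$, the pulled-back family $0\to\tilde{\mathcal G}\to\mathcal{O}^n\to\tilde{\mathcal F}\to 0$ over $S$ has generic fibre the translate $\lambda_w(t)\cdot G^o$ and special fibre $G_w$. In other words $G_w$ is the $t\to 0$ limit of $\lambda_w(t)\cdot G^o$, which is exactly the set-up of classical Gr\"obner degeneration.

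Granting that this family is flat over $S$, so that $G_w$ is the genuine flat limit $\lim_{t\to 0}\lambda_w(t)\cdot G^o$, the equality $G_w=\mathrm{in}_w(G^o)$ is the module version of the classical identification of a one-parameter flat limit with an initial module, for which I would cite \cite{MaclaganSturmfels}. For completeness I would record the two inclusions making up that statement. The containment $\mathrm{in}_w(G^o)\subseteq G_w$ comes from rescaling, for each $g\in G^o$, the element $\lambda_w(t)\cdot g$ by the unique power $t^{-e}$ that makes it $t$-integral with non-zero reduction; that reduction modulo $t$ is exactly $\mathrm{in}_w(g)$, so it lies in $G_w$ once one knows the rescaled element lies in the integral model. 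The reverse containment $G_w\subseteq\mathrm{in}_w(G^o)$ comes from lifting any element of $G_w$ to the integral model inside $\lambda_w(t)\big(G^o\otimes\C((t))\big)$ and recognising its reduction modulo $t$ as an element of $\mathrm{in}_w(G^o)$.

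The hard part will be justifying that $G_w$ really is the flat limit, rather than something smaller: equivalently, that the integral model $\tilde G=\ker(\mathcal{O}^n\to\tilde{\mathcal F})$ coincides with its saturation $\lambda_w(t)\big(G^o\otimes\C((t))\big)\cap\mathcal{O}^n$. This is the precise point at which the rescaled integral elements of the previous paragraph lie in $\tilde G$ itself and not merely in a larger submodule with the same generic fibre, and by Nakayama over the discrete valuation ring $\C[[t]]$ it is equivalent to the flatness (that is, $t$-torsion freeness) of $\tilde{\mathcal F}$ over $S$. I would extract this $t$-torsion freeness from the standing hypothesis of Situation \ref{sit:ToricWorld} that $\mathcal F$ has no non-zero sections supported on the toric boundary: any $t$-torsion submodule of $\tilde{\mathcal F}$ is supported over the special point $s$, whose image under $\alpha$ lies in the toric boundary, so such torsion would specialise to a boundary-supported section of $\mathcal F$, contradicting the hypothesis. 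With flatness in hand the saturation is automatic, the two inclusions close up, and we obtain $G_w=\mathrm{in}_w(G^o)$.
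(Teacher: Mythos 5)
Your first two paragraphs run on the same mechanism as the paper's own argument: Lemma \ref{lem:inw is Gw} is proved there as a special case of Proposition \ref{prop:initialideal}, whose proof likewise pulls $q$ back along torus multiplication followed by the trait $\phi_w$, uses $t$-torsion freeness of the resulting family to saturate, and reads off the initial module modulo the uniformiser. The gap is in your third paragraph, where you derive that torsion-freeness from the Situation \ref{sit:ToricWorld} hypothesis that $\mathcal{F}$ has no sections supported on the toric boundary. That implication is false: ``having no sections supported on a closed set'' is not stable under non-flat base change, the map $\phi_w\times\mathrm{id}$ is not flat, and torsion in $\tilde{\mathcal{F}}=\alpha^{\star}\mathcal{F}$ is precisely what is \emph{created} by a non-flat pullback (it measures the failure of $m^{\star}\mathcal{F}$ to be flat over $X$); such a torsion section does not ``specialise to'' any section of $\mathcal{F}$, since the natural map on sections goes in the other direction. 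Concretely, take $X=\mathbb{A}^2$, $n=1$, $\mathcal{F}=\mathcal{O}_Z$ with $Z=V(y-x^2)$, and $w=(1,1)$. Here $\mathcal{F}$ is the structure sheaf of an integral curve, so the Situation \ref{sit:ToricWorld} hypothesis holds, but $\mathcal{F}$ is not logarithmically flat. One computes $G^o=(y-x^2)$ and $\alpha^{\#}(y-x^2)=t(y-tx^2)$, so $\tilde{\mathcal{F}}=\mathbb{C}[x^{\pm1},y^{\pm1}][[t]]/\bigl(t(y-tx^2)\bigr)$ contains the nonzero $t$-torsion element $y-tx^2$. The naive special fibre is then all of $\mathbb{C}[x^{\pm1},y^{\pm1}]$, so $G_w=0$, whereas $\mathrm{in}_w(G^o)$ contains $\mathrm{in}_w(y-x^2)\neq 0$ (under either sign convention for initial forms). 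So $G_w\neq\mathrm{in}_w(G^o)$: the lemma itself fails under the hypothesis you are using, and no argument can close your gap.

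The input you actually need, and the one in force where the lemma is stated and applied (it sits inside the proof of $(1)\Rightarrow(4)$ of Theorem \ref{thm:CharacteriseLogFlat}, and it is the explicit hypothesis of Proposition \ref{prop:initialideal}), is that $\mathcal{F}$ is logarithmically flat in the sense of Section \ref{sec:toriclogflat}. That hypothesis gives your flatness claim immediately: by definition $m^{\star}\mathcal{F}$ is flat over $X$ along $\pi_1$, and flatness is stable under the base change $\phi_w\colon S\to X$, so $\tilde{\mathcal{F}}$ is flat, hence $t$-torsion free, over $\mathbb{C}[[t]]$; your first two paragraphs then complete the proof exactly as in the paper. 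As a sanity check that some such hypothesis is unavoidable: if the Situation \ref{sit:ToricWorld} hypothesis alone implied the lemma, it would imply condition (4) of Theorem \ref{thm:CharacteriseLogFlat} for every such sheaf, and then Theorem \ref{conj:CharacteriseLogFlat} would force every sheaf with no boundary-supported sections to be logarithmically flat, contradicting the parabola above and rendering the flattening result Theorem \ref{thm:OldTev} vacuous.
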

\begin{proof}
    This is a special case of Proposition \ref{prop:initialideal}.
\end{proof}
\begin{proof}[Proof $(1) \Rightarrow (4)$]
    The submodule $G_w$ depends only upon $s$. The point $s$ depends only upon the cone $\sigma$ for which $w$ lies in the interior of $\sigma$. The result now follows from Lemma \ref{lem:inw is Gw}.
\end{proof}

\subsubsection{Transversality and the strict/total transform property} We work in Situation \ref{sit:toricflattening} but impose that Condition (4) of Theorem \ref{thm:CharacteriseLogFlat} is false for $q_\Gamma$ and impose Condition (3) instead of logarithmic flatness. We will show that Condition (3) implies Condition (4) by finding a contradiction. Pick $\sigma$ a minimal cone in the fan of $X$ such that $\mathrm{in}_w(G^o)$ is not constant for $w$ in the interior of $X$ and let $\tau$ be any facet of $\sigma$.

Pull $q$ back to $O(\tau)$ and take global sections to obtain a surjection of modules $$\mathbb{C}[O(\tau)]^n \rightarrow \mathcal{F}_\sigma$$ with kernel denoted $G_\sigma$ a $\mathbb{C}[O(\tau)]$ module. Let $w_D$ be a point in the interior of $\sigma$ lying in the stratum whose closure contains $\tau$.

    \begin{lemma}\label{lem:GsigmaIsinwD}
        If $\mathcal{F}_\Gamma|_{V(\tau)}$ has no sections supported on $V(\sigma)$ then $$G_\sigma = \mathrm{in}_{w_D}(G_\tau)$$
    \end{lemma}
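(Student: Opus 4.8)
The plan is to prove the two inclusions of submodules of $\mathbb{C}[O(\tau)]^n$ separately, isolating the single step that consumes the hypothesis on sections supported on $V(\sigma)$. Throughout I will pass to the toric variety $V(\tau)$, whose dense torus is $O(\tau)$ and whose cocharacter lattice is the quotient $\overline{N}=N/(N\cap\mathrm{span}(\tau))$; the cone $\sigma$ descends to a ray $\overline{\sigma}\subset\overline{N}\otimes\mathbb{R}$, inside which $V(\sigma)$ is a boundary divisor of $V(\tau)$, and the weight $w_D$ descends to an interior point $\overline{w}_D$ of $\overline{\sigma}$. Because $\sigma$ was chosen minimal with $\mathrm{in}_w(G^o)$ non-constant on its interior, the initial module is constant as $w_\tau$ ranges over the interior of the facet $\tau$; hence $G_\tau=\mathrm{in}_{w_\tau}(G^o)$ is well defined and homogeneous for the subtorus $T_\tau$, so it descends to a genuine $\mathbb{C}[O(\tau)]$-module and $\mathrm{in}_{w_D}(G_\tau)=\mathrm{in}_{\overline{w}_D}(G_\tau)$ is a submodule of $\mathbb{C}[O(\tau)]^n$, directly comparable to $G_\sigma$.

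I would first establish the containment $\mathrm{in}_{w_D}(G_\tau)\subseteq G_\sigma$, which is formal and does not use the hypothesis. The tool is the iterated initial module identity of Gröbner theory: for $w_\tau$ in the interior of $\tau$ and $w_D$ in the adjacent stratum $D$ whose closure contains $\tau$, one has $\mathrm{in}_{w_D}(\mathrm{in}_{w_\tau}(G^o))=\mathrm{in}_{w_\tau+\epsilon w_D}(G^o)$ for all sufficiently small $\epsilon>0$, and $w_\tau+\epsilon w_D$ lies in $D$ since $D$ is a relatively open convex cone with $\tau\subseteq\overline{D}$. Combined with Lemma~\ref{lem:inw is Gw}, applied both on $X$ and relatively on $V(\tau)$, this realises $\mathrm{in}_{w_D}(G_\tau)$ as generated by initial forms of elements of $G^o$; restricting these initial forms to $O(\tau)$ lands inside the kernel $G_\sigma$ of the restriction map, giving the inclusion.

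The crux is the reverse containment $G_\sigma\subseteq\mathrm{in}_{w_D}(G_\tau)$, and here I would argue by contradiction in the style of the proof of $(1)\Rightarrow(3)$ given above. A nonzero class in $G_\sigma/\mathrm{in}_{w_D}(G_\tau)$ produces, through the one-parameter family $\phi_{\overline{w}_D}\colon\mathrm{Spec}(\mathbb{C}[[t]])\to V(\tau)$ attached to $\overline{w}_D$, a section of the special fibre $\mathcal{F}_\Gamma\rest{V(\tau)}$ that is not detected by the degeneration. Mirroring the earlier argument, I would choose a nonzero function $f$ pulled back from $[V(\tau)/\mathbb{G}_m^{\mathrm{dim}(X)}]$ whose vanishing locus contains the boundary divisor $V(\sigma)$ of $V(\tau)$; multiplication by powers of $f$ is injective on the special fibre, so the discrepancy class restricts to zero on $O(\tau)=V(\tau)\setminus V(f)$ and is therefore supported on $V(\sigma)$. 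Since $\mathcal{F}_\Gamma\rest{V(\tau)}$ has no sections supported on $V(\sigma)$, the class vanishes and the two submodules agree.

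The main obstacle I anticipate is bookkeeping rather than conceptual: one must track the two ambient rings $\mathbb{C}[X^o]$ and $\mathbb{C}[O(\tau)]$ simultaneously and verify that the iterated initial module identity transports correctly under the $T_\tau$-homogeneous descent identifying $G_\tau$ with its restriction. A secondary subtlety is that $w_D$ is only required to lie in the stratum $D$ and need not be generic, so I must confirm that the single flat limit $\mathrm{in}_{\overline{w}_D}(G_\tau)$ computes the restriction to all of $O(\tau)$ rather than to a smaller torus; this is exactly the point at which Proposition~\ref{prop:initialideal}, the general form of Lemma~\ref{lem:inw is Gw}, is invoked.
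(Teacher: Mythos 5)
Your opening reduction is the same as the paper's (pass to $V(\tau)$ so that $\tau$ becomes the zero cone and $\sigma$ a ray, then compare with Lemma~\ref{lem:inw is Gw}), but your division of labour between the two inclusions is exactly inverted, and this is a genuine gap rather than bookkeeping. The inclusion that is formal is $G_\sigma \subseteq \mathrm{in}_{w_D}(G_\tau)$: pullback of sheaves is right exact, so $G_\sigma$ is the image of the pulled-back kernel of the degenerating family over the trait, and the initial module is the special fibre of the $t$-saturation of that same family (this is what the proof of Proposition~\ref{prop:initialideal} computes); saturating can only enlarge the kernel, so the naive kernel $G_\sigma$ always sits inside the Gr\"obner kernel $\mathrm{in}_{w_D}(G_\tau)$, with no hypothesis consumed. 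The inclusion that consumes the hypothesis is $\mathrm{in}_{w_D}(G_\tau) \subseteq G_\sigma$: to place $\mathrm{in}_{w_D}(g)$ in $G_\sigma$ one must know that the normalized twist $t^{-c}\tilde{g}$, a priori only in the saturation, already lies in the kernel of the family, i.e.\ that $F_S$ has no $t$-torsion; and torsion classes are precisely sections of the special fibre supported over $V(\sigma)$, which is what the hypothesis rules out. The paper's own Example~\ref{ex:NeedLogQuot} makes this concrete: restricting $V(X+t(Y+Z))$ to the divisor $X=0$, the kernel of the naive restriction at $t=0$ is zero while the initial module is $(Y+Z)$, so $\mathrm{in}_{w_D}(G_\tau)\subseteq G_\sigma$ fails when the hypothesis fails, whereas $G_\sigma\subseteq \mathrm{in}_{w_D}(G_\tau)$ still holds --- the opposite of your assignment.

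As a consequence both halves of your argument have holes. In your ``formal'' half, the iterated initial module identity $\mathrm{in}_{w_D}(\mathrm{in}_{w_\tau}(G^o))=\mathrm{in}_{w_\tau+\epsilon w_D}(G^o)$ is correct and genuinely useful (it reduces the two-step degeneration to a one-step one in the stratum $D$), but the closing sentence ``restricting these initial forms to $O(\tau)$ lands inside the kernel $G_\sigma$'' is precisely the non-formal claim; it is equivalent to the statement you were supposed to extract from the hypothesis, so the hypothesis-free half silently assumes the hypothesis-dependent one. In your ``crux'' half, the contradiction misfires: a nonzero class in $G_\sigma/\mathrm{in}_{w_D}(G_\tau)$ (which in fact never exists) would be an element killed by the naive restriction but surviving in the Gr\"obner limit, and such a class does not define a section of $\mathcal{F}_\Gamma|_{V(\tau)}$ at all, so there is nothing for the multiplication-by-$f$ injectivity to act on; the sections supported on $V(\sigma)$ that the hypothesis kills are the torsion classes, which live in $\mathrm{in}_{w_D}(G_\tau)/G_\sigma$ and obstruct the other inclusion. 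The repair is to swap the roles: prove $G_\sigma\subseteq\mathrm{in}_{w_D}(G_\tau)$ by right-exactness plus saturation, and run your torsion/$f$-injectivity argument to get $\mathrm{in}_{w_D}(G_\tau)\subseteq G_\sigma$ from the hypothesis --- which is exactly the content of the proof of Proposition~\ref{prop:initialideal} that the paper's one-line citation of Lemma~\ref{lem:inw is Gw} is invoking.
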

    \begin{proof}
        It suffices to handle the case that $\tau$ is the zero cone and $\sigma$ a ray. The result is now Lemma~\ref{lem:inw is Gw}.
    \end{proof}

\begin{proof}[Proof $(3) \Rightarrow (4)$]
    In the above setup we claim that $\mathcal{F}_\Gamma|_{V(\tau)}$ has a section supported on $V(\sigma)$, which is in the compliment of $O(\tau)$. Thus whenever (4) is false (3) is also false. The contrapositive proves our claim.

    We argue $G_\sigma <\mathrm{in}_{w_D}(G_\tau)$ is a proper submodule contradicting the conclusion of Lemma \ref{lem:GsigmaIsinwD}. Indeed for any $w'$ in the interior of $\sigma$ we have $G_\sigma \leq \mathrm{in}_{w'}(G)$. Choose $w'$ not in the same stratum $\kappa$ as $w_D$, but ask $w'$ lies in the closure of $\kappa$. Lemma~\ref{lem:inw is Gw} applied to a sufficiently fine toric modification of $X_\Gamma$ now shows $\mathrm{in}_w(G)$ admits a degeneration to $\mathrm{in}_{w_D}(G)$ and so it is not possible that $\mathrm{in}_{w_D}(G)<\mathrm{in}_{w'}(G)$ unless they are equal. Since we assumed no equality, there is some element of $\mathrm{in}_{w_D}(G)$ which is not in $\mathrm{in}_{w'}(G)$ and thus not in $G_\sigma$.
\end{proof}

\subsubsection{Strict transforms, total transforms and logarithmic flatness} We deduce that $(2) \Rightarrow (1)$ from the following proposition.

\begin{proposition}\label{prop:Strict/Total implies log flat}
    Assume that in Situation \ref{sit:toricflattening} the strict and total transform of $\mathcal{F}$ under $\pi_\Gamma$ coincide. Then $\mathcal{F}$ is logarithmically flat.
\end{proposition}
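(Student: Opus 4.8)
The plan is to verify logarithmic flatness directly from its definition in Section \ref{sec:toriclogflat}, namely that $m^\star\mathcal{F}$ is flat over $X$ along $\pi_1\colon X\times X^o\to X$, and to reduce this to a torsion-freeness statement that the strict/total transform hypothesis is designed to control. First I would record the commuting square coming from equivariance of $\pi_\Gamma$: writing $m_X,m_{X_\Gamma}$ for the two multiplication maps, one has $m_X\circ(\pi_\Gamma\times\operatorname{id})=\pi_\Gamma\circ m_{X_\Gamma}$, and the square with vertices $X_\Gamma\times X^o$, $X\times X^o$, $X_\Gamma$, $X$, whose horizontal maps are the two projections $\pi_1$ and whose vertical maps are $\pi_\Gamma\times\operatorname{id}$ and $\pi_\Gamma$, is Cartesian since $\pi_1$ simply replaces the $X$-factor by $X_\Gamma$. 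The hypothesis that strict and total transform agree says $\mathcal{F}_\Gamma=\pi_\Gamma^\star\mathcal{F}$; combined with the square this yields $m_{X_\Gamma}^\star\mathcal{F}_\Gamma=(\pi_\Gamma\times\operatorname{id})^\star(m_X^\star\mathcal{F})$. Since $\mathcal{F}_\Gamma$ is logarithmically flat by Situation \ref{sit:toricflattening}, the left-hand sheaf is flat over $X_\Gamma$, so the total transform of $m_X^\star\mathcal{F}$ is flat over the blown-up base $X_\Gamma$. The task thereby becomes to descend flatness of $m_X^\star\mathcal{F}$ from $X_\Gamma$ down to $X$.

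To effect the descent I would pass to a valuative criterion. Flatness of $m_X^\star\mathcal{F}$ over $X$ already holds over the dense torus $X^o$, where the family is a translate of a fixed sheaf and both $m_X$ and $\pi_1$ are smooth; by torus equivariance it then suffices to check flatness after base change along the test cocharacters $\phi_w\colon\Spec(\C[[t]])\to X$ introduced above, since these reach every boundary orbit. Over the discrete valuation ring $\C[[t]]$, flatness is equivalent to torsion-freeness, that is to $t$ being a nonzerodivisor on the pullback, and by Lemma \ref{lem:inw is Gw} the special fibre of this pullback is governed by the initial submodule $\mathrm{in}_w(G^o)$, whose generic counterpart is $G^o$. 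The strict$=$total hypothesis, transported along $\phi_w$ together with its lift to $X_\Gamma$ (along which $\mathcal{F}_\Gamma$ is flat), says precisely that the total transform acquires no sections supported on the exceptional locus, which I would convert into the statement that $\mathrm{in}_w(G^o)$ has the expected colength and hence that no $t$-torsion is created in the limit.

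The hard part will be this final conversion: ruling out $t$-torsion in the degeneration, equivalently showing that flatness genuinely descends along the non-flat modification $\pi_\Gamma$. The mechanism is a dichotomy for associated points: an associated point of $m_X^\star\mathcal{F}$ supported over the toric boundary would, under the modification, either survive — contradicting flatness of the total transform over $X_\Gamma$ established in the first step — or be absorbed into the exceptional divisor, contradicting the equality of strict and total transform. I would make this precise by analysing the associated points of the total transform on $X_\Gamma\times X^o$ relative to $X_\Gamma$, using that $\pi_\Gamma\times\operatorname{id}$ is an isomorphism over $X^o$ and proper birational overall, so that any new associated point lies over the exceptional divisor — exactly the sections excluded by the hypothesis. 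Carrying out this associated-point bookkeeping over the Cohen--Macaulay toric base $X$, and combining it with the torsion-free characterisation of flatness over $\C[[t]]$, then completes the descent and proves that $\mathcal{F}$ is logarithmically flat.
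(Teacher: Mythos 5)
Your first step is sound and coincides with the launching point of the paper's argument: from equivariance one gets $m_{X_\Gamma}^\star\mathcal{F}_\Gamma=(\pi_\Gamma\times\id)^\star\,(m_X^\star\mathcal{F})$, so the strict$=$total hypothesis makes the total transform of $m_X^\star\mathcal{F}$ flat over $X_\Gamma$, and the problem becomes descent of flatness along $\pi_\Gamma$. The gaps come after that. First, your reduction of the valuative criterion to the monomial test arcs $\phi_w$ is unjustified: equivariance only tells you the non-flat locus is torus-invariant, and flatness of the pullback along every $\phi_w$ (and its torus translates) does not imply flatness of $m_X^\star\mathcal{F}$ over $X$. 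The criterion of \cite[11.8.1]{EGA4} quantifies over \emph{all} traits $S\to X$ --- including non-monomial arcs with generic point in the torus, and arcs whose generic point lies in a boundary stratum. Your proposal never addresses the latter at all; the paper handles them by showing the hypotheses are inherited by $\mathcal{F}|_{V(\sigma)}$ and iterating over strata. Second, your ``associated-point dichotomy'' cannot close the argument: flatness over a base of dimension at least two is not detected by associated points (the blowup itself is a counterexample --- $\mathcal{O}_{X_\Gamma}$ has integral total space, hence no embedded points, yet $X_\Gamma\to X$ is not flat), and transporting the strict-versus-total hypothesis, which concerns $\mathcal{F}$ on $X$ and $X_\Gamma$, into statements about associated points of $m_X^\star\mathcal{F}$ on $X\times X^o$ is precisely the content that is missing. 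The idea that actually completes the proof in the paper is different and simpler: a trait $S\to X$ whose generic point lies in the torus determines a ray $\rho$ in the cocharacter lattice; choosing a further toric modification $X_{\Gamma'}\to X_\Gamma\to X$ with $\rho$ a ray of $\Gamma'$, the universal property of blowup forces $S\to X$ to factor through $X_{\Gamma'}$, where (by strict$=$total) the pullback of $\mathcal{F}$ is logarithmically flat; flatness of $\mathcal{F}_S$ then follows from base change alone, with no torsion or Gr\"obner analysis.

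Separately, there is a circularity in your second step: Lemma \ref{lem:inw is Gw} is proved in the paper as a special case of Proposition \ref{prop:initialideal}, whose hypothesis is that $\mathcal{F}$ is logarithmically flat. You therefore cannot invoke it to identify the special fibre of the arc pullback with $\mathrm{in}_w(G^o)$ when logarithmic flatness of $\mathcal{F}$ is exactly what you are trying to establish; without that identification the claimed ``no $t$-torsion'' conclusion has no starting point.
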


\begin{proof}
    The valuative criterion for flatness \cite[11.8.1]{EGA4} states that to check $\mathcal{F}$ is flat over $X$ it suffices to check that for $S$ any trait and given a map $p: S \rightarrow X$ we have the following property. The sheaf $\mathcal{F}_S$ on $S \times X^o$ obtained by pulling back $\mathcal{F}$ along the induced map $S \times X^o\rightarrow X$ is logarithmically flat over $S$. 

    In this paragraph we show that it suffices to handle the case that the generic point of $S$ maps to the interior of the dense torus of $X$. Let $\sigma$ be a cone of $X$. We claim now that $\mathcal{F}$ satisfying the hypotheses of the theorem implies that $\mathcal{F}|_{V(\sigma)}$ also satisfies these hypotheses. It suffices to handle the case that $\sigma$ is a ray as the argument may be iterated for the general case. For $\sigma$ a ray note that the logarithmic modification $X_\Gamma \rightarrow X$ induces a logarithmic modification $Y \rightarrow V(\sigma)$. Since flatness is preserved by base change the fact $\mathcal{F}_\Gamma$ is logarithmically flat implies that the pullback of $\mathcal{F}|_{V(\sigma)}$ to $Y$ is logarithmically flat. This implies that the strict and total transforms coincide because logarithmic flatness implies no sections supported on the compliment of $X^o$.

    To finish the proof we appeal to the valuative criterion for flatness. A map $S \rightarrow X$ such that the generic point of $S$ intersects the dense torus of $X$ specifies a cocharacter of $X$ and thus a ray $\rho$ in the fan of $X$. Choose a logarithmic modification $$X_{\Gamma'} \rightarrow X_\Gamma\rightarrow X$$ such that $\rho$ is a ray of $\Gamma'$. By the universal property of blowup the map from $S\rightarrow X$ factors through $X_{\Gamma'}$. Since pullback is functorial we know $\mathcal{F}_S$ is pulled back from a sheaf $\mathcal{F}''$ on $X_{\Gamma'}\times X^o$. Observe $\mathcal{F}''$ is flat over $X$ by construction. Flatness is preserved under base change and we have verified the valuative criterion for flatness.
\end{proof}

\begin{proof}[Proof $(2) \Rightarrow (1)$]
    Theorem \ref{thm:OldTev} implies that there is a logarithmic modification $\pi: X_\Gamma \rightarrow X$ on which the strict transform of $\mathcal{F}$ is transverse. The strict and total transform property implies we are then in the situation of Proposition \ref{prop:Strict/Total implies log flat}. 
\end{proof}

	\section{Proper monomorphisms and logarithmic surjections of coherent sheaves}\label{sec:RelativeSheaves}

    In this section we define logarithmic surjections of coherent sheaves. In Section \ref{sec:LogQuot} we will define the logarithmic Quot space as a moduli space of (logarithmically flat and integral) logarithmic surjections of coherent sheaves. We draw the readers attention to Section \ref{sec:getmap} which gives a geometric perspective on the importance of the definition of logarithmic flatness.
    
    Logarithmic surjections of coherent sheaves are modelled on surjections of module valued sheaves in a certain topology \cite{MW23,Nak17} - we believe this is an intrinsic perspective on the logarithmic Quot space. We finish the section by connecting logarithmic surjections from the structure sheaf to proper monomorphisms in the category of logarithmic schemes. This gives an alternative intrinsic perspective on the logarithmic Hilbert scheme.
    
    We will have cause to fix a sheaf $\mathcal{E}$ on $X$. To simplify notation we use the same symbol $\mathcal{E}$ to denote the pullback of the fixed sheaf $\mathcal{E}$ on $X$ to $X \times S$.

	\subsection{Artin fans and logarithmic modification}\label{sec:logbackground}
 See \cite[Definition 1.2]{LogFlat} for the definition of fine and saturated logarithmic schemes and further background. In this paper all logarithmic schemes are fine and saturated; in the sequel we write logarithmic scheme with fine and saturated being understood.
	
	\subsubsection{Tropical geometry and Artin fans} Stacks over \textbf{RPC} can be lifted to define stacks on the category of logarithmic schemes, see \cite[Section 6]{ModStckTropCurve}. Given a category fibered in groupoids $C\rightarrow \textbf{RPC}$ one defines a category fibered in groupoids $\mathcal{A}_C$ over the category of logarithmic schemes by setting $$\mathcal{A}_C(S) = C(\Gamma(X,M_X)).$$ Stackifying $\mathcal{A}_C$ yeilds $\mathpzc{C}$. If $C$ is a cone stack, see \cite[Section 2]{ModStckTropCurve} then $\mathpzc{C}$ is a zero dimensional algebraic stack. We call algebraic stacks arising in this way \textit{Artin fans}, see \cite{abramovich2015skeletons,AWbirational} for the theory of Artin fans. The map from cone stacks to Artin fans defines an equivalence of categories. 
	
	For a piecewise linear space $\mathscr{T}$ we write the corresponding zero dimensional stack by $a^\star\mathscr{T}$. In the special case of the moduli space of tropical supports we write $a^\star \supptrop(\mathpzc{X})  = \supplog(\mathpzc{X})$. We do not assert that this zero dimensional stack is algebraic, although it has a logarithmic \'etale cover by Artin fans.
	
	\subsubsection{The Artin fan of a logarithmic scheme.} Let $X$ be a logarithmic scheme with locally connected logarithmic strata. There is an initial strict morphism from $X$ to an Artin fan with faithful monodromy \cite{AWbirational}. We denote this map $$X \rightarrow \mathpzc{X}$$ and call $\mathpzc{X}$ the \textit{Artin fan} of $X$. Write $\mathrm{Trop}(X)$ for the stack on \textbf{RPC} such that $a^\star\mathrm{Trop}(X) = \mathpzc{X}.$
	
	The assignment of an Artin fan to a logarithmic scheme is not functorial in general, but there is a substitute for functorality, see \cite[Section 5]{abramovich2015skeletons}. Given a morphism of logarithmic schemes $X \rightarrow Y$ there is a relative Artin fan $\mathpzc{X}_Y$ and a commutative diagram
	$$
\begin{tikzcd}
X \arrow[d] \arrow[r] & \mathpzc{X}_Y \arrow[d] \\
Y \arrow[r]           & \mathpzc{Y}.            
\end{tikzcd}$$
	Indeed $X\rightarrow \mathpzc{X}_Y$ is the initial strict map from $X$ to an Artin fan (with faithful monodromy), through which the map $X \rightarrow \mathpzc{Y}$ factors.
	\subsubsection{Logarithmic modification} A \textit{logarithmic modification} $Y \rightarrow X$ of $X$ is a morphism that is \'etale locally on $X$ the base change of a morphism $$a^\star\Gamma \rightarrow a^\star\sigma$$ along a strict map $X \rightarrow a^\star \sigma$ where $\Gamma \rightarrow \mathrm{Trop}(B)$ is a morphism of cone complexes which is an isomorphism of topological realisations. Notice the morphism $\Gamma \rightarrow \mathrm{Trop}(B)$ need not be a tropical model and thus our definition of logarithmic modification permits lattice changes.
	
\subsubsection{From Artin fan to locally closed stratification} Assume $X$ has locally connected logarithmic strata. There is a bijection between cones $\sigma$ of $\mathrm{Trop}(X)$ and (stacky) points $a_\sigma$ of the Artin fan $\mathpzc{X}=a^\star\mathrm{Trop}(X)$. Thus the map $X\rightarrow \mathpzc{X}$ defines by pulling back points a locally closed semi-stratification of $X$. We denote the stratum associated to a cone $\sigma$ by $X_\sigma$. Fixing a tropical model $\Gamma \rightarrow \mathrm{Trop}(X)$ defining logarithmic modification $X_\Gamma\rightarrow X$, note to each cone $\gamma$ of $\Gamma$ there is a locally closed stratum $O(\gamma)$ of $X_\Gamma$. We denote the preimage of the closure of $a_\gamma$ by $V(\gamma)$.
	
	\subsubsection{Star piecewise linear spaces} Consider a subdivision $\mathscr{G}\rightarrow \mathrm{Trop}(X)$ and let $\gamma$ be an element of $\mathcal{P}_\mathscr{G}$ such that the image of $\gamma$ lies in the interior of a cone $\sigma$ of $\mathrm{Trop}(X)$. Suppose $\sigma$ has associated local cone $(N_\sigma, U_\sigma)$ and note $$N_\sigma \cong \mathbb{Z}^{\mathrm{dim}(\sigma)} \cong \sigma^\mathrm{gp}.$$ We let $\gamma$ have associated local cone $(N_\gamma,U_\gamma)$ and observe there is a natural inclusion $N_\gamma \hookrightarrow N_\sigma$.  The \textit{star piecewise linear space} $\mathrm{St}_\gamma$ of $\gamma$ is the data of a piecewise linear structure on $N_\sigma(\gamma) = N_\sigma/N_\gamma$. To specify such a piecewise linear structure it is enough to specify the associated locally closed stratification $\mathcal{P}_\gamma$ of $N_\sigma(\gamma)\otimes \mathbb{R}$. 
 
Strata of $\mathcal{P}_\gamma$ biject with strata $\kappa$ of $\mathcal{P}_\mathscr{G}$ such that $\gamma$ lies in the closure of $\kappa$. The stratum associated to $\kappa$ in $\mathcal{P}_\mathscr{G}$ is the image of $\kappa$ under the quotient map $M_\sigma \rightarrow M_\sigma/M_\gamma$. Write $\mathrm{St}_\gamma(\kappa)$ for the cone of $\mathrm{St}_\gamma$ corresponding to $\kappa$ in $\mathcal{P}_\mathscr{G}$. For example,  $\mathrm{St}_\gamma(\gamma)$ is the zero cone. We warn the reader that if $\sigma$ is a face of a cone $\sigma'$ then the star fan of $\gamma$ in $\mathscr{G}(\sigma)$ does not detect cones in $\mathscr{G}(\sigma')$ which contain $\gamma$ in their closure.

	\subsection{Logarithmic Flatness}\label{sec:transversality}
Example \ref{ex:NeedLogQuot} demonstrates our claim from the introduction that pullback does not describe a map $$ r_i:\mathrm{Quot}(X,\mathcal{E})\rightarrow \mathrm{Quot}(D_i,\mathcal{E}|_{D_i}).$$ In this section we understand the open set on which $r_i$ is well defined. The correct technical condition to ask for is \textit{logarithmic flatness}. An important subtlety is that being logarithmically flat over a point is a non--trivial condition generalising strong transversality \cite{MR20}. The special fibre in Example \ref{ex:NeedLogQuot} does not satisfy this condition.

\begin{example}\label{ex:NeedLogQuot}
Consider a strict map$$V(X+t(Y+Z)) \rightarrow \mathbb{P}^2\times \mathbb{A}^1$$ where the target is a logarithmic scheme with toric logarithmic structure. This morphism is logarithmically flat over $\mathbb{A}^1$ away from $0$ where logarithmic flatness fails. Pulling back the universal surjetion $$\mathcal{O}_X \rightarrow \mathcal{O}_Z$$ to the divisor $X = 0$ defines a subschme of $\mathbb{P}^1 \times \mathbb{A}^1$. This subscheme is not flat (in the usual sense) over $\mathbb{A}^1$: the fibre over $0 \in \mathbb{A}^1$ is dimension one whereas the fibre over every other closed point of $\mathbb{A}^1$ has dimension zero.
\end{example}

\subsubsection{Logarithmic flatness for subschemes over a point} Let $Z$ be a strict closed subscheme of $X$. Every such scheme is flat over $\mathrm{Spec}(\mathbb{C})$ but not all choices of $Z$ are \textit{logarithmically flat} in the sense of the following rephrasing of the definition presented in \cite[Section 1.10]{LogFlat}. \begin{definition}\label{defn:logflatoverC}
    We say $Z$ is logarithmically flat over $\mathrm{Spec}(\mathbb{C})$ if $Z$ is flat over the Artin fan of $X$.
\end{definition}

In the special case that $X$ is a toric variety with divisorial logarithmic structure from its toric boundary, Definition \ref{defn:logflatoverC} coincides with the notion from Section \ref{sec:toriclogflat}. There is also a version of the transversality of Section \ref{sec:toricTransversality}.

\begin{definition}\label{defn:transverse}
    A subscheme $Z$ of $X$ is \textit{transverse} if for every logarithmic stratum $O(\sigma)$ the closure of $Z\cap O(\sigma)$ in $V(\sigma)$ coincides with $V(\sigma) \cap Z$.
\end{definition}

Definition \ref{defn:transverse} asks that $Z\cap V(\sigma)$ has no embedded component supported on the compliment of $O(\sigma)$. Imposing that $Z$ is logarithmically flat over $S$ is a transversality condition closely related to strong transversality defined in \cite{MR20}. Figure \ref{fig:KeyExample} has examples of how transversality can fail.

	\subsubsection{Logarithmic flatness in the language of Artin fans}\label{subsec:LogFlat} Let $\pi: X\rightarrow B$ be a morphism of logarithmic schemes and let $\mathcal{F}$ be a coherent sheaf on $X$. We upgrade Kato's definitions of logarithmic flat and integral for a morphism of logarithmic schemes to define what it means for a coherent sheaf to be logarithmically flat and integral over a base, see \cite[Section~1.10]{LogFlat}. We also recast Kato's definition in the language of Artin fans.
	
	\begin{definition}\label{defn:logflat}
		We say $\mathcal{F}$ is \textit{logarithmically flat} over $B$ if $\mathcal{F}$ is flat over  $$B_X  = B \times_{\mathpzc{B}}{\mathpzc{X}_B}$$ where $X$ is considered a $B_X$ scheme in the obvious way. We say $\mathcal{F}$ is \textit{integral} over $B$ if the map $\mathpzc{X}_B \rightarrow \mathpzc{B}$ is flat as a morphism of underlying algebraic stacks.
	\end{definition}
An important special case is to understand sheaves logarithmically flat over the standard log point $\mathsf{pt}^\dagger$. These are certain sheaves on logarithmic modifications of $X \times \mathsf{pt}^\dagger$. We call the underlying scheme of such a logarithmic modification an \textit{expansion}. The space of tropical supports will be upgraded to become something very similar to the stack of expansions introduced in \cite{MR20}. The link between the geometry of the stack of expansions and tropical geometry is explained in \cite{CarocciNabijou1,CarocciNabijou2}. 

We define a log point to be a logarithmic scheme with a single point. Our next lemma gives a necessary condition for a sheaf on $X\times S$ to be logarithmically flat over $S$. For $\sigma$ a cone of $\mathrm{Trop}(X)$ we write $V(\sigma)$ for the preimage of the closure of the point $a_\sigma$ of $\mathpzc{X}$ in $X\times S$. 
	\begin{lemma}\label{lem:transversality and closure} Set $\mathcal{F}$ a sheaf on the underlying scheme of $X$ where $X$ is a logarithmic scheme over a log point $S$. Assume $\mathcal{F}$ and $\mathcal{O}_X$ are logarithmically flat over $S$ then we have the following.
		\begin{enumerate}[(1)]
			\item For every cone $\sigma$ of $\mathrm{Trop}(X)$, the restriction $\mathcal{F}|_{V(\sigma)}$ has no sections with support contained in the complement of $O(\sigma)$.
			\item In the special case $\mathcal{F} = \underline{\mathcal{O}_Z}$ is the structure sheaf of a closed subscheme $Z$, if $\underline{\mathcal{O}_Z}$ is logarithmically flat then $Z\cap V(\sigma)$ is the closure of $Z \cap {O}(\sigma)$.
		\end{enumerate}
	\end{lemma}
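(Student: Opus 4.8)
The plan is to deduce both parts from the toric computation already carried out in the proof of $(1)\Rightarrow(3)$ of Theorem~\ref{thm:CharacteriseLogFlat}, after reducing the general statement to a strict-\'etale-local toric model over the log point $S$. Since logarithmic flatness is defined as flatness over $S_X = S\times_{\mathpzc{S}}\mathpzc{X}_S$ (Definition~\ref{defn:logflat}), and since both flatness and the condition on supports of sections are strict-\'etale local on $X$, I would first pass to a strict \'etale chart in which $X$ carries a global fs chart and the strict map $X\to\mathpzc{X}_S$ is modelled by a toric quotient stack. In such a chart the closed stratum $V(\sigma)$ is the preimage of the closure of the point $a_\sigma$, the open stratum $O(\sigma)$ is the locus on which the monoid generators not vanishing on $O(\sigma)$ are invertible, and the boundary $V(\sigma)\setminus O(\sigma)$ is the common vanishing locus of finitely many monomial functions $f$ pulled back from the relative Artin fan $\mathpzc{X}_S$.

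For part (1) I would then argue by contradiction exactly as in the toric case. Suppose $\mathcal{F}|_{V(\sigma)}$ has a nonzero section $s$ whose support is contained in $V(\sigma)\setminus O(\sigma)$; localising, I may assume $V(\sigma)$ is affine. Choose a monomial function $f$, pulled back from $\mathpzc{X}_S$, whose vanishing locus contains $V(\sigma)\setminus O(\sigma)$, so that $O(\sigma) = V(\sigma)\setminus V(f)$ on this chart. The two flatness hypotheses enter here: because $\mathcal{O}_X$ and $\mathcal{F}$ are both flat over $S_X$, and $f$ restricts to a nonzerodivisor on the structure sheaf of the relevant stratum of $S_X$, multiplication by $f$ is injective on $\mathcal{F}|_{V(\sigma)}$. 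Hence $f^k s\neq 0$ for all $k$, so $s$ is not annihilated by inverting $f$, and therefore restricts to a nonzero section over $O(\sigma) = V(\sigma)\setminus V(f)$. This contradicts the assumption that $s$ is supported on the complement of $O(\sigma)$.

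Part (2) is then a formal consequence. For $\mathcal{F}=\underline{\mathcal{O}_Z}$ one has $\mathcal{F}|_{V(\sigma)} = \underline{\mathcal{O}_{Z\cap V(\sigma)}}$, and there is always a closed immersion $\overline{Z\cap O(\sigma)}\hookrightarrow Z\cap V(\sigma)$ whose ideal is precisely the submodule of sections of $\mathcal{O}_{Z\cap V(\sigma)}$ that restrict to zero on $O(\sigma)$, that is, the sections supported on $V(\sigma)\setminus O(\sigma)$. Part (1) shows this submodule vanishes, giving the asserted equality $Z\cap V(\sigma) = \overline{Z\cap O(\sigma)}$; this recovers the scheme-theoretic transversality of Definition~\ref{defn:transverse}.

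The main obstacle I anticipate is the reduction of the first paragraph: making precise, over a nontrivial log point base, that flatness over $S_X$ restricts on each stratum closure $V(\sigma)$ to a flatness for which the boundary monomials $f$ remain nonzerodivisors. This is exactly where the hypothesis that $\mathcal{O}_X$ is logarithmically flat over $S$ is needed, rather than only $\mathcal{F}$: it guarantees that the logarithmic stratification of $X$ is itself flat over the corresponding stratification of $S_X$, so that restricting to $V(\sigma)$ does not create boundary zerodivisors. Once this bookkeeping is in place, the argument is a verbatim repetition of the toric case of Theorem~\ref{thm:CharacteriseLogFlat}, and this comparison is the only genuinely new input.
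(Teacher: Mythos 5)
Your proposal is correct and follows essentially the same route as the paper's proof: both reduce to a toric/monomial chart of the relative Artin fan over the log point, choose a monomial $f$ whose vanishing locus contains $V(\sigma)\setminus O(\sigma)$, and derive a contradiction because flatness over the monoid-ring base (where $f$ is a nonzerodivisor) forces multiplication by $f$ to be injective, so a section killed by localisation at $f$ must vanish; part (2) is then the same formal consequence of (1). The only organisational difference is that the paper first base-changes the flatness to $V(\sigma)$ so as to assume $\sigma$ is the zero cone, whereas you work with a general $\sigma$ directly and flag precisely this restriction step as the point needing care.
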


	\begin{proof} We can investigate support locally, so restrict attention to a cone $\tau$ of $\mathrm{Trop}(X)$ such that $\sigma\leq \tau$.  Flatness is preserved under base change so we can assume $\sigma$ is the zero cone. Write $Y_\tau$ for the toric variety corresponding to the cone $\tau$ and observe there is a natural map $Y_\tau \rightarrow \mathrm{Trop}(X)$. Since flatness is preserved under base change we consider a Cartesian square $$
\begin{tikzcd}
X_\tau \arrow[d] \arrow[r,"g"] & X \arrow[d]                                         \\
S \times Y_\tau \arrow[r]  & S \times \mathpzc{X}
\end{tikzcd}$$
If $\mathcal{F}$ has a non--zero section supported on $V(\sigma)\backslash O(\sigma)$ then for the right choice of $\tau$ this pulls back to a non-zero section $s$ of $g^\star \mathcal{F}$. Define $Y_\tau^o$ the dense torus of $Y_\tau$. The support of $s$ is contained in the complement of the preimage in $X$ of $S\times Y_\tau^o$.
 
Assume for contradiction that the support of $s$ were contained in the preimage $Z$ of a closed subscheme $V(f)$ of $S\times Y_\tau$. Since the stalk of $s$ vanished away from $Z$, we know $s$ must vanish on complement of $Z$. Without loss of generality $X_\tau$ is affine and global sections of $\mathcal{F}$ are a module $M$. Thus passing to an affine patch $\mathrm{Spec}(A)$ of $S\times Y_\tau$, $s$ vanishes in the localisation of $M$ (considered an $A$ module) obtained by inverting $f$. In other words $f^ks=0$ for some $k$. But by flatness the morphism $ (f^k) \otimes M \rightarrow M$ sending $f^ka \otimes b \mapsto f^kab$ is injective.
	\end{proof}

For $S$ a log point, we say a sheaf on $X\times S$ is \textit{transverse} if it satisfies the first condition in Lemma~\ref{lem:transversality and closure}. For $S$ any logarithmic scheme say $\mathcal{F}$ is transverse over $S$ if the pullback of $\mathcal{F}$ is transverse over every strict closed point of $S$. Transversality is an interesting condition first because it is easier to check than logarithmic flatness. Second, we will see that transversality is the weakest condition to ensure the morphism $r_i$ exists. Third a theorem of Tevelev shows that the difference between transversality and logarithmic flatness is not something our moduli space detects. 
%
\begin{remark} Lemma \ref{lem:transversality and closure} is similar to the proof that $(1) \Rightarrow (3)$ in Theorem \ref{thm:CharacteriseLogFlat} except that $X$ need not be toric and we work in the relative situation.
\end{remark}
 
	\subsubsection{From logarithmic flatness to morphisms between Quot schemes}\label{sec:getmap} Let $X$ be a scheme and $D$ a divisor on $X$. For $S$ noetherian consider a coherent sheaf $\mathcal{F}$ on $X\times S$ which is flat over $S$. Let $$\iota_D: D \times S \rightarrow X \times S$$
	be the natural inclusion.
	
	\begin{proposition}\label{prop:transversality}
		The sheaf $\iota_D^{\star} \mathcal{F}$ is flat over $S$ if for each closed point $s$ of $S$ the pull back $\mathcal{F}_s$ has no sections supported on $D$.
	\end{proposition}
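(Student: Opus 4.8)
The plan is to reduce the statement to a purely local assertion about modules and then prove that via the local criterion for flatness together with the standard ``fibrewise regular element'' trick. Since flatness over $S$ is local on $D\times S$ and $D$ is an effective Cartier divisor, I would first pass to an affine chart in which $D$ is cut out by a single equation $f$, so that $\iota_D^\star\mathcal{F}$ becomes $\mathcal{F}/f\mathcal{F}$. Localising $S$ at a point then reduces us to the following algebraic claim: let $(R,\mathfrak{m},\kappa)$ be a Noetherian local ring, $A$ a Noetherian $R$-algebra, $M$ a finite $A$-module that is flat over $R$, and $f\in A$; if $f$ acts injectively on the fibre $M\otimes_R\kappa$, then $M/fM$ is flat over $R$. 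The first thing to record is that the hypothesis ``$\mathcal{F}_s$ has no sections supported on $D$'' is exactly the statement that $f$ is a nonzerodivisor on $\mathcal{F}_s=M\otimes_R\kappa$: a section annihilated by a power of $f$ is supported on $V(f)=D$, and conversely. By the local criterion for flatness it then suffices to show $\operatorname{Tor}_1^R(M/fM,\kappa)=0$.

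The heart of the argument is a short $\operatorname{Tor}$ computation. Writing $I=fM=\operatorname{im}(f\colon M\to M)$ and $K=\ker(f\colon M\to M)$, I would use the two short exact sequences $0\to I\to M\to M/fM\to 0$ and $0\to K\to M\xrightarrow{f} I\to 0$. As $M$ is $R$-flat we have $\operatorname{Tor}_1^R(M,\kappa)=0$, so tensoring the first sequence with $\kappa$ gives an exact sequence $0\to\operatorname{Tor}_1^R(M/fM,\kappa)\to I\otimes_R\kappa\xrightarrow{\beta}M\otimes_R\kappa$, whence $\operatorname{Tor}_1^R(M/fM,\kappa)=\ker\beta$; tensoring the second sequence produces a surjection $\gamma\colon M\otimes_R\kappa\twoheadrightarrow I\otimes_R\kappa$. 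The key point is that the composite $\beta\circ\gamma\colon M\otimes_R\kappa\to M\otimes_R\kappa$ is multiplication by $f$ on the fibre $\mathcal{F}_s$, which is injective by hypothesis. Hence $\gamma$ is injective, so it is an isomorphism, and therefore $\beta=(\beta\circ\gamma)\circ\gamma^{-1}$ is injective. Thus $\ker\beta=0$ and $M/fM$ is flat by the local criterion. (The same diagram forces $K\otimes_R\kappa=0$, so $f$ is in fact a nonzerodivisor on $M$, recovering the usual statement that a fibrewise regular element is regular.)

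The step that requires the most care is the passage from the hypothesis, which is imposed only at \emph{closed} points of $S$, to flatness at every point of $D\times S$. I would handle this by observing that the non-flat locus of the coherent sheaf $\iota_D^\star\mathcal{F}$ is closed in $D\times S$ by openness of the flat locus \cite[11.3]{EGA4}, so it suffices to rule out non-flatness over points lying above closed points of $S$; over such points the relevant fibre is $\mathcal{F}_s$ with $s$ closed and the hypothesis applies directly to run the computation above. In the situations of interest $S$ is of finite type over $\mathbb{C}$, hence Jacobson, so closed points of $D\times S$ map to closed points of $S$ and this reduction is automatic. Assembling the local computation with this reduction completes the proof.
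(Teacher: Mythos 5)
Your proof is correct, and at its core it takes the same route as the paper: pass to an affine chart where $D$ is cut out by a single equation $f$, translate the hypothesis into fibrewise injectivity of multiplication by $f$, and invoke the slicing criterion for flatness. The difference is that the paper disposes of the algebraic step by citing \cite[Theorem 22.6]{matsumuraCommutativeRings}, whereas your Tor computation --- identifying $\operatorname{Tor}_1^R(M/fM,\kappa)$ with $\ker\beta$, producing the surjection $\gamma$, and observing that $\beta\circ\gamma$ is multiplication by $f$ on the fibre --- is precisely a proof of that theorem, so your argument is self-contained where the paper's is a citation. You also make explicit a step the paper's one-line proof elides: Matsumura's theorem is a statement about a local homomorphism of Noetherian local rings and a single fibre, so passing from a hypothesis imposed only at closed points of $S$ to flatness at every point of $D\times S$ genuinely requires an argument (openness of the flat locus plus a density statement for closed points), and you supply one. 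Two refinements are in order. First, in your displayed algebraic claim you should take $A$ local with $R\to A$ a local homomorphism (i.e.\ localise at the relevant point of $D\times S$): for a general Noetherian $R$-algebra $A$ and finite $A$-module $N$, vanishing of $\operatorname{Tor}_1^R(N,\kappa)$ does not imply $R$-flatness (e.g.\ $R=k[u,v]_{(u,v)}$, $A=R_{(u)}$, $N=A/uA$), so the local criterion needs the local set-up; since your reduction already restricts to points lying over closed points of $S$, this is purely a matter of phrasing. Second, the Proposition is asserted for arbitrary Noetherian $S$, where $D\times S$ need not be Jacobson and a closed point of $D\times S$ can map to a non-closed point of $S$ (already over a DVR base when $D$ is affine); the cleaner justification, valid in the paper's setting where $X$ and hence $D$ is projective, is that $D\times S\to S$ is a closed map, so closed points map to closed points and the image of the closed non-flat locus is a closed subset of the Noetherian scheme $S$, which, if nonempty, contains a closed point, contradicting the hypothesis.
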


	\begin{proof}
		Flatness is local so we pass to affine patches. Let $M$ be a $B$ module where $B$ is an $A$ algebra. Assume $A,B$ are Noetherian and $M$ is finitely generated over $B$. Let $f\in B$ such that for any maximal ideal $m$ of $A$ multiplication by $f$ is an injective map on $M/mM$. Then $M$ is flat over $A$ implies $M/f$ is flat over $A$ by \cite[Theorem 22.6]{matsumuraCommutativeRings}. 
	\end{proof}
	
	Proposition \ref{prop:transversality} is evidence that transversality is the natural condition to consider if one hopes to extend the morphisms $$r_i:\mathsf{Quot}(X,\mathcal{E})\rightarrow \mathsf{Quot}(D,\mathcal{E}|_{D})$$ discussed in the introduction. In the sequel we fix a sheaf $\mathcal{E}$ on $X$.
	
	\begin{corollary}\label{Corr:MapsBetweenModuliSpaces}
		Let $\mathsf{Quot}(X,\mathcal{E})^{o}$ be the moduli space whose fibre over a logarithmic scheme $S$ consists of surjections of sheaves $q:\mathcal{E} \rightarrow \mathcal{F}$ on $X \times S$ such that $\mathcal{F}$ is transverse over $S$. 
		\begin{enumerate}[(1)]
			\item There is an open inclusion $$\mathsf{Quot}(X,\mathcal{E})^{o}\hookrightarrow \mathsf{Quot}(\underline{X},\mathcal{E}).$$
			\item Pullback to $D_i$ defines a morphism $$\mathsf{Quot}(X,\mathcal{E})^o\rightarrow \mathsf{Quot}(D_i,\iota_{D_i}^{\star} \mathcal{E})$$
		\end{enumerate}
	\end{corollary}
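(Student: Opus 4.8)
The plan is to treat the two parts in sequence, using Proposition \ref{prop:transversality} as the engine in both, and to reduce every assertion to the single algebraic fact behind it: multiplication by a local equation of $D$ is injective on a fibre exactly when that fibre has no sections supported on $D$, and injectivity of such a map is both detected by, and propagates with, flatness of the restriction.

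First, for part (1), I would exhibit $\mathsf{Quot}(X,\mathcal E)^o$ as the subfunctor of Grothendieck's $\mathsf{Quot}(\underline X,\mathcal E)$ cutting out those flat families of quotients $q:\mathcal E\to\mathcal F$ whose fibres are transverse; the inclusion of functors is immediate since transversality is an extra fibrewise condition. The substance is representability by an open subscheme, i.e.\ that the transverse locus is open. Working with the universal quotient $\mathcal F$ on $X\times\mathsf{Quot}(\underline X,\mathcal E)$, which is flat over the base, I would observe that for a single stratum closure $V(\sigma)$ the condition that $\mathcal F_s|_{V(\sigma)}$ has no sections supported on $V(\sigma)\setminus O(\sigma)$ is equivalent to injectivity, on the fibre, of multiplication by a local equation of that boundary. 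By the flatness criterion cited in Proposition \ref{prop:transversality} (\cite[Theorem 22.6]{matsumuraCommutativeRings}), fibrewise injectivity at $s$ forces injectivity, hence the same transversality, in a neighbourhood of $s$; thus each stratum contributes an open condition. Since $\mathrm{Trop}(X)$ has finitely many cones, transversality is the intersection of finitely many open conditions and is therefore open. Care is needed because the conditions at deeper strata only make sense once the shallower ones hold, so I would organise this as an induction on codimension, using Proposition \ref{prop:transversality} to guarantee that $\mathcal F|_{V(\sigma)}$ is flat over the base once transversality along the shallower strata is known, making the fibrewise statement at $V(\sigma)$ meaningful and again open.

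For part (2), given an $S$-point $q:\mathcal E\to\mathcal F$ of $\mathsf{Quot}(X,\mathcal E)^o$, I would restrict along $\iota_{D_i}$ to obtain $\iota_{D_i}^\star q:\iota_{D_i}^\star\mathcal E\to\iota_{D_i}^\star\mathcal F$, which remains a surjection because $\iota_{D_i}^\star$ is right exact. Transversality of $\mathcal F$ over $S$ gives, on each strict closed fibre, no sections supported on $D$, and in particular none supported on the closed subset $D_i$; Proposition \ref{prop:transversality}, applied with $D:=D_i$, then yields that $\iota_{D_i}^\star\mathcal F$ is flat over $S$. Hence $\iota_{D_i}^\star q$ is a flat family of quotients of $\iota_{D_i}^\star\mathcal E$ and defines an $S$-point of $\mathsf{Quot}(D_i,\iota_{D_i}^\star\mathcal E)$. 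Compatibility with base change is automatic since pullback of sheaves commutes with it, so this assignment is a morphism of functors; one may moreover check that the image lands in the transverse locus, recovering the map $r_i$ of the introduction.

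The main obstacle I anticipate is the openness in part (1): promoting the fibrewise no-sections-supported-on-$D$ condition to an open condition on the base, and in particular propagating injectivity of the relevant multiplication map from a single fibre to a neighbourhood, together with the bookkeeping that lets the deeper-stratum conditions be imposed only after the shallower ones are in force. Everything else is formal, resting on right-exactness of pullback and the single commutative-algebra input already used in Proposition \ref{prop:transversality}.
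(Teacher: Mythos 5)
Your part (2) is exactly the paper's argument: the paper disposes of it with the single sentence that it follows directly from Proposition \ref{prop:transversality}, and your write-up (right-exactness of $\iota_{D_i}^{\star}$ plus Proposition \ref{prop:transversality} applied fibrewise) is the intended unpacking of that sentence. For part (1), however, the paper's entire proof is the citation \cite[Theorem~53]{MatsumuraHideyuki1970Ca/H} (openness of the flat locus for a finite module over a finite-type algebra over a noetherian base), and your substitute for that citation has a genuine gap.

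The gap is in the sentence ``fibrewise injectivity at $s$ forces injectivity, hence the same transversality, in a neighbourhood of $s$''. The local flatness criterion \cite[Theorem 22.6]{matsumuraCommutativeRings} is a statement about the local rings of $X\times S$ at points lying \emph{over} $s$: it converts injectivity of multiplication by $f$ on the fibre at such a point into $f$-regularity of $\mathcal{F}$ and $S$-flatness of $\mathcal{F}/f\mathcal{F}$ at that point. By itself this controls only those points and their generizations, whereas transversality of a nearby fibre $\mathcal{F}_{s'}$ is a condition at \emph{every} point of $X\times\{s'\}$, and such points need not specialize into $X\times\{s\}$ unless $X$ is proper. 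To close the argument you need two further ingredients, neither of which appears in your proposal: (i) openness in $X\times S$ of the locus where $\mathcal{F}/f\mathcal{F}$ is flat over $S$ --- this is precisely what the paper's Theorem 53 supplies, while stalkwise injectivity of $f$ is open for the cheaper reason that its failure locus is $\mathrm{Supp}\,\ker(f)$, a closed set --- so that the good locus is an open set $U\supset X\times\{s\}$ on which tensoring $0\to\mathcal{F}\xrightarrow{f}\mathcal{F}\to\mathcal{F}/f\mathcal{F}\to 0$ with $k(s')$ remains exact; and (ii) properness (here projectivity) of $X$, so that $\{s' : X\times\{s'\}\subset U\} = S\setminus \pi\bigl((X\times S)\setminus U\bigr)$ is open in $S$. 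With (i) and (ii) inserted, your induction on strata --- using Proposition \ref{prop:transversality} to keep $\mathcal{F}|_{V(\sigma)}$ flat over the base so that the criterion is applicable at deeper strata --- becomes a correct, and more explicit, version of the paper's one-line proof.
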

	\begin{proof}
		For statement (1) we need to verify the map is open which follows from \cite[Theorem~53]{MatsumuraHideyuki1970Ca/H}. Statement (2) follows directly from Proposition~\ref{prop:transversality}. 
	\end{proof}

 \subsection{Logarithmic surjections of coherent sheaves}
Let $\mathcal{E}$ be a coherent sheaf on a fine and saturated logarithmic scheme $X$ defined over $\mathrm{Spec}(\mathbb{C})$. The moduli space $\mathsf{Quot}(X,\mathcal{E})^{o}$ is not proper. The solution is to study a moduli space whose $S$ points are \textit{logarithmic surjections of coherent sheaves} of the pullback $\mathcal{E}_S$ on $X\times S$ flat over $S$. To simplify notation we drop the subscript $S$ from $\mathcal{E}_S$.
	\begin{definition}\label{defn:surjrelsheaves}
		A {logarithmic surjection of coherent sheaves }on $X \times S$ which is flat over $S$ is written  $[\pi_\Gamma,q_\Gamma]$ and is defined to be an equivalence class of pairs $$(\pi_\Gamma:(X\times S)_\Gamma \rightarrow X\times S, q_\Gamma:\pi_\Gamma^{\star} \mathcal{E} \twoheadrightarrow \mathcal{F}_\Gamma)$$ where $\pi_\Gamma$ is a logarithmic modification and $q_\Gamma$ is a surjection of coherent sheaves on $(X\times S)_\Gamma$. We require both $(X\times S)_\Gamma$ and $\mathcal{F}_\Gamma$ are logarithmically flat and integral over $S$.

        Given two pairs $((X\times S)_\Gamma,q_\Gamma)$ and $((X\times S)_{\Gamma'},q_{\Gamma'})$ set $\Gamma''$ to be the common refinement of $\Gamma,\Gamma'$. There are two logarithmic modifications $$\pi_{\Gamma'',\Gamma}:X_{\Gamma''}\rightarrow X_\Gamma \quad \pi_{\Gamma'',\Gamma'}:X_{\Gamma''}\rightarrow X_{\Gamma'}.$$ The equivalence relation is the smallest which identifies $(\pi_\Gamma,q_\Gamma)$ with $(\pi_{\Gamma'},q_{\Gamma'})$ whenever $$ \pi^{\star} _{\Gamma'',\Gamma}q_\Gamma = \pi^{\star} _{\Gamma'',\Gamma'}q_{\Gamma'}.$$
	\end{definition}
 
 We remark that the logarithmic modification of $X \times S$ corresponding to the common refinement of $\Gamma$ and $\Gamma'$ need not be integral over $S$. The next proposition shows that logarithmic flatness is preserved.
	\begin{proposition}
		Let $V$ be logarithmically flat over $S$. If $\mathcal{F}$ a coherent sheaf on $V$ is logarithmically flat over $S$, then given a logarithmic modification $$V_\Gamma \rightarrow V\textrm{ where }a^\star \Gamma \rightarrow \mathpzc{S}\textrm{ is flat,}$$ the pullback of $\mathcal{F}$ to a sheaf $\mathcal{F}_\Gamma$ on $V_\Gamma$ is also logarithmically flat over $S$. 
	\end{proposition}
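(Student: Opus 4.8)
The plan is to unwind Definition~\ref{defn:logflat} and reduce the claim to the stability of flatness under base change. Write $\mathpzc{V}_S$ for the relative Artin fan of $V$ over $S$ and set $S_V = S\times_{\mathpzc S}\mathpzc V_S$; by hypothesis $\mathcal F$ is flat over $S_V$ when $V$ is regarded as an $S_V$-scheme via the strict map $V\to \mathpzc V_S$ together with the structure map $V\to S$. Likewise, writing $(\mathpzc V_\Gamma)_S$ for the relative Artin fan of $V_\Gamma$ over $S$ and $S_{V_\Gamma} = S\times_{\mathpzc S}(\mathpzc V_\Gamma)_S$, the assertion that $\mathcal F_\Gamma$ is logarithmically flat over $S$ is exactly the assertion that $\mathcal F_\Gamma$ is flat over $S_{V_\Gamma}$.

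First I would identify the relative Artin fans. Since $V_\Gamma\to V$ is the logarithmic modification attached to the subdivision $\Gamma\to \mathrm{Trop}(V)$ relative to $S$, its relative Artin fan is $(\mathpzc V_\Gamma)_S = a^\star\Gamma$, and the induced map $a^\star\Gamma\to \mathpzc V_S$ is the modification of Artin fans determined by $\Gamma\to\mathrm{Trop}(V)$. By the very definition of logarithmic modification recalled in Section~\ref{sec:logbackground}, $V_\Gamma$ is, \'etale locally and hence globally over $\mathpzc V_S$, the strict base change
$$V_\Gamma = V\times_{\mathpzc V_S} a^\star\Gamma.$$
The integrality hypothesis that $a^\star\Gamma\to\mathpzc S$ is flat guarantees that $V_\Gamma$ is integral over $S$, so that $V_\Gamma$ is a legitimate member of the class of modifications under consideration and $S_{V_\Gamma}$ is the correct relative base; it does not otherwise enter the flatness computation for the sheaf.

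Next I would assemble the Cartesian square. Because $a^\star\Gamma\to\mathpzc S$ factors through $\mathpzc V_S$, I may rewrite $S_{V_\Gamma} = S\times_{\mathpzc S} a^\star\Gamma = S_V\times_{\mathpzc V_S} a^\star\Gamma$, and combining this with the displayed identity gives
$$V_\Gamma = V\times_{\mathpzc V_S} a^\star\Gamma = V\times_{S_V}\big(S_V\times_{\mathpzc V_S} a^\star\Gamma\big) = V\times_{S_V} S_{V_\Gamma}.$$
Thus the square
$$
\begin{tikzcd}
V_\Gamma \arrow[r, "g"] \arrow[d] & V \arrow[d] \\
S_{V_\Gamma} \arrow[r] & S_V
\end{tikzcd}
$$
is Cartesian, with $g$ the projection and $\mathcal F_\Gamma = g^\star\mathcal F$ the pullback. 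Since flatness of a quasi-coherent sheaf is stable under arbitrary base change of algebraic stacks, $\mathcal F$ flat over $S_V$ forces $g^\star\mathcal F = \mathcal F_\Gamma$ to be flat over $S_{V_\Gamma}$, which is what we wanted.

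The main obstacle will be the bookkeeping in the identification of the relative Artin fans and the compatibility of the structure maps to $\mathpzc S$: one must check that $(\mathpzc V_\Gamma)_S$ really is $a^\star\Gamma$, using that $V_\Gamma\to V$ is the modification built from $\Gamma$ together with the faithful-monodromy characterisation of the relative Artin fan, and that the map $a^\star\Gamma\to\mathpzc V_S\to\mathpzc S$ is indeed the one used to form $S_{V_\Gamma}$. Once these identifications are in place the remaining step is the purely formal base-change statement, so the genuine content lives entirely in the tropical and Artin-fan geometry rather than in any flatness estimate.
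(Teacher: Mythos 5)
Your proposal is correct and follows essentially the same route as the paper: unwind Definition~\ref{defn:logflat} so that logarithmic flatness becomes flatness over $S\times_{\mathpzc{S}}\mathpzc{V}$, identify $V_\Gamma$ as the fibre product $V\times_{S\times_{\mathpzc{S}}\mathpzc{V}}\left(S\times_{\mathpzc{S}}a^\star\Gamma\right)$ via a cartesian diagram, and conclude by stability of flatness under base change. The paper's proof is exactly this argument (with the same observation, implicit there, that the integrality hypothesis on $a^\star\Gamma\rightarrow\mathpzc{S}$ plays no role in the flatness deduction), so no further comparison is needed.
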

	
	\begin{proof}
		Logarithmic flatness of $\mathcal{F}$ implies $\mathcal{F}$ is flat over $S\times_\mathpzc{S} \mathpzc{V}$. We are required to check $\mathcal{F}_\Gamma$ is flat over $a^\star \Gamma \times _\mathpzc{S} S$. A diagram chase shows all squares in the following diagram are cartesian. 
$$\begin{tikzcd}
V_\Gamma \arrow[r, "h'"] \arrow[d] & S\times_\mathpzc{S} a^\star\Gamma \arrow[d] \arrow[r] & a^\star\Gamma \arrow[d] \\
V \arrow[r, "h"]                   & S\times_\mathpzc{S} \mathpzc{V} \arrow[r]      &\mathpzc{V}            
\end{tikzcd} $$
Since flatness is preserved under base change \cite[TAG 01U8]{stacks-project}, we deduce $\mathcal{F}_\Gamma$ is flat over $S\times_\mathpzc{S} a^\star\Gamma$ and the result is proved.
	\end{proof}

\subsection{Proper monomorphisms and logarithmic surjections of coherent sheaves.}
The functor of points of Grothendieck's Hilbert scheme associated to $X$ assigns to a scheme $S$ the set of closed immersions to $X\times S$ which are flat over $S$. In the category of schemes closed immersions are precisely proper monomorphisms. Thus the data of an ideal sheaf is the same as the data of a proper monomorphism.
In the category of logarithmic schemes a strict closed immersion is a proper monomorphism \cite[Proposition 1.4, Property (v)]{MonomLogSch}, but there is a second class of proper monomorphism.

\begin{lemma}
    Logarithmic modifications are proper monomorphisms.
\end{lemma}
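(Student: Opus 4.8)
The plan is to verify the two constituent properties---properness and the monomorphism condition---separately, after reducing to a local model. Both being a monomorphism and being proper are stable under base change, and both may be checked strict \'etale locally on the target (for monomorphisms this is because $f$ is a monomorphism precisely when its diagonal is an isomorphism, and being an isomorphism satisfies descent). By the definition of logarithmic modification recalled in Section~\ref{sec:logbackground}, the morphism $Y\to X$ is, strict \'etale locally on $X$, the base change of a morphism of Artin fans $a^*\Gamma\to a^*\sigma$ along a strict map $X\to a^*\sigma$, where $\Gamma\to\sigma$ is a morphism of cone complexes inducing an isomorphism of topological realisations. It therefore suffices to prove that $a^*\Gamma\to a^*\sigma$ is a proper monomorphism; both properties then descend to $Y\to X$.

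For properness I would observe that $a^*\Gamma\to a^*\sigma$ is representable and that, on underlying algebraic stacks, it is obtained from the toric morphism $X_\Gamma\to X_\sigma$ associated to the subdivision $\Gamma\to\sigma$ by passing to the quotient by the torus. A subdivision of a cone induces a proper birational toric morphism of the associated toric varieties; properness is preserved by the torus quotient and by base change, so $a^*\Gamma\to a^*\sigma$, and hence $Y\to X$, is proper.

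The substance of the lemma lies in the monomorphism property, which I would establish through the functor of points. It is equivalent to show that for every logarithmic scheme $T$ the induced functor $\Hom(T,a^*\Gamma)\to\Hom(T,a^*\sigma)$ is fully faithful, i.e. that the diagonal of $a^*\Gamma\to a^*\sigma$ is an isomorphism. Using the description of maps into an Artin fan via the lifting functor $\mathcal{A}$ of Section~\ref{sec:logbackground}, a $T$-point of $a^*\sigma$ amounts to a tropical map $\mathrm{Trop}(T)\to\sigma$, and lifting it to $a^*\Gamma$ is precisely a factorisation through $\Gamma\to\sigma$. Because $|\Gamma|\to|\sigma|$ is a homeomorphism, the image of the interior of each cone of $\mathrm{Trop}(T)$ meets the relative interior of a unique cone $\gamma$ of $\Gamma$; and since $|\gamma|\to|\sigma|$ is injective the induced lattice map $N_\gamma\hookrightarrow N_\sigma$ is injective, so the factorising monoid homomorphism is uniquely determined whenever it exists. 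Thus the lift is unique (existence is not required for a monomorphism), the comparison functor is fully faithful, and $a^*\Gamma\to a^*\sigma$ is a monomorphism.

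The main obstacle is this last uniqueness statement: one must argue carefully that not only the underlying continuous map but the full monoid and lattice data of the lift are pinned down, using that subdivisions are isomorphisms on topological realisations together with the injectivity of $N_\gamma\to N_\sigma$ that this forces, even when a lattice change is permitted. Everything else---stability of the two properties under base change and strict \'etale localisation, and properness of toric subdivisions---is standard and can be dispatched by citing the descent and toric facts already implicit in Section~\ref{sec:logbackground}.
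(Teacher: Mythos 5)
Your proposal is correct and follows essentially the same route as the paper's proof: reduce, by base change and strict \'etale localisation, to the morphism of Artin fans $a^*\Gamma \to a^*\sigma$ induced by the subdivision, then check properness (via the associated toric morphism) and the monomorphism property (uniqueness of factorisations through $\Gamma \to \sigma$, using that $|\Gamma|\to|\sigma|$ is a homeomorphism and $N_\gamma \to N_\sigma$ is injective) separately, each being stable under base change. The paper compresses both verifications into ``it is easy to see'' and ``is always proper''; your write-up simply supplies those details, including the correct observation that only uniqueness, not existence, of lifts is needed and that lattice changes cause no harm.
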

\begin{proof}
    A logarithmic modification is the base change of a tropical model. It is easy to see tropical models are monomorphisms and being a monomorphism is preserved under base change. The map of logarithmic stacks corresponding to a tropical model is always proper and since properness is preserved under base change, the result is proved.
\end{proof}

Let $X$ be a logarithmic scheme whose Artin fan is $a^\star\mathrm{Trop}(X)$ for some cone complex $\mathrm{Trop}(X)$. The next lemma shows that any proper monomorphism to $X$ can be understood in terms of logarithmic modification and strict closed immersion.

\begin{lemma}\label{lem:ProperMonom}
    Let $Z \rightarrow X$ be a proper monomorphism in the category of logarithmic schemes. There is a commutative square $$ 
\begin{tikzcd}
\tilde{Z} \arrow[r, "\iota_{\tilde{Z}}", hook] \arrow[d, "\pi_Z"'] & \tilde{X} \arrow[d, "\pi_X"] \\
Z \arrow[r, "g"]                                                        & X                           
\end{tikzcd}$$ where $\pi_Z$ and $\pi_X$ are logarithmic modifications and $\iota_{\tilde{Z}}$ is a strict closed immersion.
\end{lemma}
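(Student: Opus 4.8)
The plan is to build the logarithmic modification $\pi_X$ combinatorially, as the modification $X_\Gamma \to X$ attached to a suitable subdivision $\Gamma \to \mathrm{Trop}(X)$, then to take $\tilde Z$ to be the strict transform $Z \times_X \tilde X$ and verify the three required properties. The conceptual heart is to translate the categorical hypothesis ``$g$ is a monomorphism'' into the tropical statement that $\mathrm{Trop}(g)$ is an immersion of cone complexes which becomes an inclusion of a subcomplex after subdivision.

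First I would reduce to tropical data. Since $g$ is a monomorphism of fine and saturated logarithmic schemes, the induced morphism of cone complexes $\mathrm{Trop}(g)\colon \mathrm{Trop}(Z)\to \mathrm{Trop}(X)$ is a monomorphism: two rays of $\mathrm{Trop}(Z)$ with the same image are realised by maps from a log point into $Z$ which agree after composing with $g$, hence agree. The same principle shows $\mathrm{Trop}(g)$ is injective on lattice points, since a ramified (Kummer-type) lattice map would force the fine and saturated diagonal of $g$ to fail to be an isomorphism, contradicting that $g$ is a monomorphism. Thus $\mathrm{Trop}(g)$ is an immersion: each cone $\tau$ of $\mathrm{Trop}(Z)$ is carried, isomorphically on lattices, onto a cone inside some cone $\sigma$ of $\mathrm{Trop}(X)$.

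Next I would straighten this immersion by a common-refinement argument. Subdividing $\mathrm{Trop}(X)$ so that every image cone $\mathrm{Trop}(g)(\tau)$ becomes a union of cones of the refinement, and pulling this refinement back along $\mathrm{Trop}(g)$, produces a subdivision $\Gamma \to \mathrm{Trop}(X)$ together with a subdivision $\Gamma_Z \to \mathrm{Trop}(Z)$ such that $\Gamma_Z$ is carried isomorphically onto a subcomplex of $\Gamma$. Extending the subdivision from the image subcomplex to all of $\Gamma$ is exactly the mechanism of Proposition~\ref{prop:PLsubdivisions}, and here I am free to refine lattices so that the cones of $\Gamma_Z$ acquire exactly the lattices of $\mathrm{Trop}(Z)$, since our logarithmic modifications are permitted to change lattices. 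Set $\tilde X = X_\Gamma$, a logarithmic modification of $X$ by construction, and set $\tilde Z = Z \times_X \tilde X$. As logarithmic modifications are stable under base change, $\pi_Z\colon \tilde Z \to Z$ is a logarithmic modification, and $\iota_{\tilde Z}\colon \tilde Z \to \tilde X$, being the base change of $g$, is again a proper monomorphism.

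Finally I would check strictness. Tropically $\mathrm{Trop}(\tilde Z) = \mathrm{Trop}(Z)\times_{\mathrm{Trop}(X)} \Gamma = \Gamma_Z$, and by construction $\Gamma_Z \hookrightarrow \Gamma = \mathrm{Trop}(\tilde X)$ is the inclusion of a subcomplex with matching lattices; hence the map of characteristic monoids $\iota_{\tilde Z}^\star \overline{M}_{\tilde X} \to \overline{M}_{\tilde Z}$ is an isomorphism, so $\iota_{\tilde Z}$ is strict. A strict proper monomorphism is a closed immersion on underlying schemes (a proper monomorphism of schemes is a closed immersion) equipped with pulled-back logarithmic structure, so $\iota_{\tilde Z}$ is a strict closed immersion and the square commutes. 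The main obstacle is the first step: extracting from the purely categorical hypothesis the geometric conclusion that $\mathrm{Trop}(g)$ is an immersion, ruling out Kummer behaviour, and matching the subdivisions of source and target so that the strict transform is genuinely strict rather than merely logarithmically \'etale onto its image.
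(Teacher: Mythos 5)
Your overall architecture --- tropicalise $g$, refine $\mathrm{Trop}(X)$ accordingly, base change, and finish with ``strict $+$ proper $+$ monomorphism $\Rightarrow$ closed immersion'' --- is the same as the paper's, and your endgame is sound (base change of a proper monomorphism is a proper monomorphism). The gap is in your first step: it is \emph{false} that a monomorphism of fine and saturated logarithmic schemes induces an injective map, let alone a monomorphism, of cone complexes. Take $X=\mathbb{A}^2$ with its toric logarithmic structure and $Z=\{(1,0)\}\sqcup\{(2,0)\}$, two disjoint points on the divisor $\{y=0\}$ equipped with the pulled-back logarithmic structure. Then $Z\to X$ is a strict closed immersion, hence a proper monomorphism, yet $\mathrm{Trop}(Z)=\mathbb{R}_{\geq 0}\sqcup\mathbb{R}_{\geq 0}$ carries both rays onto the \emph{same} ray of $\mathrm{Trop}(X)=\mathbb{R}^2_{\geq 0}$. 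Your argument for injectivity misuses the monomorphism property: two maps from a log point into $Z$ whose compositions with $g$ induce the same ray of $\mathrm{Trop}(X)$ need not have \emph{equal} compositions as morphisms of logarithmic schemes (they can land at different points of $X$), so the universal property of a monomorphism is never triggered. Consequently your claim that, after refinement, $\Gamma_Z$ is carried isomorphically onto a subcomplex of $\Gamma$ also fails (in the example it maps two-to-one onto a ray), and that is precisely the claim on which your strictness verification rests.

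The gap is repairable, and the repair is essentially what the paper does. Strictness of $\iota_{\tilde Z}$ is a stalkwise condition on characteristic monoids, so one never needs global injectivity: it suffices that, after refining $\mathrm{Trop}(X)$ by the common refinement of the images of all cones of $\mathrm{Trop}(Z)$ (your $\Gamma$; the paper's piecewise linear subdivision $\mathscr{T}(Z)$, built over an atomic open cover of $Z$ since $Z$ itself need not admit a cone-complex tropicalisation), each individual cone of the fibre product $\mathrm{Trop}(Z)\times_{\mathrm{Trop}(X)}\Gamma$ maps isomorphically onto a cone of $\Gamma$. The paper then takes $\tilde Z$ to be the logarithmic modification of $Z$ given by this fibre product of cone complexes, factors $\tilde Z\to X$ through $\tilde X$ by the universal property, and concludes strictness cone by cone. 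Your point about lattices (ruling out Kummer behaviour) is in the right spirit --- Kummer morphisms are not monomorphisms, as their fs self-fibre product is not the diagonal --- but note that the paper also sidesteps this by defining logarithmic modifications so as to permit lattice changes, which gives the needed flexibility when matching the lattices of source and target.
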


Lemma \ref{lem:ProperMonom} is asserting that an injection of logarithmic coherent sheaves on $X$ $$I_Z \hookrightarrow \mathcal{O}_X$$ is the same data as an equivalence class of proper monomorphism to $X$.

\begin{proof}
If $Z$ is atomic, the morphism $g$ induces a morphism of cone complexes $$\mathrm{Trop}(Z)\rightarrow \mathrm{Trop}(X)$$ and thus a piecewise linear subdivision of $\mathrm{Trop}(X)$. If $Z$ is not atomic chose an open cover of $Z$ by atomic schemes $Z_i$. Each $Z_i$ induces a subdivision. Take the common refinement to obtain the subdivision $$\mathscr{T}(Z)\rightarrow \mathrm{Trop}(X).$$  Choose a tropical model of $\mathscr{T}(Z)$ say $$\Sigma \rightarrow \mathrm{Trop}(X)$$ and write $\pi_X:\tilde{X}\rightarrow X$ for the corresponding logarithmic modification. Consider the fibre product of cone stacks of $\mathrm{Trop}(Z)$ with $\Sigma$ over $\mathrm{Trop}(X)$ and write $\pi_Z:\tilde{Z}\rightarrow Z$ for the corresponding logarithmic modification. Note the map $\tilde{Z}\rightarrow X$ factors through $\tilde{X}$ by the universal property of fibre product and by construction the map $\tilde{Z} \rightarrow \tilde{X}$ is strict.

We know both $g$ and $\pi_Z$ are monomorphisms. It follows that the composition of $\pi_X$ with $\iota_{\tilde{Z}}$ is a monomorphism and thus $\iota_{\tilde{Z}}$ is a strict proper monomorphism and is thus a closed immersion \cite[Proposition 1.4]{MonomLogSch}.
\end{proof}
We have thus shown that a logarithmic surjection of coherent sheaves with domain $\mathcal{O}_X$ is the data of an equivalence class of proper monomorphisms.
\subsection{The space of tropical supports} We turn our attention to $a^\star\supptrop(\mathpzc{X})$ which we denote $\supplog(\mathpzc{X})$. Where there is no ambiguity, we drop $\mathpzc{X}$ when writing $\supplog$ and $\supptrop$. Tropical models of $\supptrop$ are denoted by a subscript, $$\supptrop_\Sigma \rightarrow \supptrop.$$ We will write $a^\star\supptrop_\Sigma = \supplog_\Sigma$. 

Consider a diagram with all horizontal morphisms tropical models and all vertical morphisms combinatorially flat.
$$
\begin{tikzcd}
\mathcal{X}_\Sigma \arrow[r] \arrow[d] & \mathscr{X} \arrow[d] \\
\supptrop_\Sigma \arrow[r]         & \supptrop        
\end{tikzcd}
$$
\subsubsection{A flat map of Artin fans}\label{sec:aflatmapArtinFans} The morphism of Artin fans $$a^\star\pi: a^\star\mathcal{X}_\Sigma \rightarrow a^\star \supptrop_\Sigma$$ need not be flat. In this subsection we modify $\mathcal{X}_\Sigma$ in a canonical way to make the map flat. 

Our modification is specified on the level of cone complexes. In light of \cite[Theorem 2.1.4]{molcho2019universal} a morphism of Artin fans is flat if the corresponding morphism of cone complexes $\mathcal{X}_\Sigma \rightarrow \supptrop_\Sigma$ has the following two properties.
\begin{enumerate}[(1)]
\item The image of every element of $\mathcal{P}_{\mathcal{X}_\Sigma}$ lies in $\mathcal{P}_{\supptrop_\Sigma}$.
\item For every cone $\sigma$ of $\mathscr{X}_\sigma$ mapping to cone $\tau$ of $\supplog_\Sigma$ defines a surjection from the lattice of $\sigma$ to the lattice of $\tau$. 
\end{enumerate}
Our definition of combinatorial flatness ensures that (1) always holds. We define a diagram of logarithmic schemes $$\mathcal{X}_\Sigma\rightarrow \mathscr{X}_\Sigma \rightarrow \supplog_\Sigma.$$
where the first morphism is a logarithmic modification defined by modifying the lattice and the second has properties (1) and (2).

\begin{construction}\label{cons:flatmapArtinfan}
    We define the morphism $$\mathcal{X}_\Sigma\rightarrow \mathscr{X}_\Sigma \rightarrow \supplog_\Sigma.$$ We do so by replacing the local cones of $\mathcal{X}_\Sigma$. Fix $\mathfrak{s} = (N_\mathfrak{s},U_\mathfrak{s})$ a local cone of $\mathcal{X}_\Sigma$. The map to $\supplog_\Sigma$ defines a map from $\mathfrak{s}$ to a piecewise linear cone $\mathfrak{t} = (N_\mathfrak{t},U_\mathfrak{t})$. This is the data of a monoid morphism $$N_\mathfrak{s} \rightarrow N_\mathfrak{t}.$$ We define a new piecewise linear cone $(N_{\mathfrak{s}'},U_{\mathfrak{s}'})$ as follows. Whenever the image of $\mathfrak{s}$ in $\supptrop$ has the same dimension as $\mathfrak{s}$, we define the group $N_{\mathfrak{s}'}$ to be the subgroup of $N_{\mathfrak{s}'}\otimes \mathbb{Q}$ whose image in $N_\mathfrak{t}\otimes \mathbb{Q}$ lies in $N_\mathfrak{t}$. Identifying $N_\mathfrak{t}\otimes \mathbb{R} = N_{\mathfrak{t}'}\otimes \mathbb{R}$, we define $U_{\mathfrak{t}'} = U_\mathfrak{t}$. The abelian group corresponding to other cones is taken to be minimal compatible with the above choice.

    We define a topological space $|\mathscr{X}_\Sigma|=|\mathcal{X}_\Sigma|$ and $\mathcal{P}_\mathcal{X} = \mathcal{P}_\mathscr{X}$. We replace the homeomorphisms $$f_\kappa: \overline{\kappa} \rightarrow \overline{U}_{\mathfrak{t}_\kappa} \textrm{ with }f_\kappa': \overline{\kappa} \rightarrow \overline{U}_{\mathfrak{t}_\kappa'} .$$ The remaining data of the piecewise linear space $\mathscr{X}_\Sigma$ is inherited from $\mathcal{X}$.
\end{construction}

The algebro--geometric justification for making this change of lattice is that in the definition of logarithmic surjection of coherent sheaves we work up to logarithmic modification, and in particular changing the lattice. Thus, applying a logarithmic modification to the universal family over $\supplog$ is merely a convenience provided it preserves combinatorial flatness.

\subsubsection{Moduli of logarithmic modification}
Suppose one is interested in studying the moduli problem of proper monomorphisms. Our logarithmic Hilbert scheme has nice properties, but we pay the price of working up to logarithmic modification. 

A related question is to study the moduli space of logarithmic modifications of $X$ with no equivalence relation (and not allowing more generl proper monomorphisms). The resulting moduli space is the open subfunctor of $\supptrop$ obtained by discarding all strata admitting a map from a cone $\sigma$ such that the universal family does not pull back to a cone complex over $\sigma$.
 \section{Tropical support for constant degeneration}\label{sec:defineTropSupp}
Consider the projective, logarithmically flat and integral morphism of fine and saturated logarithmic schemes $V=X\times W\rightarrow W$. Let $\mathcal{E}$ be a sheaf on $V$ logarithmically flat over $W$. In this section we restrict attention to the case that $W = \mathrm{Spec}(A)$ is an affine atomic logarithmic scheme with Artin fan $\mathpzc{W} = a^\star \mathrm{Trop}(W)$ for $\mathrm{Trop}(W)$ a cone. We moreover impose that the image of $W$ in its Artin fan $\mathpzc{W}$ is a single point. In particular there exists a cone complex $\mathrm{Trop}(V)$ such that there is an equality of Artin fans $$\mathpzc{V}=\mathpzc{X}\times \mathpzc{W} = a^\star \mathrm{Trop}(V).$$ 

\begin{goal}
 To a logarithmic surjection of coherent sheaves $$q = [\pi_\Gamma:V_\Gamma \rightarrow V,q_\Gamma:\pi_\Gamma^\star \mathcal{E}\rightarrow \mathcal{F}_\Gamma]$$ on $V$ which is flat over $W$ we associate a subdivision $$\mathscr{T}(q)\rightarrow \mathrm{Trop}(V).$$ 
 \end{goal}
We call the piecewise linear space $\mathscr{T}(q)$ the \textit{tropical support} of $q$. A useful heuristic is that tropical support tracks the minimal logarithmic modification $$\pi_\Gamma: V_\Gamma \rightarrow V\textrm{ such that we can write }q = [\pi_\Gamma, q_\Gamma].$$ Morally none of our hypotheses on $W$ are necessary and we handle the general case in Section \ref{sec:TropSuppNotConstDegen}.
	\subsection{Defining tropical support}\label{sec:DefnTropSupp} 
Fix a representative $(\pi_\Gamma,q_\Gamma)$ for $q$ such that $V_\Gamma\rightarrow W$ is projective and write $\Gamma \rightarrow \mathrm{Trop}(V)$ for the tropical model corresponding to $\pi_\Gamma$. Note $V$ has a locally closed stratification pulled back from its Artin fan with strata $V_\sigma$ of $V$ indexed by cones $\sigma$ of $\mathrm{Trop}(V)$. Similarly $V_\Gamma$ has a locally closed stratification with strata $O(\gamma)$ indexed by cones $\gamma$ of $\Gamma$. We write the set of cones of $\Gamma$ whose interior is mapped to the interior of a cone $\sigma$ of $\mathrm{Trop}(V)$ by $\Gamma(\sigma)$.
	
\subsubsection{Torus actions and subdivisions} Fix a dimension $k$ cone $\gamma$ of $\Gamma(\sigma)$ and note there is an inequality $\mathrm{dim}(\sigma) = \ell\geq k$. There is a cartesian diagram $$
	\begin{tikzcd}
	O(\gamma) \arrow[r] \arrow[d]  & V_\sigma \arrow[d] \\
	B\mathbb{G}_m^{k} \arrow[r,"g"] & B\mathbb{G}_m^{\ell}. 
	\end{tikzcd}$$
	The cocharacter lattice of $\mathbb{G}_m^k$ may be identified with ${\gamma^\mathrm{gp}}$. The inclusion of $\gamma^\mathrm{gp}$ into $\sigma^\mathrm{gp}$ defines a morphism of tori $\mathbb{G}_m^{k} \rightarrow \mathbb{G}_m^{\ell}$ inducing the map $g$. There is a free action of a torus $T_\gamma \sim \mathbb{G}_m^{\ell-k}$ on $O(\gamma)$ defined by automorphisms of $\mathbb{G}_m^{\ell}$ up to the action of $\mathbb{G}_m^{k}$. The cocharacter lattice of the torus $T_\gamma$ is thus identified with the lattice $M_\sigma(\gamma) = \sigma^\mathrm{gp}/\gamma^\mathrm{gp}$. 
	
	A subtorus of $T_\gamma$ is then the data of a saturated subgroup of $M_\sigma(\gamma)$. The torus $T_\gamma$ acts on the set of surjections 
	$$\{q_{\Gamma,\gamma}:\pi_\Gamma^{\star} \mathcal{E}|_{V(\gamma)} \rightarrow \mathcal{F}|_{V(\gamma)}\}$$ by pullback. Denote the maximal subtorus of $T_\gamma$ stabilising $q_{\Gamma,\gamma}$ by $T_\gamma(q_\Gamma)$ and the corresponding subgroup of $M_\sigma(\gamma)$ by $L_\gamma(q_\Gamma)$. Observe there is an identification between cones in $\Gamma(\sigma)$ of which $\gamma$ is a face and cones of the star fan $\mathrm{St}_\gamma$.
	
	\begin{thmdefn}\label{lem:defnTropSupp}
		Fix $q_\Gamma:\pi_\Gamma^{\star} \mathcal{E}\rightarrow \mathcal{F}_\Gamma$ a surjection of sheaves on a logarithmic modification $\pi_\Gamma: V_\Gamma \rightarrow V$ with $\mathcal{F}_\Gamma$ logarithmically flat over $W$. There is an initial piecewise linear subdivision $\mathscr{T}(q_\Gamma)\rightarrow \mathrm{Trop}(V)$ with the following properties. 
		\begin{enumerate}[(1)]
			\item It is possible to factor $\Gamma \rightarrow \mathscr{T}(q_\Gamma) \rightarrow \mathrm{Trop}(V)$.
			\item Let $\gamma\leq \gamma'$ be cones of $\Gamma(\sigma)$ in $\mathrm{Trop}(V)$ where $a_\sigma$ lies in the image of the map $V \rightarrow \mathpzc{X}\times \mathpzc{W}$. The cone $\mathrm{St}_\gamma(\gamma')$ is contained in $L_\gamma(q)$ if and only if $\gamma'$ and $\gamma$ lie in the same stratum of $\mathcal{P}_{\mathscr{T}(q_\Gamma)}$.
		\end{enumerate}
		Moreover $\mathscr{T}(q_\Gamma)$ depends only on the logarithmic surjection of coherent sheaves $q = [q_\Gamma, V_\Gamma]$. We define the piecewise linear complex $\mathscr{T}(q)=\mathscr{T}(q_\Gamma)$ to be the \textit{tropical support} of $q$. 
	\end{thmdefn}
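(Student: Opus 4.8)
The plan is to produce $\mathscr{T}(q_\Gamma)$ by writing down a conical stratification of $|\mathrm{Trop}(V)|$ encoding the torus--stabiliser data and then feeding it into Construction~\ref{cons:}. Concretely, for each cone $\gamma$ of $\Gamma$ lying over a cone $\sigma$ of $\mathrm{Trop}(V)$ I would record the subgroup $L_\gamma(q_\Gamma)\le M_\sigma(\gamma)$ cut out by the maximal subtorus of $T_\gamma$ fixing the restriction $q_{\Gamma,\gamma}$. I then define a relation on the cones of $\Gamma$ by declaring $\gamma$ and $\gamma'$ (with $\gamma\le \gamma'$ in a common $\Gamma(\sigma)$) to be directly related whenever $\mathrm{St}_\gamma(\gamma')\subseteq L_\gamma(q_\Gamma)$, and let $\mathcal{P}$ be the locally closed stratification of $|\mathrm{Trop}(V)|$ whose strata are the unions of interiors of images of cones of $\Gamma$ in a single class of the generated equivalence relation. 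By construction $\mathcal{P}$ refines the Artin--fan stratification and is conical, with $\Gamma$ a tropical model each of whose cones maps into a stratum of $\mathcal{P}$. I would then \emph{define} $\mathscr{T}(q_\Gamma):=\mathscr{S}(\mathcal{P})$ via Construction~\ref{cons:}. With this definition property~(1) is immediate: since every cone of $\Gamma$ lies within a stratum of $\mathcal{P}$, the universal property recorded in Proposition~\ref{prop:} produces the factorisation $\Gamma\rightarrow \mathscr{T}(q_\Gamma)\rightarrow \mathrm{Trop}(V)$, and that same universal property is exactly the assertion that $\mathscr{T}(q_\Gamma)$ is the \emph{initial} such subdivision.

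For property~(2) I would identify the linear subgroup $G_\rho$ cut out at a ray $\rho$ through the interior of $\gamma$ in the proof of Proposition~\ref{prop:} with the stabiliser subgroup $L_\gamma(q_\Gamma)$, so that membership of two cones in a common stratum of $\mathcal{P}_{\mathscr{T}(q_\Gamma)}$ is governed precisely by the condition $\mathrm{St}_\gamma(\gamma')\subseteq L_\gamma(q_\Gamma)$. The content here is to upgrade the \emph{generated} equivalence relation back to the single-step condition, i.e. to prove transitivity: given $\gamma\le\gamma'\le\gamma''$ with $\mathrm{St}_\gamma(\gamma')\subseteq L_\gamma$ and $\mathrm{St}_{\gamma'}(\gamma'')\subseteq L_{\gamma'}$ one must deduce $\mathrm{St}_\gamma(\gamma'')\subseteq L_\gamma$. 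This reduces to the coherence statement that, under the projection $M_\sigma(\gamma)\twoheadrightarrow M_\sigma(\gamma')$ whose kernel is spanned by $\mathrm{St}_\gamma(\gamma')$, one has the equality $L_{\gamma'}(q_\Gamma)=\mathrm{im}\,L_\gamma(q_\Gamma)$ whenever $\mathrm{St}_\gamma(\gamma')\subseteq L_\gamma$. Granting it, a lift of $\mathrm{St}_{\gamma'}(\gamma'')$ modulo the kernel (which already lies in $L_\gamma$) yields $\mathrm{St}_\gamma(\gamma'')\subseteq L_\gamma$, which is what is required.

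The easy inclusion $\mathrm{im}\,L_\gamma\subseteq L_{\gamma'}$ is just restriction of invariance from $V(\gamma)$ to the deeper stratum $V(\gamma')$; the reverse inclusion is where logarithmic flatness of $\mathcal{F}_\Gamma$ is essential. By Lemma~\ref{lem:transversality and closure} the restriction to $V(\gamma')$ acquires no sections supported off $O(\gamma')$, so a stabilising direction on the deep stratum cannot be an artefact of embedded behaviour and genuinely lifts to a stabilising direction nearby. This is the relative analogue of the degeneration-of-initial-modules phenomenon of Lemma~\ref{lem:inw is Gw}, applied to the action of $T_\gamma$; I expect this coherence equality to be the \textbf{main obstacle}, since proving it rigorously requires the transversality consequences of logarithmic flatness together with a careful comparison of stabilisers across adjacent strata, precisely the computations carried out in the toric setting of Section~\ref{sec:transversalitynew}.

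Finally, for independence of the representative I would show the stratification $\mathcal{P}$ is unchanged under refinement of $\Gamma$. Passing to a common refinement $\Gamma''$ of two representatives, each orbit of $V_{\Gamma''}$ maps isomorphically onto an orbit of $V_\Gamma$ away from deeper strata, and the pulled-back surjection restricts to the pullback of $q_{\Gamma,\gamma}$ along this isomorphism; hence the stabiliser subgroups, and therefore the condition $\mathrm{St}_\gamma(\gamma')\subseteq L_\gamma$, are preserved. Thus the stratifications of $|\mathrm{Trop}(V)|$ induced by $\Gamma$ and $\Gamma''$ coincide, and since $\mathscr{S}(\mathcal{P})$ depends only on $\mathcal{P}$ we obtain $\mathscr{T}(q_{\Gamma''})=\mathscr{T}(q_\Gamma)$; the same comparison against the second representative then shows $\mathscr{T}(q)$ is well defined on the equivalence class $q=[q_\Gamma,V_\Gamma]$.
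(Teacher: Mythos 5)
Over the cones where the theorem actually makes an assertion, your plan coincides with the paper's own construction (Construction \ref{Cons:TropSupp}): the paper also stratifies $|\mathrm{Trop}(V)|$ by the equivalence relation generated by declaring face-related cones $\gamma\leq\gamma'$ equivalent when $\mathrm{St}_\gamma(\gamma')\subseteq L_\gamma(q_\Gamma)$, and its proof that the result is independent of the representative is the same orbit-identification argument you give. The coherence equality you isolate is indeed the crux, and it is exactly what the paper's Gr\"obner apparatus supplies: Proposition \ref{prop:initialideal} identifies $q|_{O(\gamma')}$ with the initial degeneration of $q|_{O(\gamma)}$ in a direction interior to $\mathrm{St}_\gamma(\gamma')$; Corollary \ref{corr:inwconst}, Corollary \ref{Corr:UnionOfCones} and Lemma \ref{lem:ConstDegenUnionCones} show the stabiliser locus is simultaneously a saturated subgroup and a union of cones of $\mathrm{St}_\gamma$ (hence a linear subspace); and the transversality argument in Case II of Lemma \ref{lem:PLspaceConstDegen} propagates invariance from $O(\gamma)$ to $V(\gamma)$ and to deeper strata. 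Those same facts are what guarantee that each stratum of your $\mathcal{P}$ is already an open subset of a linear subspace, so that Construction \ref{cons:} does not refine it further --- without this, the ``only if'' half of property (2) would fail for your $\mathscr{S}(\mathcal{P})$. So your route through Construction \ref{cons:} and Proposition \ref{prop:} is a mild repackaging of the paper's direct verification in Lemmas \ref{lem:LooksLikePL} and \ref{lem:PLspaceConstDegen}, with the heavy lifting correctly located.

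The genuine gap is your uniform treatment of all cones of $\mathrm{Trop}(V)$. The theorem restricts property (2) to cones $\sigma$ with $a_\sigma$ in the image of $V\rightarrow \mathpzc{X}\times\mathpzc{W}$, and this is not a technicality: in the standing setup the image of $W$ in $\mathpzc{W}$ is a single point, so every cone $\sigma_X\times\tau$ with $\tau$ a proper face of $\mathrm{Trop}(W)$ lies outside the image (already for $W$ a standard log point, i.e.\ the main case of interest). Over such cones $O(\gamma)=\emptyset$, the torus $T_\gamma$ and the module $G^o$ have no content, so the condition $\mathrm{St}_\gamma(\gamma')\subseteq L_\gamma(q_\Gamma)$ is not a meaningful prescription there; and your appeal to Proposition \ref{prop:} only yields initiality relative to the stratification $\mathcal{P}$ you happened to write down, not initiality among all subdivisions satisfying (1) and (2), which is the theorem's universal property and is silent over non-image cones. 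This is precisely why the paper imposes initiality (``the map $V \rightarrow \mathpzc{V}$ may not be surjective; the initial condition ensures uniqueness'') and why Construction \ref{Cons:TropSupp} has a separate second clause: over non-image cones the strata are dictated by which closures of strata over adjacent image cones they meet, with the stratification axiom checked there in Case III of Lemma \ref{lem:PLspaceConstDegen}. Your refinement-invariance argument has the same blind spot, since it compares orbits that are empty over these cones; it, too, needs the closure-based prescription to go through.
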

	Proof of this theorem occupies the remainder of this section. It will suffice to check the theorem Zariski locally on $V$. Indeed, given $\mathscr{T}(q|_{U_i})$ for $U_i$ a cover of $V$, the piecewise linear space $\mathscr{T}(q)$ is the common refinement of the piecewise linear subdivisions $\mathscr{T}(q|_{U_i})$. The reason we imposed an initial piecewise linear subdivision is that the map $V \rightarrow \mathpzc{V}$ may not be surjective. The initial condition ensures uniqueness.
\subsection{Logarithmic flatness and Gr\"obner theory}\label{sec:Grobner} 
We consider the following diagram in which the vertical maps are flat $$
\begin{tikzcd}
V_\Gamma \arrow[r] \arrow[d]                  & V \arrow[d]                     \\
W \times_\mathpzc{W} a^\star \Gamma \arrow[r] & W \times_\mathpzc{W}(\mathpzc{X}\times \mathpzc{W}).
\end{tikzcd}$$
Choosing cones $\gamma \leq \gamma'$ of $\Gamma(\sigma)$, we are interested in understanding the relation between $q|_{O(\gamma)}$ and $q|_{O(\gamma')}$. Our strategy is to work in coordinates where the problem reduces to Gr\"obner theory. 
 
 \subsubsection{Notation} The cone $\mathrm{St}_\gamma(\gamma')$ defines an affine toric variety $Y=Y(\gamma')$ over $\mathbb{C}$. Note $\gamma'$ identifies an open subset $U=U(\gamma')$ in $V(\gamma)$ which contains $O(\gamma')$ as a locally closed subscheme. Shrinking $V$ such that $V_\sigma = \mathrm{Spec}(A)$ is a sufficiently small affine, we may write a map of rings $$A \rightarrow A \otimes \mathbb{C}[Y]\textrm{ corresponding to }U \rightarrow V_\sigma.$$ The pullback of $\mathcal{E}$ to $V_\sigma$ has global sections defined by an $A$ module $E$. In  the sequel we often write $A[Y] = A \otimes \mathbb{C}[Y]$.

Restricting the surjection $q$ to $U(\gamma')$ and taking global sections defines a short exact sequence $$0 \rightarrow G \rightarrow E \otimes_A A[Y] \xrightarrow{q|_{U}} F \rightarrow 0.$$ Here $G$ is defined such that the sequence is exact.
Write $Y^o$ for the dense torus of $Y$ and let $G^o$ be the $A[Y^o]$ module generated by the image of $G$ under the localisation map $$E\otimes_A A[Y]\rightarrow E\otimes_A A[Y^o].$$ Taking global sections of $q|_{O(\gamma)}$ fits into the short exact sequence $$0 \rightarrow G^o \rightarrow E \otimes_A A[Y^o] \xrightarrow{\Gamma(q|_{O(\gamma)})} F^o \rightarrow 0.$$
Write $A[Y] = A[X_1,...,X_k,X_{k+1}^{\pm 1},...,X_{n}^{\pm 1}]$ and $A[\overline{Y}] = A[X_{k+1}^{\pm 1},...,X_{n}^{\pm 1}]$. We can then write a short exact sequence $$0 \rightarrow G' \rightarrow E \otimes_A A[\overline{Y}] \xrightarrow{\Gamma(q|_{O(\gamma')})} F' \rightarrow 0.$$
Our task is to relate $G^o$ and $G'$.

	\subsubsection{Initial degenerations for modules}  We define a \textit{monomial} of $A[Y] = A[X_1,...,X_k,X_{k+1}^{\pm 1},...,X_n^{\pm 1}]$ to be an element of the form $a X_i^j$ for some $a\in A$ and a primitive monomial if $a=1$. A \textit{monomial} of $E\otimes_AA[Y]$ is any element of the form $e\otimes m$ for $e\in E$ and $m$ a monomial of $A[Y]$.  
 
 We define a \textit{term order} $w$ to be a morphism of monoids from the set of monomials under multiplication to the real numbers under addition. The \textit{initial form} of an element $f = \sum_i m_i \in E\otimes_A A[Y^o]$ and a submodule $G^o \subset E\otimes_A A[Y^o]$, where $m_i$ are monomials, with respect to a term order $w$ are respectively $$\mathrm{in}_w(f) = \sum_{w(m_i) \mathrm{ maximal}} m_i \in E\otimes A[Y] \textrm{ and }\mathrm{in}_w(G^o) \textrm{ the }A[Y] \textrm{ module generated by } \{\mathrm{in}_w(f)|f\in G^o\}.$$ 
The same definitions work replacing $G^o$ by $G$.

Note a monomial on a toric variety $Y$ specifies a character of $Y^o$. Given a cocharacter $w$ the pairing between characters and cocharacters assigns to every monomial an integer. An element of the cocharacter lattice of $Y^o$ thus specifies a term order on monomials of $\mathbb{C}[Y^o]$ and thus on monomials of $E\otimes_A A[Y^o]$. 
	
	\subsubsection{Pullback to a trait}
	Let $S = \mathrm{Spec}(R)$ be a trait with generic point $\eta$ and closed point $s$. For a cone $\gamma$ of $\Gamma$ write $a_\gamma$ for the associated point of $a^\star \Gamma$. Choose a morphism $\varphi': S \rightarrow a^\star \Gamma$ such that the image of the generic point is $a_\gamma$ and the image of the special point is $a_{\gamma'}$. We probe flatness by studying the following enhancement of the diagram considered at the start of this section. $$
\begin{tikzcd}
V_s \arrow[r, "\hat{\iota}"] \arrow[d] \arrow[rr, "\iota", bend left] & V_S \arrow[r] \arrow[d,"\varpi"]          & V_\Gamma \arrow[r] \arrow[d]                  & V \arrow[d]                     \\
W \times_\mathpzc{W} s\arrow[r]                                                & W \times_\mathpzc{W} S \arrow[r,"\mathrm{id}\times \varphi'"] & W \times_\mathpzc{W} a^\star \Gamma \arrow[r] & W \times_\mathpzc{W}(\mathpzc{X}\times \mathpzc{W})
\end{tikzcd}$$ 
 This diagram is defined by declaring all squares cartesian.

We will assume $\gamma$ is the zero cone. The argument is identical in the general case, except that every short exact sequence must be tensored by the flat $\mathbb{C}$ module $\mathbb{C}[T_1^{\pm 1},...,T_{\mathrm{dim}(\gamma)}^{\pm 1}]$ at every step. This has no effect on our argument except to complicate notation.

We use two facts about this diagram. First $\varpi$ is flat because it is the base change of a flat map. Second pulling back $q$ to $V_s$ yields a short exact sequence $$0 \rightarrow G_\varphi = G' \otimes_\mathbb{C}M \rightarrow E \otimes_A A[\overline{Y}]\otimes_\mathbb{C}M \xrightarrow{q_s} F' \otimes_\mathbb{C}M \rightarrow 0$$
Here $M$ is the structure sheaf of a torus of rank $\mathrm{dim}(\gamma') - \mathrm{dim}(\gamma)$. It arises because the image of $s$ is stacky. Note that $G_\varphi$ depends only on the restriction of $\varphi$ to its special point. We choose an identification $$A[\overline{Y}]\otimes M\rightarrow A[Y^o]$$ extending the natural map $A[\overline{Y}]\rightarrow A[Y^o]$. In the sequel we think of $G_\varphi$ as a submodule of $E \otimes A[Y^o]$.

\subsubsection{Initial degenerations and traits} The morphism $\varphi$ specifies a cocharacter of the torus $Y^o$ and thus an element of the cocharacter lattice $m_\varphi \in M_{Y^o}$ inducing term order $w_\varphi$.
	
	\begin{proposition}\label{prop:initialideal}
		If $\mathcal{F}$ is logarithmically flat then there is an equality of submodules $$G_\varphi = \mathrm{in}_{w_\varphi}({G}^o) \leq \Gamma({\iota}^{\star}  \pi_\Gamma^\star \mathcal{E},Y_s) = E\otimes_A A[Y^o].$$
	\end{proposition}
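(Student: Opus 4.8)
The assertion is the relative, module-theoretic incarnation of the classical statement that the flat limit of a submodule under a one-parameter degeneration is its initial submodule. The plan is to read the trait map $\varphi$ as the Gr\"obner degeneration attached to the cocharacter $m_\varphi$, to extract genuine flatness over the trait from the hypothesis that $\mathcal{F}$ is logarithmically flat, and then to identify the special fibre of the resulting flat family with $\mathrm{in}_{w_\varphi}(G^o)$ using uniqueness of flat limits over a discrete valuation ring.

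First I would make the degeneration explicit. The map $\varphi'$ carries the generic point $\eta$ to $a_\gamma$, i.e. to the open orbit $O(\gamma)$, and the special point $s$ to $a_{\gamma'}$, so on coordinate rings it is the one-parameter subgroup scaling each $X_i$ by the power of the uniformiser $t$ of $R$ dictated by $m_\varphi$. Writing $P = E \otimes_A A[Y^o] \otimes_{\mathbb{C}} R$, the kernel of $q_s$ over $S$ is an $R$-submodule $\hat G \subseteq P$ whose generic fibre $\hat G \otimes_R K$ (with $K = \mathrm{Frac}(R)$) is the one-parameter translate of $G^o$, and whose special fibre $\hat G \otimes_R \kappa(s)$ is, by construction, $G_\varphi$ under the identification $A[\overline{Y}] \otimes M \cong A[Y^o]$.

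Next I would feed in flatness. Because $\mathcal{F}$ is logarithmically flat over $W$ and $\varpi$ is flat, being a base change of a flat morphism, the pulled-back quotient $\mathcal{F}_S$ is flat, hence torsion-free, over the discrete valuation ring $R$. Thus $P/\hat G$ is torsion-free, so $\hat G$ is saturated in $P$ and is itself flat over $R$, and one has $\hat G = (\hat G \otimes_R K) \cap P$ together with exactness of $0 \to \hat G/t\hat G \to P/tP \to \mathcal{F}_S/t\mathcal{F}_S \to 0$. For the containment $\mathrm{in}_{w_\varphi}(G^o) \subseteq G_\varphi$, take $f \in G^o$ and let $\hat f \in P$ be its normalised lift, namely the image of $f$ under the one-parameter subgroup rescaled by the power of $t$ that clears denominators and leaves the $w_\varphi$-maximal terms with unit coefficient. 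Then $\hat f$ is a $K$-multiple of the translate of $f$, hence lies in $\hat G \otimes_R K$, and saturation forces $\hat f \in \hat G$; reducing modulo $t$ gives $\mathrm{in}_{w_\varphi}(f) \in G_\varphi$. For the reverse inclusion I would invoke the standard Gr\"obner degeneration: the $R$-submodule generated by all such $\hat f$ is flat over $R$ with generic fibre $G^o$ and special fibre precisely $\mathrm{in}_{w_\varphi}(G^o)$, so uniqueness of the saturated flat extension $N \mapsto (N \otimes_R K) \cap P$ identifies it with $\hat G$ and yields $G_\varphi = \mathrm{in}_{w_\varphi}(G^o)$.

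I expect the principal obstacle to lie in the flatness step: converting the hypothesis of logarithmic flatness, phrased via flatness over the relative Artin fan $W \times_{\mathpzc{W}} \mathpzc{V}$, into literal $R$-flatness of $\mathcal{F}_S$ so that the flat-limit machinery applies. This is where the cartesian diagram preceding the statement, the flatness of $\varpi$, and the identification $A[\overline{Y}] \otimes M \cong A[Y^o]$ must be combined, and where one must check that the one-parameter translate genuinely realises the generic fibre of $\hat G$ as a submodule of $P$ rather than of a twist of it. The remaining Gr\"obner input, although classical for ideals, also requires a short verification in the present module setting, where $E$ need not be free and the monomial decomposition is not unique, so that $\hat f$ should be defined intrinsically via the torus action rather than term by term.
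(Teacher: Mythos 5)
Your proposal is correct and follows essentially the same route as the paper: realise the trait as the one-parameter Gr\"obner degeneration attached to $m_\varphi$ via the torus action, convert logarithmic flatness into honest $R$-flatness of $\mathcal{F}_S$ by base change along the flat map $\varpi$, and use the resulting torsion-freeness/saturation of the kernel over the discrete valuation ring to identify the special fibre with the initial module. The one place you diverge is the inclusion $G_\varphi \subseteq \mathrm{in}_{w_\varphi}(G^o)$: you import the classical flatness of the Gr\"obner degeneration module $N$ generated by the normalised lifts $\hat f$, together with uniqueness of saturated flat extensions, and the saturation of $N$ in $P$ is precisely the nontrivial classical input whose module-theoretic version you rightly flag as needing verification. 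The paper avoids this entirely: it first computes the kernel of the universal translate family $m^\star q$ explicitly, and then uses the fact that pullback is right exact, so that $G_S$ is generated by the $\varphi$-twists $\sum_i e_i\otimes a_i m_i\otimes \varphi^\#(m_i)$ of elements of $G^o$; reductions of $R[Y^o]$-combinations of such generators modulo the uniformiser visibly land in $\mathrm{in}_{w_\varphi}(G^o)$, giving the reverse inclusion with no appeal to uniqueness of flat limits, while your saturation argument for the forward inclusion is exactly the paper's. Both arguments are sound; the paper's is self-contained, whereas yours defers one step to a standard theorem that you would still need to prove in the setting where $E$ is not free.
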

	\begin{proof}
		Let $m': U \times Y^o \rightarrow U \times Y^o$ be the torus multiplication map. Consider a diagram 
		$$
\begin{tikzcd}
Y_S=S \times Y^o \arrow[d] \arrow[r]            & U \times Y^o \arrow[r, "m'"'] \arrow[d, "\pi_Y"] \arrow[rr, "m", bend left] & U \times Y^o \arrow[r, "\pi_U"'] \arrow[d, "\pi_Y\circ (m')^{-1}"] & U \arrow[d]                      \\
W\times S \arrow[r, "\varphi"] & W\times Y \arrow[r, "\mathrm{id}"]                             & W\times Y \arrow[r]                                   & W\times_{\mathpzc{W}}a^\star \Gamma
\end{tikzcd}$$
		Here we have used the fact $U = V_\sigma \times Y$ to define $\pi_Y$ as the composition $U \times Y^o \rightarrow U \rightarrow W\times Y$. Pulling back $q$ along $\pi_U$ defines a short exact sequence of sheaves which on the level of global sections is given by $$0 \rightarrow \mathrm{ker}(\pi_U^{\star} q)\rightarrow E\otimes_A A[Y] \otimes_\mathbb{C} \mathbb{C}[Y^o]\xrightarrow{\pi_U^{\star} q} F \rightarrow 0$$ Observe $\mathrm{ker}(\pi_1^{\star} q)$ is the submodule of $E\otimes_A A[Y] \otimes_\mathbb{C} \mathbb{C}[Y^o]$ generated by $g \otimes f$ where $g \in G$ and $f$ lies in $\mathbb{C}[Y^o]$.
		On the level of coordinate rings $m'$ is specified by $$ m'^\#:{A}[Y]\otimes_\mathbb{C} \mathbb{C}[Y^o]\rightarrow A[Y]\otimes_\mathbb{C} \mathbb{C}[Y^o].$$ $$  m\otimes f \mapsto m \otimes mf \textrm{ whenever }m\textrm{ a monomial in } \mathbb{C}[Y].$$ Consequently we have a short exact sequence of global sections 
  $$0 \rightarrow \mathrm{ker}(m^{\star} q) \rightarrow E \otimes_A A[Y]\otimes_\mathbb{C} \mathbb{C}[Y^o] \rightarrow {F}\otimes_A A[Y^o]  \rightarrow 0$$ where we have 
  $$\mathrm{ker}(m^{\star} q)= \left\langle\sum_i (e_i \otimes a_im_i) \otimes m_i| m_i \textrm{ a monomial and }\sum_i e_i \otimes a_im_i \in G^o \right \rangle. $$ Pulling back along $\varphi$ yields the short exact sequence of global sections of sheaves on $S\times Y^o$ \begin{equation}\label{eqn:ExSeq}
		0 \rightarrow {G}_S \rightarrow E \otimes_A A[X^o]\otimes R \xrightarrow{q_S}  {F}_S\rightarrow 0 .
		\end{equation} 
and we learn $\mathrm{ker}(q_S)$ is generated by $\sum_i e_i \otimes a_i m_i \otimes \varphi^\# (m_i)$ where $\sum_i e_i \otimes a_im_i$ is an element of $G$ and $m_i$ are monomials. Write $\pi$ for the uniformiser of $R$. Flatness of $F_S$ over $R$ ensures if $\pi g \in G_S$ for $g \in R[Y^o]$ then $g\in G_S$. Restricting to the special fibre of $S$, that is quotienting by the ideal $(\pi\otimes 1)$ yields the ideal $\textrm{in}_w({G}^o)$.
	\end{proof}
	

	\begin{corollary}\label{corr:inwconst}
		As $w$ varies within the interior of any cone of the star fan of $\gamma$ the module $\mathrm{in}_w(G^o)$ is constant.
	\end{corollary}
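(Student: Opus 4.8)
The plan is to deduce the corollary directly from Proposition \ref{prop:initialideal}, combined with the observation recorded just before that proposition that the module $G_\varphi$ depends only on the restriction of $\varphi$ to the special point of the trait $S$. First I would fix a cone $\gamma'$ of $\Gamma(\sigma)$ having $\gamma$ as a face, so that the corresponding cone $\mathrm{St}_\gamma(\gamma')$ is a cone of the star fan of $\gamma$, and take an arbitrary cocharacter $w$ in the interior of $\mathrm{St}_\gamma(\gamma')$. Throughout we work under the standing logarithmic flatness hypothesis, so Proposition \ref{prop:initialideal} applies.

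For each such $w$ I would produce the one-parameter degeneration $\varphi': S \to a^\star \Gamma$ exactly as in the ``pullback to a trait'' construction preceding Proposition \ref{prop:initialideal}: the generic point maps to $a_\gamma$ and the special point maps to $a_{\gamma'}$, and the induced term order is $w_\varphi = w$. The key point, and the only assertion that genuinely requires checking, is that for every $w$ in the relative interior of $\mathrm{St}_\gamma(\gamma')$ the special point of $\varphi'$ lands at the single point $a_{\gamma'}$, independently of the choice of $w$. This is the standard toric fact that a one-parameter subgroup whose cocharacter lies in the relative interior of a cone limits to the distinguished point of the associated torus orbit; the entire relative interior of $\mathrm{St}_\gamma(\gamma')$ therefore picks out the same limiting stratum, namely the one indexed by $\gamma'$.

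With this in hand the corollary is immediate. Since the special point of $\varphi'$ is $a_{\gamma'}$ for every $w$ in the interior of $\mathrm{St}_\gamma(\gamma')$, and $G_\varphi$ depends only on this special point, the module $G_\varphi$ is the same for all such $w$. Proposition \ref{prop:initialideal} identifies $G_\varphi = \mathrm{in}_{w}(G^o)$, so $\mathrm{in}_w(G^o)$ is constant as $w$ ranges over the interior of $\mathrm{St}_\gamma(\gamma')$, which is exactly the statement of the corollary.

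I expect the main obstacle to be purely a matter of bookkeeping rather than of substance: making precise the identification of the cocharacter $w$ with the induced term order $w_\varphi$, and confirming that the relative interior of a star-fan cone is precisely the locus of cocharacters inducing a fixed special point under $\varphi'$. Both are standard in toric geometry, so no real difficulty should arise beyond carefully tracking the lattice identifications $M_\sigma(\gamma) = \sigma^{\mathrm{gp}}/\gamma^{\mathrm{gp}}$ already set up in the preceding subsection, and the parenthetical treatment (noted in the proof of Proposition \ref{prop:initialideal}) of the case where $\gamma$ is not the zero cone, which only introduces a harmless tensor factor.
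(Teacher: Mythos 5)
Your proposal is correct and matches the paper's own proof: the paper likewise extends the cocharacter $w$ to a map from the trait $S$ into $Y$, applies Proposition \ref{prop:initialideal} to identify $\mathrm{in}_w(G^o)$ with $G_\varphi$, and concludes from the observation that $G_\varphi$ depends only on the special point, which in turn depends only on the cone of the star fan containing $w$ in its interior. Your write-up simply makes the underlying toric limit fact explicit, which the paper leaves implicit.
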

	\begin{proof}
		Consider $w$ a cocharacter of $Y^o$ specifying a point of $M_Y$ within the fan of $Y$. This is the data of a morphism $S^o \rightarrow (\mathbb{C}^{\star} )^n$. Since $w$ is contained in the support of the fan of $Y$ the morphism from $S^o$ extends to a morphism from $S$ to $Y$. We thus identify $G_\varphi$ with $\mathrm{in}_w(G^o)$ but the former depends only upon the cone in which $w$ lies.
	\end{proof}	
 \subsection{Gr\"obner theory and torus actions} There is a close link between Gr\"obner theory and torus actions. We continue notation from the previous section. Cocharacters of a torus specify the data of a one dimensional subtorus. Given cocharacter $w$ of $Y^o$ we write $T_w$ for the associated subtorus.
 
\begin{lemma}\label{lem:GrobIsTorus}
The morphism  $q|_{V(\gamma)}:\pi_\Gamma^\star E|_{V(\gamma)} \rightarrow \mathcal{F}_\Gamma|_{V(\gamma)}$ is invariant under the action of the one dimensional subtorus $T_w$ if and only if $$\mathrm{in}_w\left(\Gamma(O(\gamma),\mathrm{ker}(q|_{V(\gamma)}))\right)=\Gamma(O(\gamma),\mathrm{ker}(q|_{V(\gamma)})).$$ 
\end{lemma}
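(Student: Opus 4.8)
The plan is to reduce the asserted equivalence to the elementary fact that, for the one--parameter subgroup $T_w\cong\mathbb{G}_m$ cut out by the cocharacter $w$, a submodule is $T_w$--invariant exactly when it is homogeneous for the induced $\mathbb{Z}$--grading by $w$--weight. Throughout I write $G^o=\Gamma(O(\gamma),\mathrm{ker}(q|_{V(\gamma)}))$, viewed as a submodule of the free module $E\otimes_A A[Y^o]$; this module carries its natural $Y^o$--equivariant structure, with $E$ and $A$ placed in weight zero and the $w$--grading recording the character weight of monomials in $\mathbb{C}[Y^o]$.

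The first step is to identify invariance of the surjection with invariance of its kernel. The torus $T_w$ acts on surjections out of the fixed equivariant module $\pi_\Gamma^\star E|_{V(\gamma)}$ by pullback, and since a surjection of a fixed module is determined up to isomorphism of its target by its kernel, the point $q|_{V(\gamma)}$ is $T_w$--invariant if and only if the submodule $G^o$ is carried to itself by the $T_w$--action. This is the statement that $T_w$ is contained in the stabiliser $T_\gamma(q_\Gamma)$ in the notation preceding Theorem/Definition~\ref{lem:defnTropSupp}.

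The second step supplies the Gröbner--theoretic input. By construction $\mathrm{in}_w(G^o)$ is generated by the initial forms $\mathrm{in}_w(f)$, each of which is $w$--homogeneous, so $\mathrm{in}_w(G^o)$ is always a $w$--graded and hence $T_w$--invariant submodule; concretely, Proposition~\ref{prop:initialideal} realises $\mathrm{in}_w(G^o)$ as the special fibre of the $T_w$--degeneration of $G^o$, whose total space is manifestly $T_w$--invariant. Combining this with the first step gives both implications. If $\mathrm{in}_w(G^o)=G^o$, then $G^o$ inherits the $T_w$--invariance of $\mathrm{in}_w(G^o)$, so $q|_{V(\gamma)}$ is $T_w$--invariant. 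Conversely, if $q|_{V(\gamma)}$ is $T_w$--invariant then $G^o$ is a $w$--graded submodule: for any $f\in G^o$ each $w$--homogeneous component lies in $G^o$, so $\mathrm{in}_w(f)$ is the top--weight component and lies in $G^o$, yielding $\mathrm{in}_w(G^o)\subseteq G^o$, while each homogeneous element of $G^o$ equals its own initial form, giving the reverse inclusion and hence $\mathrm{in}_w(G^o)=G^o$.

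I expect the only point requiring real care to be the first step: pinning down the equivariant structure on $\pi_\Gamma^\star E|_{V(\gamma)}$ so that the $T_w$--action on surjections genuinely corresponds to the $T_w$--action on kernel submodules, and checking this is compatible with the trait--level description used in Proposition~\ref{prop:initialideal}. Once the problem is phrased purely in terms of $G^o$, the homogeneity argument is routine, so the emphasis in the write--up will be on making the translation from $q$ to $G^o$ precise rather than on the grading computation itself.
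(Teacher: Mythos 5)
Your proposal is correct and follows essentially the same route as the paper's proof: reduce $T_w$--invariance of $q|_{V(\gamma)}$ to $T_w$--invariance of its kernel $G^o$ (since a surjection from the fixed equivariant module is characterised by its kernel), and then identify invariance of the submodule $G^o$ with the condition $\mathrm{in}_w(G^o)=G^o$. The only difference is one of detail: the paper simply asserts this last equivalence, whereas you spell out the underlying fact that a $T_w$--stable submodule is exactly a $w$--graded one, which is a welcome elaboration rather than a departure.
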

\begin{proof}
First note $\Gamma(\mathrm{ker}(O(\gamma),q|_{V(\gamma)}) = G^o$. The morphism $q|_{V(\gamma)}$ is characterised by its kernel so we need understand when $\mathrm{ker}(q|_{V(\gamma)})$ is fixed by our torus action. The submodule $G^o$ is fixed if and only if $\mathrm{in}_w(G^o) = G^o.$
\end{proof}
 
\begin{corollary}\label{Corr:UnionOfCones}
The set of points $w$ in the cocharacter lattice of $Y^o$ such that $\mathrm{in}_w(G^o) = G^o$ forms a saturated subgroup. If $\mathcal{F}$ is logarithmically flat then this subgroup is a union of cones.
\end{corollary}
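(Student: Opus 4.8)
The plan is to deduce both assertions from the torus-theoretic reformulation of Lemma~\ref{lem:GrobIsTorus} together with the constancy statement of Corollary~\ref{corr:inwconst}. Write $N=M_\sigma(\gamma)$ for the cocharacter lattice of $T_\gamma$ and set $\Lambda=\{w\in N:\mathrm{in}_w(G^o)=G^o\}$ for the subset in question.

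For the first assertion I would identify $\Lambda$ with the cocharacter lattice of a subtorus. Since a surjection is determined by its kernel, the maximal subtorus of $T_\gamma$ fixing $q_{\Gamma,\gamma}$ equals the maximal subtorus $T(G^o):=T_\gamma(q_\Gamma)$ fixing the submodule $G^o$. By Lemma~\ref{lem:GrobIsTorus} a cocharacter $w$ satisfies $\mathrm{in}_w(G^o)=G^o$ exactly when the one-dimensional subtorus $T_w$ fixes $G^o$, that is when $T_w\subseteq T(G^o)$; as the image of a one-parameter subgroup is connected, this holds precisely when $w$ is a cocharacter of $T(G^o)$. Hence $\Lambda=L_\gamma(q_\Gamma)$ is the cocharacter lattice of the subtorus $T(G^o)\subseteq T_\gamma$, so it is a saturated subgroup of $N$; in particular $\Lambda=(\Lambda\otimes\mathbb{R})\cap N$, which I record for use below.

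For the second assertion I would assume $\mathcal{F}$ logarithmically flat and invoke Corollary~\ref{corr:inwconst}, which gives that $\mathrm{in}_w(G^o)$ is constant as $w$ ranges over the relative interior of any cone $\tau$ of the star fan $\mathrm{St}_\gamma$. Consequently the condition $\mathrm{in}_w(G^o)=G^o$ holds either at every lattice point of $\mathrm{relint}(\tau)$ or at none, so if $S$ denotes the set of cones on which it holds then $\Lambda$ meets the support of $\mathrm{St}_\gamma$ in $\bigcup_{\tau\in S}(\mathrm{relint}(\tau)\cap N)$. It remains to check that $S$ is closed under passage to faces. Fixing $\tau\in S$, rationality of $\tau$ ensures the lattice points of $\mathrm{relint}(\tau)$ span $\mathrm{span}_\mathbb{R}(\tau)$, and these points lie in $\Lambda$, so $\tau\subseteq\mathrm{span}_\mathbb{R}(\tau)\subseteq\Lambda\otimes\mathbb{R}$. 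Any face $\tau'\leq\tau$ then lies in $\Lambda\otimes\mathbb{R}$, whence the saturation identity $\Lambda=(\Lambda\otimes\mathbb{R})\cap N$ forces $\mathrm{relint}(\tau')\cap N\subseteq\Lambda$ and $\tau'\in S$. Thus $\bigcup_{\tau\in S}\tau$ is a subfan of $\mathrm{St}_\gamma$ and the subgroup is a union of cones.

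I expect the last step to be the main obstacle: constancy on cone interiors only shows that $\Lambda$ is a union of relatively open cones, and upgrading this to a union of closed cones is exactly where the saturation of $\Lambda$ from the first part must be combined with the rationality of the star-fan cones. It is worth stressing that logarithmic flatness enters only through Corollary~\ref{corr:inwconst}: without it the initial module can jump within a single cone of $\mathrm{St}_\gamma$, so that $\Lambda$ need not be conical --- precisely the behaviour the subdivision $\mathscr{T}(q)$ is built to record.
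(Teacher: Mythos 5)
Your proof is correct and takes essentially the same route as the paper, whose entire proof consists of the two citations you elaborate: Lemma~\ref{lem:GrobIsTorus} identifies the set with the cocharacter lattice of the subtorus stabilising $q|_{V(\gamma)}$ (hence a saturated subgroup), and Corollary~\ref{corr:inwconst} gives constancy of $\mathrm{in}_w(G^o)$ on interiors of star-fan cones. The face-closure step you supply at the end --- combining saturation of $\Lambda$ with rationality of the cones to pass from relative interiors to closed cones --- is precisely the detail the paper leaves implicit.
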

\begin{proof}
    First sentence from Lemma \ref{lem:GrobIsTorus}. Second sentence also uses Corollary \ref{corr:inwconst}.
\end{proof}
We work in the setup of Section \ref{sec:DefnTropSupp}. The next lemma is important because it suggests how to prove Theorem \ref{lem:defnTropSupp}. Meditating on the consequences of this lemma suggests the right definition of piecewise linear space. Lemma \ref{lem:ConstDegenUnionCones} is saying we can do a sort of global version of Gr\"obner theory - thinking about cones without thinking explicitly about coordinate patches.
	\begin{lemma}\label{lem:ConstDegenUnionCones}
		There are cones $\kappa_1,...,\kappa_n$ in $\mathrm{St}_\gamma$ such that there is an equality of submonoids  $$\bigcup_i\left(\kappa_i\cap M_\sigma(\gamma)\right) = L_\gamma(q)\leq M_\sigma(\gamma).$$
	\end{lemma}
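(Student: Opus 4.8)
The plan is to prove this as a bridge between the local Gr\"obner-theoretic analysis of the previous section and the definition of $L_\gamma(q)$ in terms of torus stabilisers. The essential content is that the set $L_\gamma(q) \subseteq M_\sigma(\gamma)$, which is defined as the subgroup of cocharacters of $T_\gamma$ stabilising the surjection $q_{\Gamma,\gamma}$, can be recovered combinatorially as a finite union of (the lattice points of) cones in the star fan $\mathrm{St}_\gamma$. First I would recall that, by the identification at the end of Section \ref{sec:DefnTropSupp}, cocharacters $w \in M_\sigma(\gamma) = \sigma^{\mathrm{gp}}/\gamma^{\mathrm{gp}}$ are precisely the test cocharacters of the torus $Y^o$ attached to $\gamma$, and that $L_\gamma(q)$ is by definition the set of $w$ for which $T_w$ stabilises $q|_{V(\gamma)}$. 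By Lemma \ref{lem:GrobIsTorus}, the condition that $T_w$ stabilises $q|_{V(\gamma)}$ is exactly the condition $\mathrm{in}_w(G^o) = G^o$, so $L_\gamma(q)$ coincides set-theoretically with $\{w : \mathrm{in}_w(G^o) = G^o\}$.

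With this translation in hand, the result is essentially Corollary \ref{Corr:UnionOfCones} phrased globally. The key steps I would carry out are: (1) invoke Corollary \ref{Corr:UnionOfCones} to conclude that $\{w : \mathrm{in}_w(G^o) = G^o\}$ is a saturated subgroup of $M_\sigma(\gamma)$ and, using logarithmic flatness of $\mathcal{F}$, that it is a union of cones; (2) observe that by Corollary \ref{corr:inwconst} the initial module $\mathrm{in}_w(G^o)$ is constant as $w$ ranges over the interior of any single cone of the star fan $\mathrm{St}_\gamma$, so that membership in $L_\gamma(q)$ is a property of the cone containing $w$ rather than of $w$ individually; and (3) conclude that $L_\gamma(q)$ is the union over those finitely many cones $\kappa_1,\dots,\kappa_n$ of $\mathrm{St}_\gamma$ whose interior points $w$ satisfy $\mathrm{in}_w(G^o) = G^o$, giving the desired equality $\bigcup_i (\kappa_i \cap M_\sigma(\gamma)) = L_\gamma(q)$. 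Finiteness is automatic since $\mathrm{St}_\gamma$ has finitely many cones.

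The step I expect to require the most care, and which is the genuine mathematical content rather than bookkeeping, is the compatibility in step (2): I must ensure that the Gr\"obner stratification on the cocharacter lattice of $Y^o$ genuinely refines into the cones of $\mathrm{St}_\gamma$, i.e. that each cone of the star fan lies entirely inside a single Gr\"obner stratum for $G^o$. This is precisely where logarithmic flatness of $\mathcal{F}_\Gamma$ enters through Proposition \ref{prop:initialideal} and Corollary \ref{corr:inwconst}: flatness forces $\mathrm{in}_w(G^o)$ to degenerate compatibly across faces, which is what guarantees the locus $\{w : \mathrm{in}_w(G^o)=G^o\}$ respects the cone structure rather than cutting cones into pieces. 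I would emphasise that without the flatness hypothesis one only obtains a subgroup, not a union of cones, so the hypothesis is used in an essential way precisely here. Once this compatibility is established the identity of submonoids follows formally, and the proof is short.

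Concretely, I would write the argument as follows. By the identification of cocharacters of $T_\gamma$ with $M_\sigma(\gamma)$ and Lemma \ref{lem:GrobIsTorus}, we have
\[
L_\gamma(q) = \{ w \in M_\sigma(\gamma) : \mathrm{in}_w(G^o) = G^o \}.
\]
Corollary \ref{Corr:UnionOfCones} shows, using logarithmic flatness of $\mathcal{F}$, that this set is a saturated subgroup of $M_\sigma(\gamma)$ which is a union of cones. By Corollary \ref{corr:inwconst}, $\mathrm{in}_w(G^o)$ is constant on the interior of each cone of $\mathrm{St}_\gamma$, so the condition $\mathrm{in}_w(G^o)=G^o$ holds either for all interior points of a given cone or for none. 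Letting $\kappa_1, \dots, \kappa_n$ be those finitely many cones of $\mathrm{St}_\gamma$ whose interior points satisfy this condition, every lattice point of $L_\gamma(q)$ lies in some $\kappa_i$ and conversely each $\kappa_i \cap M_\sigma(\gamma)$ is contained in $L_\gamma(q)$, yielding the stated equality.
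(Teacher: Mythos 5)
Your write-up correctly captures the local, affine content of the lemma, but that content is exactly Corollary \ref{Corr:UnionOfCones}, and the step that the lemma actually adds --- passing from charts to the global stabiliser $L_\gamma(q)$ --- is missing. The Gr\"obner apparatus you invoke ($G^o$, monomials, term orders, and Lemma \ref{lem:GrobIsTorus}) is only defined after the reduction at the start of Section \ref{sec:Grobner}, where $V$ is shrunk so that $V_\sigma = \mathrm{Spec}(A)$ is a sufficiently small affine and the relevant open subset of $V(\gamma)$ becomes a product $V_\sigma \times Y$. In the setting of the lemma, $V = X \times W$ with $X$ projective, so the stratum $V_\sigma$ is in general not affine and $V(\gamma)$ is only a toric variety bundle over it: there is no globally defined module $G^o$, and the equation $\mathrm{in}_w(G^o) = G^o$ has no global meaning. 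Consequently your opening identification $L_\gamma(q) = \{w : \mathrm{in}_w(G^o) = G^o\}$ conflates the global stabiliser (the cocharacters of the maximal subtorus of $T_\gamma$ fixing $q|_{V(\gamma)}$) with a chart-by-chart Gr\"obner condition, which is precisely the gap.

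The paper's proof supplies this missing local-to-global argument: it covers the preimage $V_\sigma^o$ of the point $a_\sigma$ by opens $U_i$ over which the bundle trivialises, sets $L_i$ equal to the maximal subtorus stabilising $q|_{U_i}$, establishes the identity $T_\gamma(q) = \bigcap_i L_i$ (which in the general case $V_\sigma^o \neq V_\sigma$ requires that being torus fixed is a closed condition together with logarithmic flatness of $V$), and only then applies Corollary \ref{Corr:UnionOfCones} to each affine piece; one concludes because a finite intersection of saturated subgroups, each a union of cones of $\mathrm{St}_\gamma$, is again a union of cones of $\mathrm{St}_\gamma$. To repair your proof you would need to add exactly this covering argument and the intersection identity, including the closure step --- none of which is a formality, since it is where the hypothesis on $q$ as a global surjection over the whole stratum, rather than on a single chart, enters. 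As written, your steps essentially re-derive Corollaries \ref{corr:inwconst} and \ref{Corr:UnionOfCones} rather than prove the lemma.
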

	\begin{proof}[Proof of Theorem \ref{lem:defnTropSupp}]
		Observe $V_\sigma$ contains an open subset $V_\sigma^o$ which is the preimage of the point $a_\sigma$ in $\mathpzc{V}$ associated to the cone $\sigma$. The preimage $V'(\gamma)$ of $V_\sigma^o$ in $V(\gamma)$ is a toric variety bundle over $V^o_\sigma$ Take an open cover $\{U_{i}\}$ of $V^o_\sigma$ whose preimage in $V(\gamma)$ is isomorphic to $U_{\sigma_i}\times Y$ for $Y$ a toric variety. Let $L_i$ be the maximal subtorus of $T_\gamma$ which stabilises $q|_{U_i}$. In the case $V_\sigma^o = V_\sigma$ we know $T_\gamma(q) = \cap_i L_i$; in fact this holds in general since being torus fixed is a closed condition and $V$ is logarithmically flat. It thus suffices to check each $L_i$ is a union of cones. Without loss of generality $U_i$ is affine and the result follows from Corollary \ref{Corr:UnionOfCones}. 
	\end{proof} 
\subsection{Construction}We now explain how to construct the piecewise linear space $\mathscr{T}(q_\Gamma)$. We do so by specifying a locally closed stratification of the interior of each cone $\sigma$ of $\mathrm{Trop}(V)$. 


\begin{construction}\label{Cons:TropSupp} For each choice of $\sigma$ a cone of $\mathrm{Trop}(V)$ with $a_\sigma$ in the image of $V \rightarrow \mathpzc{X}\times \mathpzc{W}$ we define a locally closed stratification $P_\sigma$ of $|\sigma|^o$ as follows. Define an equivalence relation $\sim$ on polyhedra of $\Gamma(\sigma)$ such that $\gamma \sim \gamma'$ if $\gamma'$ is a face of $\gamma$ and the image of $\gamma'$ in $\mathrm{St}_\gamma$ lies in $L_\gamma(q_\Gamma)$. We specify a locally closed stratification $\mathcal{P}$ of the interior of $\sigma$ by declaring $p,q \in |\sigma|^o$ lie in the same stratum if and only if $p,q$ lie in the interiors of cones related by $\sim$.

For each choice of $\sigma$ a cone of $\mathrm{Trop}(V)$ with $a_\sigma$ not in the image of $V \rightarrow \mathpzc{X}\times \mathpzc{W}$ we define a locally closed stratification $P_\sigma$ of $|\sigma|^o$ as follows. Define an equivalence relation $\sim$ on polyhedra of $\Gamma(\sigma)$ such that $\gamma \sim \gamma'$ if whenever $\sigma$ is a face of $\sigma'$ and $\sigma'$ is as in the first paragraph, $\gamma$ and $\gamma'$ lie in the closure of the same strata of $\mathcal{P}_{\sigma'}$. 

We define $\mathscr{T}(q)$ to be the unique piecewise linear space such that the subset of $\mathcal{P}_{\mathscr{T}(q)}$ mapping to the interior of $\sigma$ is $\mathcal{P}_\sigma$ for all $\sigma$.
\end{construction}

We next verify that the piecewise linear space ${\mathscr{T}(q)}$ built in Construction \ref{Cons:TropSupp} exists. Our task is to check the $\mathcal{P}_\sigma$ fit together to define a locally closed stratification. We first handle strata in $\Gamma(\sigma)$ for a fixed $\sigma$.

\begin{lemma}\label{lem:LooksLikePL}
Fixing any cone $\sigma$, there is a piecewise linear space subdividing $\sigma$ whose restriction to the interior of $\sigma$ is $\mathcal{P}_\sigma$.
\end{lemma}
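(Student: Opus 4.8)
The plan is to exhibit $\mathcal{P}_\sigma$ as (the interior part of) a conical locally closed stratification of $\sigma$ and then feed that stratification into Construction~\ref{cons:}. First I would record that conicality is immediate: the restriction $\Gamma(\sigma)$ of the fixed tropical model $\Gamma$ is a fan structure on $|\sigma|^o$, and by the very definition in Construction~\ref{Cons:TropSupp} every stratum of $\mathcal{P}_\sigma$ is a union of interiors of cones of $\Gamma(\sigma)$; taking $\Sigma = \Gamma|_\sigma$ as the tropical model witnesses conicality. To extend $\mathcal{P}_\sigma$ to all of $|\sigma|$ I would keep the $\sim$-classes on $|\sigma|^o$ and refine each proper face $\tau < \sigma$ by the individual cone interiors $\mathrm{int}(\gamma)$, $\gamma \in \Gamma(\tau)$. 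This extension refines the face stratification of $\sigma$ and remains conical for $\Gamma|_\sigma$.

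The real content is the frontier axiom: the closure of each stratum must be a union of strata. Translating to cones, a stratum is an equivalence class $C$ for $\sim$ and its closure is $\bigcup_{\gamma\in C}\gamma$; since this closure is a union of full cones of $\Gamma$, its intersection with the boundary $\partial\sigma$ is automatically a union of the boundary strata $\mathrm{int}(\gamma)$, so the boundary interactions are immediate and I only need the frontier condition among interior strata. Concretely, given $\delta\leq\gamma$ with $\gamma\in C$ and $\delta\sim\delta'$, I must produce $\gamma'\in C$ with $\delta'\leq\gamma'$. The two tools are: \emph{(i)} semicontinuity of the stabiliser, namely that for $\delta\leq\gamma$ the lattice quotient $M_\sigma(\delta)\to M_\sigma(\gamma)$ carries $L_\delta(q)$ into $L_\gamma(q)$, since a cocharacter with $\mathrm{in}_w(G^o)=G^o$ fixes $q|_{O(\delta)}$ (Lemma~\ref{lem:GrobIsTorus}) and hence fixes its degeneration $q|_{O(\gamma)}$, as being torus-fixed is a closed condition; and \emph{(ii)} the subfan structure of Lemma~\ref{lem:ConstDegenUnionCones}, which presents $L_\gamma(q)$ as a union of cones of $\mathrm{St}_\gamma$, in particular closed under passing to faces. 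Using these I would transport the single generating step $\delta\sim\delta'$ up along the direction from $\delta$ to $\gamma$: the cone $\gamma'$ generated by $\delta'$ and the image of $\mathrm{St}_\delta(\gamma)$ lies in $C$ precisely because $L$ is a subfan compatible with specialisation, and an induction on the length of a $\sim$-chain then upgrades single steps to the full equivalence. I expect this transport step — reconciling the local subfans $L_\gamma(q)$ across the face relation — to be the main obstacle, and the semicontinuity in \emph{(i)} together with Corollaries~\ref{corr:inwconst} and \ref{Corr:UnionOfCones} to be exactly what makes it work.

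With a conical locally closed stratification $\mathcal{P}$ of $\sigma$ in hand, I would finish by applying Construction~\ref{cons:} and its universal property (Proposition~\ref{prop:}) to the cone complex $\sigma$ and $\mathcal{P}$. The only hypothesis to check is that each stratum is an open subset of a linear subspace of a local cone; here this is transparent because every stratum sits inside the single cone $\sigma$ with fixed ambient lattice $\sigma^{\mathrm{gp}}$ and is a union of relatively open rational cones, so the argument of Proposition~\ref{prop:} identifying its minimal linear span applies verbatim. The resulting piecewise linear space $\mathscr{S}(\mathcal{P})\to\sigma$ is a subdivision of $\sigma$ whose restriction to $|\sigma|^o$ is by construction $\mathcal{P}_\sigma$, which is the assertion of the lemma.
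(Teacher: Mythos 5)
Your proposal is correct and follows essentially the same route as the paper: the paper's proof likewise rests on Lemma~\ref{lem:ConstDegenUnionCones} to give each stratum of $\mathcal{P}_\sigma$ the structure of a piecewise linear cone, and on the Gr\"obner-theoretic semicontinuity fact that a subtorus fixing $q_\Gamma|_{O(\gamma)}$ also fixes $q_\Gamma|_{O(\gamma')}$ for $\gamma \leq \gamma'$ (your tool \emph{(i)}, checked on affine covers via Lemma~\ref{lem:GrobIsTorus}) to verify the frontier condition. The differences are cosmetic: the paper endows the strata with their piecewise linear structure directly instead of routing through Construction~\ref{cons:} and Proposition~\ref{prop:}, and it opens with the reduction to the case that $a_\sigma$ lies in the image of $V \rightarrow \mathpzc{X}\times \mathpzc{W}$, which you leave implicit.
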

\begin{proof}
It suffices to handle the case that $a_\sigma$ lies in the image of $V \rightarrow \mathpzc{X}\times \mathpzc{W}$. Lemma \ref{lem:ConstDegenUnionCones} implies each stratum of $\mathcal{P}_\sigma$ naturally carries the structure of a piecewise linear cone. It remains to check $\mathcal{P}_\sigma$ is a bone fide stratification. This follows by observing that $\gamma\leq \gamma'$ elements of $\Gamma(\sigma)$ and $q_\Gamma|_{O(\gamma)}$ is fixed by a subtorus of $Y_\gamma^o$ then $q_{\Gamma}|_{O(\gamma')}$ is fixed by the same subtorus. This property can be checked on any affine cover of $X$ and thus follows from our Gr\"obner theory characterisation of being torus fixed. 
\end{proof}

It remains to check that the locally closed stratifications $\mathcal{P}_\sigma$ fit together to form a piecewise linear complex as claimed. The only work we must do is verify that the locally closed stratification $\mathcal{P}_{\mathscr{T}(q)}$ is a stratification, not just a semistratification.

\begin{lemma}\label{lem:PLspaceConstDegen}
    Let $\kappa,\kappa'$ be strata of $\mathcal{P}_{\mathscr{T}(q)}$ such that $\kappa$ intersects the closure of $\kappa'$. Then $\kappa$ lies in the closure of $\kappa'$.
\end{lemma}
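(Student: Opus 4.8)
The plan is to reduce the frontier condition to a single combinatorial transport statement for the stabilizer-defined strata lying over the cones in the image of $V\to\mathpzc{X}\times\mathpzc{W}$, and to deduce the remaining cases formally from the second clause of Construction \ref{Cons:TropSupp}. First I would record the topological reduction. Every stratum of $\mathcal{P}_{\mathscr{T}(q)}$ maps to the interior of a single cone of $\mathrm{Trop}(V)$, so write $\kappa$ over $|\sigma|^o$ and $\kappa'$ over $|\tau|^o$. As $\overline{\kappa'}\subseteq|\tau|$ while $\kappa\subseteq|\sigma|^o$, the hypothesis $\kappa\cap\overline{\kappa'}\neq\emptyset$ forces $|\sigma|^o\cap|\tau|\neq\emptyset$, hence $\sigma$ is a face of $\tau$. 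When $\sigma=\tau$ the claim is exactly the frontier condition inside the piecewise linear space subdividing $\sigma$ produced by Lemma \ref{lem:LooksLikePL}, so I may assume $\sigma$ is a proper face of $\tau$. It then suffices to prove that $\overline{\kappa'}\cap|\sigma|^o$ is a union of strata of $\mathcal{P}_\sigma$: since $\kappa$ is a single stratum meeting this set, it would be contained in it. Writing strata as $\sim$-classes of cones of $\Gamma(\sigma)$ and $\Gamma(\tau)$, I must show that if some $\gamma\in\kappa$ is a face of a cone of $\kappa'$ and $\gamma\sim_\sigma\delta$, then $\delta$ is a face of some cone of $\kappa'$.

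Next I would dispose of the cases involving a cone not in the image. Because $\mathrm{Trop}(V)=\mathrm{Trop}(X)\times\mathrm{Trop}(W)$ and the image consists of the cones whose projection to $\mathrm{Trop}(W)$ is the whole of $\mathrm{Trop}(W)$, one has: if $\sigma$ is in the image then so is $\tau$; equivalently, a non-image $\tau$ has all of its faces non-image, which rules out the configuration $\sigma$ in the image and $\tau$ not. When $\sigma$ is not in the image, $\sim_\sigma$ is by the second clause of Construction \ref{Cons:TropSupp} precisely the relation ``equal incidence to the strata of every $\mathcal{P}_{\sigma^+}$ with $\sigma^+\geq\sigma$ in the image''. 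If in addition $\tau$ is in the image, then $\gamma\sim_\sigma\delta$ forces $\gamma$ and $\delta$ to lie in the closure of the same strata of $\mathcal{P}_\tau$, so $\overline{\kappa'}\cap|\sigma|^o$ is $\sim_\sigma$-saturated. If instead $\tau$ (hence $\sigma$) is not in the image, I would pass up to an image cone $\tau^+=\tau_X\times\mathrm{Trop}(W)\geq\tau$, apply the previous sub-case to $\sigma\leq\tau^+$, and descend again along faces over $\tau$; this is formal bookkeeping in a refinement of $\Gamma$ once the image case below is settled.

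The heart of the argument is the transport statement when both $\sigma$ and $\tau$ lie in the image. Reducing $\gamma\sim_\sigma\delta$ to a single generating relation, a downward step $\delta\leq\gamma$ is trivial, since if $\gamma$ is a face of $\gamma'\in\kappa'$ then so is $\delta$. The content is an upward step $\gamma\leq\delta$ with $v:=\mathrm{St}_\gamma(\delta)\in L_\gamma(q)$. Given a face $\gamma\leq\gamma'$ with $\gamma'\in\kappa'\subseteq\Gamma(\tau)$, I would refine $\Gamma$ (legitimate, as $\mathscr{T}(q)$ is independent of the model by Theorem/Definition \ref{lem:defnTropSupp} and the point sets $\kappa,\kappa'$ are unchanged) so that the cone $\delta':=\gamma'+\mathbb{R}_{\geq0}v$ belongs to the fan. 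It lies over $\tau$ and has both $\delta$ and $\gamma'$ as faces, with $\mathrm{St}_{\gamma'}(\delta')$ the image of $v$. The decisive input is the monotonicity of stabilizers from the proofs of Lemma \ref{lem:ConstDegenUnionCones} and Lemma \ref{lem:LooksLikePL}: a subtorus fixing $q|_{O(\gamma)}$ still fixes $q|_{O(\gamma')}$, because being torus-fixed is a closed condition and $O(\gamma')\subseteq\overline{O(\gamma)}$. Hence $v\in L_\gamma(q)$ yields $\mathrm{St}_{\gamma'}(\delta')\subseteq L_{\gamma'}(q)$, so $\delta'\sim_\tau\gamma'$, that is $\delta'\in\kappa'$, and $\delta$ is a face of $\delta'$. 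Iterating along a $\sim_\sigma$-chain, with upward steps handled by transport and downward steps trivially, shows $\delta$ is a face of a cone of $\kappa'$, as required.

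I expect the main obstacle to be precisely this upward-transport step, for two intertwined reasons. First, $\mathrm{St}_\gamma$ as defined only detects cones lying over $\sigma$, so it does not see $\gamma'$, and the comparison of stabilizers across the face relation $\gamma\leq\gamma'$ — where $\gamma$ and $\gamma'$ lie over different cones of $\mathrm{Trop}(V)$ — cannot be read off from $\mathrm{St}_\gamma$ alone; it must be made through the Gr\"obner description of Proposition \ref{prop:initialideal} and Lemma \ref{lem:GrobIsTorus}, working in a common affine toric chart of $\tau$ in which $\sigma$ appears as a face. Making the closed-condition monotonicity precise across cones in that chart, and checking that $v$ and the degeneration direction into $\tau$ really span a cone realizable in a refinement of $\Gamma$ that still computes $\mathscr{T}(q)$ without disturbing $\kappa'$, are the points that will demand the most care.
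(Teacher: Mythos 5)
Your route is structurally the paper's: reduce to the same-cone case via Lemma \ref{lem:LooksLikePL}, treat cones outside the image of $V\to\mathpzc{X}\times\mathpzc{W}$ formally through the second clause of Construction \ref{Cons:TropSupp}, and reduce the case of image cones $\sigma\leq\tau$ to upward monotonicity of stabilisers, i.e.\ to showing that a subtorus fixing $q|_{O(\gamma)}$ also fixes $q|_{O(\gamma')}$ whenever $\gamma\leq\gamma'$. Your repackaging via the refined cone $\delta'=\gamma'+\mathbb{R}_{\geq0}v$ is an equivalent formulation of what the paper proves directly (ruling out $L_{\gamma'}(q)<L_\gamma(q)$).

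However, your justification of the decisive step is not a proof, and the principle you invoke is false in general. ``Being torus-fixed is a closed condition and $O(\gamma')\subseteq\overline{O(\gamma)}$'' does not transport invariance of quotient sheaves from an orbit to deeper orbits in its closure. Counterexample: on $Y=\mathbb{A}^2$ with coordinates $x,y$, let $I=\bigl((y-1)(x-1),\,(y-1)y\bigr)$, let $q:\mathcal{O}_Y\to\mathcal{O}_Y/I$, and let $T_w=\mathbb{G}_m$ act by scaling $x$. On the dense torus $I$ localises to $(y-1)$, so $q$ restricted to the open orbit is $T_w$-fixed; but restricted to the orbit $\{y=0,\,x\neq0\}$ the quotient is the skyscraper $\mathbb{C}[x^{\pm1}]/(x-1)$, which is not $T_w$-fixed. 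The implication fails precisely because $\mathcal{O}_Y/I$ has a component supported on the toric boundary. What makes the transport valid in the lemma is transversality of $\mathcal{F}_\Gamma$ --- the consequence of logarithmic flatness recorded in Lemma \ref{lem:transversality and closure} --- and this ingredient never appears in your argument. The paper's Case II uses it as follows: since $\mathcal{F}_\Gamma|_{V(\gamma)}$ has no sections supported in $V(\gamma')$, the kernel $K|_{V(\gamma)}$ is the unique maximal subsheaf of $\pi_\Gamma^\star\mathcal{E}|_{V(\gamma)}$ restricting to $K|_{O(\gamma)}$; a torus element fixing $K|_{O(\gamma)}$ therefore fixes $K|_{V(\gamma)}$ by this canonical characterisation, hence fixes $K|_{V(\gamma')}$ and so $q|_{O(\gamma')}$. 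You do flag that the cross-cone comparison ``cannot be read off from $\mathrm{St}_\gamma$ alone'' and defer to Proposition \ref{prop:initialideal} and Lemma \ref{lem:GrobIsTorus}, but deferring does not supply the mechanism, and the mechanism is sheaf-theoretic (maximality via transversality), not closedness. A secondary, fixable point: you invoke independence of $\mathscr{T}(q)$ from the model $\Gamma$ (Theorem/Definition \ref{lem:defnTropSupp}) to legitimise refining $\Gamma$, but in the paper that independence is proved only after this lemma; to avoid circularity you should verify directly that refining $\Gamma$ does not change the relation $\sim$ on points, as the paper does in its later proof of that theorem.
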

\begin{proof} We split the proof into three cases.

\noindent \textbf{Case I.} If $\kappa,\kappa'$ lie in the interior of the same cone $\sigma$ of $\mathrm{Trop}(V)$ then the result follows from Lemma~\ref{lem:LooksLikePL}. 

\noindent \textbf{Case II.} We consider the case that $\kappa,\kappa'$ lie in the interiors of different cones $\sigma\leq \sigma'$ both of whose interiors map to the interior of the same cone of $\mathrm{Trop}(W)$. Assume moreover that the associated point of $\mathpzc{W}$ lies in the image of the map $W \rightarrow \mathpzc{W}$. We must check that there are not cones $\gamma \leq \gamma'$ of $\Gamma(\sigma), \Gamma(\sigma')$ respectively with the following property. Observe a one dimensional sublattice of $\sigma^\mathrm{gp}$ specifies a one dimensional sublattice of $(\sigma')^\mathrm{gp}$ by inclusion. Thus it makes sense to define our property as $L_{\gamma'}(q) <L_\gamma(q)$. 

We define $K = \mathrm{ker}(\pi_\Gamma^\star \mathcal{E}\rightarrow \mathcal{F}_\Gamma)$ and note this kernel characterises $q_\Gamma$. We know there are no sections of $\mathcal{F}_\Gamma|_{V(\gamma)}$ whose support is contained within $\mathcal{F}_\Gamma|_{V(\gamma')}$. Knowing $K|_{O(\gamma)}$ thus determines $K|_{V(\gamma)}$ as the maximal subsheaf of $\pi_\Gamma^\star \mathcal{E}$ whose restriction to $O(\gamma)$ is $K|_{O(\gamma)}$. Thus if $K|_{O(\gamma)}$ is preserved when pulling back along some isomorphism, so is $K|_{V(\gamma)}$ and thus $K|_{V(\gamma')}$.

\noindent \textbf{Case III.} Now suppose $\kappa,\kappa'$ lie in different cones $\sigma\leq \sigma'$ which map to different cones of $\mathrm{Trop}(W)$. Then the result is a property of piecewise linear spaces following from point set topology.  
\end{proof}

\subsection{Proof of Theorem} We are now in a position to verify that the piecewise linear space of Construction~\ref{Cons:TropSupp} satisfies the hypothesis of Theorem \ref{lem:defnTropSupp}.
 	\begin{proof}[Proof of Theorem \ref{lem:defnTropSupp}]
		Properties (1) and (2) are clear. We check the tropical support is independent of the choice of $\Gamma$. Consider a refinement of subdivisions  $$ \Gamma' \rightarrow \Gamma \rightarrow \mathrm{Trop}(V)$$ with corresponding logarithmic modification  $$V_{\Gamma'} \xrightarrow{\pi_{\Gamma',\Gamma}} V_\Gamma \xrightarrow{\pi_\Gamma} V$$ 
		It suffices to check $\mathscr{T}(\pi^{\star} _{\Gamma,\Gamma'}q_\Gamma) = \mathscr{T}(q_{\Gamma})$. Equivalently the stratifications $\mathcal{P}_{\mathscr{T}(q_\Gamma)}$ and $\mathcal{P}_{\mathscr{T}(\pi^{\star} _{\Gamma',\Gamma}q_\Gamma)}$ of $|\mathrm{Trop}(V)|$ coincide. Let $\gamma'$ be a stratum of $\Gamma'$ with image contained in $\gamma$ a stratum of $\Gamma$. If $\Gamma$ has the same dimension as $\gamma'$ the strata $O(\gamma)$ and $O(\gamma')$ are isomorphic and the associated surjections of sheaves may be identified. Consequently $L_\gamma(q) = L_{\gamma'}(q')$. In general, since $q_{\Gamma'}$ is pulled back from $q_\Gamma$ we have $L_\gamma(q) = L_{\gamma'}(q')$. 
	\end{proof} 
 \section{Tropical support in families}\label{sec:TropSuppNotConstDegen}
We continue with the notation introduced at the start of Section \ref{sec:DefnTropSupp} except we drop all assumptions on $W$. For semantic convenience throughout we pretend $W$ has locally connected logarithmic strata and thus admits an Artin fan. In Remark \ref{rem:NOAFAN} we explain how to adapt our discussion to remove this hypothesis.

 \begin{goal}\label{goal:1}
 To a logarithmic surjection of coherent sheaves $$q = [\pi_\Gamma:(X\times W)_\Gamma \rightarrow X\times W,q_\Gamma:\pi_\Gamma^\star \mathcal{E}\rightarrow \mathcal{F}_\Gamma]$$ on $X$ flat over $W$ we associate a map $$\mathpzc{W} \rightarrow \supplog.$$
 \end{goal}

\subsection{Tropical support over atomic logarithmic schemes} All of the difficulty in achieving Goal~\ref{goal:1} appears when $W$ is atomic and has locally connected logarithmic strata. In this situation the Artin fan of $X \times W$ is $\mathpzc{X}\times \mathpzc{W}$. Our strategy is to use the universal property of $\supptrop$. The work we do in this section comes down to checking the definition of tropical support generalises nicely from the case that the image of $W$ in $\mathpzc{W}$ is a single point.

We now define tropical support for $W$ atomic with Artin fan $a^\star \sigma$ for some cone $\sigma$. The preimage of the closed point in $\sigma$ is a non--empty closed subscheme $V_W(\sigma)$ of $W$. For $q$ a logarithmic surjection of coherent sheaves flat over $W$ define $\mathscr{T}(q)$ to be $\mathscr{T}(q|_{V_W(\sigma)})$ as defined in the last section. By definition this is a combinatorially flat morphism of piecewise linear spaces, defining a map $$\mathpzc{W} \rightarrow \supplog.$$
It is far from clear that this construction satisfies descent in the strict \'etale topology as we vary $W$. In the remainder of this section we check strict descent.
\subsection{The relative Quot scheme trick} 
The relative Quot scheme trick is a local version of Maulik and Ranganathan's technique for constructing logarithmic Donaldson--Thomas spaces \cite{MR20}. Assume $W$ is atomic and fix a tropical model $\Gamma \rightarrow \mathrm{Trop}(V)$ such that $V_\Gamma$ is integral over $S$. There is a cartesian diagram $$
\begin{tikzcd}
V \arrow[r] \arrow[d] & \mathcal{X}_\Gamma=a^\star \Gamma \times_\mathpzc{X} X \arrow[d] \\
W \arrow[r]                   & \mathpzc{W}                                  
\end{tikzcd}$$
 and the sheaf $\pi_\Gamma^\star \mathcal{E}$ is pulled back from a sheaf $\mathcal{E}_\Gamma$ on $\mathcal{X}_\Gamma$. Logarithmic flat and integral over a base imply flatness, so a representative of a logarithmic surjection of coherent sheaves on $V$ is specified by a $\underline{W}$ point of Grothendieck's relative Quot scheme $Q=\mathrm{Quot}(\underline{\mathcal{X}_\Gamma}/\mathpzc{W},\mathcal{E}_\Gamma)$. Here we adopt the notation for any logarithmic scheme $V$, $\underline{V}$ is the underlying scheme.
 
 Not all $\underline{W}$ points of $Q$ give rise to logarithmic surjections of coherent sheaves because logarithmic flatness is stronger than flatness. We will often restrict attention to the open substack $Q^o$ of $Q$ whose points specify logarithmically flat quotients. This technique is useful because tropical support can be understood in terms of locally closed substacks of $Q^o$.

\subsection{Strict descent and base change} A strict map of atomic logarithmic schemes $f:V \rightarrow W$ induces a map of Artin fans $a^\star f_\mathrm{trop}:\mathpzc{V} = a^\star \tau \rightarrow \mathpzc{W} = a^\star \sigma$. A family of tropical supports $$\mathscr{T}\rightarrow \mathrm{Trop}(X)\times \sigma \rightarrow \sigma$$ over $\sigma$ pulls back along the map of cones $f_\mathrm{trop}$ to define a family of tropical supports $$f^\star_\mathrm{trop} \mathscr{T}\rightarrow \mathrm{Trop}(X)\times \tau \rightarrow \tau$$ over $\tau$. 

 \begin{proposition}\label{prop:Family gives Family}
 The subdivision $\mathscr{T}(q)$ respects strict base change. More precisely in the situation of the previous paragraph $$ f^\star_\mathrm{trop} \mathscr{T}(q) = \mathscr{T}(f^\star q).$$
 \end{proposition}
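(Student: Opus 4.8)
The plan is to reduce the equality of piecewise linear spaces to a comparison of the torus--stabiliser subgroups $L_\gamma$ that govern Construction \ref{Cons:TropSupp}, and then to check that these subgroups are compatible with strict base change through the Gr\"obner--theoretic description of Section \ref{sec:Grobner}. First I would fix a tropical model $\Gamma \to \mathrm{Trop}(X)\times \sigma$ representing $q$ on the deepest stratum $V_W(\sigma)$. Since the pullback of a logarithmic modification along a strict map is again a logarithmic modification, $f^\star_\mathrm{trop}\Gamma$ represents $f^\star q$, and both $f^\star_\mathrm{trop}\mathscr{T}(q)$ and $\mathscr{T}(f^\star q)$ are subdivisions of $\mathrm{Trop}(X)\times \tau$ refined by $f^\star_\mathrm{trop}\Gamma$. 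Each is combinatorially flat over $\tau$: the first by the base--change lemma of Section \ref{CombFlatandBaseChange}, the second by Theorem/Definition \ref{lem:defnTropSupp}. Because axiom F2 recovers the stratification over every face of $\tau$ from the stratification over the interior of $\tau$, it suffices to prove that the two families have the same fibre over each $p$ in the interior of $|\tau|$, i.e. that $\mathscr{T}(f^\star q)_p = \mathscr{T}(q)_{f_\mathrm{trop}(p)}$ as PL polyhedral structures on $\mathrm{Trop}(X)$.

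By Theorem/Definition \ref{lem:defnTropSupp} and Construction \ref{Cons:TropSupp}, each such fibre is cut out by the equivalence relation generated by the subgroups $L_\gamma$, so the statement reduces to the following compatibility: for a cone $\tilde\gamma$ of $f^\star_\mathrm{trop}\Gamma$ lying over a cone $\gamma$ of $\Gamma$, the stabiliser $L_{\tilde\gamma}(f^\star q)$ is the preimage of $L_\gamma(q)$ under the map of star lattices induced by $f_\mathrm{trop}$. To prove this I would use Lemma \ref{lem:GrobIsTorus} and Corollary \ref{Corr:UnionOfCones}, which identify $L_\gamma(q)$ with the cocharacters $w$ satisfying $\mathrm{in}_w(G^o)=G^o$ for the module $G^o$ of global sections of $\ker q$ over the dense torus of the stratum. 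As $f$ is strict, the stratum $O(\tilde\gamma)$ is the strict base change of $O(\gamma)$, the acting torus $T_{\tilde\gamma}$ maps to $T_\gamma$ compatibly with $f_\mathrm{trop}$ on cocharacters, and the corresponding kernel module is obtained from $G^o$ by flat base change. The flatness argument of Proposition \ref{prop:initialideal} shows that forming initial modules commutes with this base change, so $w$ stabilises $q_{f^\star_\mathrm{trop}\Gamma,\tilde\gamma}$ if and only if its image stabilises $q_{\Gamma,\gamma}$; this is exactly the asserted preimage relation, and in particular the kernel of $f_\mathrm{trop}$ lies in $L_{\tilde\gamma}(f^\star q)$ because $f^\star q$ is pulled back.

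The main obstacle is that a strict map of atomic logarithmic schemes need not carry the deepest stratum $V_V(\tau)$ into the deepest stratum of $W$: if $f_\mathrm{trop}$ sends the interior of $\tau$ into the interior of a proper face $\sigma_0$ of $\sigma$, then $\mathscr{T}(f^\star q)_p$ is computed from the stratum $V_W(\sigma_0)$, while $\mathscr{T}(q)_{f_\mathrm{trop}(p)}$ is read off as a non--generic fibre of the family built over the deepest stratum $V_W(\sigma)$. To bridge this I would first establish a face--restriction compatibility: the fibre of $\mathscr{T}(q)$ over the interior of $\sigma_0$ coincides with the tropical support of $q|_{V_W(\sigma_0)}$. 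This is precisely the mechanism already used in Case II of the proof of Lemma \ref{lem:PLspaceConstDegen} --- knowing $\ker q$ on an open stratum $O(\gamma)$ determines it on $V(\gamma)$, since logarithmic flatness forbids sections supported on the complement of $O(\gamma)$ --- combined with the combinatorial flatness of $\mathscr{T}(q)$ over $\sigma$. With this reduction in hand the comparison becomes the deepest--to--deepest case of the preceding paragraph, now applied to the strict map $V_V(\tau)\to V_W(\sigma_0)$, both of which have single--point image in their Artin fans, and the Gr\"obner base--change argument closes the proof. Finally, agreement of the two combinatorially flat families over the interior of $\tau$ propagates to all of $\tau$ by the F2 determinacy noted above, yielding $f^\star_\mathrm{trop}\mathscr{T}(q)=\mathscr{T}(f^\star q)$.
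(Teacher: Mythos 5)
Your outline gets the reductions right (restrict to fibres over the interior, compare the stabiliser subgroups $L_\gamma$, handle the mismatch of deepest strata), but the step that carries all the weight is unsupported. You claim that for a cone $\tilde\gamma$ of $f^\star_\mathrm{trop}\Gamma$ lying over $\gamma$ one has $L_{\tilde\gamma}(f^\star q)=f_\mathrm{trop}^{-1}(L_\gamma(q))$, and you justify it by saying that Proposition \ref{prop:initialideal} shows initial modules commute with the base change along $f$. Proposition \ref{prop:initialideal} says nothing of the sort: it computes the kernel over a \emph{deeper stratum of a fixed expansion} as $\mathrm{in}_{w}(G^o)$, i.e.\ it concerns degeneration inside $X_\Gamma$ over a fixed base, not base change of the base $W$ itself. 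Of the two inclusions you need, one is trivial (homogeneous generators pull back, so the stabiliser can only grow under pullback); the reverse inclusion is exactly the assertion that torus-fixedness cannot hold at the image of $V$ without holding over all of the connected scheme $W$. Since being torus-fixed is a \emph{closed} condition, this is precisely the claim that it is also \emph{open} on the logarithmically flat locus --- which is the entire content of Lemma \ref{lem:TropSuppConst} and Proposition \ref{prop:StabilityIsOpen}, proved in the paper by a genuinely global argument: an auxiliary family obtained by restricting to $V(\rho)$ and pulling back along a flat projection (Step I), a Gr\"obner degeneration forcing a jump of Hilbert polynomials (Step II), and properness/connectedness of the relative Quot scheme (Step III). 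Your proposal never invokes these results and offers nothing in their place; note also that $A_W\to A_V$ is typically not flat (take $V$ a closed point of $W$), so ``flat base change'' is a misnomer --- exactness of the pulled-back sequence follows from flatness of $\mathcal{F}$ over the base, but that does not make $\mathrm{in}_w(-)$ commute with $-\otimes_{A_W}A_V$; initial modules notoriously fail to commute with specialisation, and ruling this out for logarithmically flat families is where all the work lies.

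The same gap reappears in your ``face-restriction compatibility.'' Comparing the stabiliser of $q|_{V(\gamma)}$ computed over the stratum attached to $\sigma_0$ with the fibre over the interior of $\sigma_0$ of the family $\mathscr{T}(q|_{V_W(\sigma)})$ built on the deepest stratum is again a constancy-of-stabilisers statement across a connected base; Case II of Lemma \ref{lem:PLspaceConstDegen} only establishes closure relations among strata of $\mathcal{P}_{\mathscr{T}(q)}$ for a \emph{fixed} surjection in the constant-degeneration setting, so it does not supply this. By contrast, the paper's proof runs through the relative Quot scheme trick: the data of $q$ gives a map $\underline{W}\to Q^o$, compatibility with $\mathscr{T}(q)$ holds on the deepest stratum by the very definition of $\mathscr{T}(q)$, Proposition \ref{prop:StabilityIsOpen} makes compatibility an open condition, and a maximality argument using atomicity and connectedness of the closed stratum propagates it to every stratum of $W$. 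If you want to keep your fibrewise Gr\"obner framework, you must first prove the openness statement (or Lemma \ref{lem:TropSuppConst}) independently; as written, your argument assumes the crux of the proposition.
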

To prove Proposition \ref{prop:Family gives Family} we develop the relative Quot trick introduced in the last section. 

\subsubsection{Constant degeneration implies constant tropical support}\label{sec:ConstDegen} Assume now that the image of $W$ in $\mathpzc{W}$ consists of a single (stacky) point $a_s$. There is a diagram with all squares cartesian $$ 
\begin{tikzcd}
(\mathrm{Trop}(X)\times W)_\Gamma \arrow[r] \arrow[d] & \mathcal{X}_s \arrow[d] \arrow[r] & \mathcal{X}_\Gamma \arrow[d] \\
W \arrow[r]                                  & a_s \arrow[r, hook]                          & \mathpzc{W}.                 
\end{tikzcd}$$
To avoid stack issues pull $\mathcal{X}_s$ back along a map from $\mathrm{Spec}(\mathbb{C})$ to $a_s$ to define a scheme $X_s$. In the relative Quot scheme trick we consider morphisms $W \rightarrow \mathrm{Quot}(X_s,\mathcal{E}_\Gamma)=Q$. Observe any map from a scheme $S$ to the open subset $Q^o$ of $Q$ includes the data of a map from $S$ to $\mathpzc{W}$ which specifies a logarithmic structure on $S$ by pullback. In this way a point of $Q^o$ gives the data of a logarithmic surjection of coherent sheaves flat over a log point. The tropical support of a point of the scheme $Q^o$ is the tropical support of this logarithmic surjection of coherent sheaves.

\begin{lemma}\label{lem:TropSuppConst}
Proposition \ref{prop:Family gives Family} holds whenever the image of $W$ in $\mathpzc{W}$ consists of a single point.
\end{lemma}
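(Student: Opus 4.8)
The plan is to prove Lemma~\ref{lem:TropSuppConst} by reducing the claimed equality of piecewise linear subdivisions $f^\star_\mathrm{trop}\mathscr{T}(q) = \mathscr{T}(f^\star q)$ to a statement about the constancy of initial modules, which is exactly what the Gr\"obner-theoretic machinery of Section~\ref{sec:Grobner} controls. Since the image of $W$ in $\mathpzc{W}$ is a single point $a_s$, both sides of the equation are constructed via Construction~\ref{Cons:TropSupp}, and by the locality remarks following Thm/Defn~\ref{lem:defnTropSupp} it suffices to check the equality on the interior of each cone $\sigma$ of $\mathrm{Trop}(V)$ with $a_\sigma$ in the image of $V \to \mathpzc{X}\times\mathpzc{W}$. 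On such a cone the stratification is determined entirely by the subgroups $L_\gamma(q)$ associated to the cones $\gamma$ of $\Gamma(\sigma)$, so the whole lemma comes down to showing that $L_\gamma$ is unchanged by strict base change $f$.

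First I would set up the relative Quot scheme trick as in Section~\ref{sec:ConstDegen}: the scheme $X_s$ obtained by pulling $\mathcal{X}_s$ back along $\mathrm{Spec}(\mathbb{C})\to a_s$ carries Grothendieck's Quot scheme $Q = \mathrm{Quot}(X_s,\mathcal{E}_\Gamma)$, and a logarithmic surjection of coherent sheaves flat over the log point corresponds to a point of the open subscheme $Q^o\subset Q$ of logarithmically flat quotients. The key observation is that, because the degeneration is constant (the image of $W$ in $\mathpzc{W}$ is a single point), the data of $q$ and of $f^\star q$ pull back from the \emph{same} point of $Q^o$: the strict map $f$ only changes the logarithmic structure on the base by pulling back along $f_\mathrm{trop}$, not the underlying surjection of sheaves on the fibre $X_s$. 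Concretely, for each cone $\gamma$ the restriction $q|_{O(\gamma)}$ and the restriction $(f^\star q)|_{O(\gamma)}$ determine the same kernel module $G^o \le E\otimes_A A[Y^o]$ after identifying the relevant affine patches.

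With this in hand I would invoke Lemma~\ref{lem:GrobIsTorus}, which characterises the stabiliser torus $T_\gamma(q)$ purely in terms of the condition $\mathrm{in}_w(G^o) = G^o$ on the kernel module, and hence $L_\gamma(q)$ as the union of cones of $\mathrm{St}_\gamma$ on which this initial-module equality holds (Corollary~\ref{Corr:UnionOfCones}, Lemma~\ref{lem:ConstDegenUnionCones}). Since the module $G^o$ is literally the same for $q$ and $f^\star q$, and the term orders $w$ range over the cocharacter lattice $M_\sigma(\gamma)$ of the \emph{same} torus $T_\gamma$ in both cases, we get $L_\gamma(q) = L_{\gamma}(f^\star q)$ for every $\gamma$. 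The operation $f^\star_\mathrm{trop}$ on families of tropical supports is, by its definition at the start of Section~\ref{sec:TropSuppNotConstDegen}, precisely pullback of the stratification along $f_\mathrm{trop}\colon \tau\to\sigma$; since the equivalence relation $\sim$ defining $\mathcal{P}_\sigma$ in Construction~\ref{Cons:TropSupp} depends only on the subgroups $L_\gamma$, the equality of these subgroups yields the equality of stratifications $f^\star_\mathrm{trop}\mathcal{P}_\sigma = \mathcal{P}_\tau$, and therefore $f^\star_\mathrm{trop}\mathscr{T}(q) = \mathscr{T}(f^\star q)$.

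I expect the main obstacle to be the bookkeeping required to verify that $q$ and $f^\star q$ really do correspond to the same point of $Q^o$, and that the identifications of coordinate patches $A[Y^o]$ and of the star-fan lattices $M_\sigma(\gamma)$ are compatible across the base change. One must be careful that $f$ being strict guarantees the Artin fan of $X\times V$ is $\mathpzc{X}\times\mathpzc{V}$ with $\mathpzc{V}=a^\star\tau$, so that the cones $\Gamma(\sigma)$ and their star fans are genuinely pulled back along $f_\mathrm{trop}$ rather than altered; the stacky nature of the point $a_s$ (which introduces the auxiliary torus factor $M$ in Proposition~\ref{prop:initialideal}) also needs to be tracked but, as noted there and in the proof of Thm/Defn~\ref{lem:defnTropSupp}, tensoring by the flat module $M$ has no effect on which initial modules are fixed. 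Once these compatibilities are pinned down, the equality of subgroups and hence of subdivisions is formal.
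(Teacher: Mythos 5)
Your reduction of the statement to showing that the stabiliser subgroups $L_\gamma$ are unchanged by strict base change is a correct framing, but the proof collapses at your ``key observation'' that $q$ and $f^\star q$ ``pull back from the same point of $Q^o$'' and hence ``determine the same kernel module $G^o$.'' The hypothesis of Lemma~\ref{lem:TropSuppConst} is that the image of $W$ in $\mathpzc{W}$ is a single point, \emph{not} that $\underline{W}$ is a point: the underlying scheme can be positive-dimensional (think $W=\underline{W}\times \mathsf{pt}^\dagger$ with $\underline{W}$ a curve), the classifying map $\underline{W}\to Q^o$ of the relative Quot scheme trick is then genuinely non-constant, and the kernels of $q|_{O(\gamma)}$ and $(f^\star q)|_{O(\gamma)}$ are related by the base change $-\otimes_{A_W}A_V$, not equal. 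A subtorus fixing a family fixes its pullback, so base change to a smaller base can only enlarge stabilisers; your argument therefore yields only the inclusion $L_\gamma(q)\subseteq L_\gamma(f^\star q)$. The reverse inclusion --- that a subtorus fixing the quotient over one point of $\underline{W}$ (say the special point of a trait in $Q^o$) must fix it over the generic point --- is precisely the content of the lemma, and nothing in your proposal addresses it. In the paper's language: the locus in $Q^o$ with given tropical support is constructible, torus-fixedness is manifestly a \emph{closed} condition, and the entire difficulty is to show it is also \emph{open} on the logarithmically flat locus.

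That openness is what the paper's proof establishes, and it is not formal. Assuming Property $\star$ (a one-parameter subgroup $T^o$ fixing $q_\gamma(s)$ but not $q_\gamma(\eta)$), the paper constructs two auxiliary trait maps to $\overline{Q}_\gamma$ --- one in Step I by restricting to $V(\rho)$ and pulling back along the flat map $V(\gamma)\to V(\rho)$ (this is where transversality and combinatorial flatness, i.e.\ logarithmic flatness, enter), and one in Step II as the limit of the $T^o$-orbit --- then compares the Gr\"obner degenerations $\mathrm{in}_w(G)$ and $\mathrm{in}_{-w}(G)$ to show the limits have different Hilbert polynomials, contradicting Step III, which places all these families in the same connected component of $\overline{Q}_\gamma$. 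A telltale sign of the gap in your proposal is that logarithmic flatness plays no role in your argument beyond defining $Q^o$: on the full Quot scheme $Q$, torus-fixedness is closed but \emph{not} open (e.g.\ the family of lines $V(X_1+X_2-tX_0)$ in $\mathbb{P}^2$ is fixed by the diagonal torus exactly at $t=0$), so no argument that never invokes log flatness or transversality can prove the statement.
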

Lemma \ref{lem:TropSuppConst} reinterprets and generalises \cite[Lemma 4.3.2]{MR20}. It establishes even without the hypothesis that the image of $W$ is a point, there is a locally closed stratification of $Q^o$ on which tropical support is well understood. Understanding tropical support in families then amounts to understanding how these locally closed pieces fit together. 

\begin{remark} Since the image of $W$ in $\mathpzc{W}$ is a single point, we know $W$ is connected. The data of $q$ gives a map $\underline{W}\rightarrow Q^o$. The tropical support of $W$ could be characterised as the tropical support of any point in the connected component of $Q^o$ in which the image of $\underline{W}$ lies. 
\end{remark}

The remainder of Section \ref{sec:ConstDegen} is a proof of Lemma \ref{lem:TropSuppConst}. We start by sketching the proof idea. Note first the substack of $Q^o$ consisting of points with fixed tropical support is constructible. Indeed, we are imposing that $q_\Gamma$ is fixed by certain torus actions (a closed condition) and not fixed by other torus actions (an open condition). A constructible subset is closed/open if and only if it is preserved under specialisation/generization respectively. Being torus fixed is a closed condition, so our task is to show that being torus fixed is also open in our situation.

Write $V(\gamma)$ for the closed stratum of $X_\Gamma$ corresponding to cone $\gamma$ and $O(\gamma)$ for the locally closed stratum. The automorphism torus associated to $O(\gamma)$ is denoted $T_\gamma$ and we denote the restriction of $\pi_\Gamma$ to $V(\gamma)$ by $$\pi_\gamma: V(\gamma)\rightarrow X.$$ The restriction of a quotient of $\mathcal{E}$ denoted $q_\Gamma$ to $V(\gamma)$ is written $$q_\gamma:\pi_\gamma^\star \mathcal{E}\rightarrow \mathcal{F}_\gamma.$$

We write $\overline{Q}_\gamma = \mathrm{Quot}(\underline{V(\gamma)}, \pi_\Gamma^\star \mathcal{E}|_{V(\gamma)})$ and $Q^o_\Gamma$ for the open logarithmically flat subscheme. By Corollary~\ref{Corr:MapsBetweenModuliSpaces}, there is a map $$Q^o \rightarrow Q^o_\gamma = \mathrm{Quot}(\underline{V(\gamma)}, \pi_\Gamma^\star \mathcal{E}|_{V(\gamma)}) \textrm{ sending } q_\Gamma \mapsto q_\gamma.$$ Let $S$ be a trait and chose a morphism from $S$ to $Q^o$. Write $s$ for the special point of $S$ and $\eta$ for the generic point. Our map specifies a surjection of sheaves on $X\times S$ written $q(S):\pi_X^\star \mathcal{E}\rightarrow \mathcal{F}_S$. The restriction of $q(S)$ to $V(\gamma)$ is $q_\gamma(S)$. The restriction of $q(S),q_\gamma(S)$ to the special and general fibre are denoted $q(s), q_\gamma(s)$ and $q(\eta), q_\gamma(\eta)$ respectively. We must verify the tropical support of $q(s)$ and $q(\eta)$ coincide. Since being torus fixed is closed, if these tropical supports did not coincide then we could find a cone $\gamma$ of $\Gamma$ with the following property. 

\noindent \textbf{Property $\star$}: There is a one parameter subgroup $T^o$ of $T_\gamma$ which fixes $q_\gamma(s)$ but does not fix $q_\gamma(\eta)$.

\noindent The proof of Lemma \ref{lem:TropSuppConst} is completed with the following steps.  Assume for contradiction that there exists a cone $\gamma$ and torus $T^o$ as in property $\star$. We write $\eta = \mathrm{Spec}(K)$.

\noindent \textbf{Step I}. Construct a second map from the trait $S$ to $\overline{Q}_\gamma$. We write $S=S''$ when referring to this second family to avoid confusion: the special point of $S''$ is $s''$ and the generic point $\eta''$. The construction ensures that $q_\gamma(s'')=q_\gamma(s)$ and $$q_\gamma(\eta''): \mathcal{E}'' \rightarrow \mathcal{F}''_{\eta''}$$ is invariant under the action of $T^o$. Deduce the images of $S$ and $S''$ lie in the same connected component of $\overline{Q}_\gamma$.

\noindent \textbf{Step II}. Use the action of $T^o$ to construct a 
map $S'=\mathrm{Spec}(K[[t]])\rightarrow \overline{Q}_{\gamma}$. The construction has the property that the restriction of the universal surjection to the special fibre fits into a diagram $$\mathcal{E}'' \rightarrow \mathcal{F}'_{s'}\xrightarrow{p} \mathcal{F}''_{\eta''}$$ where the kernel of $p$ is non trivial. Use this to argue that $\mathcal{F}''_{\eta''}$ and $\mathcal{F}'_{s'}$ have different Hilbert polynomials and thus the images of $S''$ and $S'$ do not lie in the same connected component of $\overline{Q}_{\gamma}$.

\noindent \textbf{Step III.} Observe the map $\eta'\rightarrow \overline{Q}_{\gamma}$ admits a lift to a map $R = \mathrm{Spec}(K[T^{\pm 1}]) \rightarrow \overline{Q}_{\gamma}$ and we may factor $$\eta \rightarrow R \rightarrow \overline{Q}_{\gamma}.$$ Thus the images of $S$ and $S'$ lie in the same connected component of ${\overline{Q}_\gamma}$.

The final sentences of each step cannot all be true and we obtain a contradiction.
 \begin{proof}
We execute steps in the proof outlined above. Note $T^o$ specifies a ray $\rho$ in the star fan of $\gamma$. After subdividing $\gamma$ we may assume $\rho$ is a ray in this star fan and that moreover the projection map induces a combinatorially flat map of smooth fans $\mathrm{St}_\gamma \rightarrow \rho$. The closed stratum associated to $\mathrm{St}_\rho$ is denoted $V(\rho)$.

\textbf{Step I}. Transversality means we can restrict the universal surjection $q_\gamma(S)$ to $V(\rho)$ and still have a family flat over $S$. The map $V(\gamma) \rightarrow V(\rho)$ is flat because it is pulled back from a combinatorially flat map of smooth fans. Thus pulling $q_\gamma(S)|_{V(\rho)}$ back to $V(\gamma)$ gives our new map from $S$ to $\overline{Q}_\gamma$.

\textbf{Step II.} The action of $T^o$ on $q$ defines a map $R\rightarrow Q^o$ defining a map $\eta'\rightarrow Q^o$. By properness we can take a limit obtaining $S'\rightarrow \overline{Q}_\gamma$. 

We can understand the surjection of sheaves associated to $s$ on affine patches via Gr\"obner theory. Choose an affine patch $U_i$ of $X_\sigma$. Assume $q$ restricted to the affine patch of $X_\Gamma$ formed by intersecting $U_i$ with the locally closed stratum associated to $\rho$ fits into a short exact sequence of global sections $$ 0 \rightarrow G \rightarrow E \otimes_A A[T, Y_1^{\pm 1},...,Y_n^{\pm 1}]\rightarrow {F}\rightarrow 0.$$ Here we pick coordinates such that $T^o$ acts trivially on the $Y_i$. Associated to $T^o$ is then a term order $w$ assigning each $Y_i$ to $0$ but $T$ to $1$.

With the above setup, $q_\gamma(s')$ on our affine patch has global sections fitting into the short exact sequence 
$$ 0 \rightarrow G' \rightarrow E \otimes_A A[T, Y_1^{\pm 1},...,Y_n^{\pm 1}]\rightarrow {F}_{s'}\rightarrow 0$$
where $G'$ is $\mathrm{in}_w(G)$ for $w$ the term order defined by $T^o$. On the same affine patch we know $q(\eta'')$ is pulled back from a surjection of sheaves on $V(\rho)$ and so fits into the short exact sequence of global sections $$0 \rightarrow G'' \rightarrow E \otimes_A A[T, Y_1^{\pm 1},...,Y_n^{\pm 1}]\rightarrow {F}_{\eta''}\rightarrow 0.$$ Here $G' = \mathrm{in}_{-w}(G)$. Note $G'$ and $G''$ can only coincide if $G$ were generated by elements homogenous in $T$. If $G$ were so generated then by transversality $G$ is generated by polynomials in which $T$ does not appear. Thus $q_\gamma$ restricted to this the preimage of $U_i$ is torus fixed, which cannot happen for all affine patches $U_i$ lest $q$ be torus fixed. 

Hilbert polynomial is additive in short exact sequences. There is a short exact sequence $$\mathrm{ker}(p) \rightarrow  \mathcal{F}'_{s'}\rightarrow \mathcal{F}''_{\eta''}\rightarrow 0$$ and we now know $\mathrm{ker}(p)$ is not the zero sheaf because we saw this on affine patches. Thus the Hilbert polynomials of $\mathcal{F}'_{s'}$ and $\mathcal{F}''_{\eta''}$ differ.

\textbf{Step III.} Immediate from above analysis.
\end{proof}
\subsubsection{Dropping the constant degeneration hypothesis} We extend the analysis of Section \ref{sec:ConstDegen} by dropping the assumption that the image of $W$ in $\mathpzc{W}$ is a single point. We still assume that $W$ admits a strict map to an Artin cone. Fix a piecewise linear subdivision $\mathscr{T}$ of $\mathrm{Trop}(W)\times \mathrm{Trop}(X)$ such that $\Gamma$ is a tropical model of $\mathscr{T}$. We say a point $p$ of $Q^o$ is \textit{compatible with $\mathscr{T}$} if the tropical support associated to $p$ is obtained by restricting $\mathscr{T}$ to the preimage of the appropriate face of $\mathrm{Trop}(W)$.

\begin{proposition}\label{prop:StabilityIsOpen}
    Being compatible with $\mathscr{T}$ is an open condition on $Q^o$.
\end{proposition}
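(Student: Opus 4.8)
The plan is to show that the compatibility locus is a constructible subset of $Q^o$ which is stable under generization; since $Q^o$ is of finite type, such a subset is open. Constructibility is the easy half. By Theorem/Definition~\ref{lem:defnTropSupp} the tropical support $\mathscr{T}(q_p)$ of a point $p$ is determined by its discrete data, and in particular by the finitely many subgroups $L_\gamma(q_p)\leq M_\sigma(\gamma)$; each of these is a union of cones of the star fan $\mathrm{St}_\gamma$ by Lemma~\ref{lem:ConstDegenUnionCones}, and hence takes only finitely many values. By Lemma~\ref{lem:GrobIsTorus} the locus where $q_{p,\gamma}$ is fixed by a prescribed subtorus of $T_\gamma$ is closed, whereas the locus where it fails to be fixed by a given subtorus is open. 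Thus for each combinatorial type the locus where $\mathscr{T}(q_p)$ equals that type is cut out by finitely many closed and open conditions, hence is locally closed; being compatible with $\mathscr{T}$ is the union, over the finitely many faces $\tau$ of $\mathrm{Trop}(W)$, of the locally closed loci where $p$ lies over $\tau$ and $\mathscr{T}(q_p)$ equals the restriction $\mathscr{T}|_\tau$. A finite union of locally closed sets is constructible.

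It remains to prove stability under generization, which I would check on traits. Let $S\to Q^o$ be a trait with special point $s$ and generic point $\eta$, and suppose $s$ is compatible with $\mathscr{T}$; the goal is to show $\eta$ is as well. The induced map $S\to\mathpzc{W}$ sends $\eta$ to a generization of the image of $s$, so the associated faces of $\mathrm{Trop}(W)$ satisfy $\tau_\eta\leq\tau_s$. First I would dispose of the case $\tau_\eta=\tau_s$: then the trait factors through a single point of $\mathpzc{W}$, we are in the constant degeneration situation of Section~\ref{sec:ConstDegen}, and Lemma~\ref{lem:TropSuppConst} gives $\mathscr{T}(q(\eta))=\mathscr{T}(q(s))=\mathscr{T}|_{\tau_s}=\mathscr{T}|_{\tau_\eta}$. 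The content is therefore the case $\tau_\eta<\tau_s$, where the base degeneration genuinely changes.

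For this case I would exploit two inputs. On the side of the fixed datum, the combinatorial flatness of $\mathscr{T}$ over $\mathrm{Trop}(W)$ (axioms F1, F2 and Construction~\ref{cons:BddPLSpace}) determines the fibre $\mathscr{T}|_{\tau_\eta}$ from $\mathscr{T}|_{\tau_s}$, and in particular prescribes subgroups $L^{\mathscr{T}}_\gamma(\tau_\eta)$ in terms of $L^{\mathscr{T}}_\gamma(\tau_s)$. On the side of the family, since being torus fixed is a closed condition, each subgroup is upper semicontinuous along $S$, giving $L_\gamma(q(\eta))\subseteq L_\gamma(q(s))=L^{\mathscr{T}}_\gamma(\tau_s)$. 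The remaining task is to pin down $L_\gamma(q(\eta))$ exactly, i.e. to establish the two inclusions $L^{\mathscr{T}}_\gamma(\tau_\eta)\subseteq L_\gamma(q(\eta))$ and $L_\gamma(q(\eta))\subseteq L^{\mathscr{T}}_\gamma(\tau_\eta)$. The first asserts that the invariances prescribed by $\mathscr{T}|_{\tau_\eta}$ actually hold at $\eta$; this I would deduce from compatibility at $s$ together with the relative Quot trick, restricting $q$ to the strata $V(\gamma)$ and comparing initial modules via Proposition~\ref{prop:initialideal} and Corollary~\ref{corr:inwconst}. The second is a no-excess statement preventing $q(\eta)$ from acquiring more invariance than $\mathscr{T}|_{\tau_\eta}$ allows; here I would re-run the three-step trait argument of Lemma~\ref{lem:TropSuppConst} — constructing auxiliary families in $\overline{Q}_\gamma$, comparing Hilbert polynomials, and deriving a contradiction from Property~$\star$ — but now relative to the changing base face rather than a fixed fibre.

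The main obstacle is precisely this last case: coordinating the restriction of the fixed family $\mathscr{T}$ across the face change $\tau_\eta<\tau_s$ (governed by combinatorial flatness) with the fibrewise Gröbner-theoretic behaviour of $\mathscr{T}(q)$ under specialization. Concretely, one must verify that the matching of star-fan data forced by combinatorial flatness of $\mathscr{T}$ is exactly the matching produced by the torus-invariance analysis of $q$ on the strata $V(\gamma)$, so that no spurious invariance appears or disappears as the base degenerates. Once this compatibility of the two bookkeeping systems is established, upper semicontinuity and the no-excess argument close the generization step, and constructibility upgrades it to openness.
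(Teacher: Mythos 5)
Your overall strategy coincides with the paper's: both reduce openness to constructibility plus stability under generization, and both attack the generization step by importing the three-step contradiction argument (Property $\star$, Steps I--III) from Lemma \ref{lem:TropSuppConst}. Your constructibility paragraph and the disposal of the case $\tau_\eta = \tau_s$ are fine. The problem is that in the only case with content, $\tau_\eta < \tau_s$, you stop exactly where the proof has to begin. The paper's entire proof of this proposition consists of adapting Step I of Lemma \ref{lem:TropSuppConst} --- the construction of the auxiliary family $S'' \to \overline{Q}_\gamma$ with $q_\gamma(s'') = q_\gamma(s)$ and $q_\gamma(\eta'')$ invariant under $T^o$ --- to the situation where the base face changes. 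In the constant-degeneration case that construction rested on a single flat map $V(\gamma) \to V(\rho)$, with $\rho$ the ray of $\mathrm{St}_\gamma$ in the $T^o$ direction, flatness coming from combinatorial flatness of a map of smooth fans. When $\tau_\eta < \tau_s$ there is no single such $\rho$: one must take the cones $\gamma_1, \dots, \gamma_q$ of $\Gamma$ minimal with interior mapping to the interior of $\sigma_\eta$ and containing $\gamma$ as a face, subdivide so that each star fan $\mathrm{St}_{\gamma_i}$ is smooth, produce for each $i$ a cone $\rho_i$ of relative dimension one over $\mathrm{Trop}(W)$ in the $T^o$ direction, subdivide further so that each $V(\gamma_i) \to V(\rho_i)$ is combinatorially flat hence flat, and then check --- via the valuative criterion for flatness, using that the $\rho_i$ all contain a fixed cone $\rho$ lying over $\eta$ --- that the union of the $V(\gamma_i)$ is flat over the union of the $V(\rho_i)$. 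This produces the flat map $V(\gamma) \to V(\rho)$ that makes Step I, and hence the whole contradiction argument, run.

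Your closing paragraph names precisely this issue (``coordinating the restriction of $\mathscr{T}$ across the face change with the fibrewise Gr\"obner-theoretic behaviour'') as ``the main obstacle'' and then asserts the proof closes ``once this compatibility is established.'' That is a genuine gap, not a deferred detail: what you leave unproved is the entirety of the new mathematical content of the proposition, and it does not follow formally from Lemma \ref{lem:TropSuppConst}, Proposition \ref{prop:initialideal}, or Corollary \ref{corr:inwconst}; it requires the tropical construction just described. A smaller caveat in the same direction: your two-inclusion bookkeeping (upper semicontinuity plus no-excess) silently compares $L_\gamma(q(\eta))$ with $L_\gamma(q(s))$, which live in star fans of cones lying over different faces of $\mathrm{Trop}(W)$; giving that comparison a precise meaning is itself part of the missing construction.
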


\begin{proof}
The proof of Lemma \ref{lem:TropSuppConst} carries through almost vis a vis. The difference is constructing the map $$S'' \rightarrow \overline{Q}_\gamma$$ in Step I. We explain how to adapt the construction to the present setting. The image of $s$ in $\mathpzc{W}$ is $a^\star\sigma_s$ and $\eta$ maps to $a^\star \sigma_\eta$. There are cones $\gamma_1,...,\gamma_q$ of $\Gamma$ minimal with the property that the interior of $\gamma_i$ maps to the interior of $\sigma_\eta$ and $\gamma$ is a face of $\gamma_i$. 

After possibly subdividing we may assume the star fan of each $\gamma_i$ is equivariant and smooth. For each $\gamma_i$ there is a cone $\rho_i$ of relative dimension one over $\mathrm{Trop}(W)$ corresponding to the direction defined by $T^o$ in the star fan of $\gamma_i$. Subdividing further we may assume the map from $V(\gamma_i)$ to $V(\rho_i)$ is combinatorially flat and thus flat. The the union of the $V(\gamma_i)$ is flat over the union of the $V(\rho_i)$. Indeed for fixed $i$ this follows as in the constant degeneration case; it suffices to check for each $\rho_i$ by the valuative criterion for flatness. 

Note $\rho_i$ are precisely cones which contain a fixed cone $\rho$ lying over $\eta$ in their image. There is now a flat map $V(\gamma) \rightarrow V(\rho)$ and the proof goes through as before.  
\end{proof}
\subsubsection{When $W$ is not atomic} We finish this subsection by removing the hypothesis that $W$ is atomic. Any logarithmic scheme $W$ admits an \'etale cover by atomic logarithmic schemes $W_i\rightarrow W.$ The expanded sheaf can be pulled back to $W_i$ and thus we have already constructed a morphism from each $\mathpzc{W}_i$ to $\mathpzc{Supp}$. The Artin fan of $\mathpzc{W}$ is the colimit of the Artin fans of each $\mathpzc{W}_i$. 

 \begin{proof}[Proof of Proposition \ref{prop:Family gives Family}.] Tropical support is controlled in the logarithmic Quot space trick by the image of $W$ in the relative Quot scheme. We need to show each logarithmic stratum of $W$ is mapped to the locally closed subscheme compatible with $\mathscr{T}(q)$.

If this were not true there would be a locally closed subscheme $W'$ of $W$ which is mapped to $a_{\sigma'}$ for some $\sigma'$ such that the tropical support over $W'$ was not the restriction of $\mathscr{T}(q)$ to $\sigma'$. Pick $\sigma'$ maximal with this property and observe $W'$ must contain a point of $W$ in its closure mapped to $a_{\sigma''}$ such that $\sigma'$ is a proper face of $\sigma''$ (else $W$ is not atomic). This contradicts Proposition \ref{prop:StabilityIsOpen} unless $\sigma'$ is maximal. For $\sigma'$ maximal $V(\sigma')$ is connected (else $W$ is not atomic).
\end{proof}

\begin{remark}\label{rem:NOAFAN}
    Throughout we have assumed that $W$ has locally connected logarithmic strata and thus we can speak of the Artin fan of $W$. This was convenient for stating clean results but not necessary to obtain a natural map $$W \rightarrow \supplog$$ which \'etale locally on $W$ factors through a strict map from $W$ to an Artin fan. As in the case $W$ admits an Artin fan it suffices to work \'etale locally on $W$ so we may assume our logarithmic modification of $W \times X$ is pulled back from a morphism of Artin fans $$\Gamma \rightarrow a^\star\sigma\times \mathpzc{X}.$$ Here $a^\star \sigma$ is any Artin cone such that there is a strict map $W \rightarrow a^\star\sigma$. Replacing $\mathpzc{W}$ by $a^\star \sigma$ the proof of Proposition \ref{prop:StabilityIsOpen} remains valid. However the map from $W$ to $\supplog$ need not factor through $a^\star \sigma$ as tropical support need not be constant. Instead we use Proposition \ref{prop:StabilityIsOpen} to obtain a strict \'etale open cover of $W$ on which Proposition \ref{prop:Family gives Family} holds. The map $W \rightarrow \supplog$ can now be defined as in the proof of Proposition \ref{prop:Family gives Family}.
\end{remark}

\section{Flat limits after Tevelev.}\label{sec:GenTev}
The goal of this section is to develop the techniques needed to show the logarithmic Quot space is proper. Let $\underline{S}$ be a trait with generic point $\underline{\eta}$ and consider a sheaf $\mathcal{F}$ on $X\times S$ which is flat over $\underline{S}$. Define a logarithmic scheme $\eta$ by equipping $\underline{\eta}$ with either of the following logarithmic structures
\begin{enumerate}
\item Case 1: equip $\eta$ with the trivial logarithmic structure.
\item Case 2: equip $\eta$ with logarithmic structure with ghost sheaf $\mathbb{N}$. 
\end{enumerate}
Assume the pullback of $\mathcal{F}$ to $X\times \eta$ is logarithmically flat over $\eta$.
\begin{theorem}\label{Goal:FlatLimits}
    In both Case 1 and Case 2 there is a logarithmic structure on $\underline{S}$ extending the the logarithmic structure on $\eta$ defining a logarithmic scheme $S$ with the following property. There is a logarithmic modification $$\pi_\Gamma: (X\times S)_\Gamma \rightarrow X\times S$$ such that the strict transform of $\mathcal{F}$ under $\pi_\Gamma$ is logarithmically flat and integral over $S$.
\end{theorem}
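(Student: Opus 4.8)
The plan is to run a valuative argument that combines Tevelev's flattening theorem (Theorem~\ref{thm:OldTev}) with the combinatorics of tropical support, treating the two cases uniformly through the shape of $\mathrm{Trop}(S)$. Since the conclusion concerns flatness and logarithmic flatness, both of which are local and may be tested on traits, I would first reduce to the situation where $X$ is an affine toric variety with its divisorial logarithmic structure, working \'etale locally on $X$. The logarithmic structure on $\underline{S}$ that I will produce is dictated tropically: in Case 1 I give $S$ the standard logarithmic structure, so that $\mathrm{Trop}(S)=\mathbb{R}_{\geq 0}$ and $\mathrm{Trop}(X\times S)=\mathrm{Trop}(X)\times \mathbb{R}_{\geq 0}$; in Case 2 the generic ghost sheaf $\mathbb{N}$ already contributes a ray, and I extend it to a two--dimensional cone $\mathrm{Trop}(S)$. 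In both cases the cone point over the special point is where I must manufacture a logarithmically flat and integral limit.

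The heart of the argument is tropical. Over $\eta$ the sheaf $\mathcal{F}|_\eta$ is logarithmically flat, so by Theorem/Definition~\ref{lem:defnTropSupp} it has a well defined tropical support subdividing $\mathrm{Trop}(X\times \eta)$, equivalently a map from $\mathrm{Trop}(\eta)$ to $\supptrop(\mathpzc{X})$. I would then extend this to a combinatorially flat family of tropical supports over the whole cone $\mathrm{Trop}(S)$. Concretely, the degeneration from the generic to the special point of the trait is governed on affine charts by an initial degeneration: by Proposition~\ref{prop:initialideal} the limiting module is $\mathrm{in}_w(G^o)$ for $w$ the term order recording the direction of specialisation, and the constancy of such initial modules on cones (Corollary~\ref{corr:inwconst}) shows that these assemble into a subdivision $\mathscr{T}\to \mathrm{Trop}(X)\times \mathrm{Trop}(S)$. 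I then subdivide $\mathscr{T}$ so that the resulting map to $\mathrm{Trop}(S)$ is combinatorially flat, using Lemma~\ref{lem:F2} to reduce the verification to the generic face and Construction~\ref{cons:flatmapArtinfan} to correct the lattices. Combinatorial flatness of this family is precisely the integrality of the associated modification over $S$.

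Let $\Gamma\to \mathrm{Trop}(X\times S)$ be a tropical model of this extended, combinatorially flat family, and let $\pi_\Gamma\colon (X\times S)_\Gamma\to X\times S$ be the associated logarithmic modification. Writing $\mathcal{F}_\Gamma$ for the strict transform, I would then flatten it by appealing to Tevelev's theorem relatively: the construction in the proof of Theorem~\ref{thm:OldTev}---push forward the torus--multiplication pullback, extract a coherent equivariant subsheaf, form the relative Quot scheme, flatten over the generic locus by generic flatness, and recognise the scheme--theoretic image as an equivariant blowup via Grothendieck---goes through over the base $S\times_{\mathpzc{S}}\mathpzc{X}_S$ and outputs a further logarithmic modification under which the strict transform is flat over $S\times_{\mathpzc{S}}\mathpzc{X}_S$, that is, logarithmically flat over $S$. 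Logarithmic flatness is equivalent to the strict--and--total transform property by Theorem~\ref{thm:CharacteriseLogFlat}, and since $\Gamma$ was arranged combinatorially flat over $\mathrm{Trop}(S)$ the limit is simultaneously integral; Proposition~\ref{prop:Strict/Total implies log flat} packages this verification.

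The step I expect to be the main obstacle is reconciling the flattening with integrality. Tevelev's construction may force additional subdivision of $\Gamma$ to achieve logarithmic flatness, and a priori such a subdivision can destroy combinatorial flatness of the map to $\mathrm{Trop}(S)$, hence integrality. Controlling this is exactly where the Gr\"obner--theoretic description of tropical support (Proposition~\ref{prop:initialideal} together with Corollary~\ref{corr:inwconst}) is essential: it should guarantee that the flattening subdivision can be chosen to refine the combinatorially flat family built in the tropical step, so that both conditions hold at once. A secondary subtlety is ensuring, in Case~2, that the generic logarithmic structure extends over $\underline{S}$ with $\mathrm{Trop}(S)$ a genuine cone rather than a non--convex piecewise linear object; here I would invoke the universal property of $\supptrop(\mathpzc{X})$ (Theorem~\ref{thm: Universal property Supptrop}) to extend the generic family over a cone.
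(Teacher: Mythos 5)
Your Case 1 is essentially the paper's argument: equip $\underline{S}$ with the divisorial logarithmic structure from its special point, reduce to the toric situation, and apply Tevelev's flattening (Theorem \ref{thm:OldTev}) through the torus--multiplication reformulation of logarithmic flatness; integrality is automatic there since any map of Artin fans to $a^\star\mathbb{R}_{\geq 0}$ sending generic point to generic point is flat. The genuine gap is in Case 2, and it is precisely the step you flag as the ``main obstacle'' and then leave to hope. You propose to first extend the tropical support of $\mathcal{F}|_\eta$ to a combinatorially flat family over a two--dimensional cone $\mathrm{Trop}(S)$, take a tropical model $\Gamma$, and then flatten the strict transform by a relative Tevelev argument, asserting that the Gr\"obner description ``should guarantee'' that the flattening subdivision refines the combinatorially flat family. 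Two things break. First, the extension is not available from the results you cite: Proposition \ref{prop:initialideal} and Corollary \ref{corr:inwconst} compute the tropical support of a sheaf that is \emph{already} logarithmically flat on a given modification; they do not produce the limiting tropical support over the closed point of $S$, which is determined by the flat limit you have not yet constructed. In the toric--local setting one can define a candidate subdivision by Gr\"obner stratification without any flatness hypothesis, but concluding logarithmic flatness of the strict transform from a tropical model of that subdivision is exactly the implication Condition (4) $\Rightarrow$ Condition (1), i.e.\ Theorem \ref{conj:CharacteriseLogFlat}, which this paper explicitly defers to \cite{Quot2} and does not prove --- so your route either consumes an unproven input or is circular. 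Second, even granting the extension, nothing in your argument prevents the relative Tevelev flattening from subdividing in a way that destroys combinatorial flatness over $\mathrm{Trop}(S)$; invoking the universal property of $\supptrop$ (Theorem \ref{thm: Universal property Supptrop}) does not repair this, because a family of tropical supports over the generic ray does not extend canonically, or even uniquely, over a two--dimensional cone.

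The paper's Case 2 resolves exactly this tension by reducing to Case 1 \emph{componentwise} rather than tropically: for each vertex $\gamma_i$ of the polyhedral subdivision $\Gamma_1$ describing the generic expansion, it endows $V(\gamma_i)\times\eta$ with a new logarithmic structure pulled back from the Artin fan of the star fan $a^\star\mathrm{St}_{\gamma_i}$, so that the generic logarithmic structure becomes trivial, applies Proposition \ref{prop:GenTev} to each $V(\gamma_i)^\dagger\times S^\dagger$, and then glues the resulting local subdivisions using $\varepsilon$--neighbourhoods of the rays $\gamma_i$ and a shrinking of two--dimensional cones $\sigma_i$ down to a common smooth subcone $\sigma_0$. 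Crucially, the logarithmic structure on $\underline{S}$ (the choice of $\sigma_0$) is an \emph{output} of this construction, not an input fixed in advance as in your outline; integrality is then secured by miracle flatness (cone--to--cone images by construction, regular base since $\sigma_0$ is smooth), and logarithmic flatness is checked by the valuative criterion against the cover of the target by the $V(\gamma_i)$. The componentwise reduction and the a posteriori choice of $\sigma_0$ are the missing ideas you would need to make your proposal work.
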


Theorem \ref{Goal:FlatLimits} is the technical ingredient required to show that the logarithmic Quot space is universally closed. Our argument follows \cite[Theorem 4.6.1, especially Section 7]{MR20}
 	\subsection{Generically trivial logarithmic structure after Tevelev}\label{sec:Transversality}
	We first handle Case 1 where the generic point of $S$ has trivial logarithmic structure.
 \subsubsection{Logarithmic structure on $S$} Define logarithmic scheme $S$ by equipping $\underline{S}$ with the divisorial logarithmic structure from its special point. We are left to find the logarithmic modification $\pi_\Gamma$.

 \subsubsection{Reduction to the toric case} In this section we reduce the proof of Theorem \ref{Goal:FlatLimits} in Case 1 to the situation that $X$ is toric equipped with its toric boundary divisors.
 \begin{lemma}\label{Lem:justdotoriccase}
 To prove Theorem \ref{Goal:FlatLimits}, Case 1 it suffices to check the case $X$ is a toric variety.
 \end{lemma}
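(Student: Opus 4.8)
The plan is to show that every ingredient of Theorem~\ref{Goal:FlatLimits} is \'etale local on $X$ and that the datum to be produced is combinatorial, so that a solution on the members of an \'etale cover glues to a global one, with the toric case supplying the solution on each piece. First I would observe that, having equipped $\underline{S}$ with its divisorial logarithmic structure, we have $\mathrm{Trop}(X\times S)=\mathrm{Trop}(X)\times\mathbb{R}_{\geq 0}$, and that a logarithmic modification $\pi_\Gamma$ is by definition \'etale locally pulled back from a subdivision $\Gamma\to\mathrm{Trop}(X)\times\mathbb{R}_{\geq 0}$. By Definition~\ref{defn:logflat}, both logarithmic flatness and integrality of the strict transform over $S$ are flatness conditions --- flatness of $\pi_\Gamma^!\mathcal{F}$ over the Artin--fan pullback of $S$, and flatness of the associated map of Artin fans --- and flatness is \'etale local on the source \cite[TAG 01U8]{stacks-project}. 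Hence it suffices to produce, on each member of an \'etale cover of $X$, a subdivision whose logarithmic modification flattens the strict transform; the common refinement over a finite subcover (using quasicompactness of $X$) is a single $\Gamma$, logarithmic flatness persists under this refinement by the preservation of logarithmic flatness under logarithmic modification established earlier, and integrality is then restored by the canonical flattening of the map of Artin fans in Construction~\ref{cons:flatmapArtinfan}.

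Next I would use the local structure of the simple normal crossings pair. \'Etale locally on $X$ there is a strict smooth morphism $h:X\to Y$ to an affine toric variety $Y$ carrying its full toric boundary logarithmic structure, with the logarithmic structure of $X$ pulled back along $h$; here $h$ is the projection recording the branches of $D$ through a point. Then $\mathrm{Trop}(X)=\mathrm{Trop}(Y)$, the Artin fans agree, and a subdivision of $\mathrm{Trop}(Y)\times\mathbb{R}_{\geq 0}$ is the same as one of $\mathrm{Trop}(X)\times\mathbb{R}_{\geq 0}$, so a toric modification $Y_\Gamma\times S\to Y\times S$ pulls back along the smooth strict morphism $X\times S\to Y\times S$ to the logarithmic modification of $X\times S$ we seek. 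Because $h$ is strict, the Artin fan used to test logarithmic flatness is the one pulled back from $Y$; thus the combinatorial flattening condition for the strict transform, analysed in the local toric coordinates of $Y$ by the Gr\"obner theory of Section~\ref{sec:Grobner}, is exactly the condition solved by the toric case of Theorem~\ref{Goal:FlatLimits} applied to $Y\times S$.

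The step I expect to be the main obstacle is transferring the sheaf--theoretic flattening across the strict smooth projection $h$: the toric case flattens sheaves living on a toric variety, whereas $\mathcal{F}$ lives on $X\times S$, which is only smooth and strict over $Y\times S$ via $h\times\mathrm{id}_S$, and is not pulled back from $Y\times S$. I would resolve this by exploiting that the sought modification does not touch the fibres of $h$ and is pulled back along the flat morphism $h\times\mathrm{id}_S$, so that flatness of the strict transform is tested after a flat base change and the fibre directions of $h$ contribute only flat data; this reduces the assertion to flatness of the strict transform on $Y\times S$, where Theorem~\ref{thm:OldTev} and the toric case apply. The final point to verify is that the finitely many local subdivisions agree sufficiently to admit a common refinement: since each is determined by the shared Artin fan combinatorics and logarithmic flatness is an \'etale local property, their common refinement is the required global $\Gamma$, completing the reduction.
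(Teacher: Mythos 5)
Your reduction has a genuine gap, and it sits exactly at the step you yourself flagged as the main obstacle. Your local model is a strict \emph{smooth} projection $h:X\to Y$ onto a toric variety, and you then assert that, since every logarithmic modification is pulled back along $h\times\mathrm{id}_S$ (true, because $h$ is strict and $\mathrm{Trop}(X)=\mathrm{Trop}(Y)$), ``flatness of the strict transform is tested after a flat base change and the fibre directions of $h$ contribute only flat data,'' reducing the problem to ``flatness of the strict transform on $Y\times S$.'' This is not a reduction: there is no sheaf on $Y\times S$ whose strict transform you could be speaking of. The toric case (Proposition \ref{prop:GenTev}, resting on Theorem \ref{thm:OldTev}) takes as input a \emph{coherent} sheaf on a toric variety and produces a subdivision adapted to that sheaf; the subdivision cannot be chosen before the sheaf is known. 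Here $\mathcal{F}$ lives on $X\times S$, is not pulled back from $Y\times S$, and pushforward along the smooth affine morphism $h\times\mathrm{id}_S$ is only quasi-coherent (already $h_\star\mathcal{O}_X$ is infinitely generated), so there is nothing to feed into the toric theorem. Smoothness of $h$ does not repair this afterwards: logarithmic flatness of $\mathcal{F}_\Gamma$ means flatness over $a^\star\Gamma\times_{\mathpzc{S}}S$ (Definition \ref{defn:logflat}), a condition on a sheaf living upstairs on $(X\times S)_\Gamma$; flatness of the factor $(X\times S)_\Gamma\to(Y\times S)_\Gamma$ would only help if $\mathcal{F}_\Gamma$ were pulled back from $(Y\times S)_\Gamma$, which it is not, and which $\Gamma$ works genuinely depends on how $\mathcal{F}$ degenerates in the fibre directions of $h$ as well as the base directions. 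The fallback of declaring $X$ \'etale-locally toric ($\simeq\mathbb{A}^n$) does not rescue the argument either: the logarithmic structure is then only a partial coordinate-hyperplane boundary, whereas the toric case is proved only for the full toric boundary (its proof quotients by the full dense torus). One could try to salvage your route by rerunning Tevelev's Quot-scheme flattening argument relative to the fibre directions of $h$, but that is reproving Theorem \ref{thm:OldTev} in greater generality, not reducing to it, which defeats the purpose of the lemma.

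The device you are missing is the paper's very affine embedding, which replaces your smooth projection by a \emph{closed immersion} into a toric variety. Cover $X$ by very affine opens $U_i$ with $D|_{U_i}=V(f_1,\dots,f_k)$ for a regular sequence; the $f_j$ together with the very affine coordinates give a closed immersion $\iota:U_i\hookrightarrow(\mathbb{C}^\star)^\ell\times\mathbb{A}^k$ under which the \emph{full} toric boundary restricts to $D\cap U_i$, because the complementary directions have become torus directions carrying no boundary. Now $\iota_\star\mathcal{F}$ is coherent, the toric case applies to it on $(\mathbb{C}^\star)^\ell\times\mathbb{A}^k\times S$, and the resulting toric modification pulls back (strict transform along the closed immersion) to the required logarithmic modification of $U_i\times S$, with logarithmic flatness of the strict transform of $\mathcal{F}$ inherited from that of $\iota_\star\mathcal{F}$; integrality is automatic since any toric morphism to $\mathbb{A}^1$ is flat. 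Your surrounding scaffolding --- locality of logarithmic flatness, common refinement of finitely many local subdivisions, stability of logarithmic flatness under further modification --- is sound and consistent with what the paper does; it is only the sheaf-transfer mechanism that fails, and it is the heart of the lemma.
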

 \begin{proof} 
 Being logarithmically flat is a local property so it suffices to ensure logarithmic flatness on an open cover. Observe $\mathbb{A}^n$ may be covered by very affine varieties. It follows that any scheme admits an open cover by very affine varieties. Take such an open cover $\{U_i\hookrightarrow X\}$. Since $D$ is simple normal crossings without loss of generality $D|_{U_i} = V(f_1,...,f_k)$ for some regular sequence $f_i$. The functions $f_1,...,f_k$ define a function from $U_i$ to $\mathbb{A}^k$. Combining this with the fact $U_i$ is very affine we can consider $U_i$ a closed subscheme of $(\mathbb{C}^\star)^\ell \times \mathbb{A}^k$. This embedding has the property that the toric boundary of $\mathbb{A}^k$ pulls back to the divisor $D\cap U_i$.

We now consider a coherent sheaf $\mathcal{F}$ on $U_i$. Any such coherent sheaf can be pushed forward to a coherent sheaf on the toric variety $(\mathbb{C}^\star)^\ell \times \mathbb{A}^k$. Suppose we find a toric modification $U_i(\Gamma)$ of $(\mathbb{C}^\star)^\ell \times \mathbb{A}^k$ such that the strict transform of $\iota_\star \mathcal{F}$ is logarithmically flat over $S$. The strict pullback of a logarithmic modification is a logarithmic modification and thus we obtain a logarithmic modification of $X\times S$.  

Any toric morphism to $\mathbb{A}^1$ is flat and thus every logarithmic modification of $X\times S$ is integral over $S$ (whose log structure corresponds to the cone $\mathbb{R}_{\geq 0}$). Logarithmic flatness of $\mathcal{F}$ over $S$ follows from the same statement for $\iota_\star \mathcal{F}$.
\end{proof}	

\subsubsection{Toric case} We prove Case 1 of Theorem \ref{Goal:FlatLimits} in the case $X$ is toric equipped with divisorial logarithmic structure from its toric boundary. In light of Section \ref{Lem:justdotoriccase} this completes our analysis of Case 1.
	\begin{proposition}\label{prop:GenTev}
		Given a coherent sheaf on $X\times S$ which is logarithmically flat over $X\times \eta$, there is a logarithmic modification $$\pi_\Gamma:(X\times S)_\Gamma \rightarrow X\times S$$ such that the strict transform of $\mathcal{F}$ is logarithmically flat.
	\end{proposition}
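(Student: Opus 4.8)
The plan is to carry out the flattening argument of Theorem~\ref{thm:OldTev} relative to the trait $S$, following \cite[Section~7]{MR20}, with the generic logarithmic flatness hypothesis supplying the extra input. Write $T = X^o$ for the dense torus of $X$, acting on $X\times S$ over $S$ (trivially in the $S$ direction), and let
$$ X\times S \xleftarrow{\pi_1} (X\times S)\times T \xrightarrow{m} X\times S $$
denote projection and torus multiplication. As in Section~\ref{sec:toriclogflat}, logarithmic flatness over $S$ of the strict transform of $\mathcal{F}$ should be equivalent to flatness of the $m$--pullback of that strict transform over $X\times S$ along $\pi_1$; granting this, it suffices to flatten $m^\star\mathcal{F}$ by a $T$--equivariant modification of $X\times S$ whose centre lies in the logarithmic boundary.

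First I would set up the Quot scheme exactly as in Theorem~\ref{thm:OldTev}. Realising $T = \Gm^k$ as the dense torus of $\mathbb{P}^k$, push $m^\star\mathcal{F}$ forward along the open inclusion $(X\times S)\times T \hookrightarrow (X\times S)\times \mathbb{P}^k$ and then choose an equivariant coherent subsheaf $\mathcal{G}$ of the (merely quasicoherent) pushforward to $(X\times S)\times\mathbb{P}^k$. Form Grothendieck's relative Quot scheme
$$ Q = \mathsf{Quot}\big((X\times S)\times \mathbb{P}^k \,/\, (X\times S),\, \mathcal{G}\big), $$
a union of projective schemes over $X\times S$ carrying a universal quotient.

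The decisive step, and the one using the hypothesis, is the construction of the section whose closure produces the modification. Since $\mathcal{F}|_{X\times\eta}$ is logarithmically flat over $\eta$, the sheaf $m^\star\mathcal{F}$ is already flat over the entire generic fibre $X\times\eta$; combined with generic flatness this yields an open $U\subset X\times S$ with $X\times\eta\subseteq U$ over which $\mathcal{G}$ is flat, hence a section $U\to Q$ given by the identity quotient of $\mathcal{G}|_U$. I would take $\overline{X\times S}$ to be the scheme theoretic closure of this section: the map $\overline{X\times S}\to X\times S$ is proper, birational, and an isomorphism over $U$, so by \cite[2.3.5]{EGA3} it is the blow up of a closed subscheme. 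Because $\mathcal{G}$, $Q$ and the section are all $T$--equivariant, the blown up centre is $T$--invariant, and because the map is an isomorphism over $U\supseteq X\times\eta$ the centre is supported in the special fibre $X\times\{s\}$, which is part of the logarithmic boundary; hence the blow up is a logarithmic modification $\pi_\Gamma$ of $X\times S$. The universal quotient restricts to the strict transform of $\mathcal{G}$, which is flat, and descending this flatness through $m$ shows the strict transform of $\mathcal{F}$ is logarithmically flat over $S$. Integrality over $S$ is then automatic, since every toric morphism to $\mathbb{A}^1$ is flat, exactly as in the proof of Lemma~\ref{Lem:justdotoriccase}.

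The main obstacle I anticipate lies in the two verifications glossed over above. The first is confirming that a $T$--equivariant blow up of $X\times S$ with centre in the logarithmic boundary is genuinely a logarithmic modification, i.e.\ is \'etale locally pulled back from a subdivision of the prism $\mathrm{Trop}(X)\times \mathbb{R}_{\geq 0}$; here it is exactly the generic logarithmic flatness that forces the centre into the special fibre and keeps the modification inside this class rather than an arbitrary equivariant blow up. The second is matching the torus multiplication characterisation of logarithmic flatness over $S$ used in the reduction with Definition~\ref{defn:logflat}, namely flatness over $S\times_{\mathpzc{S}}\mathpzc{V}_S$ for $V = X\times S$; this is the relative counterpart of the identification, recalled in Section~\ref{sec:toriclogflat}, of logarithmic flatness with flatness after torus multiplication, and it requires tracking how the $S$ direction of the tropicalisation interacts with the $T$ action.
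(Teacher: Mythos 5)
Your proposal is correct and takes essentially the same route as the paper: the paper likewise reduces logarithmic flatness of the strict transform to flatness over the quotient $\mathpzc{X}\times S = [X\times S/\mathbb{G}_m^k]$ via the torus action map, and then flattens by running the Quot-scheme argument of Theorem~\ref{thm:OldTev} with the trait $S$ in place of $\mathbb{A}^1$. Your write-up simply unfolds the paper's closing remark that ``the same proof works for $S$ a trait,'' and your explicit use of generic logarithmic flatness to force the blowup centre into the special fibre (so that the equivariant blowup is a genuine logarithmic modification) is precisely the point that makes that remark valid.
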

	
	Proposition~\ref{prop:GenTev} is a translation of a mild upgrade of Theorem \ref{thm:OldTev}.
	
	\begin{proof} For the strict transform of $\mathcal{F}$ to be logarithmically flat we require first that the morphism of Artin fans $$a^\star(\mathrm{Trop}(X)\times \mathrm{Trop}(S))_\Gamma \rightarrow a^\star\mathrm{Trop}(S)$$ is flat. This is true for any morphism of Artin fans to $\mathrm{Trop}(S) = \mathbb{R}_{\geq 0}$ mapping the generic point to the generic point.   

    Thus it suffices to ensure that the sheaf $\mathcal{F}$ on $X \times S$ is flat in the usual sense over $$X \times S \rightarrow \mathpzc{X}\times \mathpzc{S} \times_\mathpzc{S} S = \mathpzc{X}\times S.$$ The map $$X \times S  \rightarrow [X \times S/\mathbb{G}_m^k] = \mathpzc{X}\times S$$ is a global quotient, where $X$ is a toric variety of dimension $k$ and $\mathbb{G}_m^k$ acts as the dense torus of $X$. 

    We first rephrase the requirement that $\mathcal{F}$ is flat over $\mathpzc{X}\times S$ without the language of stacks. Consider the map $$\Psi: \mathbb{G}_m^k \times (X\times S) \rightarrow X \times S.$$  $$(g,y) \mapsto g^{-1}y.$$ Note $\mathcal{F}$ is flat over $\mathpzc{X}\times S$ if and only if $\Psi^\star \mathcal{F}$ is flat with respect to the projection map $\mathbb{G}_m^k \times X \rightarrow X$. 
    
    We must now show that $\Psi^\star \mathcal{F}$ can be flattened by an equivariant blowup of $X\times S$. Replacing $S$ with $\mathbb{A}^1$ this follows from Theorem \ref{thm:OldTev}. The same proof works for $S$ a trait.
\end{proof}

\subsection{The general case}
We now handle Case 2 where the generic point of $S$ has logarithmic structure with ghost sheaf $\mathbb{N}$. The basic strategy is to reduce to Case 1 which we already know how to handle. 
\subsubsection{Logarithmic structures on $S$}\label{sec:LogStr} The first part of Theorem \ref{Goal:FlatLimits} requires us to specify a logarithmic structure on $S$. A sufficient class of such logarithmic structures extending the logarithmic structure on $\eta$ with ghost sheaf $\mathbb{N}$ are the \textit{logarithmic extensions} introduced in \cite[Section 7.1]{MR20}.

\subsubsection{Construction}
In this subsection we construct a logarithmic structure on $\underline{S}$ defining logarithmic scheme $S$; and a logarithmic modification of $X\times S$. Together these prove Theorem \ref{Goal:FlatLimits}. 

\textbf{Setup, notation and link to Case 1.} We are given a logarithmic modification $(X\times \eta)_\Gamma$ of $X \times \eta$. This is the same data as a polyhedral subdivision $\Gamma_1 \rightarrow \mathrm{Trop}(X)$. Taking the cone over $\Gamma_1$ recovers a subdivision $\Gamma$ of $\mathrm{Trop}(X)\times \mathrm{Trop}(\eta) = \mathrm{Trop}(X)\times \mathbb{R}_{\geq 0}$. 

Each vertex of $\Gamma_1$ specifies a ray $\gamma$ in $\Gamma$ and thus a closed subscheme $V(\gamma)\times \eta$ of $(X\times \eta)_\Gamma$. We now define a new logarithmic structure on $V(\gamma)\times \eta$ and a morphism from the resulting logarithmic scheme $V(\gamma)\times \eta^\dagger$ to $\eta^\dagger$. Here we define ${\eta}^\dagger$ to be the logarithmic scheme obtained by equipping the underlying scheme of $\eta$ with the trivial logarithmic structure.

\textbf{Defining new logarithmic structure.} Let $\mathcal{A}$ be the scheme underlying an Artin fan. The scheme $\mathcal{A}$ carries a natural logarithmic structure as an Artin fan. Thus specifying a map from a scheme $W$ to $\mathcal{A}$ specifies a logarithmic structure on $W$. This is the logarithmic structure pulled back from the natural logarithmic structure on $\mathcal{A}$.

Our strategy for defining a new logarithmic structure on the scheme $\underline{V(\gamma)\times \eta}$ underlying $V(\gamma)\times \eta$ is to give a map to the underlying algebraic stack of an Artin fan. We already have one such map given by restricting the underlying map of schemes $$(X \times S)_\Gamma \rightarrow a^\star \Gamma.$$ We know the image of $V(\gamma)\times \eta$ lies in the closed substack corresponding to the Star fan of $\gamma$ denoted $a^\star \mathrm{St}_\gamma$. This closed substack is itself the stack underlying an Artin fan. The logarithmic structure on $({V(\gamma)\times \eta})^\dagger$ is pulled back from the logarithmic structure of the Artin fan with underlying stack $a^\star \mathrm{St}_\gamma$. We can express this logarithmic scheme as a product $$({V(\gamma)\times \eta})^\dagger= V(\gamma)^\dagger\times \eta^\dagger.$$ 

\textbf{Data of Case 1.} There is a natural map $$(V(\gamma) \times \eta)^\dagger \rightarrow {\eta}^\dagger.$$ On underlying schemes this is the projection map. Since $\eta^\dagger$ has the trivial logarithmic structure, there is a unique way to upgrade this to our morphism of logarithmic schemes.

\textbf{Applying Case 1.} Write $S^\dagger$ for the logarithmic scheme obtained by equipping $S$ with logarithmic structure from its special point. By Proposition \ref{prop:GenTev} there is a logarithmic modification $$\pi^\dagger_{\gamma'}:(V(\gamma)^\dagger\times S^\dagger)_{\gamma'}\rightarrow V(\gamma)^\dagger\times S^\dagger$$ such that the strict transform of $\mathcal{F}$ is logarithmically flat over ${S}^\dagger$. The data of $(V(\gamma)^\dagger\times S^\dagger)_{\gamma'}$ is a subdivision $$\gamma' \rightarrow \mathrm{Trop}(V(\gamma)^\dagger)\times \mathbb{R}_{\geq 0}.$$

\textbf{Upgrading the rank.} We now upgrade $\pi^\dagger_{\gamma'}$ to $$ \pi_\gamma':(V(\gamma)\times {S}_\sigma)_{\gamma'}\rightarrow V(\gamma)\times {S}_\sigma$$ where the logarithmic scheme ${S}_\sigma$ is the logarithmic extension of $\eta$ corresponding to the cone $\mathbb{N}^2$.
Indeed we simply take the fibre product of $\pi_{\gamma'}^\dagger$ with $\mathsf{pt}^\dagger$. We adopt the convention $\sigma = \mathbb{N}^2$ with the first copy of $\mathbb{N}$ being the ghost sheaf at the generic point and the second copy corresponding to the uniformiser of the special point of $\underline{S}$.

\textbf{Gluing data associated to each vertex.} Write $3\varepsilon$ for the rational number which is the smallest lattice distance between two vertices in $\Gamma_1$. For each vertex $\gamma_i$ in $\Gamma_1$ identify a two dimensional cone $\sigma_i$ in $\sigma$ and containing the ray $\{(i,0): i \in \mathbb{N}\}$. 

By our previous discussion, for each vertex $\gamma_i$ of $\Gamma$ we get a subdivision $\gamma_i'$ of $\mathrm{St}(\gamma_i)\times \mathbb{N}^2$ which is the trivial subdivision over the ray $\{(0,i)|i \in \mathbb{N}\}$. By taking a fibre we can think of each point $p$ in $\mathbb{N}^2$ as specifying the data of a polyhedral subdivision of $\mathrm{St}(\gamma_i)$. For $p$ in the $x$ axis the polyhedral subdivision is the same. By shrinking $\sigma_i$ we may assume that whenever the $x$ coordinate is at most one, every vertex in the subdivision $\gamma_i'$ lies within lattice distance $\epsilon$ of the vertex associated to the zero cone in $\mathrm{St}_{\gamma_i}$.

Set $\sigma_0 \subset \sigma$ a smooth subcone contained in each $\sigma_i$. We now define $S = S_{\sigma_0}$. To finish our construction we must specify a subdivision of $\mathrm{Trop}(X)\times \sigma_0.$ A piecewise linear subdivision on an $\varepsilon$ neighbourhood of each cone $\gamma_i$ corresponding to a vertex of $\Gamma_1$ is specified by the subdivision $\gamma_i'$. Each subdivision extends to a subdivision of $\mathrm{Trop}(X)\times \sigma_0$. Take the common refinement $\mathscr{T}$ of the piecewise linear subdivision induced by each $\gamma_i$. Choose any smooth subdivision $\Gamma\rightarrow \mathrm{Trop}(X)\times S_{\sigma_0}$. Shrinking $\sigma_0$ if needed and pulling back the resulting subdivision as in \cite{molcho2019universal}, without loss of generality the image of each stratum of $\Gamma$ is a cone of $\sigma_0$ . 

\subsubsection{Verifying construction works} First observe the map of Artin fans $$a^\star\Gamma \rightarrow a^\star \sigma_0$$ is flat by miracle flatness. By construction the image of each cone is a cone and thus fibre dimension is constant. The base is regular because $\sigma_0$ was chosen to be smooth. This handles being integral and we are left to check the pullback of $\mathcal{F}$ is logarithmically flat.

It remains to check the strict transform $\mathcal{F}_\Gamma$ of $\mathcal{F}$ to $(X \times S)_\Gamma$ is logarithmically flat over $S$. This is the same as checking $\mathcal{F}_\Gamma$ is flat in the usual sense over the codomain of the morphism $$ (X \times S)_\Gamma \rightarrow a^\star \Gamma \times_\mathpzc{S} S.$$ This morphism is of finite presentation over a noetherian base (since $S$ is a trait) so we may appeal to the valuative criterion for flatness \cite[11.8.1]{EGA3}. The target has a cover by schemes $V(\gamma_i)$ where $\gamma_i$ is a cone corresponding to a vertex of $\Gamma_1$. The image of any trait is contained within one of these closed substacks. Thus it suffices to verify the restriction of the map to the preimage of each $V(\gamma_i)$ is flat over $V(\gamma_i)$. This holds by construction.

	\section{The logarithmic Quot space}\label{sec:LogQuot}
	In the sequel we set $X$ a projective (fine and saturated) logarithmic scheme which is logarithmically flat over a point with the trivial logarithmic structure. Fix also a Hilbert polynomial $\Phi$. Let $\mathcal{E}$ be a coherent sheaf on $X$. 

 \begin{definition}\label{defn:LogQuot}
The logarithmic Quot space $\mathrm{Quot}(X,\mathcal{E})$ is the groupoid valued sheaf on the category of logarithmic schemes obtained by sheafifying the presheaf which assigns to a logarithmic scheme $S$ the groupoid of logarithmic surjections of coherent sheaves on $X\times S$ which are flat over $S$.
 \end{definition}

Since logarithmic flatness and being integral are preserved under strict base change, we can think of the logarithmic Quot space as a (not necessarily algebraic) stack in the strict \'etale site. The next two sections constitute a proof of Theorem \ref{Mainthm:LogQuot} and Theorem \ref{thm:LogQuot}.

\subsection{Representability} In Section \ref{sec:DefnTropSupp} we showed that a morphism from $S$ to the presheaf used to define the logarithmic Quot space specifies in particular a map from $\mathpzc{S}$ to the stack of tropical supports $\mathpzc{Supp}.$ Proposition \ref{prop:Family gives Family} shows this assignment descends to define a morphism $$\mathrm{Quot}(X,\mathcal{E}) \rightarrow \mathpzc{Supp}(\mathpzc{X}).$$
Given a tropical model $\supptrop_\Sigma \rightarrow \supptrop$ the associated \textit{proper model} of  $\mathrm{Quot}(X,\mathcal{E})$ is the fibre product $$\mathrm{Quot}_\Sigma(X,\mathcal{E}) = \mathrm{Quot}(X,\mathcal{E})\times_{\supplog}\supplog_\Sigma\xrightarrow{\pi_\Sigma} \mathrm{Quot}(X,\mathcal{E}).$$ Representability properties of the logarithmic Quot space are captured by the proper models.  
 
 \subsubsection{Open cover} Given a cone complex $\supptrop_{\Lambda_2}$ embedded in $\supptrop$ and given moreover a combinatorially flat tropical model of the universal piecewise linear space $$
\begin{tikzcd}
\mathscr{X}_{\Lambda_1} \arrow[r] & X \times \supptrop_{\Lambda_2} \arrow[d] \\
                                  & \supptrop_{\Lambda_2}                   
\end{tikzcd}$$ we apply the Construction \ref{cons:flatmapArtinfan} to define a morphism of piecewise linear spaces whose corresponding diagram of stacks on the category of logarithmic schemes $$\mathcal{X}_{\Lambda_1} \rightarrow \supplog_{\Lambda_2}$$ is flat.
 
 Now set $U_\Lambda$ to be the open substack of the relative Quot scheme $\mathrm{Quot}(\mathcal{X}_{\Lambda_1}/\supplog_{\Lambda_2})$ whose $S$ valued points satisfy two conditions.
\begin{enumerate}[(1)]
    \item \textbf{Logarithmic flatness}: the logarithmic surjection of coherent sheaves $[\pi_\Gamma,q_\Gamma]$ on $S \times X$ is logarithmically flat over $S$.
    \item \textbf{Stability}: The family of tropical supports associated to $[\pi_\Gamma,q_\Gamma]$ defined in Section \ref{sec:TropSuppNotConstDegen} coincides with the family pulled back along the morphism ${S} \rightarrow \supplog_{\Lambda_2}$.
\end{enumerate}
The first condition is open because flatness is an open condition. The fact that stability is open requires more work, and this claim appears as Proposition \ref{prop:StabilityIsOpen}. Note since $\pi:\mathscr{X}_{\Lambda_1} \rightarrow \supptrop_{\Lambda_2}$ is proper, the relative Quot scheme $\mathrm{Quot}(\mathscr{X}_{\Lambda_1}/\supptrop_{\Lambda_2})$ is an algebraic space, and thus $U_\Lambda$ is also an algebraic space. We say $\Lambda$ is \textit{compatible} with $\Sigma$ if $\supptrop_{\Lambda_2}$ is a subcomplex of $\Sigma$.
	\begin{lemma}\label{lem:coverbyUlambda}
		Whenever $\Lambda$ is compatible with $\Sigma$, the natural morphism $$U_\Lambda\rightarrow \mathsf{Quot}_\Sigma(X,\mathcal{E})$$ is an open immersion.
	\end{lemma}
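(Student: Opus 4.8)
The plan is to show that $U_\Lambda \to \mathsf{Quot}_\Sigma(X,\mathcal{E})$ is a monomorphism, that it is étale (or at least has open image and is an open immersion onto that image), and that it is in fact an open immersion by identifying its image with the locus of those logarithmic surjections of coherent sheaves whose tropical support is compatible with the given subcomplex $\supptrop_{\Lambda_2} \subset \Sigma$. First I would recall the explicit description of both source and target: a point of $\mathsf{Quot}_\Sigma(X,\mathcal{E})$ over $S$ is a logarithmic surjection $[\pi_\Gamma, q_\Gamma]$ together with a compatible map $\mathpzc{S} \to \supplog_\Sigma$ refining the canonical tropical-support map to $\supplog$, while a point of $U_\Lambda$ is an honest (non-logarithmic) quotient of $\mathcal{E}_{\Lambda_1}$ on the relative Quot scheme $\mathrm{Quot}(\mathcal{X}_{\Lambda_1}/\supplog_{\Lambda_2})$ satisfying logarithmic flatness (condition (1)) and stability (condition (2)). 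The key translation, which I would make precise at the outset, is that the stability condition forces the tropical support of the resulting logarithmic surjection to equal the restriction of the universal family over $\supptrop_{\Lambda_2}$, so that the minimal logarithmic modification recorded by the tropical support is exactly the one visible on $\mathcal{X}_{\Lambda_1}$.

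Next I would construct the morphism and its candidate inverse on the level of functors of points. Given an $S$-point of $U_\Lambda$, the map $S \to \supplog_{\Lambda_2}$ composed with the inclusion $\supptrop_{\Lambda_2} \hookrightarrow \Sigma$ gives a map $\mathpzc{S} \to \supplog_\Sigma$; pulling back the universal family $\mathcal{X}_{\Lambda_1}$ and the universal quotient produces a logarithmic surjection of coherent sheaves on $X \times S$ which, by conditions (1) and (2), is logarithmically flat over $S$ and has the prescribed tropical support. This defines the morphism $U_\Lambda \to \mathsf{Quot}_\Sigma(X,\mathcal{E})$. To see it is a monomorphism I would argue that the logarithmic surjection $[\pi_\Gamma,q_\Gamma]$ together with the refinement datum $\mathpzc{S} \to \supplog_\Sigma$ recovers the map $S \to \supplog_{\Lambda_2}$ (since $\supptrop_{\Lambda_2}$ is a subcomplex of $\Sigma$, the factorisation through it is unique when it exists), and then the stability condition pins down the quotient on $\mathcal{X}_{\Lambda_1}$ uniquely by the minimality built into the tropical support in Theorem/Definition~\ref{lem:defnTropSupp}. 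This is where I would lean on the fact that tropical support tracks the \emph{minimal} expansion, so that no choice is lost in passing between the logarithmic surjection and its representative on $\mathcal{X}_{\Lambda_1}$.

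Openness is the heart of the matter. The morphism lands in the open substack of $\mathsf{Quot}_\Sigma(X,\mathcal{E})$ cut out by the two conditions, and I would argue both conditions are open: logarithmic flatness is open because ordinary flatness is open and logarithmic flatness is detected by flatness over a suitable base change of Artin fans (Definition~\ref{defn:logflat} together with Construction~\ref{cons:flatmapArtinfan}, which arranges that $\mathcal{X}_{\Lambda_1} \to \supplog_{\Lambda_2}$ is flat), and stability is open by Proposition~\ref{prop:StabilityIsOpen}. The image therefore is an open substack, and the monomorphism together with openness of image and the fact that étale-locally the construction identifies $U_\Lambda$ with this open locus gives the open immersion. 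I expect the main obstacle to be verifying that the morphism is surjective onto this open substack, i.e.\ that every logarithmically flat $S$-point of $\mathsf{Quot}_\Sigma(X,\mathcal{E})$ whose tropical support is compatible with $\supptrop_{\Lambda_2}$ actually arises from a genuine quotient on $\mathcal{X}_{\Lambda_1}$; this requires showing that the minimal expansion recorded tropically coincides with the modification visible in $\mathcal{X}_{\Lambda_1}$, so that the strict transform is an ordinary (non-logarithmic) surjection of coherent sheaves defining a point of the relative Quot scheme. This in turn rests on Proposition~\ref{prop:Family gives Family}, which guarantees that the tropical-support map descends and is compatible with strict base change, so that the subdivision seen by $\mathpzc{S} \to \supplog_{\Lambda_2}$ is genuinely the one carried by the universal family $\mathscr{X}_{\Lambda_1}$.
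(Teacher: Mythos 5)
Your skeleton (monomorphism plus open image, with openness coming from openness of logarithmic flatness and from Proposition~\ref{prop:StabilityIsOpen}) has the right shape and matches the discussion preceding the lemma, but the mechanism you offer for both the monomorphism step and the surjectivity-onto-the-image step has a genuine gap. You appeal to the ``minimality built into the tropical support'' to claim that a logarithmic surjection pins down its representative on $\mathcal{X}_{\Lambda_1}$ uniquely. But $\mathcal{X}_{\Lambda_1}$ is \emph{not} the minimal expansion: the tropical support is in general only a piecewise linear space, not a cone complex, and $\Lambda_1$ is one of many tropical models strictly refining it. So minimality by itself can neither recover the quotient on $\mathcal{X}_{\Lambda_1}$ from the equivalence class, nor rule out that two distinct stable quotients on $\mathcal{X}_{\Lambda_1}\times_{\supplog_{\Lambda_2}}S$ become identified after pullback to a common refinement --- pullback of surjections along a modification is not injective for free.

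The paper fills exactly this hole with a comparison argument your proposal lacks. Given two tropical models $\Lambda_1,\Lambda_1'$ of the universal family over the same base $\supptrop_{\Lambda_2}$, one pulls a quotient back to the common refinement $\overline{\Lambda}$ and uses stability to descend it to the other model; the assignment is a bijection because of two facts: (i) a transverse surjection $q$ is determined by its restriction to the strata of relative dimension zero over the base lying in the tropical support (a consequence of transversality, i.e.\ no sections supported on boundary strata), and (ii) combinatorial flatness over $\Sigma$ guarantees that these relative-dimension-zero strata are unchanged by passing to the common refinement. This determination-on-distinguished-strata step is what makes the map a monomorphism, and what lets an arbitrary logarithmically flat, stable point of $\mathsf{Quot}_\Sigma(X,\mathcal{E})$ lying over $\supplog_{\Lambda_2}$ be represented canonically by a point of the relative Quot scheme for $\mathcal{X}_{\Lambda_1}$. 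Your citation of Proposition~\ref{prop:Family gives Family} is relevant to the compatibility of the tropical-support map with strict base change, but it does not supply this sheaf-theoretic descent of the quotient itself, which is the actual content of the paper's proof.
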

    It follows that $U_\Lambda \rightarrow \mathsf{Quot}(X,\mathcal{E})$ is logarithmic \'etale as it is the composition of logarithmically \'etale morphisms. 
	\begin{proof}
		Consider two tropical models of $\mathscr{X}$, say $\Lambda_1, \Lambda_1'$, which are both integral over a fixed base $\supptrop_{\Lambda_2}$. Write $\overline{\Lambda}$ for the common refinement of $\Lambda_1,\Lambda_1'$ and note there are corresponding maps of universal expansions $$\mathcal{X}_{\overline{\Lambda}}\rightarrow \mathcal{X}_{\Lambda_1},\mathcal{X}_{\overline{\Lambda}}\rightarrow \mathcal{X}_{\Lambda_1'}.$$ 
  
  We now show that the transverse and stable locus in $\mathrm{Quot}(\mathcal{X}_{\Lambda_1}/\supplog_{\Lambda_2},\mathcal{E})$ is canonically identified with the transverse and stable locus in $\mathrm{Quot}(\mathcal{X}_{\Lambda_1'}/\supplog_{\Lambda_2},\mathcal{E})$. Indeed a surjection of sheaves on $S \times_{\supplog_{\Lambda_2}}\mathcal{X}_{\Lambda_1}$ can be pulled back to a surjection of sheaves on $S \times_{\supplog_{\Lambda_2}}\mathcal{X}_{\overline{\Lambda}}.$ Stability ensures this surjection is pulled back from a surjection of sheaves on $S \times_{\supplog_{\Lambda_2}}\mathcal{X}_{\Lambda_1'}.$ Since both ${\Lambda_1}$ and ${\Lambda_1'}$ are models of the universal tropical support, the above operation sends transverse sheaves to transverse sheaves. The restriction of a surjection $q$ of sheaves to strata of relative dimension zero over the base in the tropical support determines $q$. Both $\Lambda_1$ and $\Lambda_1'$ are combinatorially flat over $\Sigma$ so the common refinement does not effect those cones of relative dimension zero over the base which are strata of the tropical support. This ensures the above assignment is a bijection.

  Write $Q^o$ for the logarithmically flat and stable locus in $\mathrm{Quot}(\mathcal{X}_{\Lambda_1}/\supplog_{\Lambda_2},\mathcal{E})$. Note $Q^o$ is an open in the transverse and stable locus of $\mathrm{Quot}(\mathcal{X}_{\Lambda_1}/\supplog_{\Lambda_2},\mathcal{E})$. In light of the previous paragraph we can identify $Q^o$ as an open inside $\mathrm{Quot}(\mathcal{X}_{\Lambda_1'}/\supplog_{\Lambda_2},\mathcal{E})$. 
\end{proof}

\begin{corollary}
    The logarithmic Quot space is a logarithmic space in the sense of \cite[Definition 4.11.1]{MolchoWise}.
\end{corollary}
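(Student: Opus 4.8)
The plan is to verify the conditions of \cite[Definition 4.11.1]{MolchoWise} directly, using the covering family produced in Lemma \ref{lem:coverbyUlambda} together with the representability statement of Theorem \ref{thm:LogQuot}. A logarithmic space in the sense of Molcho--Wise is a sheaf on the strict \'etale site of logarithmic schemes which admits a representable logarithmic \'etale surjection from a logarithmic algebraic space, such that the associated relation (the fibre product of the chart with itself over the sheaf) is again a logarithmic algebraic space. By Definition \ref{defn:LogQuot} the logarithmic Quot space is, by construction, a sheaf on the strict \'etale site, obtained by sheafifying the evident presheaf; so the sheaf-theoretic hypothesis is automatic, and it remains to exhibit a chart and check that the relation is representable.

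First I would assemble the chart. For each $\Lambda$ compatible with a tropical model $\Sigma$, the space $U_\Lambda$ is an open substack of a relative Grothendieck Quot scheme $\mathrm{Quot}(\mathcal{X}_{\Lambda_1}/\supplog_{\Lambda_2},\mathcal{E})$, hence a logarithmic algebraic space with logarithmic structure pulled back from $\supplog_{\Lambda_2}$. The remark following Lemma \ref{lem:coverbyUlambda} records that each structure map $U_\Lambda \to \mathsf{Quot}(X,\mathcal{E})$ is logarithmic \'etale, being the composite of the open immersion $U_\Lambda \hookrightarrow \mathsf{Quot}_\Sigma(X,\mathcal{E})$ with the logarithmically \'etale projection $\pi_\Sigma$. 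Taking the disjoint union $\coprod_\Lambda U_\Lambda$ over all compatible $\Lambda$, as $\Sigma$ ranges over the system $S_{\mathpzc{X}}$ of tropical models, yields a single logarithmic algebraic space mapping logarithmic \'etale to $\mathsf{Quot}(X,\mathcal{E})$; surjectivity follows from the colimit presentation $\varinjlim_\Sigma \mathsf{Quot}_\Sigma(X,\mathcal{E}) = \mathsf{Quot}(X,\mathcal{E})$ of Theorem \ref{thm:LogQuot} together with the fact that every point of a proper model lies in some $U_\Lambda$.

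The substantive step, and the place where the genuinely logarithmic (rather than strictly \'etale) nature of the cover is forced, is to show that the relation $R = \coprod_{\Lambda,\Lambda'} U_\Lambda \times_{\mathsf{Quot}(X,\mathcal{E})} U_{\Lambda'}$ is a logarithmic algebraic space. Here I would reuse the analysis inside the proof of Lemma \ref{lem:coverbyUlambda}: given two compatible models $\Lambda_1,\Lambda_1'$ over a common base $\supptrop_{\Lambda_2}$, passing to the common refinement $\overline{\Lambda}$ and pulling surjections back and forth along $\mathcal{X}_{\overline{\Lambda}} \to \mathcal{X}_{\Lambda_1}$ and $\mathcal{X}_{\overline{\Lambda}} \to \mathcal{X}_{\Lambda_1'}$ canonically identifies the logarithmically flat and stable loci. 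This realises each overlap as an open subspace of the Quot scheme attached to $\overline{\Lambda}$, which is again a logarithmic algebraic space, and makes the two projections of $R$ logarithmic \'etale. The main obstacle is the bookkeeping of compatibility: one must check that these pairwise identifications satisfy a cocycle condition as $\Lambda,\Lambda'$ vary, so that $R$ is a well-defined \'etale equivalence relation on the chart. This is exactly where \emph{stability} (condition (2) in the definition of $U_\Lambda$) and Proposition \ref{prop:StabilityIsOpen} are essential, since stability guarantees that the restriction of a surjection to the relative-dimension-zero strata of the tropical support determines it, forcing the transition isomorphisms to be canonical and hence compatible.

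Finally I would invoke the criterion of \cite[Definition 4.11.1]{MolchoWise}: a strict-\'etale sheaf presented by a logarithmic algebraic space chart with logarithmic algebraic relation, both the chart map and its projections being logarithmic \'etale, is a logarithmic space. Having verified all three ingredients, the corollary follows. I expect no genuinely new difficulty beyond Lemma \ref{lem:coverbyUlambda} and Proposition \ref{prop:StabilityIsOpen}; the corollary is essentially a repackaging of the \textbf{Representability} clause of Theorem \ref{thm:LogQuot} into the Molcho--Wise formalism, with the only real care needed in confirming that the finite automorphisms of the moduli problem are absorbed into the representable relation $R$ rather than obstructing the construction of the chart.
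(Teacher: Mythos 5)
Your proposal is correct and follows essentially the same route as the paper: the paper's proof is precisely the one-line observation that the logarithmic \'etale morphisms $U_\Lambda \rightarrow \mathrm{Quot}(X,\mathcal{E})$ of Lemma \ref{lem:coverbyUlambda} form the requisite cover. Your additional verifications (surjectivity via the colimit presentation, representability of the overlaps via the common-refinement and stability argument) are exactly the details the paper leaves implicit in that lemma and its surrounding discussion.
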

\begin{proof}
    The logarithmic \'etale morphisms $U_\Lambda \rightarrow \mathrm{Quot}(X,\mathcal{E})$ form the requisite cover.
\end{proof}
\subsubsection{Prorepresentability and cover by proper models}
Denote the set of tropical models $$S_\mathpzc{X} = \{\supptrop_\Sigma(\AX) \rightarrow \supptrop(\AX)\}.$$
 \begin{proposition}
	Taking colimits in the category of stacks over \textbf{LogSch} there is an equality of moduli stacks $$\varinjlim_{\Sigma \in S_\mathpzc{X}} \mathsf{Quot}_\Sigma(X,\mathcal{E})=\mathsf{Quot}(X,\mathcal{E}).$$ 
	\end{proposition}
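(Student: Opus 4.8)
The plan is to reduce the statement to the prorepresentability of the space of tropical supports established in Section \ref{sec:PLSpaces}, and then propagate that equality first through the lift to logarithmic schemes and then through the base change defining the proper models. Recall that in Section \ref{sec:PLSpaces} we proved, for any piecewise linear space $\mathscr{S}$, an equality of sheaves on \textbf{RPC} of the form $\varinjlim_{\Sigma\in S_\mathscr{S}} h_\Sigma = h_\mathscr{S}$, the colimit running over the filtered system of tropical models (filtered because any two subdivisions admit a common refinement). Applied to $\mathscr{S}=\supptrop(\mathpzc{X})$ this gives $\varinjlim_{\Sigma\in S_\mathpzc{X}} \supptrop_\Sigma = \supptrop$ as piecewise linear spaces, and this is the only genuinely tropical input the argument needs.

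First I would lift this equality to the category of stacks on the strict \'etale site. The assignment $\mathscr{T}\mapsto a^*\mathscr{T}$ of Section \ref{sec:logbackground} sends $\supptrop_\Sigma\mapsto\supplog_\Sigma$ and $\supptrop\mapsto\supplog$, and I would argue that it carries the filtered colimit above to the corresponding colimit of stacks over \textbf{LogSch}, yielding $\varinjlim_{\Sigma\in S_\mathpzc{X}} \supplog_\Sigma = \supplog$. Second, I would invoke the morphism $\pi:\mathsf{Quot}(X,\mathcal{E})\to\supplog$ produced by Proposition \ref{prop:Family gives Family}, with respect to which $\mathsf{Quot}_\Sigma(X,\mathcal{E}) = \mathsf{Quot}(X,\mathcal{E})\times_{\supplog}\supplog_\Sigma = \pi^*\supplog_\Sigma$. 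Working in the slice of stacks over $\supplog$, the pullback functor $\pi^*$ has a right adjoint (pushforward in the strict \'etale topos) and so preserves colimits, while the domain functor from the slice to all stacks also preserves colimits; combining these with the previous paragraph gives
$$\varinjlim_{\Sigma\in S_\mathpzc{X}}\mathsf{Quot}_\Sigma(X,\mathcal{E}) = \pi^*\Big(\varinjlim_{\Sigma}\supplog_\Sigma\Big) = \pi^*\supplog = \mathsf{Quot}(X,\mathcal{E}),$$
which is the desired equality. Throughout one uses that sheafification is exact, preserving both the finite fibre products and the colimit, so every operation may be performed at the level of the presheaf colimit and then sheafified.

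The main obstacle I expect is justifying that $a^*$ genuinely commutes with the filtered colimit: the tropical equality is an equality of \emph{sheaves} on \textbf{RPC}, so it already incorporates a sheafification that must be reconciled with the stackification built into the lift of Section \ref{sec:logbackground}. Rather than appeal abstractly to exactness of $a^*$, I would discharge this by checking the comparison morphism $\varinjlim_\Sigma\mathsf{Quot}_\Sigma\to\mathsf{Quot}$ on $S$-points. Essential surjectivity holds because the tropical support map $\mathpzc{S}\to\supplog$ attached to any logarithmic surjection of coherent sheaves factors, strict \'etale locally on $S$, through some tropical model $\supplog_\Sigma$, by the lifted form of Proposition \ref{prop:PLsubdivisions} (every map from a cone to a piecewise linear space factors through a tropical model). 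Faithfulness holds because any two factorizations, through $\Sigma$ and $\Sigma'$, are identified after passing to a common refinement $\Sigma''$, the system $S_\mathpzc{X}$ being filtered.

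These two facts are precisely what is needed for the open pieces $U_\Lambda$ of Lemma \ref{lem:coverbyUlambda} to assemble into the claimed colimit: the lifted Proposition \ref{prop:PLsubdivisions} guarantees the $U_\Lambda$ cover $\mathsf{Quot}(X,\mathcal{E})$, and the filteredness of $S_\mathpzc{X}$ together with the identification of transverse-and-stable loci across common refinements (established inside the proof of Lemma \ref{lem:coverbyUlambda}) guarantees the gluing data agree on overlaps. Thus the point-theoretic verification and the formal base-change argument give the same conclusion, and I would present the formal computation as the main line with the $S$-point check supplying the one step, commutation of the lift with the filtered colimit, that is not purely categorical.
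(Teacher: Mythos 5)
Your proposal is correct, and its essential content coincides with the paper's own proof: the paper likewise builds the forward map from the projections $\pi_\Sigma$ and then inverts it on $B$-points by refining a strict \'etale cover of $B$ until each piece $U_i$ factors through some model $\mathsf{Quot}_{\Sigma_i}(X,\mathcal{E})$, gluing the resulting maps into the colimit via common refinements $\Sigma_{ij}$ --- which is precisely your ``essential surjectivity'' (strict-\'etale-local factorization of the tropical support map through a tropical model, via Proposition \ref{prop:PLsubdivisions}) and your ``faithfulness'' (identification of two lifts after passing to a common refinement). What you add is the formal superstructure: reducing to the prorepresentability lemma $\varinjlim_{\Sigma} h_\Sigma = h_{\supptrop}$ and transporting it through $a^*$ and the base change along $\pi:\mathsf{Quot}(X,\mathcal{E})\to\supplog$. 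That framing is sound in principle (colimits of stacks on the strict \'etale site are universal, so pullback and the domain functor preserve them), but as presented it is redundant and mildly circular: the step you identify as the genuine gap is $\varinjlim_\Sigma \supplog_\Sigma = \supplog$, yet your $S$-point check verifies the Quot-level comparison directly rather than this intermediate statement, and once that check is done the base-change computation contributes nothing --- it is the check itself, i.e.\ the paper's argument, that proves the proposition. Two small precision points. First, with refinements as the morphisms of $S_\mathpzc{X}$, the index system is cofiltered rather than filtered (common refinements map \emph{to} both models; common coarsenings need not exist); nothing in your argument actually needs filteredness, only the zig-zag through a common refinement, so this is terminological. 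Second, the paper stresses that the statement fails for colimits of prestacks; your insistence that factorizations exist only strict \'etale locally and that one sheafifies the presheaf colimit shows you have this right, and it is the reason the \'etale refinement step cannot be dropped from either version of the argument.
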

	\begin{proof}
		The morphisms $\pi_\Sigma$ specify a map $$\varinjlim_{\Sigma \in S_\mathpzc{X}} \mathsf{Quot}_\Sigma(X,\mathcal{E})\rightarrow \mathsf{Quot}(X,\mathcal{E}).$$ We write down an inverse on the level of functors of points. Note the proposition is false if one takes colimits in the category of prestacks. 
		
		A morphism $B \rightarrow \mathsf{Quot}(X,\mathcal{E})$ is the data of an \'etale cover $$\{f_i:U_i\rightarrow B\} \textrm{ where we denote }U_i \cap U_j = U_{ij}$$ and a surjection of expanded logarithmic sheaves $q_i:\mathcal{E} \rightarrow \mathcal{F}$ on $X \times U_i$. Refining the open cover if necessary,  $\mathcal{F}|_{U_i}= [\varphi^{\star}  \mathcal{X}_\Sigma, \mathcal{F}_\Gamma]$ for some choice of $\Sigma$ and $\varphi:U_i \rightarrow \mathsf{Supp}_\Sigma(X)$. Pulling $q_i$ back to a morphism $f_i^{\star} q_i$ of sheaves on $U_i$ we obtain a morphism $$U_i \rightarrow \mathsf{Quot}_{\Sigma_i}(X).$$ Moreover denoting the common subdivision of $\Sigma_i$ and $\Sigma_j$ by $\Sigma_{ij}$ the restriction of $g_i$ to $U_{ij}$ factors $$g_i|_{U_{ij}}:U_{ij} \rightarrow \mathsf{Quot}_{\Sigma_{ij}}(X)\xrightarrow{h_{ij}} \mathsf{Quot}_{\Sigma_i}(X).$$  Since $h_{ij} = h_{ji}$ the compositions  $$U_i\rightarrow \mathsf{Quot}_{\Sigma_i}(X) \rightarrow \varinjlim \mathsf{Quot}_\Sigma^\mathrm{log}(X)$$ glue to define a morphism $$g:B \rightarrow \varinjlim \mathsf{Quot}_\Sigma^\mathrm{log}(X).$$
	\end{proof}

	\begin{theorem}\label{thm:algebraic}
		The model $\mathsf{Quot}_\Sigma(X,\mathcal{E})$ is an algebraic stack of Deligne--Mumford type with logarithmic structure. 
	\end{theorem}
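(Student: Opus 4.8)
The plan is to assemble an étale groupoid presentation of $\mathsf{Quot}_\Sigma(X,\mathcal{E})$ out of the open substacks $U_\Lambda$ constructed above. Recall from the discussion preceding Lemma~\ref{lem:coverbyUlambda} that, for $\Lambda$ compatible with $\Sigma$, the stack $U_\Lambda$ is an open substack of the relative Quot scheme $\mathrm{Quot}(\mathscr{X}_{\Lambda_1}/\supptrop_{\Lambda_2})$; since $\mathscr{X}_{\Lambda_1}\to\supptrop_{\Lambda_2}$ is proper this relative Quot scheme, and hence $U_\Lambda$, is an algebraic space. Moreover Lemma~\ref{lem:coverbyUlambda} shows that the natural morphism $U_\Lambda \to \mathsf{Quot}_\Sigma(X,\mathcal{E})$ is an open immersion. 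Thus each $U_\Lambda$ supplies a representable, étale morphism into $\mathsf{Quot}_\Sigma(X,\mathcal{E})$ whose source is an algebraic space.

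First I would verify that the family $\{U_\Lambda\}$, with $\Lambda$ ranging over tropical models compatible with $\Sigma$, is jointly surjective. Given an $S$-point of $\mathsf{Quot}_\Sigma(X,\mathcal{E})$ --- a logarithmic surjection of coherent sheaves $q = [\pi_\Gamma, q_\Gamma]$ together with its classifying map to $\supplog_\Sigma$ --- Theorem/Definition~\ref{lem:defnTropSupp} produces the tropical support $\mathscr{T}(q)$, a subdivision of $\mathrm{Trop}(X)$ of which $\Gamma$ is a tropical model. Refining by Proposition~\ref{prop:PLsubdivisions} I may arrange $\Gamma = \Lambda_1$ and choose a cone subcomplex $\Lambda_2$ of $\Sigma$ through which the classifying map factors; then $q_\Gamma$ is exactly a point of the relative Quot scheme lying in its logarithmically flat and stable locus, so the given point factors through $U_\Lambda$. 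This shows the open immersions cover.

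With surjectivity in hand, set $U = \bigsqcup_\Lambda U_\Lambda$, an algebraic space, so that $U \to \mathsf{Quot}_\Sigma(X,\mathcal{E})$ is a representable, étale and surjective morphism. Then $R := U \times_{\mathsf{Quot}_\Sigma(X,\mathcal{E})} U$ is an algebraic space and $R \rightrightarrows U$ is an étale groupoid whose quotient stack is $\mathsf{Quot}_\Sigma(X,\mathcal{E})$; the standard groupoid-presentation criterion then exhibits $\mathsf{Quot}_\Sigma(X,\mathcal{E})$ as an algebraic stack with representable, unramified diagonal, that is, one of Deligne--Mumford type. Finiteness of stabilizers, which would otherwise be the delicate point, is built in: the stability condition defining $U_\Lambda$ records the minimal expansion determined by the tropical support and so kills the positive-dimensional torus automorphisms of the expansion, which is precisely what makes $U_\Lambda \to \mathsf{Quot}_\Sigma(X,\mathcal{E})$ étale rather than merely smooth. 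The logarithmic structure is then obtained by pulling back along $\mathsf{Quot}_\Sigma(X,\mathcal{E}) \to \supplog_\Sigma$ the canonical logarithmic structure on the Artin fan $\supplog_\Sigma = a^\star \supptrop_\Sigma$.

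I expect the main obstacle to be the joint surjectivity of the $U_\Lambda$ together with the compatibility of the stability condition across different choices of $\Lambda$: one must confirm that every point truly admits a representative on a compatible tropical model landing in the stable locus, and that the transition maps $U_\Lambda \dashrightarrow U_{\Lambda'}$ on overlaps glue to a genuine étale groupoid. This is where the universal property of $\supptrop(\mathpzc{X})$ (Theorem~\ref{thm: Universal property Supptrop}), the openness of stability (Proposition~\ref{prop:StabilityIsOpen}), and the overlap analysis already carried out inside the proof of Lemma~\ref{lem:coverbyUlambda} do the real work.
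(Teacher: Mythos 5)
There is a genuine gap, and it sits at the exact point your argument declares to be ``built in''. Your presentation rests on the premise that each $U_\Lambda$ is an algebraic space, so that the open immersions $U_\Lambda \to \mathsf{Quot}_\Sigma(X,\mathcal{E})$ assemble into an \'etale equivalence relation on a disjoint union of algebraic spaces. But $U_\Lambda$ is an open substack of the relative Quot scheme over $\supplog_{\Lambda_2}$, and $\supplog_{\Lambda_2}$ is an Artin fan: its points have stabilizer groups $\mathbb{G}_m^k$. Relative representability of the Quot functor only says that $U_\Lambda \to \supplog_{\Lambda_2}$ is representable by algebraic spaces; the total stack $U_\Lambda$ inherits at each point the stabilizer $G \leq \mathbb{G}_m^k$ consisting of those automorphisms of the expansion which fix the surjection $q$. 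So the objects you are gluing are a priori Artin stacks with possibly positive-dimensional stabilizers, and the entire mathematical content of the theorem is to show that $G$ is finite. If your premise were literally true, then $\mathsf{Quot}_\Sigma(X,\mathcal{E})$, being covered by open substacks that are algebraic spaces, would itself be an algebraic space; this is false --- for instance, in the logarithmic Hilbert scheme of two points on $\mathbb{A}^2$ the point corresponding to the ideal $(X^2-1,Y-1)$ has a $\mathbb{Z}/2$ stabilizer coming from $X \mapsto -X$, so the space is genuinely Deligne--Mumford and not an algebraic space.

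The paper's proof supplies precisely the step you omit: a point of $U_\Lambda$ is a map $p$ to $\supplog_\Lambda$ together with a surjection $q$ on the corresponding expansion; its stabilizer is the subgroup $G \leq \mathbb{G}_m^k$ fixing $q$; and by the defining property of tropical support (Theorem/Definition \ref{lem:defnTropSupp}), compared against the stability condition that the tropical support of $q$ agree with the family pulled back from $\supplog_{\Lambda_2}$, the group $G$ contains no one-dimensional subtorus --- a one-parameter subgroup fixing $q$ would force the corresponding cones to lie in a common stratum of the tropical support, contradicting stability. An algebraic subgroup of a torus containing no one-dimensional subtorus is finite, hence $U_\Lambda$, and so $\mathsf{Quot}_\Sigma(X,\mathcal{E})$, is of Deligne--Mumford type. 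Your remaining steps (joint surjectivity of the $U_\Lambda$ via Proposition \ref{prop:PLsubdivisions}, and the overlap analysis of Lemma \ref{lem:coverbyUlambda}) are reasonable, but they cannot substitute for this stabilizer computation; without it, the claim that the cover is ``\'etale rather than merely smooth'' is an assertion, not an argument.
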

\begin{proof}
    First observe the morphism $U_\Lambda \rightarrow \mathrm{Quot}_{\Lambda_1}(X,\mathcal{E})$ is a strict open immersion as in Lemma~\ref{lem:coverbyUlambda}. We are left to check stabilisers are finite which may be done locally, and thus we check our claim for the $U_\Lambda$. A map from a point to $U_\Lambda$ can be thought of as two pieces of data. First a map from the point $p$ to the Artin stack $\mathrm{Supp}_\Lambda$ and second a surjection of sheaves $q$ on some logarithmic modification $X_\Gamma$. The image of $p$ in $\mathrm{Supp}_\Lambda$ has stabiliser group $\mathbb{G}_m^k$ for some $k$. The stabiliser of $p$ considered a point of $U_\Lambda$ is the subgroup $G$ of $\mathbb{G}_m^k$ whose action fixes $q$. This follows by comparing the definition of tropical support with \cite[Theorem 1.8]{CarocciNabijou1}. Now observe $G$ is an algebraic subgroup of $\mathbb{G}_m^k$ containing no one dimensional subtori by Theorem \ref{lem:defnTropSupp}. All such groups are finite.
\end{proof}
	\subsection{Open subscheme}\label{sec:OpenSubscheme} We observe $\mathrm{Quot}(X,\mathcal{E})^o$ is a open sub--scheme of $\mathrm{Quot}(X,\mathcal{E})$. Indeed $\mathrm{Quot}(X,\mathcal{E})^o$ comes equipped with universal data of a surjection of sheaves $\mathcal{E} \rightarrow \mathcal{F}$ on $\mathrm{Quot}(X,\mathcal{E})^o~\times~X$. This universal surjection of sheaves does not obviously define a logarithmic surjection of coherent sheaves because it is not clear $\mathcal{F}$ is logarithmically flat over $\mathrm{Quot}(X,\mathcal{E})^o$. However we know by Theorem \ref{thm:OldTev} that there is a logarithmic modification of $\mathrm{Quot}(X,\mathcal{E})^o\times X$ such that the strict transform of $\mathcal{F}$ is logarithmically flat. Being integral is automatic because $\mathrm{Quot}^o(X,\mathcal{E})$ has the trivial logarithmic structure, and thus we have constructed a valid logarithmic surjection of coherent sheaves over $\mathrm{Quot}(X,\mathcal{E})^o$.

 The universal property of $\mathrm{Quot}(X,\mathcal{E})$ now defines a map $\iota: \mathrm{Quot}(X,\mathcal{E})^o\rightarrow \mathrm{Quot}(X,\mathcal{E})$. The image is the locus in $\mathrm{Quot}(X,\mathcal{E})$ is the locus with trivial tropical support. This tropical support corresponds to the zero dimensional cone in $\supptrop$ and thus $\iota$ is open. Transversality ensures the map is an injection. Thus we have defined an open immersion.
	\subsection{Universally closed and separated} We have shown the model ${\mathrm{Quot}}_\Sigma(X,\mathcal{E})$ is an algebraic Deligne-Mumford stack $\underline{\mathrm{Quot}}_\Sigma(X,\mathcal{E})$  equipped with a logarithmic structure. 
	\begin{theorem}
		Connected components of the Deligne--Mumford stack $\underline{\mathrm{Quot}}_\Sigma(X,\mathcal{E})$ are universally closed and separated. 
	\end{theorem}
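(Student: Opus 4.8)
The plan is to establish both properties through the valuative criterion, working one connected component at a time. Since $\underline{\mathrm{Quot}}_\Sigma(X,\mathcal{E})$ is a Deligne--Mumford stack (Theorem~\ref{thm:algebraic}), it suffices to test each component against traits $\underline S = \Spec(R)$ with $R$ a complete discrete valuation ring with algebraically closed residue field, allowing finite extensions of $R$ as usual. A fixed connected component is of finite type by \cite{Quot2}, so that universal closedness together with separatedness upgrades to properness of the component; I would state the two criteria separately as in the theorem.

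For universal closedness I would begin with a morphism $\underline\eta \to \underline{\mathrm{Quot}}_\Sigma(X,\mathcal{E})$ from the generic point of a trait. Such a map equips $\underline\eta$ with the logarithmic structure pulled back along $\underline\eta \to \underline{\mathrm{Quot}}_\Sigma(X,\mathcal{E}) \to \supplog_\Sigma$ together with a logarithmic surjection of coherent sheaves over $\eta$. After a finite base change and a reduction to the situation where the generic point maps into a single face, the generic ghost sheaf is either trivial or of rank one, placing us in Case~1 or Case~2 of Theorem~\ref{Goal:FlatLimits}. Applying that theorem yields a logarithmic structure on $\underline S$ extending that of $\eta$ and a logarithmic modification $\pi_\Gamma$ whose strict transform $\mathcal{F}_\Gamma$ is logarithmically flat and integral over $S$; this is exactly the datum of a logarithmic surjection of coherent sheaves over $S$, hence a morphism $S \to \mathrm{Quot}(X,\mathcal{E})$ extending the generic one. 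To land in the fixed model I would use that $\supptrop_\Sigma \to \supptrop$ is a tropical model, so the induced map $\underline\eta \to \supplog_\Sigma$ extends uniquely over the trait, and that the tropical support of the limit is then controlled by Proposition~\ref{prop:Family gives Family}, with stability propagated along the trait by Proposition~\ref{prop:StabilityIsOpen}.

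For separatedness I would take two extensions $S \to \underline{\mathrm{Quot}}_\Sigma(X,\mathcal{E})$ agreeing over $\underline\eta$ and show they coincide. Their composites to $\supplog_\Sigma$ agree because this target is the stack underlying an Artin fan and the corresponding map of cone complexes is separated, so the two families admit a common logarithmic modification $X_{\bar\Gamma}$. Pulling both surjections back to $X_{\bar\Gamma}$ reduces the claim to uniqueness of flat limits in Grothendieck's relative Quot scheme, which is separated; hence the sheaf data agree. Combined with the agreement of the logarithmic and tropical data and the equivalence relation of Definition~\ref{defn:surjrelsheaves}, the two logarithmic surjections of coherent sheaves are equivalent, so the extensions coincide.

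The main obstacle will be the universal closedness step, and specifically the verification that the flat limit produced by Theorem~\ref{Goal:FlatLimits} has tropical support compatible with the chosen model $\Sigma$ rather than merely with some refinement of it. The delicate issue is that the flattening modification is a priori governed by the \emph{minimal} tropical support of the limit (Theorem/Definition~\ref{lem:defnTropSupp}), which one must reconcile with the fixed combinatorially flat family over $\supptrop_\Sigma$; here I expect to lean on the openness of stability in Proposition~\ref{prop:StabilityIsOpen} to carry the generic combinatorial type across the closed point, and on the finiteness of stabilisers from Theorem~\ref{thm:algebraic} to control the residual gerbe structure after base change. A secondary technical point is justifying the reduction of the generic logarithmic structure to rank at most one so that Theorem~\ref{Goal:FlatLimits} applies verbatim.
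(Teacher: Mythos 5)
Your overall skeleton---valuative criterion, existence via the flattening theorem, uniqueness via separatedness of Grothendieck's relative Quot scheme---is the paper's, and your separatedness paragraph is essentially the paper's argument (the paper additionally observes that the logarithmic structure on $S$ is one of the logarithmic extensions $S_\sigma$ of Section~\ref{sec:LogStr}, so that after shrinking the cone the common refinement remains integral; your appeal to a ``common logarithmic modification'' needs exactly this remark to be licit).

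The universal closedness half, however, has a genuine gap. Theorem~\ref{Goal:FlatLimits} takes as input a coherent sheaf $\mathcal{F}$ that is \emph{already defined on all of} $X\times S$ and flat over $\underline{S}$ in the classical sense, with logarithmically flat restriction to $X\times \eta$; its conclusion is that the strict transform of such a family can be made logarithmically flat and integral after a logarithmic modification. From a morphism $\underline{\eta}\to \underline{\mathrm{Quot}}_\Sigma(X,\mathcal{E})$ you only possess a surjection of sheaves on the generic expansion $X_\Gamma\times\eta$, so there is as yet nothing to which the theorem applies; your sentence ``Applying that theorem yields\dots'' presupposes an extension across the closed point that you never construct. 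The paper's pivotal intermediate step is to invoke properness of Grothendieck's classical Quot scheme of the underlying scheme $\underline{X}_\Gamma$ to extend the generic surjection to a flat family on $X_\Gamma\times S$ (after a ramified base change), and only then to apply Theorem~\ref{Goal:FlatLimits} to that classical flat limit, with miracle flatness supplying integrality. Grothendieck properness appears nowhere in your existence argument, and without it the flattening theorem cannot be invoked. A secondary error: your claim that the induced map $\underline{\eta}\to\supplog_\Sigma$ ``extends uniquely over the trait'' because $\supptrop_\Sigma\to\supptrop$ is a tropical model is false---maps from a trait to the stack underlying an Artin fan do not extend uniquely from the generic point (extensions can be twisted at the closed point), which is precisely why the paper must build the logarithmic structure on $S$ by hand and then verify the factorization through $\mathrm{Quot}_\Sigma(X,\mathcal{E})$ after a further ramified base change with the cone $\sigma_i$ chosen sufficiently small.
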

	By \cite[Theorem 2.2.5.2]{MolchoWise} it follows that each model ${\mathrm{Quot}}_\Sigma(X,\mathcal{E})$ satisfies the \textit{unique right lifting property defined} in the same theorem statement. An exercise in abstract nonsense shows ${\mathrm{Quot}}(X,\mathcal{E})$ satisfies the same right lifting property.
	
	We check the valuative criterion. Let $\underline{S} = \mathrm{Spec}(R)$ be a trait with generic point $\eta$ and special point $s$. Consider a commutative square 
	$$
	\begin{tikzcd}
	\eta \arrow[d, hook] \arrow[r] & {\mathsf{Quot}_\Sigma(X, \mathcal{E})} \arrow[d] \\
	S \arrow[r] \arrow[ru, dashed,"f"] & \mathrm{Spec}(\mathbb{C}).                             
	\end{tikzcd}
	$$
	We must check that for any $S$ at most one map $f$ exists. Moreover after replacing $R$ by a ramified base change, the morphism $f$ exists. It is also necessary to check that the logarithmic Quot space is bounded. We defer the boundedness proof to Section.

	\begin{proof}[Proof of valuative criterion]
		We first show existence after a ramified base change. A morphism from $\eta$ to $\mathrm{Quot}$ specifies a morphism $\phi:\eta \rightarrow \supplog_\Sigma$ and a surjection of sheaves on $\mathscr{X}\times_{\supplog_\Sigma} \eta$. Since $\eta$ is a single point the underlying scheme of $\mathscr{X}\times_{\supplog_\Sigma} \eta$ is a product $X_\Gamma \times \eta$ for some scheme $X_\Gamma$. Properness of Grothendieck's Quot scheme $\mathrm{Quot}_\Sigma(\underline{X}_\Gamma,\mathcal{E})$ defines a surjection of sheaves on $S\times X_\Gamma$. By Proposition \ref{Goal:FlatLimits}, possibly after replacing $R$ by a base change, there is a logarithmic scheme $S$ containing $\eta$ as a subscheme, and a logarithmic modification of $X_S$ such that the strict transform of $\mathcal{F}$ is logarithmically flat. The strict transform of $\mathcal{E}$ is already logarithmically flat. By miracle flatness the expansion is integral. Thus we have the data of a logarithmic surjection of coherent sheaves on $X\times S$ which is logarithmically flat over $S$.

        We have thus defined a map from $S$ to $\mathrm{Quot}(X,\mathcal{E})$ and must verify the map factors through $\mathrm{Quot}_\Sigma(X,\mathcal{E})$. This is true after a ramified base change of $R$ provided the cone $\sigma_i$ used to define $S$ is sufficiently small. 
		
		It remains to check uniqueness. Suppose we are given two surjections of logarithmic schemes on $X\times S$ which agree on $X\times \eta$. Modifying the logarithmic structure if necessary, we may choose representatives $$(\pi_\Gamma:X_\Gamma \rightarrow X, q_\Gamma) \textrm{ and } (\pi_\Gamma: X_\Gamma\rightarrow X, q_\Gamma')\textrm{ where both logarithmic modifications are the same}.$$ Note the logarithmic structure on $S$ is one of the $S_\sigma$ introduced in \cite[Section 7.1]{MR20} and discussed in Section \ref{sec:LogStr}. Therefore the fact that common refinements of logarithmic refinements need not be integral is not an issue: simply shrink the cone $\sigma$ and we can be sure that the map is integral.
        By the Quot scheme trick, the surjections $q_\Gamma, q_\Gamma'$ are the data of morphisms to the relative Quot scheme $\mathrm{Quot}(X_\Gamma/S, \mathcal{E})$. This relative Quot scheme is separated (over $S$) by a theorem of Grothendieck. Since $q_\Gamma$ and $q_\Gamma'$ agree on the generic point, separatedness of Grothendieck's Quot scheme implies $q_\Gamma = q_\Gamma'$.
	\end{proof}

\section{Examples}
The goal of this section is to describe examples of the logarithmic Quot space and computations of tropical support. The key takeaway from is that studying 
the logarithmic Quot space does not pose substantially more difficulty than studying Grothendieck's Quot scheme. 

\subsection{Trivial logarithmic structure}
Let $X$ be a logarithmic scheme obtained by equipping a scheme $X$ with the trivial logarithmic structure. Then $X$ is automatically logarithmically flat over a point with the trivial logarithmic structure and any coherent sheaf $\mathcal{E}$ on $X$ is logarithmically flat. 

In this situation logarithmic modifications of $X\times S$ which are integral over $S$ are Kummer and are unimportant for understanding additional logarithmic surjections of coherent sheaves. Thus the logarithmic Quot space of $\mathcal{E}$ on $X$ is Grothendieck's Quot scheme $\mathrm{Quot}(\underline{X},\mathcal{E})$ equipped with the trivial logarithmic structure.

\subsection{Tropical support} We provide two instructive examples of computing the tropical support. Tropical support can be accessed either through torus actions or cohomologically. Here we take the torus action perspective. 

\subsubsection{Points on $\mathbb{A}^2$}
We give an example of computing the tropical support. First consider a logarithmic modification of $\mathbb{A}^2 \times \mathsf{pt}^\dagger$. Such a logarithmic modification is the data of a polyhedral subdivision of $\mathbb{R}^2_{\geq 0}$, see the left of Figure \ref{fig:TropSupp}.

A point of the logarithmic Hilbert scheme of $\mathbb{A}^2$ is specified by a closed subscheme of such an expansion. Two distinct points in a logarithmic modification of $\mathbb{A}^2$ give a length two subscheme of $\mathbb{A}^2$. See the right of Figure \ref{fig:TropSupp}.

\begin{figure}[ht]
	\centering
	\includegraphics[width= 120mm]{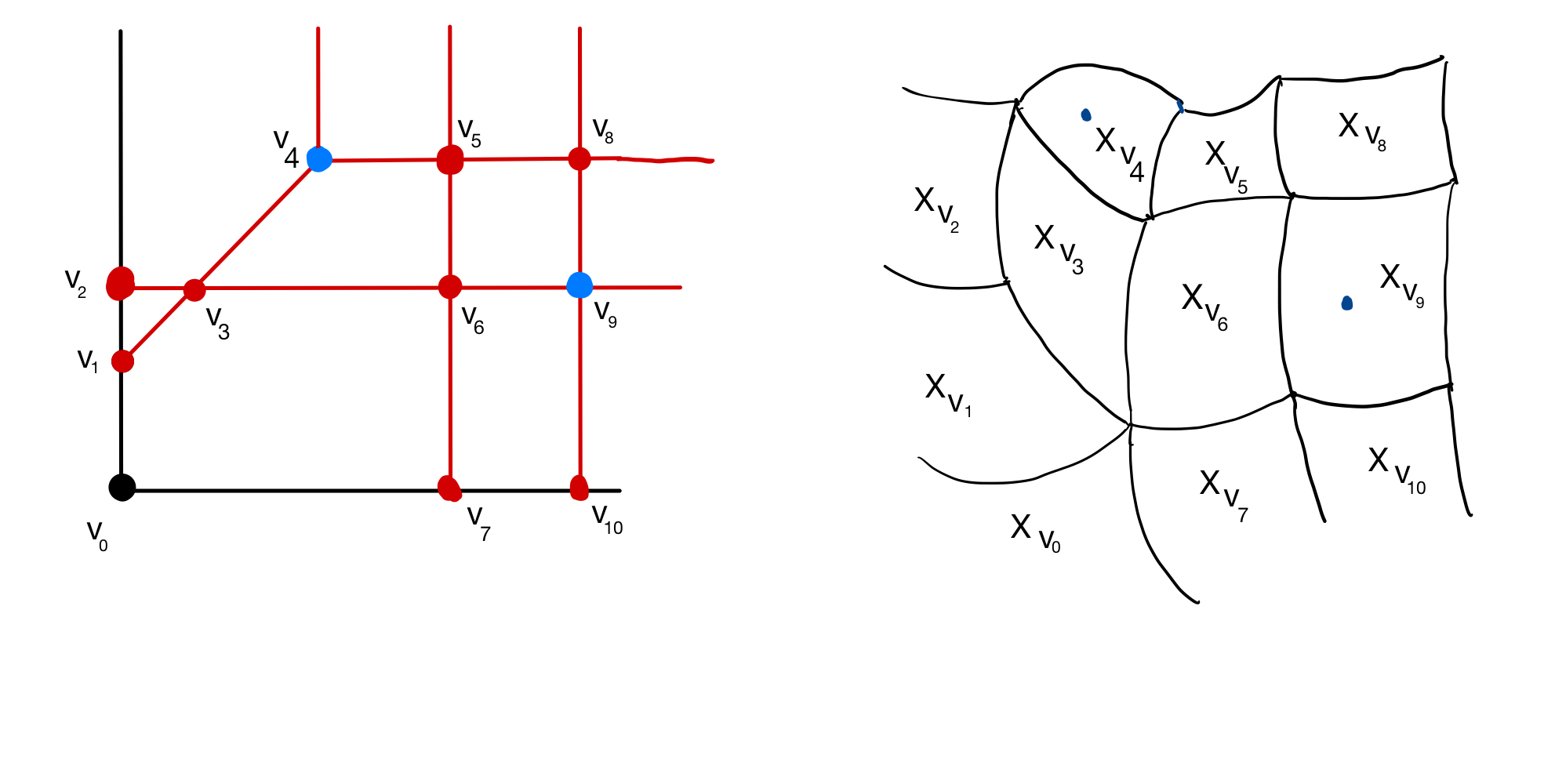}
    \caption{Left a polyhedral subdivision of $\mathbb{R}^2_{\geq 0}$ and right the associated expansion containing a subscheme (purple dots). The tropical support is obtained from the left hand diagram by keeping only data of blue vertices. These are the vertices corresponding to components which contain a point, and are thus fixed by all tori.}\label{fig:TropSupp}
\end{figure}

The tropical support records only the two blue vertices. These vertices correspond to components containing subschemes which are not fixed fixed by action of the two dimensional torus associated to each component. 

\subsubsection{Curves and points in $\mathbb{P}^2$}
We give an example of tropical support for mixed dimensional subschemes. The example in Figure \ref{fig:MixedDim} occurs when studying the moduli space of curves and points in $\mathbb{P}^2$.

\begin{figure}[ht]
	\centering
	\includegraphics[width= 120mm]{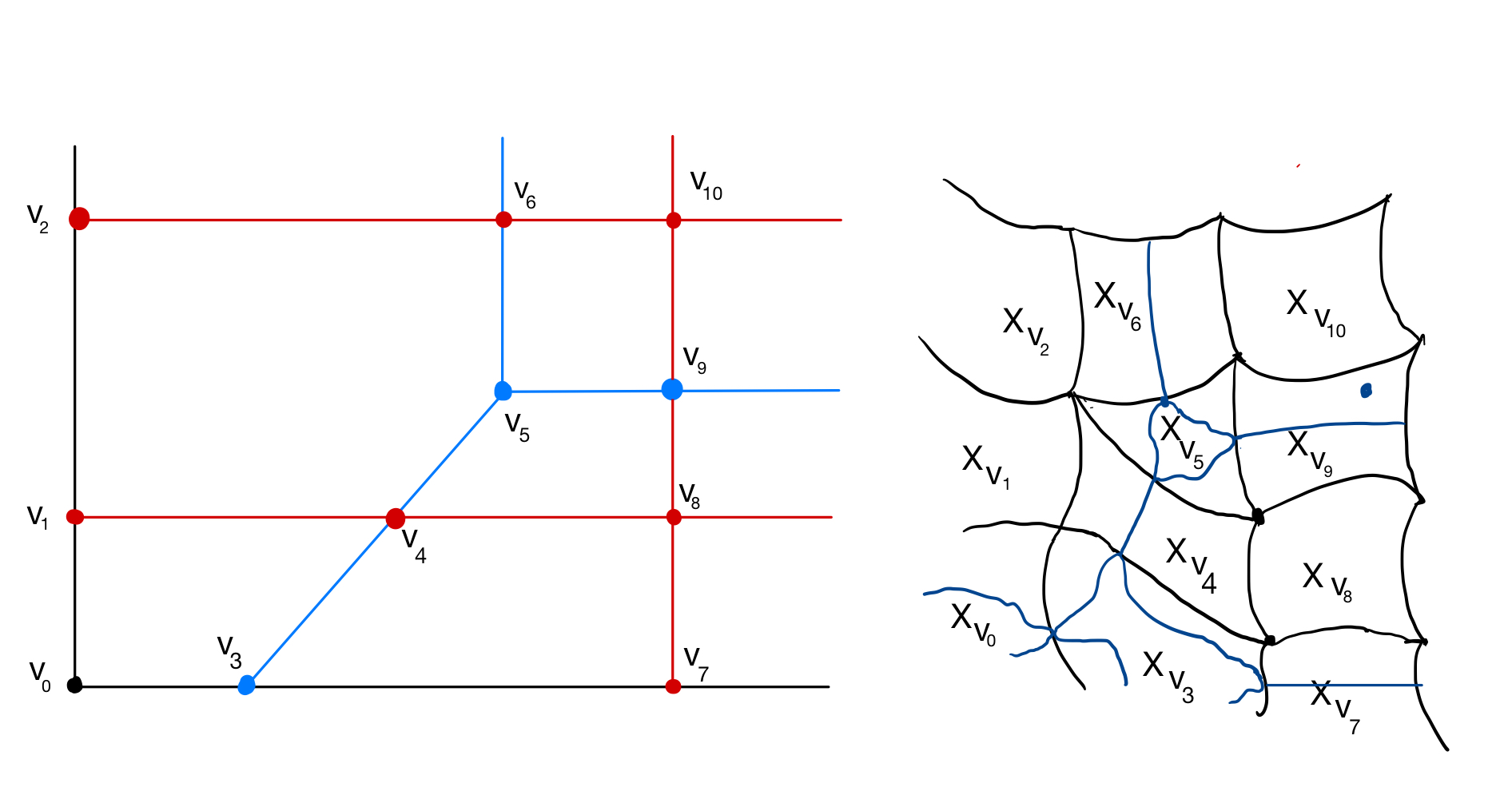}
    \caption{Left a polyhedral subdivision of $\mathbb{R}^2_{\geq 0}$ and right the associated expansion containing a subscheme (purple). The tropical support is obtained from the left hand diagram by keeping only data of blue tropical curve. Note vertex $v_4$ is not seen by the tropical support because the associated subscheme is fixed by a one dimensional torus. By contrast tropical support detects vertex $v_9$ because the subscheme in $X_{v_9}$ is not fixed by any one parameter subgroup of the torus.}\label{fig:MixedDim}
\end{figure}

Once again the blue data is the tropical support. In particular we see a tropical version of an embedded point in a tropical curve. Algebraically this arises from a point in the same component as a tropical curve fixed by a one dimensional torus.

\subsection{The logarithmic linear system}
The logarithmic Hilbert scheme of divisors in a toric variety is the first non--trivial example of a logarithmic Hilbert scheme where the sub-schemes have dimension at least three.

The \textit{logarithmic linear system} of hypersurfaces in a toric variety is a toric stack. For toric surfaces the situation is described in detail in \cite[Sections 1 and 2]{KH20} following an observation of Maulik and Ranganathan \cite{MR20}. The construction is identical for moduli of hypersurfaces in toric varieties of every dimension. The only difference is to increase the dimension of the polytope. The fan of the logarithmic linear system is closely related to the Gelfand--Kapranov--Zelevinsky secondary polytope \cite{GKZ}. 

\subsection{Logarithmic Hilbert scheme of two points on $\mathbb{A}^2$.} The logarithmic strata of the logarithmic Hilbert scheme of two points in $\mathbb{A}^2$ are governed by the possible tropical supports. Up to permuting the $x$ and $y$ axes all possible tropical supports are depicted in Figure \ref{fig:twoptA2}. 

We now describe the isomorphism class of the scheme underlying some of these logarithmic strata. The top logarithmic stratum is the locus of points in $\mathbb{A}^2$ supported away from the toric boundary. The Ghost sheaf of the associated logarithmic stratum is zero.

We now turn our attention to the locus $X_1$ associated to the tropical diagrams in a blue oval. The logarithmic stratum associated to the upper diagram has ghost sheaf $\mathbb{N}^2$. The  underlying locally closed stratum is the stack quotient of the Hilbert scheme of two points on $(\mathbb{C}^\star)^2$ by the action of $(\mathbb{C}^\star)^2$. Note this space is a Deligne--Mumford stack and not a scheme since the ideal $(X^2 - 1,Y-1)$ is fixed by the action sending $X \mapsto -X$ and $Y \mapsto Y$.

Associated to the bottom tropical diagram is a product - one for each dot. Assigned to each dot is the quotient of the Hilbert scheme of one point on $(\mathbb{C}^\star)^2$ by the action of $(\mathbb{C}^\star)^2$. The stratum is thus a product of two copies of $\mathrm{Spec}(\mathbb{C})$ and is thus a single point.

On the level of underlying schemes, the closure of $X_1$ is thus a single point. One can think of $X_1$ as a blowup of $(\mathbb{C}^\star)^2$ in the point $(1,1)$. There are no one point compactifications of this blowup. Indeed blowing down the exceptional curve in such a compactification yields a one point compactification of $(\mathbb{C}^\star)^2$. There are no one point compactifications of $(\mathbb{C}^\star)^2$ which are algebraic spaces. The author thanks Bernd Siebert for a conversation about this example.

\begin{figure}[ht]
	\centering
	\includegraphics[width= 120mm]{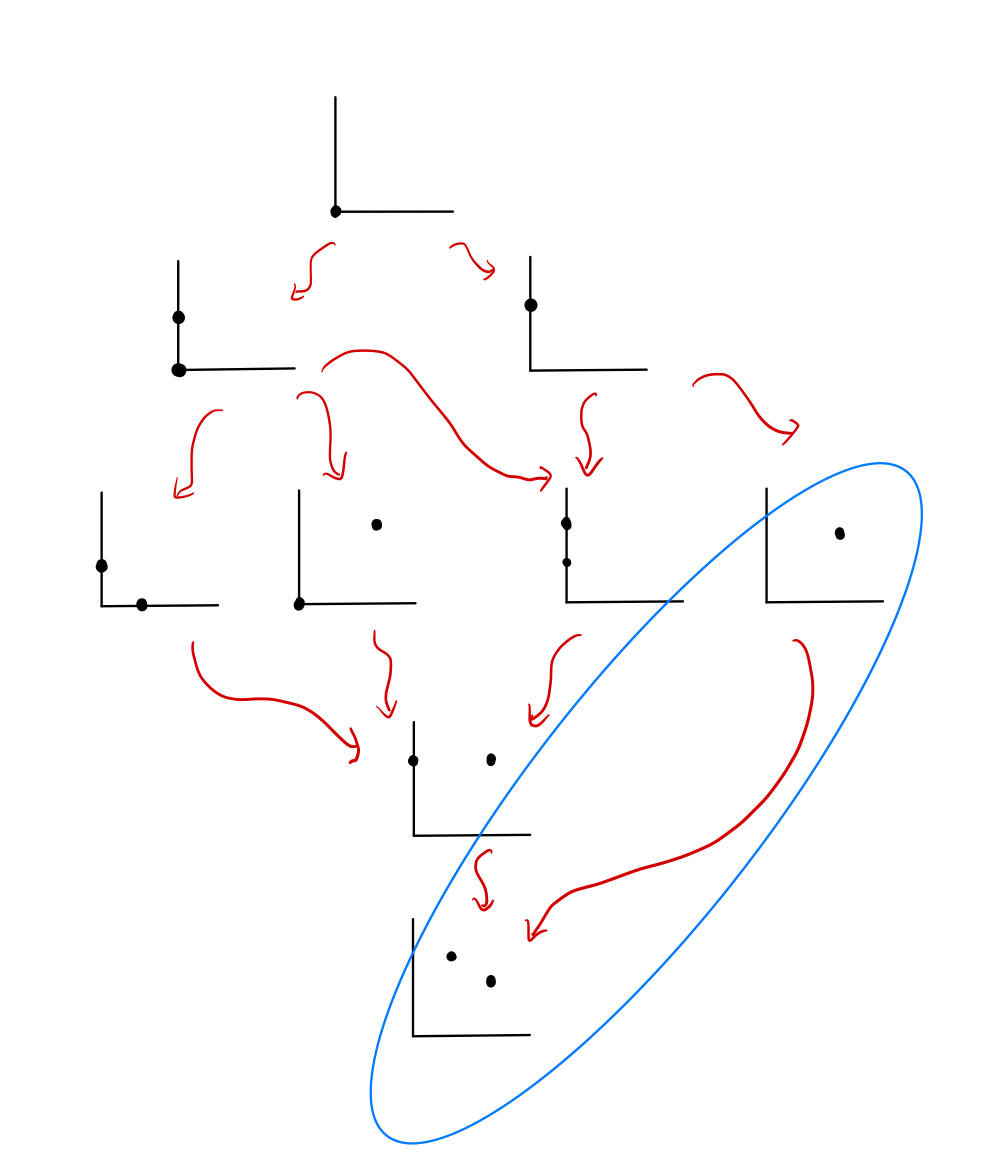}
    \caption{Tropical supports which appear when studying the logarithmic Hilbert scheme of $\mathbb{P}^2$. The height records the rank of the logarithmic stratum (rank zero for the first row and rank four on the bottom). There is a red line from diagram A to diagram B whenever the logarithmic stratum associated to B lies in the closure of the logarithmic stratum associated to A. The blue circle is explained in the text.}\label{fig:twoptA2}
\end{figure}

\subsection{Two choices of subdivision} In the perspective of our paper Maulik and Ranganathan's version of the logarithmic Hilbert scheme of curves requires two choices. They fix both a subdivision of $\mathrm{Supp}$ and a subdivision of the universal family $\mathscr{X}$. The goal of this section is to explain how these choices are connected and the ramifications for choices of tropical model of the logarithmic Quot space. 

Fix a combinatorial type of tropical support in the universal expansion. This fixes a combinatorial type of tropical support. To subivide $\mathscr{X}$, it suffices to choose for each point of $\supptrop$ a polyhedral subdivision of $\mathrm{Trop}(X)$ refining the PL structure induced by $\mathscr{X}$. This combinatorial model must be chosen in a particular way in order to define a valid tropical model of $\mathrm{Trop}(X)$. The key criterion is that the slant of each ray in the one skeleton is locally constant. 

Fixing such a choice on each stratum of $\supptrop$ gives rise to a subdivision of the universal expansion. This subdivision need not be combinatorially flat, but there is a universal way to flatten it (by pushing forward the locally closed stratification as in \cite{molcho2019universal}). This defines a subdivision $\supptrop' \rightarrow \supptrop$ which need not be a tropical model.

Maulik and Ranganathan make a second choice of tropical model $\supptrop_\Sigma \rightarrow \supptrop.$ If one fixes a subdivision of the universal expansion up front then the choice of $\Sigma$ is not arbitrary. Indeed combinatorial flatness implies we can factor $$\supptrop_\Sigma \rightarrow \supptrop' \rightarrow \supptrop.$$ Thus the two choices are linked, although neither choice determines the other. 

	\bibliographystyle{alpha}
	\bibliography{Bibliography2}
\end{document}